\documentclass[12pt, reqno]{amsart}
\usepackage[foot]{amsaddr}
\usepackage{amsmath,amsthm,amssymb}
\usepackage{mathtools}
\usepackage{enumerate, enumitem}
\usepackage{graphicx}
\usepackage{color}
\usepackage[colorlinks=true]{hyperref}
\usepackage{bm}
\usepackage{subcaption}

\usepackage{float}


\newtheorem{theorem}{Theorem}[section]
\newtheorem{lemma}[theorem]{Lemma}
\newtheorem{proposition}[theorem]{Proposition}
\newtheorem{corollary}[theorem]{Corollary}

\hypersetup{linkcolor=red,urlcolor=blue,citecolor=blue}

\definecolor{ThirdTheoremGreen}{rgb}{0,0.5,0}
\makeatletter
\newcommand{\ThirdRevisionBegin}{%
  \begingroup%
  \ifcsname @show@reffalse\endcsname\@show@reffalse\fi
}

\makeatother

\theoremstyle{definition}
\newtheorem{definition}[theorem]{Definition}
\newtheorem{remark}[theorem]{Remark}
\newtheorem{example}[theorem]{Example}

\numberwithin{equation}{section}

\newcounter{ProblemsCounter}
\newcounter{GlobalProblemsCounter}

\title{Geometric and functional mixing by 2D stationary incompressible flows}

\author[Weiwei Hu, Ziqian Li, Yubiao Zhang]{Weiwei Hu$^{1}$}
\address{$^{1}$Department of Mathematics, University of Georgia, Athens, GA 30602, USA}

\author{Ziqian Li$^2$}
\address{$^2$Chair for Dynamics, Control, Machine Learning and Numerics, Department of Mathematics, Friedrich-Alexander-Universit\"{a}t Erlangen-N\"{u}rnberg, 91058 Erlangen, Germany}

\author{Yubiao Zhang$^{3,2}$}
\address{$^3$School of Mathematics, Jilin University, Changchun, Jilin 130012, China}

\email{Weiwei.Hu@uga.edu, ziqian.li@fau.de, yubiao.zhang.math@outlook.com}

\begin{document}
\keywords{
	Fluid mixing, incompressible flow, flow Jacobian, mixing scale, mixing norm}

\begin{abstract}
    We study quantitative mixing and deformation of sets and curves for a class
    of two-dimensional autonomous Hamiltonian flows with finitely
    many critical points satisfying  local conditions that allow finite-order degeneracy.
    Variation of the period across neighboring trajectories generates
    transverse shear, providing a common mechanism for scalar mixing,
    set deformation, and curve stretching.
    First, for $H^1$ initial data supported away from equilibria and
    infinite-period trajectories, in regions where the period gradient
    is bounded away from zero, we establish sharp $(1+t)^{-1}$ decay
    in $H^{-1}$ towards the time average of the initial data along each
    periodic trajectory.
    Second, under the same geometric conditions, we prove matching
    upper and lower bounds  of order $(1+t)^{-1}$ for an orbit-relative
    geometric mixing scale of transported Lipschitz subdomains whose
    closures are not invariant under the flow. 
    This scale measures how closely the transported subdomain
    covers the union of trajectories meeting its initial position. 
    Third, for Lipschitz curves separated from infinite-period
    trajectories, we derive an explicit first-order large-time
    expansion of their length with a remainder bounded uniformly
    in time. In particular, their length grows at most linearly. 
    
    Counterexamples illustrate how the stated conclusions can fail
    when selected nondegeneracy or separation assumptions are removed.
    The analysis combines coordinates adapted to the periodic
    trajectories with quantitative estimates and asymptotic
    expansions for the flow Jacobian.
    Numerical simulations for cellular and radial flows illustrate
    the functional and geometric mixing rates and the evolution
    of curve length.


\end{abstract}

\maketitle

\section{Introduction}

\subsection{Problem setting and motivation}

Incompressible advection can transform an initially heterogeneous
scalar distribution into increasingly fine spatial structures.
We study this process for two-dimensional flows generated by
time-independent velocity fields. The basic model is the
transport equation
\begin{align}\label{20240925-yubiao-MainEquation}
    \partial_t \theta + V \cdot \nabla_x \theta = 0
    \quad \text{in } (0,+\infty)\times\Omega;
    \qquad
    \theta|_{t=0}=\theta_0
    \quad \text{in }\Omega.
\end{align}
Here, $\Omega\subset\mathbb R^2$ is a bounded Lipschitz domain,
$\theta$ is the transported scalar, $\theta_0$ is its initial
distribution, and $V$ is a prescribed time-independent
divergence-free velocity field tangent to the boundary.
The precise  assumptions on this field are stated
in the next subsection.

Since the flow preserves Lebesgue measure, pure transport
preserves the $L^2$ norm of the scalar. Thus, in the absence
of diffusion, mixing is understood through weak convergence
or through the geometry of increasingly fine spatial structures,
rather than through decay of the $L^2$ norm.
Negative Sobolev norms provide a quantitative way to measure
this weak homogenization, while transported sets and curves
describe its geometric aspects. Related works concerning these aspects are introduced in Subsection \ref{sec:RelatedWorks}. 

The autonomous Hamiltonian setting introduces an additional
constraint: trajectories remain on invariant level sets of
the Hamiltonian. In a region of periodic trajectories, the
time average of the initial scalar over each orbit is therefore
preserved. Denote this orbit average by $\bar\theta_0$, as
defined in \eqref{third:orbit-average}.
It is the natural candidate for the limiting profile, which
need not coincide with the global spatial average.
However, periodicity of the trajectories alone does not
guarantee convergence towards this profile.
For example, rigid rotation transports a nonconstant angular
pattern periodically without producing progressively finer
scales.

The mechanism considered here is the variation of the period
across neighboring trajectories. When their rotation
frequencies differ, neighboring trajectories accumulate
different phases over time. This differential rotation
produces transverse shear, which stretches material structures
across trajectories and generates fine spatial scales.
We ask how this mechanism determines three complementary
observables of mixing and material deformation.

First, we study convergence of a solution to equation \eqref{20240925-yubiao-MainEquation} towards its orbit average:
\[
\theta(t,\cdot)-\bar\theta_0
\rightharpoonup 0
\quad\text{in }L^2(\Omega)
\quad\text{as }t\to+\infty.
\]
We seek conditions ensuring this convergence and sharp
quantitative estimates for
$\|\theta(t,\cdot)-\bar\theta_0\|_{H^{-1}(\Omega)}$.
Subtracting the orbit average removes the invariant component,
so that the estimate measures only the part of the scalar
affected by differential rotation.

Second, we study the geometry of a transported subdomain.
If a material initially occupies an open set $A$, its
dynamically accessible region is the union of all trajectories
meeting $A$, denoted by $Orbit(A)$.
We ask how closely the transported set covers this region
at a given time. The geometric scale introduced below
quantifies this spatial coverage relative to $Orbit(A)$.
Its density parameter is not fixed in advance, so this
quantity differs from geometric mixing scales defined
at a prescribed volume fraction.

Third, we study the length of a transported Lipschitz curve,
which may represent a material interface.
We seek a first-order large-time formula for its length
and conditions distinguishing bounded length from growth
of linear order. Curve stretching records 
deformation of sets, but does not by itself imply the mixing property
of the scalar or uniform interpenetration of two phases.

Our aim is to relate these three observables through the
shear generated by period variation, while keeping their
distinct meanings and hypotheses explicit.
We also examine how the conclusions can fail when the
relevant nondegeneracy or separation assumptions are removed,
including situations involving vanishing period variation,
equilibria, and infinite-period trajectories.
The following subsection states the precise assumptions
and the quantitative results for each observable.

\vskip 10pt
\paragraph{\textbf{Notation.} } The following notations will be frequently used throughout this paper.
Let $\mathbb{R}^+ := (0,+\infty)$ and $\mathbb{N}^+ := \{1,2,3,\ldots\}$. For a set $E \subset \mathbb{R}^2$,  denote by $\overline{E}$ or $\mathrm{cl}\,E$ its closure, by $\mathrm{Int}\,E$ its interior, and by $|E|$ its Lebesgue measure. 
Write $d(\cdot,\cdot)$ for the distance between two points, between a point and a set, or between two sets; by convention, the distance is $+\infty$ if one of the sets involved is empty.
Denote by $\langle \cdot, \cdot \rangle$ and $|\cdot|$ the standard inner product and the corresponding Euclidean norm in $\mathbb{R}^2$, respectively. 
Denote by $B_r(x)$ the open ball in $\mathbb{R}^2$ centered at $x$ with radius $r$. 
Write $\vec v^{\perp}$ for the vector obtained by rotating $\vec v\in\mathbb{R}^2$ anticlockwise by $\pi/2$. Furthermore, write
\begin{align*}
    \nabla f := (\partial_1 f, \partial_2 f)^\top   
    \text{ and }~
    \vec{u} \otimes \vec{v} := \vec{u} \vec{v}^{\top},
    ~\forall\,\vec{u},\vec{v} \in \mathbb R^2. 
\end{align*}
The notation $C(\ldots)$ denotes a constant depending only on the parameters specified in the parentheses. 
Finally, for a Lipschitz curve $\gamma$,  denote by $|\gamma|$ its length.

\subsection{Main results}

To state the three main results, we need to  specify the class of stationary Hamiltonian flows. Let $\Omega \subset \mathbb{R}^2$ be a bounded Lipschitz domain.
Throughout the paper, we consider a divergence-free time-independent velocity
field $V \in C^1( \overline{\Omega}; \mathbb R^2)$ (i.e., $V\in C^1(U; \mathbb R^2)$ for some open set $U \supset \overline{\Omega}$) tangent to the boundary:
\begin{align*}
    \operatorname{div}V=0 ~\text{in }\Omega
    ~\text{ and }~
    V\cdot\vec n=0 ~\text{on }\partial\Omega,
\end{align*}
where $\vec n$ is the outward unit normal.  The transport therefore
preserves Lebesgue measure and the $L^2$ norm of the scalar.
We impose the following two assumptions:
\begin{itemize}
    \item[\textbf{(A1)}] There exists a function $H\in C^2(\mathbb R^2)$ such that
    \begin{align}\label{ham:representation}
        V=-\nabla^\perp H:=(\partial_2H,-\partial_1H)^\top
        ~\text{ in }\overline\Omega
        ~\text{ and }~
        H|_{\partial \Omega} = 0,
    \end{align}
    
    \item[\textbf{(A2)}] 
    The function $H$ has finitely many critical points $\{x_1^*,\ldots,x_K^*\} $  on $\overline{\Omega}$ such that for each critical point $x_k^*$, there exists a real number $m_k\ge1$ and a symmetric nonsingular matrix $S_k\in\mathbb R^{2\times2}$ such that
    \begin{align}\label{ham:local-model}
        H(x_k^*+y)-H(x_k^*)
        &=\frac12|y|^{m_k-1}y^\top S_k y+R_k(y)
        ~\text{ for $y$ near } 0,
    \end{align}
    where $R_k\in C^2(\mathbb R^2)$ satisfies
    \begin{align}\label{ham:remainder}
        \|D^jR_k(y)\|=o\!\left(|y|^{m_k+1-j}\right)
        ~\text{ as } y\to0,
        ~~ j=0,1,2.
    \end{align}
    Here, the norms for $j=1,2$ are the Euclidean and operator norms, respectively.
\end{itemize}
Assumption (A1) is standard for stationary incompressible flows. 
Assumption (A2) prescribes a finite-order local model at each critical point, with the remainder controlled through its second derivatives. The exponents $m_k$ need not be integers: the homogeneous leading term
$y\mapsto\frac12|y|^{m_k-1}y^\top S_k y$ is of class $C^2$. The case $m_k=1$ includes Hamiltonians with finitely many nondegenerate critical points; $m_k>1$ permits degenerate centers and saddles. In particular, this class contains the cellular flow and the radial Hamiltonians $H(x)=|x|^{m+1}-1$ with $m\ge1$. The local velocity and period estimates required below are proved in Section~\ref{section-OrbitAndPeriod}.

Now we specialize \eqref{20240925-yubiao-MainEquation} to a velocity field $V$
satisfying assumptions (A1)--(A2).  No boundary condition is required for
$\theta$, since $V$ is tangent to $\partial\Omega$. 
The mixing process is
described by the transport equation \eqref{20240925-yubiao-MainEquation}.
Write $\theta(\cdot; \theta_0)$ for the solution to equation \eqref{20240925-yubiao-MainEquation}. The characteristic equation associated with \eqref{20240925-yubiao-MainEquation} is the following ODE: 
\begin{align}\label{20240925-yubiao-Flow}
    x'(t)=V(x(t)), ~ t\in\mathbb{R};  ~~ x(0)=x_0,
\end{align}
whose solution with initial point $x_0$ is denoted by
$x(\cdot;x_0)$. 
The associated $C^1$ flow map $\{\varPhi(t)\}_{t\in\mathbb R}$ is defined by
\begin{align}\label{20240926-yubiao-DefinitionOfFlow}
    \varPhi(t)(x_0):=x(t;x_0), ~ x_0\in\overline{\Omega}.
\end{align}
The following definition related to \eqref{20240925-yubiao-Flow} will be frequently used throughout this paper. 

\begin{definition}\label{20250110-yb-definiton-InvariantSet} 
   The following function is called the \emph{orbit period function}:
    \begin{align}\label{20241021-yb-PeriodOfOrbits}
        T(x_0) := \inf \Big\{
        \hat t >0 ~:~ x(\hat t; x_0) = x_0
        \Big\} \in [0,+\infty],
        ~ x_0 \in \overline{\Omega}.
    \end{align}
    By convention, $T(x_0)=+\infty$ if the set in the definition of the infimum is empty.
\end{definition}
It deserves to mention the following properties of the orbit period function $T(\cdot)$ (see Lemma \ref{20250322-yb-propsotion-PropertiesOfOrbitPeriod}): (i)  it is of class $C^1$ in the open set $\{0<T<+\infty\}$, (ii) it takes the value $0$ only at the critical points of $H$, and (iii) it takes the value $+\infty$ on at most finite curves.

\medskip

We now state the three main results about the functional and geometric mixing, in order to understand the mixing properties from various aspects. 

Our first main result concerns the functional mixing scale of  solutions to equation \eqref{20240925-yubiao-MainEquation}.  
For a function $f \in L^2(\Omega)$, we define its orbit average by 
\begin{align}\label{third:orbit-average}
    \bar f(x) := \frac1{T(x)}\int_0^{T(x)}  f\big( \varPhi(s)(x) \big)\,ds
    ~\text{ for a.e. }~   x \in \Omega.
\end{align}
See Lemma~\ref{20260902-RegularityOfProjection} for more details. 
The first theorem gives the sharp decay rate of the difference between the
solution and the orbit average of its initial data in $H^{-1}(\Omega)$. The initial data 
is supported in a region separated from stationary points and nonperiodic
trajectories, where the gradient of the orbit period is bounded away from
zero. Numerical illustrations are presented in
Subsection~\ref{subsection-MixNorm}.

\begin{theorem}
    \label{third:optimal-negative-norm}
    Suppose that $V$ satisfies assumptions (A1)--(A2) and that
    $V\in C^2(\overline\Omega)$.  
    Let $A\subset\Omega$ be a subdomain such that
    \begin{align}
        d\big(A,\{T=0, +\infty\}\big)>0
        ~\text{ and }~
        \inf_{ x \in A}  | \nabla T(x) | > 0.
        \label{20260902-domain}
    \end{align}
    Then there exist constants $c,C>0$ such that for each $\theta_0 \in H^1(\Omega)$ with supp\,$\theta_0 \subset A$, 
    \begin{align}
        \frac{c}{1+t} \| \theta_0  - \bar{\theta}_0 \|_{ H^{-1}(\Omega)}  
        \le\|\theta(t; \theta_0) - \bar{\theta}_0 \|_{H^{-1}(\Omega)}
        \le\frac{C}{1+t}  \| \theta_0  - \bar{\theta}_0 \|_{ H^{1}(\Omega)}  ,
        ~~ t\ge0,
        \label{third:sharp-rate}
    \end{align}
    where $\theta$ solves equation 
    \eqref{20240925-yubiao-MainEquation} with the initial data $\theta_0$ and $\bar\theta_0$ is given by \eqref{third:orbit-average}.      
\end{theorem}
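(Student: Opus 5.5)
Our plan is to work in action–angle-type coordinates adapted to the periodic trajectories. On the region $\{0<T<+\infty\}$, away from equilibria and separatrices, we parametrize a neighborhood $U$ of $\overline{\mathrm{Orbit}(A)}$ by $(h,\phi)\in I\times\mathbb R/\mathbb Z$. Here $h=H(x)$, and $\phi$ is the normalized time along the orbit measured from a fixed transversal section, with $x=\varPhi(\phi T(h))(\sigma(h))$ for a $C^1$ section $\sigma$. Since $\varPhi$ preserves area and $dH$ is transverse to $V$, the Jacobian of $(h,\phi)\mapsto x$ equals $T(h)$, which is bounded above and below on $U$. The flow becomes $\phi\mapsto\phi+t/T(h)$, so with $\omega(h):=1/T(h)$ we have $\theta(t;h,\phi)=\theta_0(h,\phi-t\omega(h))$. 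Moreover, $\bar\theta_0$ is exactly the zeroth Fourier mode in $\phi$. Assumption \eqref{20260902-domain}, together with $|\nabla T|=|T'(h)||\nabla H|$ and $|\nabla H|$ bounded on $U$, gives $|\omega'(h)|\ge c_0>0$ on $U$. Shrinking $U$ slightly, we may take $I$ to be a finite union of intervals on each of which $\omega$ is strictly monotone.

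\textbf{Upper bound.} Let $g:=\theta_0-\bar\theta_0$, which has zero $\phi$-mean and is supported in $U$. By duality, for $\psi\in H^1_0(\Omega)$ we need to bound $\int\theta(t)\,g$-type pairings $\int g(h,\phi-t\omega(h))\psi\,T(h)\,dh\,d\phi$. Expanding in Fourier modes gives
\[
\sum_{k\ne0}\int \hat g_k(h)e^{-2\pi i k t\omega(h)}\overline{\hat\psi_k(h)}T(h)\,dh .
\]
We integrate by parts in $h$ using $e^{-2\pi ikt\omega}=\frac{\partial_h e^{-2\pi ikt\omega}}{-2\pi ikt\omega'}$. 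This gains the factor $1/(kt)$ and puts one $h$-derivative onto $\hat g_k\hat\psi_k T/\omega'$, which needs $\omega\in C^2$; this is where $V\in C^2$ and the $C^2$ regularity of $T$ (shown in the paper's period lemma) enter. Boundary terms vanish because $g$ is supported compactly inside $U$ in $h$. Cauchy–Schwarz and Parseval then bound the sum by $Ct^{-1}\|g\|_{H^1}\|\psi\|_{H^1}$, using that $\partial_h$ and $\partial_\phi$ are controlled by $\nabla_x$ and that $\|\bar\theta_0\|_{H^1}\lesssim\|\theta_0\|_{H^1}$ (by Lemma~\ref{20260902-RegularityOfProjection}). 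For $t\le1$ we simply use $\|\theta(t)-\bar\theta_0\|_{H^{-1}}\le\|g\|_{L^2}$, since $\bar\theta_0$ is invariant.

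\textbf{Lower bound.} The key observation is that $\partial_h$ of the transported profile grows linearly: $\partial_h\theta(t)=(\partial_h\theta_0)-t\omega'(h)\partial_\phi\theta_0$. We use the interpolation $\|f\|_{L^2}^2\le\|f\|_{H^{-1}}\|f\|_{H^1_0}$ with $f=\theta(t)-\bar\theta_0$. Here $\|f\|_{L^2}=\|g\|_{L^2}$ is conserved, and $\|f\|_{H^1}\le C(1+t)\|g\|_{H^1}$. This gives $\|f\|_{H^{-1}}\ge c\|g\|_{L^2}^2/((1+t)\|g\|_{H^1})$. However, this has the wrong form compared with \eqref{third:sharp-rate}, so we will refine it via duality with a transported test function. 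Take $\psi_0\in H^1_0$ with $\|\psi_0\|_{H^1}=1$ achieving $\langle g,\psi_0\rangle\ge\frac12\|g\|_{H^{-1}}$, and cut it off to $U$ (the pairing only sees $\mathrm{supp}\,g$, but the cutoff must be handled). Then set $\psi_t:=\psi_0\circ\varPhi(-t)$, so that $\langle\theta(t)-\bar\theta_0,\psi_t\rangle=\langle g,\psi_0\rangle$ by measure preservation, while $\|\psi_t\|_{H^1}\le C(1+t)$ by the shear estimate for the Jacobian $D\varPhi(t)$ on $U$. Dividing yields the lower bound $c(1+t)^{-1}\|g\|_{H^{-1}}$.

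The main obstacle is justifying the coordinate change globally on the support with uniform constants. We will need $C^2$ regularity of $T$ as a function of $h$, including near the possibly several orbit families meeting $A$, and a linear-in-$t$ bound on $\|D\varPhi(t)\|$ on $U$. The localization of the near-optimal test function $\psi_0$ to $U$ also requires a cutoff argument with constants depending only on $A$.
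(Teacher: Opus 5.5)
Your proposal is correct and follows essentially the paper's proof. For the upper bound, the paper also uses the energy–angle chart, a Fourier expansion in the angle (the zero mode is killed by subtracting the orbit average), and one integration by parts in $h$ using $\omega\in C^2$ and $|\omega'|\ge c>0$; for the lower bound, it also pairs against the transported test function $\varphi\circ\varPhi(-t)$, whose $H^1$ norm grows at most like $1+t$ by the Jacobian bound. The uniform chart constants you flag as the main obstacle are supplied by the paper's appendix lemmas: $Orbit(A)$ inherits the separation and period-gradient hypotheses, and the chart is uniformly bi-Lipschitz on an invariant neighborhood of $Orbit(A)$.
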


\begin{remark}
\begin{itemize}  
    \item[(i)]  The main reason for \eqref{third:sharp-rate} is the following:  variation
    of the orbit period supplies the transverse shear between neighboring
    periodic trajectories (related to the second inequality in assumption \eqref{20260902-domain}). This can be seen from the proof of Theorem \ref{third:optimal-negative-norm}. 
    
    \item[(ii)] Each inequality in assumption \eqref{20260902-domain} is sharp due to the counterexamples in Examples \ref{example:VanishingPeriodGradient-MixingNorm-20260910}--\ref{Counterexample-InfinitePeriod:MixingNorm} for some specific velocity fields.

\end{itemize}
\end{remark}

The same transverse shear also controls how rapidly a transported subdomain
covers the periodic region accessible from its initial position.  This is studied in our second main theorem. To state it, we need the following definition. 

\begin{definition}\label{20250103-yb-DefinitionOfMixingScale}
    \begin{itemize}
        \item[(i)] For $E\subset\overline\Omega$, we define its orbit by
        \begin{align}
            Orbit(E):=\bigcup_{t\in\mathbb R}\varPhi(t)(E).
            \label{20250103-yb-ErgodicSet}
        \end{align}
        For a singleton $\{x\}$, we rewrite $Orbit(\{x\})$ as $Orbit(x)$ for simplicity.  A set
        $E\subset\Omega$ is called $V$-invariant if $Orbit(E)=E$. 
        
        \item[(ii)] 
        Let $E,F\subset\mathbb R^2$ be nonempty open sets such that
        $E\subset F$.
        We say that the set $E$ is \emph{mixed to scale $\varepsilon>0$ over $F$} if there exists $\kappa > 0$ such that
        \begin{align}\label{20250310-yb-DensityForMixing}
            |B_{\varepsilon}(x)\cap E|
            \ge \kappa\,|B_{\varepsilon}(x)|
            ~ \text{for each } x\in F.
        \end{align}
        
        \item[(iii)] 
        Let $A\subset\Omega$ be a nonempty open set. Let
        $t\ge0$ and  $\varepsilon>0$.
        We say that the vector field $V$ \emph{mixes the set $A$ to scale $\varepsilon$ at time $t$} if the set $\varPhi(t)(A)$ is mixed to scale $\varepsilon$ over its orbit $Orbit(A)$ defined in \eqref{20250103-yb-ErgodicSet}. 
        The infimum of such $\varepsilon$ is called the \emph{mixing scale} of $A$ under the action of $V$ at time $t$, and is denoted by $MixingScale(t,A)$:
        \begin{align}\label{20250103-yb-ExactDefinitionOfMixingScale}
            MixingScale(t,A)
            :=
            \inf \Big\{
            \hat\varepsilon>0:
            V \text{ mixes } A \text{ to scale } \hat\varepsilon \text{ at time } t
            \Big\}.
        \end{align}
        
    \end{itemize}
\end{definition}

The use of $Orbit(A)$ reflects the dynamically accessible region of a
stationary flow.  Definition~\ref{20250103-yb-DefinitionOfMixingScale} is inspired by, but differs from, fixed-volume-fraction geometric mixing scales, such as those in
\cite{Bressan-2006,yao2017mixing}, because the constant $\kappa$ is not fixed in advance.  

The second main result gives the sharp $(1+t)^{-1}$ rate for the mixing
scale of a Lipschitz subdomain whose closure is not invariant under the flow.
The subdomain satisfies the same separation and period-variation conditions
as in Theorem \ref{third:optimal-negative-norm}. Numerical illustrations are presented in 
Subsection~\ref{subsection-SecondTheorem}.

\begin{theorem}\label{20250103-yb-theorem-MixingScale}
  Assume that $V$ satisfies (A1)--(A2). Let $A\subset\Omega$ be a  Lipschitz subdomain whose closure  is not $V$-invariant. Assume that
  \begin{align}
      d\big(A,\{T=0, +\infty\}\big)>0
      ~\text{ and }~
      \inf_{x \in A}  | \nabla T(x)| > 0,
      \label{rev:regular}
  \end{align}
  where  the orbit-period function $T$ is given by \eqref{20241021-yb-PeriodOfOrbits}. 
  Then, there exist two constants $C_1,C_2>0$ such that
  \begin{align}
      C_1(1+t)^{-1}
      \leq  MixingScale(t,A) \leq
      C_2(1+t)^{-1}, ~t\geq0,
      \label{20250112-yb-OrderOfMixingScale}
  \end{align}
  where  $MixingScale(t,A)$ is defined in  \eqref{20250103-yb-ExactDefinitionOfMixingScale}. 
\end{theorem}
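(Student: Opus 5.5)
We will work in action--angle type coordinates adapted to the periodic trajectories. On the region $Orbit(\overline A)$, which is compact, lies at positive distance from $\{T=0,+\infty\}$, and on which $|\nabla T|$ is bounded below (the period is constant along orbits, so the infimum over $A$ propagates to $Orbit(A)$), introduce coordinates $(h,\phi)\in I\times\mathbb R/\mathbb Z$ with $h=H(x)$ and $\phi$ the normalized time phase $\phi=s/T(x)$ along the orbit measured from a fixed transversal section. In these coordinates the flow is $\varPhi(t):(h,\phi)\mapsto(h,\phi+t/T(h))$. The change of variables $x\mapsto(h,\phi)$ is $C^1$ with Jacobian bounded above and below by constants depending only on $Orbit(\overline A)$, since $|\nabla H|$ is bounded away from zero there and $T$ is $C^1$ and bounded (Lemma \ref{20250322-yb-propsotion-PropertiesOfOrbitPeriod}). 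Because $T$ depends only on $h$, the hypothesis $\inf|\nabla T|>0$ gives $|\omega'(h)|\ge c_0>0$ for the frequency $\omega=1/T$ on the relevant interval $I$. Thus metric balls of radius $\varepsilon$ and coordinate squares of side $\sim\varepsilon$ are comparable, and it suffices to prove both bounds in $(h,\phi)$-coordinates.

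\textbf{Upper bound.} Fix $t\ge1$ and a point $x=(h_0,\phi_0)\in Orbit(A)$. Since $A$ is open and every orbit in $Orbit(A)$ meets it, choose $(h_0',\phi_1)\in A$ with $h_0'$ close to $h_0$ (in fact $h_0'=h_0$), together with a square $Q=[h_0-\delta,h_0+\delta]\times[\phi_1-\delta,\phi_1+\delta]\subset A$. By compactness of $\overline{Orbit(A)}$ and Lipschitz regularity of $A$, we can take $\delta$ uniform, after possibly shifting $h_0$ by at most $\varepsilon$. Under the flow, $Q$ is sheared into $\{(h,\phi+t\omega(h))\}$. As $h$ ranges over an interval of length $\varepsilon$, the phase $t\omega(h)$ changes by at least $c_0 t\varepsilon$. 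If $\varepsilon= C_2/t$ with $C_2$ large, then on each horizontal line $h$ in $[h_0-\varepsilon,h_0+\varepsilon]$ the image covers a phase arc of length $2\delta$. Moreover, as $h$ varies, these arcs wind around the circle at a rate $t\omega'(h)$, so the set $\{h:\ \phi_0\in \text{image arc}\}$ has measure $\gtrsim\delta\varepsilon$ within any $h$-interval of length $\varepsilon$. We then integrate this over the $\phi$-window $[\phi_0-\varepsilon,\phi_0+\varepsilon]$ and use Fubini to get $|B_\varepsilon(x)\cap\varPhi(t)(A)|\ge\kappa|B_\varepsilon(x)|$ with $\kappa\sim\delta$. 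Bounded times $t\le1$ are handled trivially with $\varepsilon$ of order $\operatorname{diam}\Omega$, which gives the $(1+t)^{-1}$ form.

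\textbf{Lower bound.} This is where non-invariance of $\overline A$ is used. Since $\overline A$ is not $V$-invariant, there is a point of $Orbit(A)$ lying outside $\overline A$. Hence on some orbit $h_*$ the phase set $\{\phi:(h_*,\phi)\in \overline A\}$ misses an arc, and by openness the complement $Orbit(A)\setminus\overline A$ contains a coordinate square $Q'=[h_*-\eta,h_*+\eta]\times J$ with $|J|=\eta$. The set $\varPhi(t)(Q')$ is disjoint from $\varPhi(t)(A)$ and lies in $Orbit(A)$. For fixed $h$, the fiber of the complement has phase length $\ge\eta$; the shear rotates it, but fiber lengths are preserved. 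The image $\varPhi(t)(Q')$ is the sheared parallelogram $\{(h,\phi+t\omega(h)):\ \phi\in J\}$, whose strips have slope $\sim t\omega'(h)$ in the $(h,\phi)$-plane and horizontal width $\sim\eta/(t|\omega'|)$. Take a point $x$ in the middle of this band, where the band is locally a strip of transverse thickness $\sim\eta/t$. If $\varepsilon\le C_1/t$ with $C_1$ small, then $B_\varepsilon(x)$ lies entirely inside $\varPhi(t)(Q')$, so $|B_\varepsilon(x)\cap\varPhi(t)(A)|=0$ and no $\kappa>0$ works. Hence $MixingScale(t,A)\ge C_1/t$. For small $t$, a fixed $\varepsilon_0$ ball inside $\varPhi(t)(Q')$ works uniformly, which gives $C_1(1+t)^{-1}$.

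\textbf{Main obstacle.} The main difficulty is the uniformity in the upper bound: the constant $\kappa$ and the threshold $C_2$ must be independent of $x\in Orbit(A)$ and of $t$. This requires a uniform inner square size $\delta$ for $A$ along every orbit. I would obtain it from the Lipschitz boundary, which provides cones and hence interior balls of uniform size at points near $\partial A$, combined with compactness of the $h$-range of $Orbit(\overline A)$. I would also need careful control of the distortion between Euclidean balls and coordinate squares via the bounds on the Jacobian of $(h,\phi)$.
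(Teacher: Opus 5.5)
Your lower bound is sound. A coordinate square $Q'\subset Orbit(A)\setminus\overline A$ is sheared into strips of transverse width $\sim\eta/t$ that are essentially straight at scale $1/t$. So a ball of radius $C_1(1+t)^{-1}$ fits in $\varPhi(t)(Q')$, which misses $\varPhi(t)(A)$. This is a coordinate version of Step~2 of Proposition~\ref{rev:prop52}, where the paper gets the same inradius bound from the Jacobian lower bound \eqref{20260823-LowerBoundOfJacobianOfFlow}.

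The upper bound, however, rests on a step that fails. You need, for every $x=(h_0,\phi_0)\in Orbit(A)$, a square $Q\subset A$ of uniform side $\delta$ whose $h$-range comes within $\varepsilon\sim1/t$ of $h_0$. You then claim a density $\kappa\sim\delta$ independent of $t$. Near the extremal orbits of $Orbit(A)$ neither holds. Let $a:=\inf H(A)$ and suppose $\partial A$ touches $\{H=a\}$ tangentially at an isolated point, as a disk does. Then the fibre of $A$ on the orbit $h_0$ is an arc of length $\sim\sqrt{h_0-a}$. Once $h_0-a\ll\delta^2$, every $\delta$-square in $A$ lies at $h$-distance $\gtrsim\delta^2$ from $h_0$, far more than $\varepsilon$. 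The interior balls supplied by the cone condition sit at distance $\sim r_0$ from the vertex, i.e.\ on other orbits, so they do not help. In fact, for $h_0-a\le\varepsilon$ all fibres of $A$ in the $h$-window of $B_\varepsilon(x)$ have length $\lesssim\sqrt\varepsilon$. Counting the $O(1)$ phase windings across that window gives $|B_\varepsilon(x)\cap\varPhi(t)(A)|\lesssim\varepsilon^{5/2}$, a density $\lesssim\sqrt\varepsilon\to0$. So no $t$-independent $\kappa$ exists with $\varepsilon=C_2/t$.

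Two ideas, both used in the paper, close the gap.

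\emph{First}, $\kappa$ in Definition~\ref{20250103-yb-DefinitionOfMixingScale} may depend on $t$, so only covering matters. Suppose every $x\in Orbit(A)$ has some $y_x\in\varPhi(t)(A)$ with $|x-y_x|<r$. Then $|B_{2r}(x)\cap\varPhi(t)(A)|\ge\inf_{y\in\varPhi(t)(A)}|B_r(y)\cap\varPhi(t)(A)|>0$ by Lemma~\ref{20250322-yb-lemma-ZeroMixingScaleForOpenSets}. This is how Lemma~\ref{rev:core-transition} identifies $MixingScale(t,A)$ with $\sup_{x\in Orbit(A)}d(x,\varPhi(t)(A))$.

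\emph{Second}, covering needs only a transverse curve in $A$ starting on the orbit of $x$, not a fat square. A bounded Lipschitz domain satisfies the cone condition at every point of $A$, in particular at a point $p\in A$ on the orbit $\{H=h_0\}$. That cone contains a segment $p+s\nu$, $0\le s\le r_0$, making an angle $\gtrsim\beta_0$ with $V$. In your coordinates its image has phase $\alpha(s)+t\omega(h(s))$. The $s$-derivative of this phase is $\gtrsim t$, because $|\omega'|\ge c_0$, $|h'(s)|\gtrsim1$ and $|\alpha'|\lesssim1$. Hence the phase equals $\phi_0$ modulo $1$ at some $s^*\lesssim1/t$. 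This gives a point of $\varPhi(t)(A)$ at coordinate distance $|h(s^*)-h_0|\lesssim1/t$ from $x$. This is Step~3 of Proposition~\ref{rev:prop52}; with these two replacements your coordinate framework yields the upper bound.
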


\begin{remark}
\begin{itemize}
    \item[(i)] 
    The motivation for studying $MixingScale(t,A)$ arises from the following typical scenario: a certain material is initially distributed over a subdomain, and one seeks to understand to what extent this material can approach distant spatial locations under the action of a flow as time evolves. This naturally leads to the study of the smallest spatial scales to which a subdomain can be mixed by a stationary divergence-free vector field $V$.
    
    \item[(ii)] 
    Theorem \ref{20250103-yb-theorem-MixingScale} shows that  the flow generated by $V$ mixes the set $A$ to arbitrarily small spatial scales for sufficiently large time. Moreover, the optimal mixing scale decays at the rate $O(t^{-1})$.

     \item[(iii)] 
    The deformation of the orbit-relative complement $Orbit(A)\setminus A$ plays a central role in establishing the estimate \eqref{20250112-yb-OrderOfMixingScale}. As time evolves, this set becomes increasingly elongated and thin under the action of the flow (see Proposition \ref{rev:prop52} and Remark \ref{20250404-remark-MechanismForDeformationOfSet} as well as Figure \ref{fig:r2-onecircle-evolution} in Subsection~\ref{subsection-SecondTheorem}). The main mechanism behind this deformation is as follows: variation of the orbit period supplies the transverse shear between neighboring
    periodic trajectories, causing the anisotropic growth of the singular values of the Jacobian matrices of the flow $\{\varPhi(t)\}_{t\ge 0}$, as illustrated in Theorem \ref{20250326-yb-theorem-SingularValuesOfJacobian} and Remark \ref{remark-SingularValues}. This stretching--compression effect produces increasingly thin filamentary structures and determines the decay rate of the mixing scale.

     \item[(iv)] 
    The requirement that the closure of $A$ is not $V$-invariant is necessary to exclude trivial cases. If $\overline{A}$ were invariant under the action of  $V$, then it would remain unchanged in time. In this case the mixing scale of $A$ could always be taken to be zero (see Lemma \ref{lemma-20260122-FineMixingForInvariantSet}), making the problem trivial.
    
    \item[(v)] 
    Assumption \eqref{rev:regular} is sharp due to the counterexamples in  Subsection \ref{subsec:assumption-SecondMainTheorem}. 
    In particular, the second inequality in assumption \eqref{rev:regular}  prevents a dynamical obstruction to the formation of small mixing scales. Points with nearly identical orbit periods evolve in a synchronized manner under the flow generated by $V$, which tends to preserve their relative spatial configuration. Such coherent motion inhibits the formation of sufficiently fine structures. Requiring the gradient of the orbit period function to be bounded away from zero ensures sufficient shearing between nearby trajectories, which in turn promotes mixing.

    \item[(vi)] 
    Two consequences of Theorem \ref{20250103-yb-theorem-MixingScale} are presented in Section \ref{20250313-yb-subsection-MixingScale-ProofsOfCorollaries}. The first provides a sharp analogue of Bressan's conjecture \cite[p.\,101]{Bressan-2006} for stationary divergence-free vector fields (see Corollary \ref{20250110-yb-corollary-AnalogOfBressanConjecture}). The second concerns the rate at which two disjoint evolving open sets that share the same orbit approach each other at large time (see Corollary \ref{20250110-yb-corollary-DistanceBetweenTwoSets}).
   \end{itemize}
\end{remark}

At the differential level, the same shear appears in the flow Jacobian and
governs the stretching of transported curves.  For a Lipschitz curve
$\gamma_0:[0,1]\to\overline\Omega$, set
\begin{align*}
    \gamma_t(\alpha):=\varPhi(t)(\gamma_0(\alpha)),
    ~\alpha\in[0,1]
    ~\text{ for each } t\ge0.
\end{align*}
Our third main result gives a first-order expansion of the
transported length $|\gamma_t|$.  In particular, $|\gamma_t|=O(1+t)$; the expansion
characterizes exactly whether $|\gamma_t|$ remains bounded or has linear
order.  Numerical
illustrations are presented in Subsection~\ref{subsection-FirstTheorem}.

\begin{theorem}\label{20241021-yb-theorem-OptimalGrowthForCurves}
    Suppose that the vector field $V$ satisfies assumptions (A1)--(A2).
    Let $\gamma_0:[0,1]\to\overline{\Omega}$ be a Lipschitz curve satisfying
    \begin{align}\label{20250402-yb-FintiePeriodAssumptionOnCurve}
        D := d\big(\gamma_0,\{T=+\infty\}\big) > 0,
    \end{align}
    where $T$ denotes the orbit period function defined in \eqref{20241021-yb-PeriodOfOrbits}.   
    Define
    \[
    F(\alpha,t)
    := \begin{cases}
        |\nabla_{\gamma_0'(\alpha)}\ln T(\gamma_0(\alpha))| 
        \cdot 
        \,|V(\gamma_t(\alpha))|,   ~& 0 < T(\gamma_0(\alpha) ) < +\infty,
        \vspace{0.5em} \\
        0,  &\text{otherwise}.
    \end{cases}
    \]
        Set $D_*:=\min\{1,D\}>0$.  Then there exists a constant     $C=C(\Omega,V,D_*)>0$
    such that
    \begin{align}\label{20241023-yb-AsymptoticForLengthOfCurves}
        \sup_{t\ge0}
        \left|
        |\gamma_t|
        -
        t \int_0^1 F(\alpha,t)\,d\alpha
        \right|
        \le C|\gamma_0|.
    \end{align}
    Moreover, 
    \begin{align}\label{20241023-yb-AuxsiliaryEstimateForLengthOfCurves}
      \sup_{t \geq 0}  \int_0^1 F(\alpha,t)\,d\alpha
        \leq C |\gamma_0 |. 
    \end{align}   
\end{theorem}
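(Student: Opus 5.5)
The plan is to differentiate the transported curve, $\gamma_t'(\alpha)=D\varPhi(t)(\gamma_0(\alpha))\gamma_0'(\alpha)$ for a.e. $\alpha$, so that $|\gamma_t|=\int_0^1|D\varPhi(t)(\gamma_0(\alpha))\gamma_0'(\alpha)|\,d\alpha$. The result then reduces to a pointwise large-time expansion of the flow Jacobian acting on a fixed vector $v$ at a point $x_0$ with $d(x_0,\{T=+\infty\})\ge D$. I will prove the following two statements for such $x_0$:
\begin{align*}
\Big|\,|D\varPhi(t)(x_0)v| - t\,|\nabla_v \ln T(x_0)|\,|V(\varPhi(t)x_0)|\,\Big|\le C(\Omega,V,D_*)|v| \quad (0<T(x_0)<\infty),
\end{align*}
and $|D\varPhi(t)(x_0)v|\le C|v|$ when $T(x_0)=0$. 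Integrating in $\alpha$, and using $\int_0^1|\gamma_0'|=|\gamma_0|$, gives \eqref{20241023-yb-AsymptoticForLengthOfCurves}.

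To get the expansion, I will work in orbit-adapted coordinates. Decompose $v=a\,V(x_0)+b\,\nabla H(x_0)/|\nabla H(x_0)|^2$. The field $V$ commutes with the flow, so $D\varPhi(t)V(x_0)=V(\varPhi(t)x_0)$, which is bounded. For the transverse part, use the period function. Since $H$ and $T$ are invariant, $\nabla H(\varPhi(t)x)^\top D\varPhi(t)=\nabla H(x)^\top$. Differentiating the identity $\varPhi(T(x))x=x$, or more directly writing $\varPhi(t)x=\varPhi(t\,T(x_0)/T(x)\cdot T(x)/T(x_0))\ldots$, gives the standard formula
\begin{align*}
D\varPhi(t)(x_0)w = -\,t\,\nabla_w\ln T(x_0)\,V(\varPhi(t)x_0) + \text{(a term periodic in $t$)},
\end{align*}
with $w=\nabla H/|\nabla H|^2$. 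Here the periodic term equals $D\varPhi(s)w$ with $s=t \bmod T$, up to a bounded combination of $V$. The key observation is that the $V$-component of $D\varPhi(t)v$ grows like $t\,\nabla_v\ln T\cdot V$ (note $\nabla_v\ln T=b\,\nabla_w\ln T$ since $\nabla_V T=0$), and everything else is bounded by $\sup_{0\le s\le T(x_0)}|D\varPhi(s)(x_0)|\,|v|$. The reverse triangle inequality then yields the pointwise estimate, with remainder $C|v|$, provided these bounded quantities are uniform. The same Jacobian bound, combined with $|\nabla T|\,|V|$, should also give \eqref{20241023-yb-AuxsiliaryEstimateForLengthOfCurves}, because $F$ controls the growth of the Jacobian.

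The main obstacle is uniformity of the constant near critical points, which is exactly where $T\to0$ at centers. Away from critical points and from $\{T=+\infty\}$, compactness and the $C^1$ regularity of $T$ (Lemma \ref{20250322-yb-propsotion-PropertiesOfOrbitPeriod}) give bounded $T$, bounded $|\nabla T|\,|V|$ and bounded one-period Jacobians. Near a center $x_k^*$, the quantities $|\nabla\ln T|$ and $1/|\nabla H|$ blow up. Their products must therefore be controlled using the local model (A2) and the velocity/period estimates of Section \ref{section-OrbitAndPeriod}. I expect these to give $T\sim r^{1-m_k}$, $|\nabla T|\sim r^{-m_k}$ and $|V|\sim r^{m_k}$, so that $|\nabla\ln T|\,|V|\lesssim r^{m_k-1}$ is bounded. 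I also need $\sup_{s\le T}|D\varPhi(s)|$ to be bounded uniformly near the center. I would obtain this by rescaling $y=r z$ and using the homogeneity of the leading term together with the $o$-bounds \eqref{ham:remainder} on $D^2R_k$, which make the rescaled flow a $C^1$-small perturbation of the homogeneous model on the time scale $r^{1-m_k}$. Saddles lie on $\{T=+\infty\}$, so the separation $D_*$ excludes them. The constant then depends on $D_*$ only through the compact region where $T$ is finite and bounded.
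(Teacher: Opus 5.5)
Your proposal is correct. Its skeleton matches the paper's proof: $\gamma_t'=J_{\varPhi(t)}(\gamma_0)\gamma_0'$, then a uniform expansion $J_{\varPhi(t)}(x_0)w=-t\,(w\cdot\nabla\ln T(x_0))\,V(\varPhi(t)x_0)+O(|w|)$ on $\Omega_{\ge D}\setminus\{V=0\}$ (Theorem~\ref{20241023-yb-proposition-AsymptoticForJacobianOfVeolocityField}), the reverse triangle inequality, and integration in $\alpha$.

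The difference is in how you derive the expansion. The paper writes $J_{\varPhi(t)}$ in the moving frame $(V/|V|,V^\perp/|V|)$ as an upper-triangular matrix whose shear entry contains $\int_0^t\mathrm{curl}(V/|V|^2)(x(\tau))\,d\tau$. It evaluates one period of this integral as $-V\times\nabla T/|V|^2$ via $J_{\varPhi(T)}=I_2-V\otimes\nabla T$, and controls the leftover partial period by $T\|J_V\|$ and the speed ratio. You instead use the group property at the periodic point. For $t=kT(x_0)+s$ one has $J_{\varPhi(t)}(x_0)=J_{\varPhi(s)}(x_0)\bigl(I_2-k\,V(x_0)\otimes\nabla T(x_0)\bigr)$, because $(V\otimes\nabla T)^2=0$. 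The remainder is then exactly $J_{\varPhi(s)}(x_0)w+s\,(w\cdot\nabla\ln T(x_0))\,V(\varPhi(s)x_0)$. Since $V(\varPhi(s)x_0)=J_{\varPhi(s)}(x_0)V(x_0)$, its size is at most $\sup_{0\le s\le T(x_0)}\|J_{\varPhi(s)}(x_0)\|\,\bigl(1+|\nabla T(x_0)|\,|V(x_0)|\bigr)|w|$. Your route is shorter, needs no curl computation, and absorbs the speed ratio into the one-period Jacobian. The paper's frame buys the explicit triangular form, which it reuses for the singular-value asymptotics in Theorem~\ref{20250326-yb-theorem-SingularValuesOfJacobian}.

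Three points to tighten:

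\begin{itemize}
\item Your planned rescaling near centers is unnecessary. By Gronwall, $\|J_{\varPhi(s)}(x_0)\|\le\exp\bigl(T(x_0)\sup_\tau\|J_V(x(\tau;x_0))\|\bigr)$ for $s\le T(x_0)$. The bound on $T\|J_V\|$ along such orbits is supplied by Proposition~\ref{20250322-yb-proposition-UsefulPropertiesOfOrbitPeriod}, whose proof already carries out that scaling.
\item Your claim $|J_{\varPhi(t)}(x_0)v|\le C|v|$ at $T(x_0)=0$ needs a line of justification. Saddles are excluded by $D>0$ and Lemma~\ref{20250329-yb-lemma-EquilibriaInFinitePeriodRegion}. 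At a center, $J_V(x_k^*)=0$ if $m_k>1$; if $m_k=1$ it equals $\mathcal J S_k$, with simple eigenvalues $\pm i\sqrt{\det S_k}$, so $e^{tJ_V(x_k^*)}$ is bounded. The paper avoids this entirely by noting that $\gamma_0'=0$ a.e.\ on $\{\alpha: V(\gamma_0(\alpha))=0\}$, since equilibria are isolated.
\item State \eqref{20241023-yb-AuxsiliaryEstimateForLengthOfCurves} as the direct bound $F(\alpha,t)\le T^{-1}|\nabla T|\,|V|\,(\gamma_0(\alpha))\cdot|V(\gamma_t(\alpha))|/|V(\gamma_0(\alpha))|\cdot|\gamma_0'(\alpha)|$. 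Here $1/T$ and $|\nabla T|\,|V|$ are bounded by Proposition~\ref{20250322-yb-proposition-UsefulPropertiesOfOrbitPeriod}, and the speed ratio is bounded by your one-period Jacobian bound. The heuristic that $F$ controls the growth of the Jacobian is not an argument.
\end{itemize}
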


\begin{remark}\label{20250402-yb-remark-FirstMainResult}
    \begin{itemize}
        \item[(i)] The motivation for studying $|\gamma_t|$ arises from the following typical scenario. 
        Consider a domain occupied by two distinct materials, which are often represented by binary values (e.g., $\pm1$). 
        The discontinuity set between these labels defines the interface separating the materials. 
        As the materials are transported and mixed by a flow, it is natural to expect that the length of this interface increases over time. 
        
        \item[(ii)] The asymptotic expansion \eqref{20241023-yb-AsymptoticForLengthOfCurves} shows that the linear growth of $|\gamma_t|$ is driven by the derivatives of the orbit-period function $T(\cdot)$ along the initial curve $\gamma_0$. 
        In particular, $|\gamma_t|$ remains bounded as $t\to+\infty$ if and only if the integral in \eqref{20241023-yb-AsymptoticForLengthOfCurves} vanishes. Indeed, the uniform same-orbit speed ratio in Proposition~\ref{20250322-yb-proposition-UsefulPropertiesOfOrbitPeriod} makes this integral uniformly comparable to its value at $t=0$.
        This occurs precisely in the situation: for almost every $\alpha\in[0,1]$, either $T(\gamma_0(\alpha))=0$ or $\nabla_{\gamma_0'(\alpha)}T(\gamma_0(\alpha))=0$. 
        
        
        In particular, linear growth requires nonzero variation of the period along the initial curve on a set of positive parameter measure.
        
        \item[(iii)] The main idea behind \eqref{20241023-yb-AsymptoticForLengthOfCurves} is to analyze the asymptotic expansion of the Jacobian matrices of the flow $\{ \varPhi(t) \}_{t\ge0}$ (see Theorem~\ref{20241023-yb-proposition-AsymptoticForJacobianOfVeolocityField}). 
        This analysis is closely related to the growth behavior of the singular values of these matrices (see Theorem~\ref{20250326-yb-theorem-SingularValuesOfJacobian} and Remark~\ref{remark-SingularValues}). 
        The behavior of these singular values may have further applications in the study of mixing phenomena.
        
        \item[(iv)] Assumption \eqref{20250402-yb-FintiePeriodAssumptionOnCurve} on the curve $\gamma_0$ is sharp due to the counterexample in Example \ref{Counterexample-InfinitePeriod:Length}. 
        Different growth rates can arise when condition \eqref{20250402-yb-FintiePeriodAssumptionOnCurve} fails, which would no longer be consistent with the rate obtained in Theorem~\ref{20241021-yb-theorem-OptimalGrowthForCurves}. 
        Understanding this situation remains an interesting topic for future investigation.
        
        \item[(v)] Theorem~\ref{20241021-yb-theorem-OptimalGrowthForCurves} also reveals an important dynamical feature of the ODE \eqref{20240925-yubiao-Flow}: small perturbations of the initial data can lead to linear deviations of trajectories at large times.
    \end{itemize}
\end{remark}

\subsection{Novelties and methods}

We study three quantitative measures of mixing 
for two-dimensional
stationary incompressible flows generated by Hamiltonians with finitely many
critical points, each of finite order. 
The main novelties of this work are summarized as follows:
\begin{itemize}
    \item[(a1)] We establish the sharp decay rate $(1+t)^{-1}$ in
    $H^{-1}$ towards the orbit average for solutions of the transport equation \eqref{20240925-yubiao-MainEquation} (with a stationary incompressible velocity field), in a regular region away from stable equilibria and infinite-period orbits. See Theorem \ref{third:optimal-negative-norm}.
    
      \item[(a2)] The mixing scale of a noninvariant transported Lipschitz subdomain (in the same regular region) is shown to be     comparable to $(1+t)^{-1}$ (see Theorem \ref{20250103-yb-theorem-MixingScale}). In addition, sharp two-sided estimates for the inradius and
      orbit-relative inradius of open sets satisfying a uniform-orbit cone
      condition are also established (see Proposition \ref{rev:prop52}).  
      
    \item[(a3)] We derive an explicit first-order formula for the length of transported Lipschitz curves (away from infinite-period orbits) and show that this length grows at most linearly in time. See Theorem \ref{20241021-yb-theorem-OptimalGrowthForCurves}.

    \item[(a4)] We identify the common mechanism behind the three measures in (a1)--(a3): variation
    of the orbit period supplies the transverse shear between neighboring
    periodic trajectories. Furthermore, this causes the anisotropic growth of singular values of the flow  Jacobian at the rates of $1+t$ and $(1+t)^{-1}$, respectively (see Theorem  \ref{20250326-yb-theorem-SingularValuesOfJacobian}), which produces the stretching--compression effect. This geometric phenomenon is responsible for the deformation of open sets and curves. 
    
    Although the shared shear mechanism links the aforementioned three measures, their precise estimates require separate geometric and analytic arguments.
    
    \item[(a5)]     We derive a uniform first-order expansion of
    $J_{\varPhi(t)}$ at non-equilibrium points away from the
    infinite-period orbits, with a remainder uniform as such points approach stable
    equilibria (see Theorem \ref{20241023-yb-proposition-AsymptoticForJacobianOfVeolocityField}).  
\end{itemize}
Furthermore, we present counterexamples concerning the sharpness of the assumptions (i.e., separation from stable equilibria, infinite-period orbits and regions of the zero gradient of orbit periods) in
the three main theorems to show that the conclusions in
the corresponding theorems fail when the indicated hypothesis is
removed (while the other hypotheses listed there are retained). See Section \ref{section-ExamplesOnAssumptions}.

We next describe the main methods used to prove Theorems \ref{third:optimal-negative-norm}, \ref{20250103-yb-theorem-MixingScale}, and \ref{20241021-yb-theorem-OptimalGrowthForCurves}:
\begin{itemize}
    \item[(b1)] The proof of Theorem \ref{third:optimal-negative-norm} relies mainly on a particular coordinate system (see Lemma \ref{rev:uniform-action-angle}), where the flow is represented in terms of rotations with nonzero angular velocities provided by the variation of orbit periods. Applying the classical Fourier series method in the angular variable introduces a non-stationary phase, allowing direct estimates of the solutions using tools from harmonic analysis. 
    
    \item[(b2)] In the proofs of Theorems \ref{20250103-yb-theorem-MixingScale} and \ref{20241021-yb-theorem-OptimalGrowthForCurves}, we first derive a first-order expansion of
    $J_{\varPhi(t)}$ at non-equilibrium points away from the
    infinite-period set. Then, for Theorem \ref{20241021-yb-theorem-OptimalGrowthForCurves}, this expansion is sufficient to establish the growth of the length of a curve.  
    For Theorem \ref{20250103-yb-theorem-MixingScale}, we use the aforementioned expansion, as well as the aforementioned coordinate system, to study the deformation of the orbit-relative complement of  a target domain, which reflects the mixing scale of this domain. 
\end{itemize}

\subsection{Related works}
\label{sec:RelatedWorks}

Mixing problems arise in many applications, including atmospheric frontogenesis \cite{doswell1984kinematic}, solute transport in microfluidic channels 
\cite{abraham2002chaotic}, chemical engineering \cite{manu2016how}, and microfluidic applications in biology
\cite{beebe2002biology}. For incompressible transport without diffusion,
the $L^2$ norm of a passive scalar is conserved. Consequently,
mixing can be quantified through weaker norms or through
geometric properties of transported sets and interfaces. General
accounts of quantitative mixing and its interaction with diffusion
can be found in \cite{thiffeault2012multiscale,zelati2024mixing}.

Beyond the aforementioned applications, mixing also plays a fundamental role in the mathematical theory of partial differential equations and dynamical systems. 
In particular, the study of mixing is closely connected with ergodic theory and the long-time behavior of dynamical systems, where it provides a framework for understanding how stretching and folding mechanisms lead to the mixing of scalar quantities.

An important distinction is between designing velocity fields that
mix efficiently under prescribed constraints and determining the
mixing properties of a given flow. The first direction includes
constructions of efficient and universal mixers
\cite{alberti2016exponential,yao2017mixing,elgindi2019universal}
and optimization of stirring protocols
(e.g.~\cite{mathew2007optimal, liu2008mixing, foures2014optimal,  d1999control, zheng2023numerical, hu2023feedback, hu2018boundaryNS}).
The present work belongs primarily to the second direction:
we study a two-dimensional autonomous Hamiltonian flow and relate its mixing and deformation properties to the variation of the periods of its trajectories.

We organize the discussion around three complementary observables
of mixing and material deformation: (i) growth of interfaces;
(ii) geometric mixing scales; and (iii) functional mixing scales.
We briefly review the literature on each below.


\textit{(A) Growth of interfaces.}
The length of a material interface is a natural measure of the
deformation produced by an incompressible flow. 
    A typical setting considers a domain occupied by two distinct materials, which are often represented by binary values (e.g., $\pm 1$). 
    The discontinuity set between these labels defines the interface. 
    As the materials are advected by a flow and undergo mixing, it is natural to expect that the interface length increases over time due to stretching and folding mechanisms. 
    This observation motivates the study of the growth of interfaces. 
It has been used
both as a diagnostic of mixing
\cite{chakravarthy1996mixing,vikhansky2002enhancement}
and as an objective in the optimization of stirring.
Li and Zuazua \cite{li2026hamiltonian}, for example, formulate
interface-length optimization through a reduced Hamiltonian
control problem.


\textit{(B) Geometric mixing scales.}
Geometric mixing scales quantify local interpenetration through
averages over small spatial neighborhoods. Bressan's formulation
\cite[p. 101]{Bressan-2006} and subsequent work, including
Yao and Zlato\v{s} \cite{yao2017mixing}, employ a prescribed
accuracy or volume-fraction parameter.
The constructions of Alberti, Crippa, and Mazzucato
\cite{alberti2016exponential} illustrate how attainable mixing
rates depend on regularity constraints and on the construction
of the flow. Their self-similar scaling analysis yields
finite-time perfect mixing below the critical Sobolev index $s=1$,
exponential mixing at the critical index, and polynomial mixing
above it, under the corresponding assumptions.
The polynomial rate in the last regime is a property of this
construction, rather than a general obstruction to faster mixing.

For autonomous flows, Crippa, Luc\`a, and Schulze
\cite{crippa2019polynomial} study a stationary radial Euler flow
on the disk. For continuous initial data with zero average on
almost every circular trajectory, they establish a
$t^{-1}$ upper bound for the geometric mixing scale.
In a substantially less regular setting, Bonicatto and Marconi
\cite{bonicatto2021regularity} prove Lusin--Lipschitz estimates
for planar autonomous divergence-free BV flows in their setting, with constants
growing at most linearly in time. They deduce lower bounds
of order $(1+t)^{-1}$ for both geometric and analytical mixing
in their setting.

Our geometric quantity (introduced in \eqref{20250103-yb-ExactDefinitionOfMixingScale}) is inspired by these notions but differs
from the fixed-volume-fraction scales used in those works.
It is measured relative to the orbit saturation $Orbit(A)$,
and the density parameter  is not prescribed in advance.
The matching bounds proved here therefore concern this
orbit-relative notion and should be distinguished from estimates
for geometric mixing at a fixed accuracy.

\textit{(C) Functional mixing scales.}
Negative Sobolev norms provide a quantitative way to measure weak
homogenization. The multiscale approach of Mathew, Mezi\'c, and
Petzold \cite{mathew2005multiscale} and the use of the
$H^{-1}$ norm in stirring optimization
\cite{lin2011optimal} have motivated extensive analytical
and numerical work; see also \cite{thiffeault2012multiscale}.
Lower bounds under constraints on the velocity gradient were
developed in
\cite{crippa2008estimates,seis2013maximal,iyer2014lower}.
The relationship between geometric and functional mixing scales
requires care: the two notions are not equivalent in general,
and their comparability, including the role of large-scale
components, is studied in \cite{zillinger2019scales}.

For an autonomous Hamiltonian flow with periodic trajectories,
the average along each trajectory is preserved. Thus the relevant
decaying quantity is the difference between the solution and its
orbit average. This distinction already appears in the radial Euler flow in  \cite{crippa2019polynomial}, which discusses both
convergence towards the circular average and polynomial
negative-Sobolev estimates under suitable regularity assumptions.
In particular, under  support assumptions and the condition of zero average
on almost every circle, that work establishes a
$\dot H^{-1}$ upper bound of order $t^{-\alpha/(\alpha+1)}$
for $C^{0,\alpha}$ initial data, with $\alpha\in(0,1]$.
The Fourier-based estimates of order
$t^{-\alpha}$ are also claimed for $\dot H^\alpha$ initial data.


Bru\`e, Coti Zelati, and Marconi \cite{brue2024enhanced}
study mixing and enhanced dissipation for two-dimensional
Hamiltonian flows on a compact 2D manifold using orbit-period functions and action-angle
coordinates. For the standard cellular flow, they obtain
a lower bound of order $t^{-1}$ and 
an upper bound of order $t^{-1/3+\varepsilon}$ for every sufficiently
small $\varepsilon>0$, with the latter estimate reflecting the
difficulty of controlling the hyperbolic regions.
These estimates concern the component with zero streamline
average; the global upper bound does not identify an exact
asymptotic decay rate.


Against this background, the present work investigates the
common role of period variation in three observables:
decay towards the orbit average, orbit-relative geometric
mixing, and the growth of transported curves.
Under explicit separation and nondegeneracy assumptions,
we obtain sharp $(1+t)^{-1}$ estimates for the first two
quantities (see Theorems  \ref{third:optimal-negative-norm} and \ref{20250103-yb-theorem-MixingScale}) and a first-order expansion for curve length (Theorem \ref{20241021-yb-theorem-OptimalGrowthForCurves}).
The comparison with earlier work thus concerns the precise
hypotheses, the form of the quantitative estimates, and the
connection between these observables.

\subsection{Organization}

The rest of this paper is organized as follows.
Section~\ref{section-OrbitAndPeriod} studies the properties of the orbit period function and related estimates used throughout the paper.  Section~\ref{20241018-yb-JacobianOfVelocityField}
derives the moving-frame representation, large-time expansion, and
singular-value asymptotics of the flow Jacobian. 
Section~\ref{20241018-yb-TimeInvariantVelocityField} verifies Theorem \ref{20241021-yb-theorem-OptimalGrowthForCurves}.  Section~\ref{20250313-yb-subsection-MixingScale-DeformationOfOpenSet}
establishes a deformation estimate for orbit-relative complements, and 
Section~\ref{20241018-yb-section-MixingScale} proves Theorem \ref{20250103-yb-theorem-MixingScale}. 
Section \ref{section-ThirdMainTheorem} is devoted to the proof of Theorem \ref{third:optimal-negative-norm}.  Section~\ref{section-ExamplesOnAssumptions}
provides explicit counterexamples illustrating the roles of the hypotheses in main theorems. 
Section~\ref{20250313-yb-subsection-MixingScale-ProofsOfCorollaries}
derives consequences for mixing time and set separation and
Section~\ref{section-NumericalSimulations} presents numerical illustrations.
Section~\ref{section-conclusion} summarizes the common mechanism, separates
the content of the three main results, and records several directions for
further study.  The Appendix contains the auxiliary results and technical
proofs.

\section{Orbits and their periods}
\label{section-OrbitAndPeriod}

In this section we study the orbits of solutions to equation \eqref{20240925-yubiao-Flow} and the associated properties of their orbit periods. 
The dynamical behavior of equation \eqref{20240925-yubiao-Flow} may vary significantly near different equilibria. To classify these equilibria, we introduce the following definition.

\begin{definition}\label{20250329-yb-ClassificationOfEquilibria}
With the notations in assumption (A2), any critical point of $H$ is called an equilibrium of $V$, and 
an equilibrium $x_k^*$ is said to be stable (of center type) if $S_k$ is positive or negative definite, and unstable (of saddle type) if $S_k$ is indefinite. 
The real number $m_k\ge1$ is called the order of vanishing of $V$ at $x_k^*$. 
\end{definition}

    For each $s>0$, define
    \begin{align}\label{20250323-yb-LargePeirodLayer}
        \Omega_{\ge s}
        :=
        \big\{x\in\overline{\Omega}  ~:~  d(x,\{T=+\infty\})\ge s\big\}
        \quad \text{and}\quad 
        \Omega_{<s}
        :=
        \overline{\Omega}\setminus\Omega_{\ge s}.
    \end{align}
The main result of this section is stated below. It concerns the regularity properties of the orbit-period function $T(\cdot)$ and several related estimates.

\begin{proposition}\label{20250322-yb-proposition-UsefulPropertiesOfOrbitPeriod}
Let $T(\cdot)$ be defined in \eqref{20241021-yb-PeriodOfOrbits}. 
Then, for each $\hat x_0\in\overline{\Omega}$, the function $T$ is $C^1$ at $\hat x_0$ if and only if $T(\hat x_0)\in(0,+\infty)$. 
Moreover, given $\varepsilon>0$, there exists a constant $C=C(\Omega,V,\varepsilon)>0$ such that for every 
$x_0\in\Omega_{\ge\varepsilon}$ with $V(x_0) \neq 0$,
\begin{align}\label{20250329-yb-GlobalUpperBoundsOfOrbitPeriodAndRatioOfVelocity}
\frac{1}{T(x_0)}
+&   T(x_0)\,\|J_V(x_0)\|_{\mathbb{R}^{2\times2}}
\nonumber\\
&\quad+ |V(x_0)|\, \Big( T(x_0) + |\nabla T(x_0)| \Big)
+ \sup_{t,s\in\mathbb{R}}
\frac{|V(x(t;x_0))|}{|V(x(s;x_0))|}
\le C .
\end{align}
\end{proposition}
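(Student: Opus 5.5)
We prove the two parts separately: first the characterization of $C^1$ points of $T$, then the bound \eqref{20250329-yb-GlobalUpperBoundsOfOrbitPeriodAndRatioOfVelocity}.

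\textbf{Regularity of $T$.} Let $\hat x_0$ satisfy $0<T(\hat x_0)<+\infty$. Then $V(\hat x_0)\neq0$ and the orbit is a closed regular level curve of $H$. Take a short transversal segment $\Sigma$ through $\hat x_0$ parametrized by $h=H$, with $\Sigma$ along $\nabla H$. The return time $\tau(h)$ to $\Sigma$ is defined by $\langle x(\tau;\sigma(h))-\sigma(h),\nabla H(\hat x_0)^\perp\rangle=0$. The implicit function theorem, using that the flow is $C^1$ and $V(\hat x_0)\neq0$, gives $\tau\in C^1$ with $\tau(H(\hat x_0))=T(\hat x_0)$. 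Alternatively, $T(h)=\oint_{\{H=h\}}|\nabla H|^{-1}\,d\ell$, and this is differentiable in $h$ because a neighbourhood of the orbit is foliated by regular closed level curves. Since $T(x)=\tau(H(x))$ near $\hat x_0$, $T$ is $C^1$ there.

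Conversely, suppose $T(\hat x_0)=0$. Then $\hat x_0$ is an equilibrium, and $T$ is either discontinuous there (at a saddle, nearby orbits have large periods) or, at a center, $T(x)\to0$ or a positive limit depending on $m_k$. Near a center we use the local model (A2). Rescaling $y=r\omega$ gives $H\approx \tfrac12 r^{m_k+1}\omega^\top S_k\omega$, so the period scales like $r^{1-m_k}\,T_k^{\mathrm{lin}}$, where $T_k^{\mathrm{lin}}>0$. Since $T\neq0$ at nearby points while $T(\hat x_0)=0$, $T$ cannot be continuous at $\hat x_0$, hence is not $C^1$. If $T(\hat x_0)=+\infty$, the point lies on a separatrix or at a saddle, and nearby periodic points have finite period, so again $T$ is not continuous.

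\textbf{The bound.} On $\Omega_{\ge\varepsilon}$ with $V\neq0$, every orbit is periodic and stays away from the saddles and separatrices. Indeed, the distance from a periodic orbit to $\{T=+\infty\}$ is controlled by its distance at any one of its points, by compactness and continuity of $H$. The set $K_\delta:=\Omega_{\ge\varepsilon}\setminus\bigcup_{k\ \mathrm{center}}B_\delta(x_k^*)$ is compact and consists of regular periodic points. So $T$, $1/T$, $|\nabla T|$ and $|V|$ are bounded there by continuity, and the ratio $\sup|V|/\inf|V|$ over an orbit is bounded since $|V|$ has a positive minimum on $K_\delta$. Each term of \eqref{20250329-yb-GlobalUpperBoundsOfOrbitPeriodAndRatioOfVelocity} is bounded on $K_\delta$, and orbits starting in $K_\delta$ but entering $B_\delta$ are handled by choosing $\delta$ so that small balls contain whole orbits.

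The remaining task is uniform control near each center $x_k^*$, with $r=|x_0-x_k^*|\to0$. Using \eqref{ham:local-model}--\eqref{ham:remainder}, we pass to polar-type coordinates $y=\rho\,\omega(\phi)$ adapted to the level sets of the homogeneous part $Q(y)=\tfrac12|y|^{m_k-1}y^\top S_ky$. Then $|V|\sim r^{m_k}$ uniformly along the orbit, which bounds the speed ratio. The period satisfies $T\sim r^{1-m_k}$, which yields:
\begin{itemize}
\item $1/T\lesssim r^{m_k-1}\le C$;
\item $T\|J_V\|\lesssim r^{1-m_k}r^{m_k-1}=O(1)$;
\item $|V|T\sim r$.
\end{itemize}

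The main obstacle is $|V|\,|\nabla T|$. We need $|\nabla T|\lesssim r^{-m_k}$. Write $T=\tau(H)$, so that $\nabla T=\tau'(h)\nabla H$ with $|\nabla H|=|V|\sim r^{m_k}$, and the requirement reduces to $|\tau'(h)|\lesssim r^{-2m_k}$. With $h\sim r^{m_k+1}$, the homogeneous model gives $\tau(h)=c\,h^{(1-m_k)/(1+m_k)}$, hence $\tau'(h)\sim h^{-2m_k/(1+m_k)}\sim r^{-2m_k}$, which is exactly the required scaling. The difficulty is showing that the $o(\cdot)$ remainder perturbs $\tau'$ only by a lower-order term.

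For this I would write $\tau(h)=\oint_{\{H=h\}}|\nabla H|^{-1}\,d\ell$ in the rescaled variable $y=h^{1/(m_k+1)}z$. The rescaled Hamiltonian $h^{-1}H(h^{1/(m_k+1)}z)$ converges in $C^2$ on an annulus to $Q(z)$ by \eqref{ham:remainder}. Differentiating the period integral in $h$ then produces a factor $h^{-1}$ times a bounded quantity, which gives $|\tau'(h)|\lesssim h^{-1}\tau(h)$, exactly the needed bound. This step requires $C^2$ convergence, and is precisely why \eqref{ham:remainder} is imposed up to $j=2$.
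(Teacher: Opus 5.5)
Your proposal follows essentially the same route as the paper: the return-time/implicit-function argument at regular periodic points, discontinuity of $T$ at equilibria for the converse, compactness on $\Omega_{\ge\varepsilon}$ away from the centers, and the homogeneous scaling $T\asymp r^{1-m_k}$, $|V|\asymp r^{m_k}$, $|\nabla T|\lesssim r^{-m_k}$ near each center; your rescaled bound $|\tau'(h)|\lesssim h^{-1}\tau(h)$ repackages the level-set asymptotics behind Lemma~\ref{20250329-yb-lemma-OrbitPeriodAroundEquilibra}. A few statements need correcting, though none changes the structure: near a center $T$ never tends to $0$ but is bounded below by a positive constant (and tends to $+\infty$ when $m_k>1$), which is what makes $T$ discontinuous there; a saddle has $T=0$, not $+\infty$, and the case $T(\hat x_0)=+\infty$ is trivial because $T$ is then not real-valued; and the orbit of $x_0\in\Omega_{\ge\varepsilon}$ may leave $K_\delta$, so for the speed ratio you need it to stay uniformly away from $\{T=0,+\infty\}$, which does not follow from continuity of $H$ alone but from the uniform bound on $T$ over the compact regular set together with continuous dependence on initial data, as in Lemma~\ref{20260903-EquivalentDistanceHypothesis}.
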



The proof of Proposition \ref{20250322-yb-proposition-UsefulPropertiesOfOrbitPeriod} relies on several auxiliary lemmas. The first records the boundary values and local consequences of the Hamiltonian structure. Its proof is included in Appendix~\ref{appendix-HamiltonianFunction}.

\begin{lemma}\label{20241001-yb-lemma-Hamiltonian}
Use the same notations as in assumptions (A1)--(A2). 
Let $x_k^*$ be an equilibrium and let $P_k(y):=\frac12|y|^{m_k-1}y^\top S_k y$, $y\in \mathbb R^2$. Then, it holds that as $y \rightarrow 0$, 
\begin{align}\label{20250329-yb-AsymptoticsOfHamiltonianAndVelocity}
 \left\{
   \begin{array}{r@{~}l}
       H(x_k^*+y)&=H(x_k^*)+P_k(y)+o(|y|^{m_k+1}),
       \\
       \nabla H(x_k^*+y)&=\nabla P_k(y)+o(|y|^{m_k}),
       \\
       D^2H(x_k^*+y)&=D^2P_k(y)+o(|y|^{m_k-1}),
       \\
       V(x_k^*+y)&=\mathcal J\nabla P_k(y)+o(|y|^{m_k}),
       \\
       J_V(x_k^*+y)&=\mathcal J D^2P_k(y)+o(|y|^{m_k-1})
   \end{array}
  \text{ with }
  \mathcal J := \begin{pmatrix}
      0 & 1 \\
      -1 & 0
  \end{pmatrix}.
 \right.
\end{align}
In particular, there are constants $c_k,C_k>0$ such that for all sufficiently small $y$,
\begin{align}\label{ham:velocity-bounds}
c_k|y|^{m_k}\le|\nabla H(x_k^*+y)|\le C_k|y|^{m_k}
~\text{ and }~
\|D^2H(x_k^*+y)\|\le C_k|y|^{m_k-1}.
\end{align}
Furthermore, the same bounds hold for $|V|$ and $\|J_V\|$, respectively, and every stable equilibrium lies in $\Omega$.
\end{lemma}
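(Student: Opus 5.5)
The asymptotic statements follow by differentiating the identity \eqref{ham:local-model} term by term, using the remainder bounds \eqref{ham:remainder}. Set $y\mapsto H(x_k^*+y)-H(x_k^*)=P_k(y)+R_k(y)$. The first three lines of \eqref{20250329-yb-AsymptoticsOfHamiltonianAndVelocity} are then exactly \eqref{ham:remainder} with $j=0,1,2$. Some care is needed with $m_k\in[1,2)$ noninteger, where we first check that $P_k\in C^2$ away from the origin. There its derivatives are homogeneous of degrees $m_k$ and $m_k-1$, so they extend continuously to $0$ (with value $0$ when $m_k>1$). Hence the differentiation is legitimate for $y\ne0$ and, by continuity, at $y=0$. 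Since $V=-\nabla^\perp H=\mathcal J\nabla H$ by \eqref{ham:representation}, we get $J_V=\mathcal J D^2H$, and the last two lines follow after multiplying by the orthogonal matrix $\mathcal J$.

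For the two-sided bounds \eqref{ham:velocity-bounds}, compute $\nabla P_k(y)=|y|^{m_k-1}S_ky+\frac{m_k-1}{2}|y|^{m_k-3}(y^\top S_ky)\,y$. By homogeneity, $\nabla P_k(y)=|y|^{m_k}\,g(y/|y|)$ with $g(\omega)=S_k\omega+\frac{m_k-1}{2}(\omega^\top S_k\omega)\omega$ on the unit circle. The key point, and the only genuine obstacle, is to show that $g$ never vanishes. Suppose $g(\omega)=0$. Taking the inner product with $\omega$ gives $\omega^\top S_k\omega\,(1+\frac{m_k-1}{2})=0$, so $\omega^\top S_k\omega=0$, and then $S_k\omega=0$. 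This contradicts the nonsingularity of $S_k$. By compactness, $c:=\min|g|>0$ and $C:=\max|g|<\infty$. Combined with $\nabla H=\nabla P_k+o(|y|^{m_k})$, this yields $c_k=c/2$ and $C_k=2C$ for small $|y|$. The Hessian bound comes from $D^2P_k(y)=|y|^{m_k-1}G(y/|y|)$ with $G$ continuous on the circle, plus the $o(|y|^{m_k-1})$ remainder. Because $\mathcal J$ is an isometry, the same bounds transfer to $|V|$ and $\|J_V\|$.

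Finally, consider a stable equilibrium $x_k^*$, so that $S_k$ is definite, say positive definite. Then $P_k(y)\ge \frac{\lambda_{\min}}{2}|y|^{m_k+1}$, and by the first line of \eqref{20250329-yb-AsymptoticsOfHamiltonianAndVelocity}, $x_k^*$ is a strict local extremum of $H$. Suppose for contradiction that $x_k^*\in\partial\Omega$. Since $H|_{\partial\Omega}=0$, we have $H(x_k^*)=0$. The argument now uses the fact that the boundary of the bounded Lipschitz domain near $x_k^*$ is a connected Lipschitz graph passing through $x_k^*$, and therefore contains points $z\neq x_k^*$ arbitrarily close to $x_k^*$. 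At such points $H(z)=0=H(x_k^*)$, which contradicts the strict extremum $H(z)>H(x_k^*)$ for $0<|z-x_k^*|$ small. Hence $x_k^*\notin\partial\Omega$, and since critical points lie in $\overline\Omega$, it follows that $x_k^*\in\Omega$. The negative definite case is symmetric.
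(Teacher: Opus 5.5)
Your proposal is correct and follows the paper's proof essentially step by step. The first three asymptotics are read off from the remainder bounds in (A2), and the two-sided bounds come from homogeneity of $P_k$ together with the nonvanishing of $\nabla P_k$ on the unit circle. The stable-equilibrium claim uses the strict extremum of $H$ against the fact that $H=0$ on the Lipschitz boundary curve through $x_k^*$. Your inner-product argument for $g(\omega)\neq 0$ is equivalent to the paper's observation that $\nabla P_k(e)=\bigl(I_2+\frac{m_k-1}{2}\,e\otimes e\bigr)S_k e$, where the prefactor is invertible.
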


The next lemma concerns several basic properties of the orbit period function. Its proof is provided in Appendix \ref{appendix-PropertiesOfOrbitPeriod}.

\begin{lemma}\label{20250322-yb-propsotion-PropertiesOfOrbitPeriod}
Let $x_0 \in \overline{\Omega}$. Then, the following statements hold:
\begin{itemize}
	\item[(i)] $T(x_0) = 0$ if and only if $x_0$ is an equilibrium (i.e., $V(x_0)=0$).
	
	\item[(ii)] $T(x_0) = +\infty$ if and only if $x(\cdot;x_0)$ is not constant and there exist two (possibly identical) equilibria $x_k^*, x_l^*$ such that
	\begin{align}\label{20250329-yb-SourcesOfSolutionsWithInfinitePeriod}
		\lim_{ t\rightarrow - \infty} x(t;x_0) = x_k^*
		\text{ and }
		\lim_{ t\rightarrow + \infty} x(t;x_0) = x_l^*.
	\end{align}
	
	\item[(iii)] If $T(x_0) \in (0, +\infty)$, then $T(\cdot)$ is locally $C^1$ at $x_0$.
\end{itemize}
\end{lemma}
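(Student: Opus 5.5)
For part (i), I would argue directly from the ODE \eqref{20240925-yubiao-Flow}. If $V(x_0)=0$, the solution is constant, every $\hat t>0$ returns to $x_0$, and so $T(x_0)=0$. Conversely, suppose $V(x_0)\neq0$. By the flow-box theorem, $x(t;x_0)\neq x_0$ for $0<t<\delta$ with some $\delta>0$. Hence the set of return times is either empty, so $T(x_0)=+\infty$, or a closed subgroup of $\mathbb R$ that is not dense. In the latter case its least positive element is $\ge\delta$. Either way $T(x_0)>0$.

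Throughout, I would use the following facts. $H$ is constant along trajectories, since $V=-\nabla^\perp H$ gives $\frac{d}{dt}H(x(t))=0$. The flow leaves $\overline\Omega$ invariant, because $V$ is tangent to $\partial\Omega$ and $V\in C^1(U)$ for an open $U\supset\overline\Omega$. So all trajectories are global and bounded.

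For part (ii), the direction ``$\Leftarrow$'' is easy. If $x(\cdot;x_0)$ is nonconstant and converges to equilibria as $t\to\pm\infty$, it cannot be periodic: a periodic solution equals its own $\omega$-limit set, and this would force it to be constant. So the return set is empty and $T(x_0)=+\infty$.

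For ``$\Rightarrow$'', assume $T(x_0)=+\infty$. By (i), $x_0$ is not an equilibrium, so the solution is nonconstant. Let $\omega(x_0)$ be its $\omega$-limit set. It is nonempty, compact, connected, invariant, and contained in $\{H=H(x_0)\}\cap\overline\Omega$.

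The key step is to show that $\omega(x_0)$ contains no regular point. Suppose $y\in\omega(x_0)$ with $V(y)\neq0$. Take a short transversal segment $\Sigma$ through $y$ in the direction $\nabla H(y)$. Along $\Sigma$ the function $H$ is strictly monotone, so $y$ is the only point of $\Sigma$ on the level $H(x_0)$. Since $y\in\omega(x_0)$, the flow box at $y$ shows that the trajectory crosses $\Sigma$ at arbitrarily large times. Every such crossing lies on the level $H(x_0)$, so each crossing point equals $y$. The trajectory therefore passes through $y$ at two distinct times, hence is periodic, contradicting $T(x_0)=+\infty$.

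Consequently $\omega(x_0)$ is a connected subset of the finite critical set $\{x_1^*,\dots,x_K^*\}$ from (A2). It is therefore a single equilibrium $x_l^*$, and $x(t;x_0)\to x_l^*$ as $t\to+\infty$. The same argument applied to the $\alpha$-limit set gives $x_k^*$ as $t\to-\infty$. I expect this step to be the main obstacle. The points needing care are trajectories running along $\partial\Omega$, where I would work in $U$ with the extended $H$, and making sure the transversality argument is correct at such points.

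For part (iii), let $T_0:=T(x_0)\in(0,+\infty)$. Then $V(x_0)\neq0$, and I take the section $\Sigma=\{x_0+s\nabla H(x_0):|s|<\eta\}$, on which $H$ is a $C^1$ coordinate. The map $(t,z)\mapsto\Phi(t)(z)$ is $C^1$ near $(0,x_0)$ for $z\in\Sigma$, with nonzero Jacobian because $V(x_0)$ is transverse to $\Sigma$. Hence every $x$ near $x_0$ is written uniquely as $x=\Phi(s(x))(z(x))$ with $(s,z)$ of class $C^1$. Since $T$ is constant along orbits, $T(x)=T(z(x))$, so it suffices to prove that $T|_\Sigma$ is $C^1$.

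For $z\in\Sigma$, I would apply the implicit function theorem to $G(t,z):=\langle\Phi(t)(z)-x_0,\nabla H(x_0)^\perp\rangle$ at $(T_0,x_0)$. Its $t$-derivative there is $\langle V(x_0),\nabla H(x_0)^\perp\rangle\neq0$. This yields a $C^1$ return time $\tau(z)$ near $T_0$ with $\Phi(\tau(z))(z)\in\Sigma$. By conservation of $H$ and strict monotonicity of $H$ on $\Sigma$, this return point is $z$ itself.

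It remains to show that $\tau(z)$ is the minimal period. For $t\in[\delta,T_0-\delta]$, the orbit of $x_0$ stays a positive distance from $x_0$, and by continuity of the flow the same holds for $z$ close to $x_0$. For $t\in(0,\delta)$ or $t$ near $T_0$ other than $\tau(z)$, the flow box excludes returns. Therefore $T=\tau$ on $\Sigma$, which completes (iii).
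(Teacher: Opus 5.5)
Your proposal is correct and follows essentially the same route as the paper: (i) uses short-time non-return near a regular point; (ii) uses a transversal/flow-box argument, with conservation of $H$, showing that a trajectory with $T=+\infty$ cannot accumulate at a regular point; and (iii) uses a transversal section with the implicit function theorem for the return time. The differences are minor: you make explicit the connectedness of the $\omega$-limit set, which the paper's appeal to ``discreteness of equilibria'' uses tacitly, and in (iii) you prove minimality of $\tau(z)$ directly by splitting $(0,\tau(z))$ into short, compact and near-$T_0$ windows, whereas the paper argues by contradiction along a sequence $s_k\to0$.
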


The following lemma concerns the positional relationship between the set $\Omega_{\geq \varepsilon}$ and the unstable equilibria. The proof is provided in Appendix \ref{appendix-EquilibriaInFinitePeriodRegion}.

\begin{lemma}\label{20250329-yb-lemma-EquilibriaInFinitePeriodRegion}
Let $\varepsilon>0$ and  $\Omega_{\geq \varepsilon}$ be defined by \eqref{20250323-yb-LargePeirodLayer}. Then, the closed set $\Omega_{\geq \varepsilon}$ contains no unstable equilibrium.
\end{lemma}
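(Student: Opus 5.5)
Lemma~\ref{20250329-yb-lemma-EquilibriaInFinitePeriodRegion} asks us to show that every unstable equilibrium lies in the closure of $\{T=+\infty\}$, indeed on that set itself. Then $d(x_k^*,\{T=+\infty\})=0<\varepsilon$, so $x_k^*\notin\Omega_{\ge\varepsilon}$. Since $\Omega_{\ge\varepsilon}$ is closed and the equilibria are finitely many, it suffices to exhibit, for each saddle-type equilibrium $x_k^*$, points arbitrarily close to $x_k^*$ whose orbit period is $+\infty$.

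\textbf{Step 1: local model.} Fix an unstable equilibrium $x_k^*$, so $S_k$ is indefinite and nonsingular. Diagonalize $S_k$ by a rotation. Then $P_k(y)=\frac12|y|^{m_k-1}(\lambda_1 y_1^2-\lambda_2 y_2^2)$ with $\lambda_1,\lambda_2>0$, up to relabeling. The zero set of $P_k$ consists of two lines through the origin, and on them $\nabla P_k\neq0$ away from $0$. By Lemma~\ref{20241001-yb-lemma-Hamiltonian}, $H(x_k^*+y)-H(x_k^*)=P_k(y)+o(|y|^{m_k+1})$ and $\nabla H=\nabla P_k+o(|y|^{m_k})$, with $|\nabla H(x_k^*+y)|\ge c_k|y|^{m_k}$.

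\textbf{Step 2: the level set through the saddle.} Use polar coordinates $y=r(\cos\phi,\sin\phi)$ and write $H(x_k^*+y)-H(x_k^*)=r^{m_k+1}\big(g(\phi)+o(1)\big)$, where $g(\phi)=\frac12(\lambda_1\cos^2\phi-\lambda_2\sin^2\phi)$ has simple zeros. The angular derivative of $r^{-(m_k+1)}(H-H(x_k^*))$ is $g'(\phi)+o(1)$, controlled through $\nabla H$. The implicit function theorem in $\phi$, applied for each small $r$, then shows that in a small punctured ball $B_\delta(x_k^*)\setminus\{x_k^*\}$ the level set $\{H=H(x_k^*)\}$ contains four arcs emanating from $x_k^*$. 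Each arc is $C^1$ in $r>0$ and tends to $x_k^*$ as $r\to0$. These arcs contain no equilibrium other than $x_k^*$ (choose $\delta$ so). If $x_k^*\in\partial\Omega$, at least one arc still lies in $\overline\Omega$, since $H|_{\partial\Omega}=0$ and $\partial\Omega$ is itself part of this level set. Alternatively, $\partial\Omega$ near $x_k^*$ is invariant and consists of such arcs.

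\textbf{Step 3: infinite period.} Take a point $x_0\ne x_k^*$ on such an arc in $\overline\Omega$, with $|x_0-x_k^*|<\eta$ for arbitrary $\eta$. The trajectory $x(\cdot;x_0)$ stays on the arc while it is in $B_\delta$, because $V=-\nabla^\perp H$ is tangent to level sets and nonvanishing there. The radial velocity component has a definite sign along the arc: from the model it is comparable to $\pm r^{m_k}$, with sign determined by $g'(\phi_0)$ at the asymptotic angle. So the point either approaches $x_k^*$ as $t\to+\infty$ or as $t\to-\infty$. It reaches $x_k^*$ only asymptotically, since $x_k^*$ is an equilibrium and uniqueness forbids finite-time arrival.

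Hence $x(t;x_0)\ne x_0$ for all $t>0$. If it returned to $x_0$, the orbit would be periodic, contradicting convergence to $x_k^*$ in one time direction. So $T(x_0)=+\infty$, directly from Definition~\ref{20250110-yb-definiton-InvariantSet}, consistent with Lemma~\ref{20250322-yb-propsotion-PropertiesOfOrbitPeriod}(ii). Letting $\eta\to0$ gives $d(x_k^*,\{T=+\infty\})=0$, which proves the lemma.

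The main obstacle is Step 2 in the degenerate case $m_k>1$ with a merely $o(\cdot)$ remainder. There we need the normalized Hamiltonian's angular derivative to be uniformly nonzero near the zeros of $g$, which follows from the $\nabla H$ asymptotics in Lemma~\ref{20241001-yb-lemma-Hamiltonian}. We also need the sign of the radial velocity to be robust, which is handled the same way.
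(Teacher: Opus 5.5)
Your proof is correct and follows essentially the paper's route: the indefinite local model, four $C^1$ arcs of $\{H=H(x_k^*)\}$ found on small circles $\partial B_r(x_k^*)$ by the implicit function theorem, infinite period for points on these arcs, and the boundary case handled via $H|_{\partial\Omega}=0$. The only variation is that you get $T=+\infty$ from the definite sign of the radial velocity plus uniqueness (no finite-time arrival at $x_k^*$), whereas the paper bounds the traversal time of an arc below by $\int_0^{r_1}|V|^{-1}\,|dy|\gtrsim\int_0^{r_1}r^{-m_k}\,dr=+\infty$. One harmless slip, not used anywhere: $x_k^*$ lies only in the closure of $\{T=+\infty\}$, not in the set itself, since $T(x_k^*)=0$ by Lemma~\ref{20250322-yb-propsotion-PropertiesOfOrbitPeriod}(i).
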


The following lemma provides estimates for the orbit-period function around a stable equilibrium.

\begin{lemma}\label{20250329-yb-lemma-OrbitPeriodAroundEquilibra}
Let $x_k^*$ be a stable equilibrium of order $m_k$. 
Then, it holds that as $x\to x_k^*$,
\begin{align}
\label{20250329-yb-AsymptoticExpansionOfOrbitPeriodAroundEquilibrium}
\left\{
\begin{array}{r@{~}l}
    T(x)&\asymp |x-x_k^*|^{ -(m_k-1)},
    \vspace{0.5em}\\
    |\nabla T(x)|&\asymp |x-x_k^*|^{-m_k}
    \quad\text{if }m_k>1,
    \vspace{0.5em}\\
    |\nabla T(x)|&\asymp o( |x-x_k^*|^{-1} )
    \quad\text{if }m_k=1.
\end{array}
\right.
\end{align}
Moreover, for some $\delta,C>0$,
\begin{align}\label{ham:center-speed-ratio}
\sup_{t,s\in\mathbb R}
\frac{|V(x(t;x_0))|}{|V(x(s;x_0))|}
\le C ~\text{ when }  0 < |x_0  - x_k^* |  < \delta. 
\end{align}
\end{lemma}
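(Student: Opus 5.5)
The plan is to reduce to the homogeneous model and use its scaling. Translate so that $x_k^*=0$ and write $H(y)-H(0)=P(y)+R(y)$ with $P=P_k$ and $m=m_k$. Since $S:=S_k$ is definite, say positive, $P$ is positively homogeneous of degree $m+1$, vanishes only at $0$, and the level sets $\{P=h\}$, $h>0$, are star-shaped closed curves. These curves are scalings of $\Gamma_1:=\{P=1\}$ by the factor $h^{1/(m+1)}$.

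\textbf{Step 1: the model period.} For the model field $V_P=\mathcal J\nabla P$, the field $\nabla P$ is homogeneous of degree $m$. The period on $\{P=h\}$ is therefore $T_P(h)=\oint_{\{P=h\}} d\ell/|\nabla P|$. Scaling $y=h^{1/(m+1)}z$ with $z\in\Gamma_1$ gives
\[T_P(h)=\tau_0\, h^{(1-m)/(m+1)},\qquad \tau_0:=\oint_{\Gamma_1}\frac{d\ell}{|\nabla P|}.\]
Because $P(y)\asymp |y|^{m+1}$ (by definiteness of $S$), we get $T_P\asymp |y|^{-(m-1)}$. This is the first relation.

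\textbf{Step 2: comparison with the true flow.} Rescale by setting $H_r(z):=r^{-(m+1)}(H(rz)-H(0))$ for $|z|\sim 1$. By \eqref{ham:remainder}, $H_r\to P$ in $C^2$ on the annulus $\{1/2\le|z|\le 2\}$. The period of $H$ through a point $y$ with $|y|=r$ equals $r^{1-m}$ times the period of $-\nabla^\perp H_r$ through $y/r$, because time rescales by $r^{m-1}$. $C^2$ (indeed $C^1$) closeness of $H_r$ to $P$ makes the level curves of $H_r$ near $\Gamma_1$ closed and $C^1$-close to those of $P$. Lemma \ref{20241001-yb-lemma-Hamiltonian} then gives $|\nabla H_r|\ge c$ on the annulus, so the rescaled periods converge uniformly to $\tau_0$. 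This proves $T\asymp|y|^{-(m-1)}$.

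For the speed ratio \eqref{ham:center-speed-ratio}, note that each orbit near $0$ is a level curve contained in an annulus $c_1 r\le |y|\le c_2 r$. This holds since $H-H(0)\asymp|y|^{m+1}$. Then \eqref{ham:velocity-bounds} gives $|V|\asymp r^{m}$ uniformly along the orbit.

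\textbf{Step 3: the gradient, which is the main obstacle.} Write $T=\mathcal T(H(x))$, where $\mathcal T(h)$ is the period of the level $h$. Then $\nabla T=\mathcal T'(h)\nabla H$, and $|\nabla H|\asymp r^{m}$. It therefore suffices to show:
\begin{itemize}
\item[(a)] $\mathcal T'(h)\asymp h^{-2m/(m+1)}$ when $m>1$, giving $|\nabla T|\asymp r^{-2m}r^{m}=r^{-m}$;
\item[(b)] $\mathcal T'(h)=o(h^{-1})$ when $m=1$, giving $o(r^{-2}r)=o(r^{-1})$.
\end{itemize}
For the model, $\mathcal T_P'(h)=\tau_0\frac{1-m}{m+1}h^{-2m/(m+1)}$, which is nonzero when $m\neq 1$.

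To get the derivative for the true $H$, use the standard formula $\mathcal T'(h)=\frac{d}{dh}\oint d\ell/|\nabla H|$. It equals a line integral over the level curve involving $D^2H$, $\nabla H$ and curvature, of the form $\oint \operatorname{div}(\nabla H/|\nabla H|^2)\,d\ell/|\nabla H|$ (a Gelfand–Leray derivative). The integrand uses only second derivatives of $H$. After rescaling, $\mathcal T'(h)=r^{-2m}\,\mathcal T_r'(h/r^{m+1})$, where $\mathcal T_r$ is the period function of $H_r$. Since $H_r\to P$ in $C^2$ and the integrand depends continuously on $(\nabla H_r, D^2H_r)$ with $|\nabla H_r|$ bounded below, $\mathcal T_r'\to\mathcal T_P'$ uniformly on compact level ranges.

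In case (a) the limit is nonzero, which gives (a). In case (b) the limit is $0$, which gives $o(r^{-2})$ for $\mathcal T'$ and hence (b).

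The delicate point is justifying this formula with only $C^2$ regularity of $H$, since $\operatorname{div}(\nabla H/|\nabla H|^2)$ involves only $D^2H$. I would handle it by writing the period as the area derivative $\mathcal T(h)=\frac{d}{dh}\bigl|\{H-H(0)<h\}\bigr|$. Alternatively, I would differentiate the time-parametrized integral using the $C^1$ dependence from Lemma \ref{20250322-yb-propsotion-PropertiesOfOrbitPeriod}(iii), comparing with the model through the uniform $C^2$ convergence.
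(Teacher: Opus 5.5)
Your proposal is correct and follows essentially the paper's route. Both arguments blow up around the center to the homogeneous model $P_k$, read off $T\asymp |x-x_k^*|^{1-m_k}$ by scaling, get the speed ratio from the annular confinement of the orbits, and obtain $|\nabla T|=|V|\,|dT/dH|$ from the model derivative $\tau_0\frac{1-m}{m+1}h^{-2m/(m+1)}$, which vanishes exactly when $m_k=1$. The only difference is in the derivative step: the paper differentiates its explicit $\varepsilon$-parametrization of the level curves, which tacitly requires a bound on the mixed $\partial_\varepsilon\partial_\delta$ remainder that is not among those it lists. Your divergence/area-derivative formula for $\mathcal T'$ involves only $D^2H$, so it is controlled directly by the $C^2$ remainder bounds in (A2).
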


\begin{proof}
 The key idea is that, near a stable equilibrium $x_k^*$, the level sets of the Hamiltonian are small closed curves of diameter comparable to $|x-x_k^*|$, while the velocity magnitude is of order $|x-x_k^*|^{m_k}$. Hence the orbit period is comparable to the ratio of orbit length to speed, which yields $T(x)\asymp |x-x_k^*|^{-(m_k-1)}$. The rest of the proof makes this scaling argument precise.

    Since $x_k^*$ is stable, Definition \ref{20250329-yb-ClassificationOfEquilibria} implies that $S_k > 0$ or $S_k <0$. Without loss of generality, assume that $S_k>0$; the argument for $S_k < 0$ is similar. 
    By assumption (A2), there exists a new orthonormal coordinate system $\{\vec v_1,\vec v_2\}$ centered at $x_k^*$, two positive numbers $a,b$, and a remainder function $R \in C^2(\mathbb R^2)$, satisfying
    \begin{align}\label{20250330-yb-NewGoodRemainder}
        \partial_{x_1}^{\alpha}\partial_{x_2}^{\beta} R(x_1,x_2)
        = o\big(|(x_1,x_2)|^{2-\alpha-\beta}\big)
        \quad \text{as } (x_1,x_2)\to 0,
    \end{align}
    for all $(\alpha,\beta)\in\mathbb N^2$ with $\alpha+\beta\le 2$, such that
    \begin{align*}
        H(x_k^* + x_1\vec v_1 + x_2\vec v_2) - H(x_k^*)
        =  |(x_1,x_2)|^{m_k-1}
        \big(a^2 x_1^2 + b^2 x_2^2 + R(x_1,x_2)\big),
        ~(x_1,x_2) \in \mathbb R^2.
    \end{align*}
    For simplicity, denote by $H(x_1,x_2)$ the function on the left-hand side of the above equality. Write
    \begin{align}\label{20250330-yb-SimplifiedHamiltonianNearStableEquilibrium}
        H(x_1,x_2)
        &= H_p(x_1,x_2) + |(x_1,x_2)|^{m_k-1} R(x_1,x_2) \nonumber\\
        &:= |(x_1,x_2)|^{m_k-1}\big(a^2x_1^2 + b^2x_2^2 + R(x_1,x_2)\big).
    \end{align}
    Let $(\bar x_1(\delta),\bar x_2(\delta))$ with $\delta\in[0,2\pi)$ be a parametrization of the curve
    \begin{align*}
        1 = H_p(x_1,x_2)
        = |(x_1,x_2)|^{m_k-1}(a^2x_1^2+b^2x_2^2),
        ~ (x_1,x_2)\in\mathbb R^2.
    \end{align*}
    Then, by \eqref{20250330-yb-SimplifiedHamiltonianNearStableEquilibrium} and \eqref{20250330-yb-NewGoodRemainder}, for sufficiently small $\varepsilon>0$, the level set $\{H=\varepsilon^{m_k+1}\}$ near $x_k^*$ admits the parametrization
    \begin{align}\label{20250330-yb-ParameterizedHamiltonianAroundStableEquilibrium}
        \varepsilon \big(
         \bar x_1(\delta) + o(\delta;\varepsilon),
        \bar x_2(\delta) + o(\delta;\varepsilon)
        \big),
        ~ \delta\in[0,2\pi),
    \end{align}
    where the remainder $o(\delta;\varepsilon)$ satisfies the estimate for $(\alpha,\beta) \in \mathbb N^2$ with $\alpha + \beta \leq 1$: 
    \begin{align}\label{SmallTerms-20260916}
          \partial_{\varepsilon}^{\alpha}  \partial_{\delta}^{\beta} o(\delta;\varepsilon)
         = o(  \varepsilon^{-\alpha})
         \text{ uniformly in } \delta
         \text{ as } \varepsilon \rightarrow 0^+.
    \end{align}
    
    Next, using \eqref{ham:representation}, \eqref{20250330-yb-SimplifiedHamiltonianNearStableEquilibrium}, and \eqref{20250330-yb-ParameterizedHamiltonianAroundStableEquilibrium}, we obtain that on the level set $\{ H =\varepsilon^{m_k+1} \}$, 
    \begin{align}\label{20250403-yb-LeadingTermInGradientOfHamiltonian}
        V^{\perp}(x_1,x_2)
        &= \nabla H(x_1,x_2) \nonumber\\
        &= |(x_1,x_2)|^{m_k-1}(2a^2x_1,2b^2x_2)  \nonumber\\
        &\quad + (m_k-1)|(x_1,x_2)|^{m_k-3}
        (a^2x_1^2+b^2x_2^2)(x_1,x_2)
        + o(|(x_1,x_2)|^{m_k}) \nonumber\\
        &= \varepsilon^{m_k}\nabla H_p(\bar x_1,\bar x_2) +  \varepsilon^{m_k} o(\delta;\varepsilon),
    \end{align}
    where the remainder $o(\delta;\varepsilon)$ also satisfies \eqref{SmallTerms-20260916}. 
    Consequently, from \eqref{20250330-yb-ParameterizedHamiltonianAroundStableEquilibrium} we compute the orbit period of the level set $\{H=\varepsilon^{m_k+1}\}$ around $x_k^*$:
    \begin{align}\label{20250330-yb-ComputeOrbitPeriod}
        T(\{H=\varepsilon^{m_k+1}\})
        &= \int_{\{H=\varepsilon^{m_k+1}\}}
        \frac{1}{|V(x)|}\,ds(x)  \nonumber\\
        &= \int_{\{H=\varepsilon^{m_k+1}\}}
        \frac{1}{|\nabla H(x_1,x_2)|}\,ds(x_1,x_2)  \nonumber\\
        &= \int_0^{2\pi}
        \frac{|(x_1'(\delta),x_2'(\delta))|}
        {|\nabla H(x_1(\delta),x_2(\delta))|}
        d\delta  \nonumber\\
        &= \varepsilon^{1-m_k}
        \int_0^{2\pi}
        \frac{|(\bar x_1'(\delta),\bar x_2'(\delta))|+ o(\delta;\varepsilon)}
        {|\nabla H_p(\bar x_1(\delta),\bar x_2(\delta))|+  o(\delta;\varepsilon)}
        d\delta  \nonumber\\
        &= \varepsilon^{1-m_k}
        \left(
        \int_{\{H_p=1\}}
        \frac{ds}{|\nabla H_p|}
        + o(\delta;\varepsilon)
        \right).
    \end{align}
    Here, $ds$ denotes the arc-length element and $o(\delta;\varepsilon)$ denote some term satisfying \eqref{SmallTerms-20260916}.
    
    From \eqref{20250330-yb-ParameterizedHamiltonianAroundStableEquilibrium} we also have
    \[
    |x-x_k^*| \sim \varepsilon
    \quad \text{as } x\to x_k^*.
    \]
    Combining this with \eqref{20250330-yb-ComputeOrbitPeriod} yields the first relation in
    \eqref{20250329-yb-AsymptoticExpansionOfOrbitPeriodAroundEquilibrium}. 
    The second and third relations follow from \eqref{20250403-yb-LeadingTermInGradientOfHamiltonian}, \eqref{20250330-yb-ComputeOrbitPeriod}, and the identity
    \[
    |\nabla T|
    = |V|\,\left|\frac{dT}{dH}\right|
    = |V| \frac{|dT/d\varepsilon|}{|dH/d\varepsilon|}.
    \]
    
    Finally, since a solution $x(\cdot; x_0)$ lies in a level set of the Hamiltonian function $H$, 
    \eqref{ham:center-speed-ratio} follows from \eqref{20250403-yb-LeadingTermInGradientOfHamiltonian} and \eqref{20250330-yb-SimplifiedHamiltonianNearStableEquilibrium}. 
    This completes the proof.
\end{proof}

Now we are in a position to prove Proposition~\ref{20250322-yb-proposition-UsefulPropertiesOfOrbitPeriod}.

\begin{proof}[Proof of Proposition~\ref{20250322-yb-proposition-UsefulPropertiesOfOrbitPeriod}]
    First, we  show that $T$ is $C^1$ at $\hat x_0$ if and only if $T(\hat x_0) \in (0,+\infty)$. 
    The sufficiency follows directly from (iii) of Lemma \ref{20250322-yb-propsotion-PropertiesOfOrbitPeriod}. 
    For the necessity, suppose that $T$ is $C^1$ at $\hat x_0$. Clearly,
    \begin{align}\label{20250403-yb-FinitePeriodAtAPoint}
        T(\hat x_0) < +\infty.
    \end{align}
    We claim that
    \begin{align}\label{20250403-yb-PositivePeriodAtAPoint}
        T(\hat x_0) > 0.
    \end{align}
    Indeed, if $T(\hat x_0)=0$, then by (i) of Lemma \ref{20250322-yb-propsotion-PropertiesOfOrbitPeriod}, $\hat x_0$ must be an equilibrium point. At a stable equilibrium, \eqref{20250329-yb-AsymptoticExpansionOfOrbitPeriodAroundEquilibrium} shows that the periods of nearby nonconstant orbits stay bounded away from zero.
    At an unstable equilibrium,
    \eqref{20250330-yb-InfinitePeriodAroundUnstableEquilibrium} in the appendix gives a sequence $x_n\to\hat x_0$ ($n \rightarrow+\infty$) with $T(x_n)=+\infty$.
    In either case, $T(\cdot)$ is not continuous at $\hat x_0$, since $T(\hat x_0)=0$. 
    Hence \eqref{20250403-yb-PositivePeriodAtAPoint} holds. Together with \eqref{20250403-yb-FinitePeriodAtAPoint}, this implies that $T(\hat x_0)\in (0,+\infty)$, completing the proof of the necessity.
    
    \vspace{1ex}
    
    It remains to prove \eqref{20250329-yb-GlobalUpperBoundsOfOrbitPeriodAndRatioOfVelocity}. 
    By Lemma \ref{20250329-yb-lemma-EquilibriaInFinitePeriodRegion}, the compact set $\Omega_{\geq \varepsilon}$ contains no unstable equilibria. Moreover, by (iii) of Lemma \ref{20250322-yb-propsotion-PropertiesOfOrbitPeriod}, the function $T(\cdot)$ is $C^1$ on the set $\{0<T<+\infty\}$. Hence $T$ is $C^1$ on $\Omega_{\geq \varepsilon}$ except possibly at finitely many stable equilibria. Near a stable equilibrium $x_k^*$, the estimates \eqref{20250329-yb-AsymptoticExpansionOfOrbitPeriodAroundEquilibrium} and \eqref{20250329-yb-AsymptoticsOfHamiltonianAndVelocity} give that as $x \rightarrow x_k^*$,
    \[
    T(x)^{-1}\asymp |x - x_k^* |^{m_k-1},~
    T(x)\|J_V(x)\|\lesssim1,   \text{ and }
    |V(x)|\, \Big( T(x) + |\nabla T(x)| \Big) \lesssim1. 
    \]
    Therefore, the estimates for all terms except the last one in \eqref{20250329-yb-GlobalUpperBoundsOfOrbitPeriodAndRatioOfVelocity} are established.

    Finally, we estimate the speed ratio (i.e., the last term) in \eqref{20250329-yb-GlobalUpperBoundsOfOrbitPeriodAndRatioOfVelocity}. Note that  $\Omega_{\geq \varepsilon}$ contains no unstable equilibria (see Lemma \ref{20250329-yb-lemma-EquilibriaInFinitePeriodRegion}). 
    On one hand, this estimate holds around each stable equilibrium (see \eqref{ham:center-speed-ratio}). On the other hand, the orbits of points away from the set $\{T=0,+\infty\}$ are still separated from this set (see  \eqref{BothAwayFromExtremeOrbitPeriods-20260926} in the appendix).     
    Thus, the estimate \eqref{20250329-yb-GlobalUpperBoundsOfOrbitPeriodAndRatioOfVelocity} holds. This completes the proof. 
\end{proof}

\section{Jacobian matrices of the flow}
\label{20241018-yb-JacobianOfVelocityField}

In this section, we investigate the Jacobian matrices $\{J_{\varPhi(t)}\}_{t\ge0}$ of the flow $\{\varPhi(t)\}_{t\ge0}$ defined in \eqref{20240926-yubiao-DefinitionOfFlow}. 
These matrices play a central role in the geometric interpretation of the main theorems and in the analysis of the deformation of curves and sets under the flow (see Proposition \ref{rev:prop52}). 

This section is organized as follows. 
Subsection \ref{20250324-yb-section-ExpressionOfJacobian} derives a useful representation of the Jacobian matrices along trajectories. 
Subsection \ref{20250324-yb-section-AsymptoticExpansionOfJacobian} establishes their leading asymptotic expansion  for large time. 
Subsection \ref{20250324-yb-section-SingularValues} analyzes the singular values of the Jacobian matrices and their implications for the geometric deformation under the flow.

\subsection{Representation of the flow Jacobian in moving frames}
\label{20250324-yb-section-ExpressionOfJacobian}

In this subsection, we focus on the Jacobian matrices $\{J_{\varPhi(t)}\}_{t\ge0}$ of the flow $\{\varPhi(t)\}_{t\ge0}$ and derive their representation in moving frames along the trajectories of equation \eqref{20240925-yubiao-Flow}. The main result is stated below.

\begin{theorem}\label{20241015-yb-Erlangen-theorem-FlowOfTangentVectorsForTimeInvariantVelocityField}
Let $x_0\in \overline{\Omega}$ be such that $V(x_0)\neq0$, and let $x(\cdot)$ be the solution $x(\cdot; x_0)$ to equation \eqref{20240925-yubiao-Flow}.  Then, it holds that for all $t \in \mathbb R$,
\begin{align}\label{20241018-yb-SpecialExpressionOfJacobianOfFlow}
    J_{\varPhi(t)}(x_0) 
    =&
    |V(x_0)|^{-2}\, V(x(t))  \otimes
    V(x_0)
    +
    |V(x(t))|^{-2}\, V^{\perp}(x(t))  \otimes
     V^{\perp}(x_0)
    \nonumber\\
    &+
    \Bigg[
    \int_0^t
    \left(
    \mathrm{curl}\,\frac{V(y)}{|V(y)|^2}
    \right)\Big|_{y=x(\tau)} d\tau
    \Bigg]
    \;  V(x(t))  \otimes  V^{\perp}(x_0)     .
\end{align}
\end{theorem}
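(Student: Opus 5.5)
The plan is to express the tangent vectors transported by the linearized flow in the moving frame $\{V(x(t)),V^\perp(x(t))\}$ and solve the resulting scalar ODEs explicitly. Set $M(t):=J_{\varPhi(t)}(x_0)$. Then $M'(t)=J_V(x(t))M(t)$ and $M(0)=I$. Since $V(x_0)\neq0$ and trajectories of an autonomous $C^1$ field never reach an equilibrium in finite time, $V(x(t))\neq0$ for all $t$. Hence the frame is a basis at each time, and $\{V(x_0)/|V(x_0)|^2,\,V^\perp(x_0)/|V(x_0)|^2\}$ is the dual basis at $t=0$. It therefore suffices to compute $M(t)V(x_0)$ and $M(t)V^\perp(x_0)$, and then write
\[
M(t)=\frac{1}{|V(x_0)|^2}\,M(t)V(x_0)\otimes V(x_0)+\frac{1}{|V(x_0)|^2}\,M(t)V^\perp(x_0)\otimes V^\perp(x_0).
\]

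The first column is immediate. Differentiating $x(t;x_0)$ along the flow gives $\varPhi(t)(x(s;x_0))=x(t+s;x_0)$, and differentiating in $s$ at $s=0$ yields $M(t)V(x_0)=V(x(t))$. This produces the first term of \eqref{20241018-yb-SpecialExpressionOfJacobianOfFlow}.

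For the second column, write $w(t):=M(t)V^\perp(x_0)=a(t)V(x(t))+b(t)V^\perp(x(t))$. I would determine $b$ by Liouville's formula. Since $\operatorname{div}V=0$, we have $\det M(t)=1$, so $\det[V(x(t)),w(t)]=\det[V(x_0),V^\perp(x_0)]$. This gives $b(t)|V(x(t))|^2=|V(x_0)|^2$, that is, $b(t)=|V(x_0)|^2/|V(x(t))|^2$, which is the second term.

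For $a(t)$, I would use $a=\langle w,V\rangle/|V|^2$, with all quantities evaluated along the trajectory, and differentiate. The two evolution equations are $w'=J_Vw$ and $\tfrac{d}{dt}V(x(t))=J_VV$. Substituting $w=aV+bV^\perp$ and using $b|V|^2=$ const, the derivative $a'$ reduces to $b$ times an expression quadratic in $V$ and $J_V$ divided by $|V|^4$. The natural candidate is $\langle J_VV^\perp,V\rangle+\langle V^\perp,J_VV\rangle-2\langle V^\perp,V\rangle\cdots$. The one genuinely computational step, and the one I expect to be the main obstacle, is identifying this quantity with $|V|^2\,\mathrm{curl}(V/|V|^2)$. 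To do it, I would expand
\[
\mathrm{curl}\frac{V}{|V|^2}=\frac{\mathrm{curl}\,V}{|V|^2}-\frac{2\langle \nabla^\perp\!\cdot\,,\,\cdot\rangle\text{-term}}{|V|^4},
\]
with the last term written as $\nabla(|V|^2)\cdot V^\perp$ with the appropriate sign. I would then check the identity in coordinates, using $\operatorname{div}V=0$, so that $J_V$ has the form $\begin{pmatrix}p&q\\ r&-p\end{pmatrix}$. Writing $J_V$ in this trace-free form should make the algebra short. The factor $b(t)$ cancels against $|V(x_0)|^2$ in the dual basis, giving
\[
\frac{a(t)}{|V(x_0)|^2}=\int_0^t \mathrm{curl}\frac{V}{|V|^2}\Big|_{x(\tau)}\,d\tau,
\]
where $a(0)=0$. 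This is the third term.

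Assembling the columns against the dual basis gives \eqref{20241018-yb-SpecialExpressionOfJacobianOfFlow} for all $t\in\mathbb R$, since every step is valid backwards in time as well. The sign conventions (orientation of $\perp$, and whether curl is $\partial_1V_2-\partial_2V_1$) must be fixed consistently with the rotation $\mathcal J$ of Lemma \ref{20241001-yb-lemma-Hamiltonian}. I would verify them on the rigid rotation $H=|x|^2/2$, where the curl term should match the known Jacobian.
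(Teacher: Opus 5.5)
Your argument is correct, but it reaches the formula by a different route from the paper.

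\paragraph{Comparison with the paper's route.} The paper first proves a moving-frame proposition. In the orthonormal frame $(\vec e,\vec e^{\perp})$ with $\vec e=V/|V|$, it computes all four entries of $U^{-1}J_VU$ and the frame rotation rate $k=|V|\,\mathrm{curl}\,\vec e$. This gives an upper-triangular generator $A(t;x_0)$ with diagonal entries $\pm V\cdot\nabla\ln|V|$, and the paper then solves that triangular system.

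You instead work in the unnormalized frame $\{V,V^\perp\}$. You obtain the triangular structure and both diagonal entries from two structural facts:
\begin{itemize}
\item $J_{\varPhi(t)}(x_0)V(x_0)=V(x(t))$, because the flow commutes with itself;
\item $\det J_{\varPhi(t)}=1$, by Liouville's formula and $\mathrm{div}\,V=0$.
\end{itemize}
This leaves only one scalar ODE, for the shear coefficient $a(t)$. Your route is shorter and avoids the curvature computation and the three-step frame lemma. The paper's route yields the full generator, whose entries it interprets as speed variation along streamlines, compensating compression, and accumulated transverse shear. Both arguments rest on the same algebraic identity, the paper's \eqref{20250324-yb-DirectComputationsOfAngularMomentum}.

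\paragraph{The step you flagged as the main obstacle.} It works, and the bookkeeping is cleaner than your sketch suggests. The terms involving $a$ cancel exactly, so
\[
a'=\frac{b}{|V|^2}\Big(\langle J_VV^\perp,V\rangle+\langle V^\perp,J_VV\rangle\Big).
\]
There is no $\langle V^\perp,V\rangle$ term, since it vanishes. Write $J_V=\begin{pmatrix}p&q\\ r&-p\end{pmatrix}$, so that $\mathrm{curl}\,V=r-q$, and $V=(v_1,v_2)$. Then the bracket equals $(q+r)(v_1^2-v_2^2)-4p\,v_1v_2$. Expanding the curl with $u\times w:=u_1w_2-u_2w_1$ gives
\[
|V|^4\,\mathrm{curl}\frac{V}{|V|^2}=|V|^2\,\mathrm{curl}\,V-2\,(J_V^{\top}V)\times V,
\]
whose right-hand side equals the same expression $(q+r)(v_1^2-v_2^2)-4p\,v_1v_2$.

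Combining this with $b|V|^2=|V(x_0)|^2$ gives $a'=|V(x_0)|^2\,\mathrm{curl}(V/|V|^2)$ along the trajectory. Together with $a(0)=0$, this is exactly the coefficient you need.

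So the correct normalization is the factor $|V|^{-2}$ in front of the bracket, not $|V|^{-4}$, and the bracket itself equals $|V|^4\,\mathrm{curl}(V/|V|^2)$. This does not affect your conclusion. With the paper's conventions (anticlockwise $\perp$, so $\det[V,V^\perp]=|V|^2>0$, and $\mathrm{curl}\,V=\partial_1V_2-\partial_2V_1$), all signs come out as stated.
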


To prove Theorem \ref{20241015-yb-Erlangen-theorem-FlowOfTangentVectorsForTimeInvariantVelocityField}, we first introduce the following auxiliary result. 
To state it, we define orthogonal frames along a trajectory $x(\cdot;x_0)$ of equation \eqref{20240925-yubiao-Flow}. 
Under assumptions (A1)--(A2), we introduce the orthogonal frame field
\[
U \in C^1\big(\overline{\Omega}\setminus\{x_k^*\}_{k=1}^K; \, SO(\mathbb R^{2\times2})\big)
\]
given by
\begin{align}\label{20241015-yb-OrthogonalFrame}
U := (\vec e \ \ \vec e^{\perp}) 
\quad \text{where}\quad
\vec e(x) := \frac{V(x)}{|V(x)|},
~
x \in \overline{\Omega}\setminus\{x_k^*\}_{k=1}^K .
\end{align}
Here, $\vec{e}$ stands for the direction along the velocity field or streamlines,   and $\vec e^{\perp}$  stands for the direction across streamlines.

\begin{proposition}\label{20241015-yb-Erlangen-theorem-FlowOfTangentVectors}
With the same notations as in Theorem \ref{20241015-yb-Erlangen-theorem-FlowOfTangentVectorsForTimeInvariantVelocityField}, define
\begin{align}\label{20241020-yb-NormalizedExpressionForJacobianOfVelocityField}
A(t;x_0) :=
\begin{pmatrix}
V \cdot \nabla \ln |V| &
|V|\,\mathrm{curl}\,\vec e
+ \vec e \times \nabla |V|
\\
0 &
-\, V \cdot \nabla \ln |V|
\end{pmatrix}
\Big|_{x=x(t)},
~ t \in \mathbb R.
\end{align}
Then
\begin{align}\label{20241018-yb-ExpressionOfJacobianOfFlow}
J_{\varPhi(t)}(x_0)
=
U\big(x(t)\big)\,
\varPsi(t;x_0)\,
U(x(0))^{-1},
~ t \in \mathbb R,
\end{align}
where $\{\varPsi(t;x_0)\}_{t\ge0}$ is the evolution system generated by the  matrices $\{A(t;x_0)\}_{t\ge0}$, i.e.,
\begin{align}\label{20241019-yb-PrincipalMatrixSolutionOfJacobianOfVelocityField}
\psi'(t) = A(t;x_0)\psi(t),
~ t \in \mathbb R;
~~ \psi(0)=I_2,
\end{align}
where $I_2$ is the identity matrix in $\mathbb R^{2\times2}$.
\end{proposition}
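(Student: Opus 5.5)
The idea is to conjugate the variational equation by the moving frame $U$ and compute the resulting coefficient matrix entry by entry. Set $J(t):=J_{\varPhi(t)}(x_0)$. Since $V\in C^1$, $J$ solves the variational equation
\[
J'(t)=J_V(x(t))\,J(t),\qquad J(0)=I_2 .
\]
Because $V(x_0)\neq0$ and trajectories cannot reach an equilibrium in finite time, $x(t)$ avoids $\{x_k^*\}_{k=1}^K$ for all $t\in\mathbb R$. Hence $t\mapsto U(x(t))$ is $C^1$ and takes values in $SO(\mathbb R^{2\times 2})$.

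Define $\widetilde\varPsi(t):=U(x(t))^{\top}J(t)\,U(x_0)$, so that $\widetilde\varPsi(0)=I_2$. Differentiating, and using $\tfrac{d}{dt}(U^\top)U=-U^\top U'$ (orthogonality), gives
\[
\widetilde\varPsi'=B(t)\,\widetilde\varPsi,\qquad B:=U^\top J_V U-U^\top \tfrac{d}{dt}U(x(t)).
\]
By uniqueness for linear ODEs, $\widetilde\varPsi=\varPsi(\cdot;x_0)$, and this yields \eqref{20241018-yb-ExpressionOfJacobianOfFlow}, provided we show $B(t)=A(t;x_0)$. That identification is the whole proof. Throughout write $\vec e'=\tfrac{d}{dt}\vec e(x(t))$, and use $\vec e\cdot\vec e'=0$, $(\vec e^\perp)'=(\vec e')^\perp=-(\vec e'\cdot\vec e^\perp)\vec e$.

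For the first column, I would use $J_V\vec e=|V|^{-1}\tfrac{d}{dt}V(x(t))=\tfrac{|V|'}{|V|}\vec e+\vec e'$. The $(1,1)$ entry is then $\vec e\cdot J_V\vec e=|V|'/|V|=V\cdot\nabla\ln|V|$, since $\vec e\cdot\vec e'=0$. The $(2,1)$ entry is $\vec e^\perp\cdot J_V\vec e-\vec e^\perp\cdot\vec e'=0$. For the $(2,2)$ entry, $\vec e^\perp\cdot(\vec e^\perp)'=0$ and $\operatorname{tr}J_V=\operatorname{div}V=0$, so it equals $-\vec e\cdot J_V\vec e=-V\cdot\nabla\ln|V|$.

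The remaining $(1,2)$ entry is where I expect the only real care, namely sign conventions for $\mathrm{curl}$ and $\times$. It equals
\[
\vec e\cdot J_V\vec e^\perp-\vec e\cdot(\vec e^\perp)'=\vec e\cdot J_V\vec e^\perp+\vec e^\perp\cdot J_V\vec e .
\]
Write $V=|V|\vec e$, so that $J_V=\vec e\otimes\nabla|V|+|V|\,D\vec e$. Using $\vec e\cdot D\vec e\,\vec w=0$ for every $\vec w$ (because $|\vec e|=1$), the first term is $\vec e^\perp\cdot\nabla|V|=e_1\partial_2|V|-e_2\partial_1|V|=\vec e\times\nabla|V|$. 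The second term is $|V|\,\vec e^\perp\cdot(\partial_{\vec e}\vec e)$. For a unit field, expressing $\mathrm{curl}\,\vec e=\partial_1e_2-\partial_2e_1$ in the orthonormal frame $\{\vec e,\vec e^\perp\}$ gives $\mathrm{curl}\,\vec e=\vec e^\perp\cdot\partial_{\vec e}\vec e-\vec e\cdot\partial_{\vec e^\perp}\vec e=\vec e^\perp\cdot\partial_{\vec e}\vec e$. Hence the $(1,2)$ entry is $|V|\,\mathrm{curl}\,\vec e+\vec e\times\nabla|V|$.

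This matches \eqref{20241020-yb-NormalizedExpressionForJacobianOfVelocityField} in every entry, so $B=A$ and the proposition follows. I would double-check the frame formula for $\mathrm{curl}\,\vec e$ on a simple example, e.g.\ the rotation field $\vec e=x^\perp/|x|$, before finalizing.
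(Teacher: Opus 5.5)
Your proof is correct and follows the paper's own argument: you conjugate the variational equation $J'=J_V(x(t))J$ by the moving frame $U(x(t))$ and identify the resulting generator $U^{\top}J_VU-U^{\top}\tfrac{d}{dt}U$ with $A(t;x_0)$. The only difference is cosmetic. The paper first computes $U^{-1}J_VU$ entrywise and then subtracts the frame rotation rate $k=|V|\,\mathrm{curl}\,\vec e$, working with the components $(w_1,w_2)$ of a tangent vector. You instead read off the first column directly from $J_V\vec e=(\ln|V|)'\vec e+\vec e'$. Your sign conventions for $\vec e\times\nabla|V|$ and your frame formula for $\mathrm{curl}\,\vec e$ are correct.
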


With the help of Proposition \ref{20241015-yb-Erlangen-theorem-FlowOfTangentVectors}, we now prove Theorem \ref{20241015-yb-Erlangen-theorem-FlowOfTangentVectorsForTimeInvariantVelocityField}. 
The proof of Proposition \ref{20241015-yb-Erlangen-theorem-FlowOfTangentVectors} will be given later.

\begin{proof}[Proof of Theorem \ref{20241015-yb-Erlangen-theorem-FlowOfTangentVectorsForTimeInvariantVelocityField}]
Since $V(x_0)\neq0$, the trajectory $x(\cdot)$ is not an equilibrium solution, and therefore
\[
V(x(t))\neq0
~ \text{for all } t \in \mathbb R.
\]
Hence the vector field $\vec e = V/|V|$ is well defined along the trajectory $x(\cdot)$.
Define
\begin{align}\label{20241018-yb-CoefficientsForJacobianOfFlow}
B(\tau;x_0)
:=
\Big(
|V|\,\mathrm{curl}\,\vec e
+
\vec e \times \nabla |V|
\Big)\Big|_{x=x(\tau)},
~ \tau \in \mathbb R.
\end{align}
A direct computation shows that
\begin{align}\label{20250324-yb-DirectComputationsOfAngularMomentum}
B(\tau;x_0)
=\Big[
|V|^2
\mathrm{curl}\,\big(|V|^{-2}V\big)
\Big]
\Big|_{x=x(\tau)},
~ \tau \in \mathbb R.
\end{align}
Observe that the matrix $A(t;x_0)$ in \eqref{20241020-yb-NormalizedExpressionForJacobianOfVelocityField} is upper triangular. 
Solving the linear system \eqref{20241019-yb-PrincipalMatrixSolutionOfJacobianOfVelocityField} (as well as \eqref{20241020-yb-NormalizedExpressionForJacobianOfVelocityField}), we obtain from \eqref{20241018-yb-CoefficientsForJacobianOfFlow} that 
\begin{align*}
\varPsi(t;x_0)
=
\begin{pmatrix}
R(t;x_0) &
R(t;x_0)\displaystyle\int_0^t
\frac{B(\tau;x_0)}{R^2(\tau;x_0)}\,d\tau
\\
0 &
R(t;x_0)^{-1}
\end{pmatrix},
~ t \in \mathbb R,
\end{align*}
where for all $\tau \in \mathbb R$,
\begin{align*}
R(\tau;x_0)
:=&
\exp\!\left[
\int_0^{\tau}
\big(
V\cdot\nabla\ln|V|
\big)\big|_{x=x(\eta)}\,d\eta
\right]
\nonumber\\
=& \exp \Big( \int_0^{\tau}
\frac{d}{ds}\big(\ln|V(x(s))|\big)\,ds \Big)
= \frac{|V(x(\tau))|}{|V(x_0)|}.
\end{align*}
This, together with \eqref{20241018-yb-ExpressionOfJacobianOfFlow} (as well as
\eqref{20250324-yb-DirectComputationsOfAngularMomentum} and \eqref{20241015-yb-OrthogonalFrame}),  yields \eqref{20241018-yb-SpecialExpressionOfJacobianOfFlow}. 
The proof is completed.
\end{proof}

This proposition expresses the Jacobian of the flow in a moving frame aligned with the velocity field, revealing that local deformation consists of speed variation along trajectories and cumulative shear between neighboring streamlines. The first diagonal term 
of $A(t; x_0)$ describes acceleration or deceleration along streamlines, and the second diagonal term, with its negative sign, ensures volume preservation due to incompressibility of the flow. The off-diagonal term 
measures shear or the total transverse deformation between neighboring streamlines. 

The remainder of this subsection is devoted to the proof of Proposition 
\ref{20241015-yb-Erlangen-theorem-FlowOfTangentVectors}, which relies on two lemmas. 
The first lemma shows that the Jacobian matrices 
$\{J_{\varPhi(t)}(x_0)\}_{t\ge0}$ satisfy a linear time-varying ordinary differential equation.

\begin{lemma}\label{20250313-yb-lemma-ComputeJacobianByODE}
Let $x_0\in\overline{\Omega}$ and $w_0\in\mathbb{R}^2$. 
Let $w(\cdot;x_0,w_0)$ denote the solution of the linear time-varying system
\begin{align}\label{20240925-yubiao-FlowForVectorFields}
w'(t)=J_V(x(t;x_0))\,w(t), ~ t \in \mathbb R; 
~~ w(0)=w_0,
\end{align}
where $x(\cdot;x_0)$ is the solution of equation \eqref{20240925-yubiao-Flow}. 
Then, for all $t \in \mathbb R$,
\begin{align}\label{20241015-yb-ComputationOnFlowOfTangentVectors}
J_{\varPhi(t)}(x_0)\,w_0
=
w(t;x_0,w_0).
\end{align}
Equivalently, the family of Jacobian matrices 
$\{J_{\varPhi(t)}(x_0)\}_{ t \in \mathbb R }$ is the evolution system generated by the time-varying matrices 
$\{J_V(x(t;x_0))\}_{ t \in \mathbb R }$.
\end{lemma}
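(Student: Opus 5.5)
The plan is to differentiate the flow identity in the initial point and then invoke uniqueness for linear ODEs. Since $V\in C^1(U;\mathbb R^2)$ on an open set $U\supset\overline\Omega$ and $V$ is tangent to $\partial\Omega$, trajectories starting in $\overline\Omega$ remain in $\overline\Omega$ for all $t\in\mathbb R$. Indeed, $H$ is constant along trajectories and $H|_{\partial\Omega}=0$, so trajectories do not cross $\partial\Omega$. Hence the flow is globally defined. By the classical theorem on differentiable dependence on initial data, the map $(t,x_0)\mapsto\varPhi(t)(x_0)$ is $C^1$. Moreover, the mixed derivative $\partial_t J_{\varPhi(t)}(x_0)$ exists, is continuous, and equals $J_{\varPhi(t)}(x_0)$ differentiated in $t$.

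First, I will differentiate the identity $\partial_t\varPhi(t)(x_0)=V(\varPhi(t)(x_0))$ with respect to $x_0$. The chain rule, together with the interchange of $\partial_t$ and $D_{x_0}$ justified by the differentiability theorem, gives
\begin{align*}
\frac{d}{dt}J_{\varPhi(t)}(x_0)=J_V\big(x(t;x_0)\big)\,J_{\varPhi(t)}(x_0),\qquad J_{\varPhi(0)}(x_0)=I_2 ,
\end{align*}
where the initial condition holds because $\varPhi(0)=\mathrm{id}$. Next, I will multiply on the right by $w_0$. The function $t\mapsto J_{\varPhi(t)}(x_0)w_0$ then solves \eqref{20240925-yubiao-FlowForVectorFields} with initial value $w_0$.

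The coefficient $t\mapsto J_V(x(t;x_0))$ is continuous on $\mathbb R$, so the linear system has a unique global solution. This yields \eqref{20241015-yb-ComputationOnFlowOfTangentVectors} for all $t\in\mathbb R$. Taking $w_0$ to be the standard basis vectors shows that $J_{\varPhi(t)}(x_0)$ is the principal matrix solution, that is, the evolution system generated by $\{J_V(x(t;x_0))\}$.

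The only delicate point is the regularity step: the existence and continuity of the mixed derivative, and the global existence of the flow for $x_0\in\partial\Omega$. Both follow from standard ODE theory, since $V$ is $C^1$ on a neighborhood of $\overline\Omega$ and $\overline\Omega$ is compact and invariant. If one prefers to avoid interchanging derivatives, an alternative is to verify directly that the difference quotient $\big(\varPhi(t)(x_0+hw_0)-\varPhi(t)(x_0)\big)/h$ converges to the solution $w(t)$. This follows from the integral equation and Gronwall's inequality, using the uniform continuity of $J_V$ on compact sets.
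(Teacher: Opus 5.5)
Your proposal is correct and follows the route the paper intends. The paper omits the proof and says only that the lemma follows by differentiating \eqref{20240925-yubiao-Flow} with respect to $x_0$; this gives the variational equation for $J_{\varPhi(t)}(x_0)$ with $J_{\varPhi(0)}(x_0)=I_2$, and uniqueness for the linear system then identifies $J_{\varPhi(t)}(x_0)w_0$ with $w(t;x_0,w_0)$, exactly as you do. Your remarks on global existence belong to the paper's standing setup rather than to this lemma, so they do not affect the proof. As written, though, the level-set argument would also need ODE uniqueness at boundary points, because interior points may lie on $\{H=0\}$.
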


\begin{proof}
The proof follows from differentiating  equation 
\eqref{20240925-yubiao-Flow} with respect to the initial point $x_0$, 
and is therefore omitted.
\end{proof}

The second lemma is stated below. It provides an expression for the Jacobian matrix of $V$ in the orthogonal frame field introduced in \eqref{20241015-yb-OrthogonalFrame}.

\begin{lemma}
Let $\vec e$ and $U$ be defined in \eqref{20241015-yb-OrthogonalFrame}. 
Then, for each $x \in \overline{\Omega} \setminus\{x_k^*\}_{k=1}^K $,
\begin{align}\label{20241015-yb-Erlangen-NewJacobian}
U(x)^{-1} J_V(x) U(x)
=
\begin{pmatrix}
V \cdot \nabla \ln |V|  &  \vec e \times \nabla |V|   \\
|V|\, \mathrm{curl}\,\vec e      &  - V \cdot \nabla \ln |V|
\end{pmatrix}.
\end{align}
Here, $\mathrm{curl}\,V := \nabla \times V$.
\end{lemma}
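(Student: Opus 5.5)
The identity is pointwise and purely algebraic, so I would compute the four entries of $U(x)^{-1}J_V(x)U(x)=U(x)^\top J_V(x)U(x)$ directly, at a fixed non-equilibrium point $x$. These entries are $\vec e^{\,\top}J_V\vec e$, $\vec e^{\,\top}J_V\vec e^{\perp}$, $(\vec e^{\perp})^\top J_V\vec e$ and $(\vec e^{\perp})^\top J_V\vec e^{\perp}$.

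\textbf{Step 1: split the Jacobian.} Since $V=|V|\vec e$ with $|V|>0$ and $C^1$ near $x$, the product rule gives
\[
J_V=\vec e\otimes\nabla|V|+|V|\,J_{\vec e}.
\]
Near $x$ I write $\vec e=(\cos\phi,\sin\phi)^\top$ for a local $C^1$ angle function $\phi$. This is possible because $\vec e$ is a $C^1$ map into the unit circle on a neighbourhood of $x$, and only a local lift is needed. Then $J_{\vec e}=\vec e^{\perp}\otimes\nabla\phi$. Two consequences follow: $\vec e^{\,\top}J_{\vec e}=0$, which is also immediate from $|\vec e|=1$, and
\[
\mathrm{curl}\,\vec e=\partial_1 e_2-\partial_2 e_1=\cos\phi\,\partial_1\phi+\sin\phi\,\partial_2\phi=\vec e\cdot\nabla\phi .
\]

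\textbf{Step 2: the first row.} For any vector $w$,
\[
\vec e^{\,\top}J_V w=(\vec e\cdot\vec e)\,\nabla|V|\cdot w+|V|\,\vec e^{\,\top}J_{\vec e}w=\nabla|V|\cdot w .
\]
Taking $w=\vec e$ gives $\vec e\cdot\nabla|V|=V\cdot\nabla\ln|V|$. Taking $w=\vec e^{\perp}$ gives $\vec e^{\perp}\cdot\nabla|V|$. With $\vec e^{\perp}=(-e_2,e_1)$, this equals $e_1\partial_2|V|-e_2\partial_1|V|=\vec e\times\nabla|V|$.

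\textbf{Step 3: the $(2,1)$ entry.} Since $\vec e^{\perp}\cdot\vec e=0$, the first term of the splitting drops out, and
\[
(\vec e^{\perp})^\top J_V\vec e=|V|\,\vec e^{\perp}\cdot J_{\vec e}\vec e=|V|\,(\nabla\phi\cdot\vec e)=|V|\,\mathrm{curl}\,\vec e .
\]

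\textbf{Step 4: the $(2,2)$ entry.} Conjugation by $U$ preserves the trace, and $\operatorname{tr}J_V=\operatorname{div}V=0$. Hence the $(2,2)$ entry equals minus the $(1,1)$ entry, namely $-V\cdot\nabla\ln|V|$. A direct check is also available: $(\vec e^{\perp})^\top J_V\vec e^{\perp}=|V|\,\nabla\phi\cdot\vec e^{\perp}$, and incompressibility $\operatorname{div}(|V|\vec e)=0$ gives $\vec e\cdot\nabla|V|+|V|\,\nabla\phi\cdot\vec e^{\perp}=0$, which is the same conclusion.

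\textbf{Main obstacle.} There is no real analytic difficulty; the work is bookkeeping. The point needing care is matching sign conventions: the meaning of $\perp$ (anticlockwise rotation, as fixed in the notation), the scalar cross product $a\times b=a_1b_2-a_2b_1$, and $\mathrm{curl}$. These must line up so that the off-diagonal entries land exactly as stated.
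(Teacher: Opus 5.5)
Your proposal is correct, and the sign conventions you flag do line up: with $\vec e^{\perp}=(-e_2,e_1)^{\top}$, $a\times b=a_1b_2-a_2b_1$ and $\vec u\otimes\vec v=\vec u\,\vec v^{\top}$, every entry comes out as stated. Its skeleton matches the paper's. You compute the four frame entries of $U^{\top}J_VU$. You get the first row from $\vec e^{\,\top}J_V=(\nabla|V|)^{\top}$; the paper simply states $\nabla|V|=J_V^{\top}\vec e$, while you derive it from your splitting and $\vec e^{\,\top}J_{\vec e}=0$. You get the $(2,2)$ entry from $\mathrm{tr}\,J_V=\mathrm{div}\,V=0$, as the paper does.

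The genuine difference is the $(2,1)$ entry. The paper does not decompose $J_{\vec e}$ at all. Writing $\vec n=\vec e^{\perp}$, it uses the antisymmetric part of $J_V$, namely $\vec n^{\top}J_V\vec e-\vec e^{\,\top}J_V\vec n=\mathrm{tr}\big(J_V(\vec e\,\vec n^{\top}-\vec n\,\vec e^{\top})\big)=\mathrm{curl}\,V$. It then applies the product rule $\mathrm{curl}(|V|\vec e)=\nabla|V|\times\vec e+|V|\,\mathrm{curl}\,\vec e$, and the two $\vec e\times\nabla|V|$ terms cancel.

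You instead write $J_V=\vec e\otimes\nabla|V|+|V|\,\vec e^{\perp}\otimes\nabla\phi$. This exhibits all four entries at once and identifies the $(2,1)$ entry as $|V|$ times the turning rate $\vec e\cdot\nabla\phi$ of the flow direction along a streamline. That gives you something the paper uses later without justification. In \eqref{20241016-yb-Curvature} it asserts $\vec e\times[(\vec e\cdot\nabla)\vec e]=\mathrm{curl}\,\vec e$, which in your notation is the one-line identity $(\vec e\cdot\nabla)\vec e=(\vec e\cdot\nabla\phi)\,\vec e^{\perp}$.

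The cost of your route is the auxiliary angle function, but the lift is not actually needed. Since $\vec e^{\,\top}J_{\vec e}=0$, one has $J_{\vec e}=\vec e^{\perp}\otimes g$ with $g:=J_{\vec e}^{\top}\vec e^{\perp}$. Your computation then goes through verbatim with $\nabla\phi$ replaced by $g$, giving $\mathrm{curl}\,\vec e=\vec e\cdot g$ and $\mathrm{div}\,\vec e=\vec e^{\perp}\cdot g$.
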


\begin{proof}
For simplicity, set $\vec n := \vec e^{\perp}$. Then
\begin{align}\label{20241015-yb-NewJacobian-InnerProducts}
U(x)^{-1} J_V(x) U(x)
=
\begin{pmatrix}
\langle \vec e, J_V \vec e \rangle_{\mathbb R^2}
&
\langle \vec e, J_V \vec n \rangle_{\mathbb R^2}
\\
\langle \vec n, J_V \vec e \rangle_{\mathbb R^2}
&
\langle \vec n, J_V \vec n \rangle_{\mathbb R^2}
\end{pmatrix}.
\end{align}
It therefore suffices to compute each entry of this matrix. The proof proceeds in three steps.

\vskip 5pt
\noindent\textit{Step 1. Computation of $\langle \vec e, J_V \vec e \rangle_{\mathbb R^2}$ and $\langle \vec n, J_V \vec n \rangle_{\mathbb R^2}$.}

First note that
\begin{align}\label{RelationBetweenGradientAndJacobian-20260926}
\nabla |V| = J_V^{\top} \vec e .
\end{align}
Hence
\begin{align*}
\langle \vec e, J_V \vec e \rangle_{\mathbb R^2}
=
\vec e^{\top} J_V \vec e
=
(\nabla |V|)\cdot \vec e
=
(|V|\,\nabla \ln |V|)\cdot \vec e
=
(\nabla \ln |V|)\cdot V.
\end{align*}
Next, since $\{\vec e,\vec n\}$ forms an orthonormal basis of $\mathbb R^2$,
\[
\mathrm{tr}\,J_V
=
\langle \vec e, J_V \vec e \rangle_{\mathbb R^2}
+
\langle \vec n, J_V \vec n \rangle_{\mathbb R^2},
\]
which yields
\begin{align*}
\langle \vec n, J_V \vec n \rangle_{\mathbb R^2}
=
\mathrm{tr}\,J_V
-
\langle \vec e, J_V \vec e \rangle_{\mathbb R^2}
=
\mathrm{div}\,V
-
V\cdot\nabla\ln|V|
=-V\cdot\nabla\ln|V|.
\end{align*}
This completes Step 1.

\vskip 5pt
\noindent\textit{Step 2. Computation of $\langle \vec e, J_V \vec n \rangle_{\mathbb R^2}$.}

It follows from \eqref{RelationBetweenGradientAndJacobian-20260926} that 
\begin{align*}
\langle \vec e, J_V \vec n \rangle_{\mathbb R^2}
=
\vec e^{\top} J_V \vec n
=
(\nabla |V|)\cdot \vec n
=
\vec n\cdot \nabla |V|
=
\vec e \times \nabla |V|.
\end{align*}

\vskip 5pt
\noindent
\noindent\textit{Step 3. Computation of $\langle \vec n, J_V \vec e \rangle_{\mathbb R^2}$.}

Using the cyclic property of the trace, $\mathrm{tr}(AB)=\mathrm{tr}(BA)$, we have
\begin{align*}
\langle \vec n, J_V \vec e \rangle_{\mathbb R^2}
&=
\vec n^{\top} J_V \vec e
=
\mathrm{tr}\!\left(\vec n^{\top} J_V \vec e\right)
=
\mathrm{tr}\!\left(J_V\,\vec e\,\vec n^{\top}\right)
\\
&=
\mathrm{tr}\!\left(J_V\,\vec n\,\vec e^{\top}\right)
+
\mathrm{tr}\!\left(J_V(\vec e\,\vec n^{\top}-\vec n\,\vec e^{\top})\right).
\end{align*}
Note that $\mathrm{tr}\!\left(J_V\,\vec n\,\vec e^{\top}\right)
=\mathrm{tr}\!\left(\vec e^{\top} J_V\,\vec n\right)
= \langle \vec e, J_V \vec n \rangle_{\mathbb R^2}$ and 
\[
\vec e\,\vec n^{\top}-\vec n\,\vec e^{\top}
=
\begin{pmatrix}
0 & 1\\
-1 & 0
\end{pmatrix}.
\]
Thus
\begin{align*}
\langle \vec n, J_V \vec e \rangle_{\mathbb R^2}
&=
\langle \vec e, J_V \vec n \rangle_{\mathbb R^2}
+
\mathrm{curl}\,V.
\end{align*}
Since $V=|V|\vec e$, we compute
\[
\mathrm{curl}\,V
=
\mathrm{curl}(|V|\vec e)
=
\nabla |V| \times \vec e
+
|V|\,\mathrm{curl}\,\vec e.
\]
Substituting the result of Step 2 then yields
\[
\langle \vec n, J_V \vec e \rangle_{\mathbb R^2}
=
\vec e\times\nabla|V|
+
\big(\nabla|V|\times\vec e+|V|\mathrm{curl}\vec e\big)
=
|V|\,\mathrm{curl}\vec e .
\]
\vskip 5pt
Finally, combining \eqref{20241015-yb-NewJacobian-InnerProducts} with the results obtained in Steps 1--3 gives \eqref{20241015-yb-Erlangen-NewJacobian}. This completes the proof. 
\end{proof}

Now we are in a position to prove Proposition \ref{20241015-yb-Erlangen-theorem-FlowOfTangentVectors}.

\begin{proof}[Proof of Proposition \ref{20241015-yb-Erlangen-theorem-FlowOfTangentVectors}]
We use \eqref{20241015-yb-ComputationOnFlowOfTangentVectors} to compute $J_{\varPhi(t)}(x_0)$. 
For this purpose, let $w$ be a solution of equation \eqref{20240925-yubiao-FlowForVectorFields}. 
Recall the orthogonal frame field $U=(\vec e,\vec e^{\perp})$ from \eqref{20241015-yb-OrthogonalFrame}. 
For simplicity, set
\[
\vec e_1(t):=\vec e(x(t)) 
~\text{ and }~
\vec e_2(t):=\vec e^{\perp}(x(t)),
~ t \in \mathbb R.
\]
Then, for each $t \in \mathbb R$, 
\[
\langle \vec e_i(t),\vec e_j(t)\rangle_{\mathbb R^2}
=
\delta_{ij},
~
i,j=1,2.
\]
Direct computation shows that
\begin{align}\label{ODE-MovingFrame-20260926}
    \frac{d}{dt}\vec e_1=k\,\vec e_2
    ~\text{ and }~
    \frac{d}{dt}\vec e_2=-k\,\vec e_1,
    ~ t \in \mathbb R,
\end{align}
where
\begin{align}\label{20241016-yb-Curvature}
k(t)
&:=
\Big\langle
\frac{d}{dt}\vec e_1(t),\vec e_2(t)
\Big\rangle_{\mathbb R^2}
=
\Big\langle
 \big( x'(t) \cdot\nabla \big)\vec e (t),
\vec e^{\perp}(t)
\Big\rangle_{\mathbb R^2}
\nonumber\\
&=
\vec e\times (V\cdot\nabla)\vec e
=
|V|\,\vec e\times\big[(\vec e\cdot\nabla)\vec e\big]
=
|V|\,\mathrm{curl}\,\vec e.
\end{align}

Next write
\begin{align}\label{20241016-yb-DecompositionOfTangentVectors}
w(t)=w_1(t)\vec e_1(t)+w_2(t)\vec e_2(t),
~ t \in \mathbb R,
\end{align}
where $w_1,w_2\in C^1( \mathbb R )$.  By \eqref{ODE-MovingFrame-20260926}, differentiating on both sides of \eqref{20241016-yb-DecompositionOfTangentVectors} yields
\[
w'=(w_1'-kw_2)\vec e_1+(w_2'+kw_1)\vec e_2.
\]
Since $w$ solves \eqref{20240925-yubiao-FlowForVectorFields}, we obtain
\begin{align*}
\frac{d}{dt}
\begin{pmatrix}
w_1(t)\\
w_2(t)
\end{pmatrix}
+
\begin{pmatrix}
0&-k\\
k&0
\end{pmatrix}
\begin{pmatrix}
w_1(t)\\
w_2(t)
\end{pmatrix}
=
\begin{pmatrix}
\langle\vec e_1,J_V\vec e_1\rangle_{\mathbb R^2} &
\langle\vec e_1,J_V\vec e_2\rangle_{\mathbb R^2}
\\
\langle\vec e_2,J_V\vec e_1\rangle_{\mathbb R^2} &
\langle\vec e_2,J_V\vec e_2\rangle_{\mathbb R^2}
\end{pmatrix}
\begin{pmatrix}
w_1(t)\\
w_2(t)
\end{pmatrix}.
\end{align*}
This, along with \eqref{20241015-yb-Erlangen-NewJacobian} and 
\eqref{20241016-yb-Curvature}, gives 
\begin{align*}
    \frac{d}{dt}
    \begin{pmatrix}
        w_1(t) \\
        w_2(t)
    \end{pmatrix}
    =
    \begin{pmatrix}
        V \cdot \nabla \ln |V|  &
        |V|  \, \text{curl}\, \vec{e}
        +   \vec{e} \times \nabla |V|
        \\
        0&          - V \cdot \nabla \ln |V|
    \end{pmatrix}
    \begin{pmatrix}
        w_1(t) \\
        w_2(t)
    \end{pmatrix}
= A(t; x_0)
\begin{pmatrix}
    w_1(t) \\
    w_2(t)
\end{pmatrix},
\end{align*}
where $A(\cdot;x_0)$ is defined in 
\eqref{20241020-yb-NormalizedExpressionForJacobianOfVelocityField}.
Then, by \eqref{20241019-yb-PrincipalMatrixSolutionOfJacobianOfVelocityField}, we obtain
\[
\begin{pmatrix}
w_1(t)\\
w_2(t)
\end{pmatrix}
=
\varPsi(t;x_0)
\begin{pmatrix}
w_1(0)\\
w_2(0)
\end{pmatrix},
~ t \in \mathbb R.
\]
Using \eqref{20241016-yb-DecompositionOfTangentVectors} twice, we get that when $t \in \mathbb R$, 
\begin{align*}
w(t)
&=
\begin{pmatrix}
\vec e_1(t)&\vec e_2(t)
\end{pmatrix}
\begin{pmatrix}
w_1(t)\\
w_2(t)
\end{pmatrix}
\\
&=
\begin{pmatrix}
\vec e_1(t)&\vec e_2(t)
\end{pmatrix}
\varPsi(t;x_0)
\begin{pmatrix}
\vec e_1(0)&\vec e_2(0)
\end{pmatrix}^{-1}
w(0)
\\
&=
U(x(t))\,\varPsi(t;x_0)\,U(x(0))^{-1}\,w(0).
\end{align*}
Combining this with \eqref{20241015-yb-ComputationOnFlowOfTangentVectors} yields 
\eqref{20241018-yb-ExpressionOfJacobianOfFlow}. This completes the proof.
\end{proof}

\subsection{Asymptotic expansion}
\label{20250324-yb-section-AsymptoticExpansionOfJacobian}

This subsection establishes an asymptotic expansion for the Jacobian matrices of the flow $\{\varPhi(t)\}_{t\in\mathbb{R}}$. The result is based on the representation derived in Theorem \ref{20241015-yb-Erlangen-theorem-FlowOfTangentVectorsForTimeInvariantVelocityField}. 
It is closely related to the growth of the singular values of these Jacobian matrices, which will be studied in Theorem \ref{20250326-yb-theorem-SingularValuesOfJacobian}.

\begin{theorem}\label{20241023-yb-proposition-AsymptoticForJacobianOfVeolocityField} 
Let $T$ be defined by \eqref{20241021-yb-PeriodOfOrbits}. Let $\varepsilon>0$ and let $\Omega_{\geq\varepsilon}$ be defined by \eqref{20250323-yb-LargePeirodLayer}. 
Take $x_0\in\Omega_{\geq\varepsilon}$ such that $V(x_0)\neq0$, and denote by $x(\cdot)$ the solution $x(\cdot;x_0)$ of equation \eqref{20240925-yubiao-Flow}. 
Then there exists a constant $C=C(\Omega,V,\varepsilon)>0$ such that
\begin{align}\label{20241021-yb-AsymptoticExpansionForJacobianOfFlow}
\sup_{t\in\mathbb{R}} 
\Big|
J_{\varPhi(t)}(x_0)w_0
+
t\,\big(w_0\cdot\nabla\ln T(x_0)\big)\,V(x(t))
\Big|
\le C|w_0|,
~~ w_0 \in \mathbb R^2.
\end{align}
In particular, there is a constant $C=C(\Omega,V,\varepsilon)>0$ such that for each $t \in \mathbb R$, 
\begin{align}\label{20260823-LowerBoundOfJacobianOfFlow}
    C^{-1}  \big( 1+   |t|  \big)^{-1}
    | w_0 |     \leq | J_{\varPhi(t)}(x_0) w_0 |
    \leq C  \big( 1+   |t|  \big)  | w_0|,
    ~w_0 \in \mathbb R^2.
\end{align}
\end{theorem}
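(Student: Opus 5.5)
The plan is to start from the explicit moving-frame formula of Theorem~\ref{20241015-yb-Erlangen-theorem-FlowOfTangentVectorsForTimeInvariantVelocityField}. Write $V_0:=V(x_0)$, $V_t:=V(x(t))$ and
\[
I(t):=\int_0^t \mathrm{curl}\,\frac{V}{|V|^2}\Big|_{x(\tau)}d\tau .
\]
Then for $w_0\in\mathbb R^2$,
\[
J_{\varPhi(t)}(x_0)w_0=\frac{w_0\cdot V_0}{|V_0|^2}V_t+\frac{w_0\cdot V_0^\perp}{|V_t|^2}V_t^\perp+I(t)\,(w_0\cdot V_0^\perp)\,V_t .
\]
The first two terms have norms at most $|w_0|\,|V_t|/|V_0|$ and $|w_0|\,|V_0|/|V_t|$. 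Both are bounded by $C|w_0|$ by the speed-ratio term in \eqref{20250329-yb-GlobalUpperBoundsOfOrbitPeriodAndRatioOfVelocity}. So everything reduces to showing that $I(t)(w_0\cdot V_0^\perp)$ equals $-t\,w_0\cdot\nabla\ln T(x_0)$ up to a bounded error, once multiplied by $V_t$.

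\emph{Step 1: the period identity.} If $T(x_0)=+\infty$, then $x_0$ lies on a trajectory in $\{T=+\infty\}$, which is excluded since $x_0\in\Omega_{\ge\varepsilon}$. As $V(x_0)\ne0$, Lemma~\ref{20250322-yb-propsotion-PropertiesOfOrbitPeriod} gives $0<T(x_0)<\infty$ and $T$ is $C^1$ near $x_0$. Differentiate the identity $\varPhi(T(x))(x)=x$ at $x_0$ to get
\[
J_{\varPhi(T_0)}(x_0)w+(\nabla T(x_0)\cdot w)\,V_0=w,\qquad T_0:=T(x_0).
\]
Since $T$ is constant along orbits, $\nabla T\cdot V_0=0$. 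Take $w=V_0^\perp$ and use the representation at $t=T_0$, where $x(T_0)=x_0$. This gives $V_0^\perp+I(T_0)|V_0|^2V_0$ on the left, hence
\[
I(T_0)=-\frac{\nabla T(x_0)\cdot V_0^\perp}{|V_0|^2}.
\]
Decomposing $w_0$ in the basis $V_0,V_0^\perp$ and using $\nabla T\cdot V_0=0$, we get $I(T_0)(w_0\cdot V_0^\perp)=-w_0\cdot\nabla T(x_0)$.

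\emph{Step 2: periodicity.} The integrand of $I$ is $T_0$-periodic, so for $t=nT_0+r$ with $n\in\mathbb Z$ and $r\in[0,T_0)$ we have $I(t)=nI(T_0)+I(r)$. Hence
\[
I(t)(w_0\cdot V_0^\perp)V_t=-t\,(w_0\cdot\nabla\ln T(x_0))V_t+\frac{r}{T_0}(w_0\cdot\nabla T)V_t+I(r)(w_0\cdot V_0^\perp)V_t .
\]
The middle term is at most $|w_0|\,|\nabla T(x_0)|\,|V_0|\cdot|V_t|/|V_0|$, which is bounded by \eqref{20250329-yb-GlobalUpperBoundsOfOrbitPeriodAndRatioOfVelocity}. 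For the last term, note $|\mathrm{curl}(V/|V|^2)|\lesssim \|J_V\|/|V|^2$. Therefore
\[
|I(r)|\,|V_0|\,|V_t|\lesssim \sup_\tau \big(T_0\|J_V(x(\tau))\|\big)\cdot\sup_{\tau,s}\frac{|V(x(s))|^2}{|V(x(\tau))|^2},
\]
and by $T(x(\tau))=T_0$ this is controlled by the $T\|J_V\|$ and speed-ratio terms of Proposition~\ref{20250322-yb-proposition-UsefulPropertiesOfOrbitPeriod} evaluated at $x(\tau)$. Negative $t$ is handled the same way with $n<0$.

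\emph{Main obstacle.} The delicate point is applying Proposition~\ref{20250322-yb-proposition-UsefulPropertiesOfOrbitPeriod} at the points $x(\tau)$ rather than only at $x_0$. This needs the whole orbit of $x_0$ to lie in some $\Omega_{\ge\varepsilon'}$ with $\varepsilon'=\varepsilon'(\Omega,V,\varepsilon)>0$. I would obtain this from the separation fact cited in the proof of that proposition (orbits of points away from $\{T=+\infty\}$ stay away from it, \eqref{BothAwayFromExtremeOrbitPeriods-20260926}), together with invariance of $\{T=+\infty\}$ under the flow and compactness. Near stable centers, uniformity comes from Lemma~\ref{20250329-yb-lemma-OrbitPeriodAroundEquilibra}, since there $T\|J_V\|\lesssim1$ and the speed ratio is bounded by \eqref{ham:center-speed-ratio}. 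This proves \eqref{20241021-yb-AsymptoticExpansionForJacobianOfFlow}.

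\emph{The two-sided bound.} For \eqref{20260823-LowerBoundOfJacobianOfFlow}, the upper bound follows from \eqref{20241021-yb-AsymptoticExpansionForJacobianOfFlow}, since $|\nabla\ln T(x_0)|\,|V_t|\le |\nabla T|\,|V_0|\cdot(|V_t|/|V_0|)\cdot T^{-1}\le C$. For the lower bound, $\det J_{\varPhi(t)}=1$ by incompressibility. So the smallest singular value is the reciprocal of the largest, which is $\ge (C(1+|t|))^{-1}$.
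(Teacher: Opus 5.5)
Your proposal is correct and follows essentially the same route as the paper: the moving-frame formula of Theorem~\ref{20241015-yb-Erlangen-theorem-FlowOfTangentVectorsForTimeInvariantVelocityField}, the one-period identity obtained by differentiating $\varPhi(T(x))(x)=x$ (the paper's \eqref{20241021-yb-DerivativeOfPeriodField} and \eqref{20250325-yb-IntegralAndOrbitPeriod}), splitting $t$ into whole periods plus a remainder bounded via Proposition~\ref{20250322-yb-proposition-UsefulPropertiesOfOrbitPeriod}, and $\det J_{\varPhi(t)}=1$ for the lower bound. Your explicit check that these bounds hold uniformly along the whole orbit of $x_0$, and not just at $x_0$, makes precise a step the paper passes over when it invokes Proposition~\ref{20250322-yb-proposition-UsefulPropertiesOfOrbitPeriod} at $x=x(t)$.
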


Before proving Theorem \ref{20241023-yb-proposition-AsymptoticForJacobianOfVeolocityField}, we present two auxiliary lemmas. 
The first lemma concerns derivatives of the orbit period function $T(\cdot)$.

\begin{lemma}
It holds that 
\begin{align}\label{20241021-yb-DerivativeOfPeriodField}
J_{\varPhi(T)} = I_2 - V \otimes \nabla T
~
\text{ in the open set } \{0<T<+\infty\}.
\end{align}
\end{lemma}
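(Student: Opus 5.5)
The plan is to differentiate the periodicity identity $\varPhi(T(x))(x)=x$, which holds for every $x$ in the open set $\{0<T<+\infty\}$, with respect to $x$. By Lemma~\ref{20250322-yb-propsotion-PropertiesOfOrbitPeriod}(iii), $T$ is $C^1$ on this set. The map $(t,x)\mapsto\varPhi(t)(x)$ is $C^1$ jointly, because $V\in C^1$, and $\partial_t\varPhi(t)(x)=V(\varPhi(t)(x))$. So the composite map $x\mapsto \varPhi(T(x))(x)$ is $C^1$ on $\{0<T<+\infty\}$, and the chain rule gives
\begin{align*}
I_2 = J_{\varPhi(T(x))}(x) + V\big(\varPhi(T(x))(x)\big)\otimes\nabla T(x).
\end{align*}
Here $J_{\varPhi(T)}(x)$ denotes $J_{\varPhi(s)}(x)$ evaluated at the frozen time $s=T(x)$, which is how the left-hand side of \eqref{20241021-yb-DerivativeOfPeriodField} is to be read.

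Next, periodicity gives $\varPhi(T(x))(x)=x$, so $V(\varPhi(T(x))(x))=V(x)$. Rearranging the displayed identity then yields $J_{\varPhi(T)}=I_2-V\otimes\nabla T$, which is the claim.

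The only delicate point is justifying the identity $\varPhi(T(x))(x)=x$ itself. If $0<T(x)<\infty$, the infimum in \eqref{20241021-yb-PeriodOfOrbits} is attained by continuity of $t\mapsto x(t;x)$. It cannot degenerate to $0$: a sequence of return times $\hat t_n\downarrow 0$ would force $V(x)=0$, and then $T(x)=0$, a contradiction. This step, together with the joint $C^1$ regularity of the flow, is routine, so I do not expect any real obstacle.

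As a consistency check, multiplying the identity by $V(x)$ should reproduce $J_{\varPhi(T)}V=V$, which holds when $V\cdot\nabla T=0$. That is true because $T$ is constant along orbits, as one sees by differentiating $T(\varPhi(t)x)=T(x)$ at $t=0$.
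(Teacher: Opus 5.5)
Your proposal is correct and is essentially the paper's own proof. Both differentiate the periodicity identity $\varPhi(T(x))(x)=x$ on $\{0<T<+\infty\}$ using the $C^1$ regularity of $T$, then use $V(\varPhi(T(x))(x))=V(x)$ to rearrange into $J_{\varPhi(T)}=I_2-V\otimes\nabla T$. Your extra remarks (the infimum is attained, and the consistency check $J_{\varPhi(T)}V=V$ via $V\cdot\nabla T=0$) are sound and only make explicit what the paper leaves implicit.
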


\begin{proof}
By the definition of the orbit period $T$ in \eqref{20241021-yb-PeriodOfOrbits}, we have
\[
x_0 = x(T(x_0);x_0)
~
\text{whenever } T(x_0)\in(0,+\infty).
\]
Since $T$ is $C^1$ on the open set $\{0<T<+\infty\}$ (see Proposition \ref{20250322-yb-proposition-UsefulPropertiesOfOrbitPeriod}), we differentiate both sides of the above identity with respect to $x_0$. This gives
\[
I_2
=
x'(T(x_0);x_0) \otimes \nabla T(x_0)
+
J_{\varPhi(T(x_0))}(x_0)
=
V(x_0) \otimes \nabla T(x_0)
+
J_{\varPhi(T(x_0))}(x_0).
\]
Rearranging the terms yields the desired identity \eqref{20241021-yb-DerivativeOfPeriodField}. 
\end{proof}

The following lemma provides an asymptotic estimate for the flow Jacobian  $\{J_{\varPhi(t)}\}_{t\ge0}$.

\begin{lemma}
With the same notations as in Theorem \ref{20241023-yb-proposition-AsymptoticForJacobianOfVeolocityField}, there exists a constant $C=C(\Omega,V,\varepsilon)>0$ such that for each $x_0\in \Omega_{\ge\varepsilon}\setminus\{V=0\}$,
\begin{align}\label{20241021-yb-AverageOfDeflectedTangentVector}
		& \sup_{t \in \mathbb R}    \Big|
		| V(x(t)) |  \cdot  |V(x_0)|   \cdot \int_0^t   	\Big( 
		\text{curl}\,   \frac{V(y)} {\|V(y)\|^2}
		\Big)\Big|_{y=x(\tau)}   d\tau  
		\nonumber\\
	   &  \quad\quad\quad
	   	+    t  
			| V(x(t)) | \cdot |V(x_0)|^{-1} \cdot \big( 
                 V(x_0)   \times    \nabla  \ln T(x_0)
		\big) 
		\Big|
		\leq C.
		\end{align}
\end{lemma}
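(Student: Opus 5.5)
The plan is to exploit the periodicity of the integrand along the orbit, together with the identity \eqref{20241021-yb-DerivativeOfPeriodField}, which identifies the integral over one full period exactly. Set
\[
g(\tau):=\Big(\mathrm{curl}\,\frac{V(y)}{|V(y)|^2}\Big)\Big|_{y=x(\tau)},
\qquad I(t):=\int_0^t g(\tau)\,d\tau .
\]
Since $x_0\in\Omega_{\ge\varepsilon}$ and $V(x_0)\neq0$, we have $T:=T(x_0)\in(0,+\infty)$. Also $g$ is $T$-periodic, so $I(t)=\frac{t}{T}I(T)+\rho(t)$, where $\rho(t)=\int_0^{r}g-\frac{r}{T}I(T)$ with $r\in[0,T)$ the remainder of $t$ modulo $T$. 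The same decomposition holds for $t<0$.

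\emph{Step 1: compute $I(T)$ exactly.} I would evaluate \eqref{20241018-yb-SpecialExpressionOfJacobianOfFlow} at $t=T$, where $x(T)=x_0$. The first two terms combine into $|V_0|^{-2}(V_0\otimes V_0+V_0^\perp\otimes V_0^\perp)=I_2$, writing $V_0:=V(x_0)$. Hence $J_{\varPhi(T)}(x_0)=I_2+I(T)\,V_0\otimes V_0^\perp$. Comparing with \eqref{20241021-yb-DerivativeOfPeriodField} gives $I(T)\,V_0\otimes V_0^\perp=-V_0\otimes\nabla T(x_0)$, so $\nabla T(x_0)=-I(T)V_0^\perp$. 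Here one uses that $\nabla T\cdot V=0$, since $T$ is constant along orbits. Pairing with $V_0^\perp$ and using $a^\perp\cdot b=a\times b$ yields
\[
I(T)=-\frac{V_0\times\nabla T(x_0)}{|V_0|^2},
\qquad
\frac{t}{T}I(T)=-t\,\frac{V_0\times\nabla\ln T(x_0)}{|V_0|^2}.
\]
Multiplying by $|V(x(t))|\,|V_0|$ shows that the leading term cancels exactly the second term in \eqref{20241021-yb-AverageOfDeflectedTangentVector}. It therefore remains to prove
\[
|V(x(t))|\,|V_0|\,|\rho(t)|\le C .
\]

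\emph{Step 2: bound the remainder.} We have $|\rho(t)|\le2\int_0^T|g|$. Moreover $|\mathrm{curl}(V/|V|^2)|\le C\|J_V\|/|V|^2$. Combining this with the speed-ratio bound in \eqref{20250329-yb-GlobalUpperBoundsOfOrbitPeriodAndRatioOfVelocity}, it suffices to show
\[
T\cdot\sup_{\tau\in[0,T]}\|J_V(x(\tau))\|\le C .
\]
Indeed, $|V(x(t))|\,|V_0|/|V(x(\tau))|^2$ is bounded by the square of the speed ratio.

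\emph{Main obstacle: uniformity of $T\|J_V\|$ along the whole orbit.} Proposition~\ref{20250322-yb-proposition-UsefulPropertiesOfOrbitPeriod} gives $T\|J_V\|\le C$ only at points of $\Omega_{\ge\varepsilon}$, while the orbit through $x_0$ might leave $\Omega_{\ge\varepsilon}$. I would handle this by splitting into two cases, with $\delta$ fixed small.

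\emph{Near a stable equilibrium.} If $x_0$ lies within $\delta$ of a stable equilibrium $x_k^*$, the whole orbit is a small closed level curve of comparable size $|y|$. Then Lemma~\ref{20241001-yb-lemma-Hamiltonian} and Lemma~\ref{20250329-yb-lemma-OrbitPeriodAroundEquilibra} give $\|J_V\|\lesssim|y|^{m_k-1}$ and $T\asymp|y|^{-(m_k-1)}$ uniformly along it.

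\emph{Away from stable equilibria.} Otherwise, I would use that $T$ is continuous on the compact set $\Omega_{\ge\varepsilon}\setminus\bigcup_k B_\delta(x_k^*)$, so $T$ is bounded there. The orbit stays in a compact region where $J_V$ is bounded; this is the separation property invoked in the proof of Proposition~\ref{20250322-yb-proposition-UsefulPropertiesOfOrbitPeriod}. This gives a uniform bound with $C=C(\Omega,V,\varepsilon)$ and completes the argument.
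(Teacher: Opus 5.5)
Your proposal is correct and follows essentially the same route as the paper: you identify the one-period integral from the moving-frame Jacobian formula at $t=T(x_0)$ combined with $J_{\varPhi(T)}=I_2-V\otimes\nabla T$, then split $t$ into whole periods plus a remainder controlled by $T\sup\|J_V\|/|V|^2$ and the speed ratio. Your case split (a small neighbourhood of a stable equilibrium versus the compact set where $0<T<+\infty$) gives a more careful justification of $T\sup_{\tau}\|J_V(x(\tau))\|\le C$ along the whole orbit than the paper, which simply evaluates the bounds of Proposition~\ref{20250322-yb-proposition-UsefulPropertiesOfOrbitPeriod} at the points $x(t)$. In your second case only the boundedness of $T(x_0)$ is really needed, since $\|J_V\|$ is bounded on all of $\overline\Omega$.
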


\begin{proof}
Fix $x_0\in\Omega_{\ge\varepsilon}\setminus\{V=0\}$.  
The proof is divided into two steps.

\vskip6pt
\noindent
\textit{Step 1. Evaluation of the period integral}

We first show that
\begin{align}\label{20250325-yb-IntegralAndOrbitPeriod}
A:=
\int_0^{T(x_0)}
\Big(
\mathrm{curl}\,\frac{V(y)}{|V(y)|^2}
\Big)\Big|_{y=x(\tau)} d\tau
=
-
\Big(|V|^{-2}V\times\nabla T\Big)\Big|_{x=x_0}.
\end{align}
By the definition of the period function $T(\cdot)$ in \eqref{20241021-yb-PeriodOfOrbits}, we have $x(T(x_0))=x_0$, and hence
\[
V(x(T(x_0)))=V(x_0).
\]
Combining this with \eqref{20241018-yb-SpecialExpressionOfJacobianOfFlow}, we obtain for each $w_0\in\mathbb R^2$
\begin{align*}
J_{\varPhi(T(x_0))}(x_0)w_0
=
w_0
+
\Big[
\int_0^{T(x_0)}
\Big(
\mathrm{curl}\,\frac{V(y)}{|V(y)|^2}
\Big)\Big|_{y=x(\tau)} d\tau
\,\langle w_0,V^\perp(x_0)\rangle
\Big]V(x_0).
\end{align*}
Using \eqref{20241021-yb-DerivativeOfPeriodField} and taking
$
w_0=V^\perp(x_0)/|V(x_0)|,
$
we deduce
\begin{align*}
\int_0^{T(x_0)}
\Big(
\mathrm{curl}\,\frac{V(y)}{|V(y)|^2}
\Big)\Big|_{y=x(\tau)} d\tau
=
- \bigg( \frac{ w_0 \cdot \nabla T }{ |V| } \bigg)\bigg|_{x=x_0}
=
-
\Big(V\times\nabla T/|V|^2\Big)\Big|_{x=x_0},
\end{align*}
which proves \eqref{20250325-yb-IntegralAndOrbitPeriod}.

\vskip6pt
\noindent
\textit{Step 2. Proof of \eqref{20241021-yb-AverageOfDeflectedTangentVector}}

It suffices to prove \eqref{20241021-yb-AverageOfDeflectedTangentVector} for $t\ge0$, since the case $t<0$ can be treated similarly.  
Fix $t\ge0$ and set $k_0 := [t/T(x_0)]$ (i.e., the largest integer  $\leq t/T(x_0)$).  
Since $x(T(x_0))=x_0$, relation \eqref{20250325-yb-IntegralAndOrbitPeriod} implies that for each $k\in\mathbb N$,
\begin{align*}
\int_{kT(x_0)}^{(k+1)T(x_0)}
\Big(
\mathrm{curl}\,\frac{V(y)}{|V(y)|^2}
\Big)\Big|_{y=x(\tau)} d\tau
=
-
\Big(|V|^{-2}V\times\nabla T\Big)\Big|_{x=x_0}.
\end{align*}
With $A$ given in \eqref{20250325-yb-IntegralAndOrbitPeriod}, we have 
\begin{align*}
\int_0^t
\Big(
\mathrm{curl}\,\frac{V(y)}{|V(y)|^2}
\Big)\Big|_{y=x(\tau)} d\tau
=
k_0A
+
\int_{k_0T(x_0)}^{t}
\Big(
\mathrm{curl}\,\frac{V(y)}{|V(y)|^2}
\Big)\Big|_{y=x(\tau)} d\tau .
\end{align*}
Since $| t - k_0 T(x_0) | < T(x_0)$, we denote the left-hand side of \eqref{20241021-yb-AverageOfDeflectedTangentVector} by $B$ and obtain
\begin{align}\label{20250329-yb-InitialEstimateOnB}
B
\le
|V(x(t))|\,|V(x_0)|
\left(
|A|
+ T(x_0)
\sup_{\tau\in[0,T(x_0)]}
\left|
\mathrm{curl}\,\frac{V}{|V|^2}(x(\tau))
\right|
\right).
\end{align}

On the other hand, by Proposition \ref{20250322-yb-proposition-UsefulPropertiesOfOrbitPeriod}, 
there exists a constant $C_1=C_1(\Omega,V,\varepsilon)>0$ such that for all $t,s\ge0$,
\begin{align*}
\frac{|V(x(t))|}{|V(x(s))|}
+
\Big(
|V|\,|\nabla T|
+
T\|J_V\|_{\mathbb R^{2\times2}}
\Big)\Big|_{x=x(t)}
\le C_1 .
\end{align*}
Combining this with \eqref{20250329-yb-InitialEstimateOnB} yields \eqref{20241021-yb-AverageOfDeflectedTangentVector}.  
The proof is completed.
\end{proof}

We are now ready to prove Theorem \ref{20241023-yb-proposition-AsymptoticForJacobianOfVeolocityField}.

\begin{proof}[Proof of Theorem \ref{20241023-yb-proposition-AsymptoticForJacobianOfVeolocityField}]
By Proposition \ref{20250322-yb-proposition-UsefulPropertiesOfOrbitPeriod}, there exists a constant $C_1=C_1(\Omega,V,\varepsilon)>0$ such that for each $x_0\in\Omega_{\ge\varepsilon}\setminus\{V=0\}$,
\[
\sup_{t,s\in\mathbb R}
\frac{|V(x(t))|}{|V(x(s))|}
\le C_1.
\]
Meanwhile, using that $V \cdot \nabla \ln T =0$, we obtain
\begin{align*}
    w_0 \cdot \nabla \ln T  =  ( V \times \nabla \ln T ) ~ ( w_0 \cdot V^{\perp}) |V|^{-2},
    ~ w_0 \in \mathbb R^2. 
\end{align*}
Now, we determine from Theorem \ref{20241015-yb-Erlangen-theorem-FlowOfTangentVectorsForTimeInvariantVelocityField} and 
\eqref{20241021-yb-AverageOfDeflectedTangentVector} that  
\eqref{20241021-yb-AsymptoticExpansionForJacobianOfFlow} holds.  

Next,  the upper bound in \eqref{20260823-LowerBoundOfJacobianOfFlow} is a direct consequence of \eqref{20241021-yb-AsymptoticExpansionForJacobianOfFlow} as well as \eqref{20250329-yb-GlobalUpperBoundsOfOrbitPeriodAndRatioOfVelocity}.  For the lower bound in \eqref{20260823-LowerBoundOfJacobianOfFlow}, note that $\det J_{\varPhi(t)} (x_0) =1$. Write $\sigma_{sing} \in (0,1]$ and $\sigma_{sing}^{-1} \in [1,+\infty)$ for the  singular values of $J_{\varPhi(t)} (x_0)$. The upper bound in \eqref{20260823-LowerBoundOfJacobianOfFlow} shows 
\begin{align*}
   \max\{ \sigma_{sing}, \sigma_{sing}^{-1}  \}   \leq C(1+t).
\end{align*}
Thus, we obtain
\begin{align*}
    | J_{\varPhi(t)} (x_0) w|     \geq \sigma_{sing} |w|  
    \geq  C^{-1} (1+t)^{-1}  |w|, ~  w \in \mathbb R^2.
\end{align*}
This gives the lower bound in \eqref{20260823-LowerBoundOfJacobianOfFlow}. 
The proof is completed. 
\end{proof}

\subsection{Growth of singular values}
\label{20250324-yb-section-SingularValues}

In this subsection, we analyze the singular values and the corresponding singular vectors of the Jacobian matrices $\{J_{\varPhi(t)}\}_{t\ge0}$ for large time.

\begin{theorem}\label{20250326-yb-theorem-SingularValuesOfJacobian}
Let $E$ be a nonempty relatively compact subset of the open set
\begin{align*}
    \big\{  x \in \overline{\Omega}   ~:~
    0 < T(x) < +\infty,~
    \nabla T(x) \neq 0   \big\}.
\end{align*}
Then, for each $t \in \mathbb R$ and $x_0\in E$, the matrix $J_{\varPhi(t)}(x_0)$ has two positive singular values
\[
\lambda_{\mathrm{sing}}^{-1}(t,x_0)\in[1,+\infty)
~\text{ and }~
\lambda_{\mathrm{sing}}(t,x_0)\in(0,1],
\]
which satisfy the following uniform estimate on $E$:
\begin{align}\label{20250325-yb-LocallyUniformEstimateOnSmallerSingularValues}
\lim_{t\to+\infty}
\Big[
\lambda_{\mathrm{sing}}(t,x_0)\,
\big(
t \, |\nabla\ln T(x_0)|\,|V(x(t;x_0))|
\big)
\Big]
=1.
\end{align}
Moreover, the singular vector $V_{\mathrm{sing}}$ corresponding to $\lambda_{\mathrm{sing}}$ can be chosen such that for some $C=C(\Omega, V, E) > 0$, 
\begin{align}\label{20250325-SingularVectorAlmostEqualsToV}
|V_{\mathrm{sing}}(t,x_0)-V(x_0)|
\le
C\big( (1+ |t| ) |\nabla\ln T(x_0)|\big)^{-1},
~ t  \in \mathbb R.
\end{align}
\end{theorem}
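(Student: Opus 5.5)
The plan is to read off the singular values directly from the first-order expansion of Theorem~\ref{20241023-yb-proposition-AsymptoticForJacobianOfVeolocityField}, using incompressibility ($\det J_{\varPhi(t)}=1$) to pass from the large singular value to the small one. For the singular vector, I will run a perturbation argument on the Gram matrix $J_{\varPhi(t)}^\top J_{\varPhi(t)}$.

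\emph{Uniform constants on $E$.} Since $\overline E$ is a compact subset of $\{0<T<+\infty,\ \nabla T\neq0\}$, we have $d(E,\{T=0,+\infty\})>0$. So $E\subset\Omega_{\ge\varepsilon}\setminus\{V=0\}$ for some $\varepsilon>0$, and Theorem~\ref{20241023-yb-proposition-AsymptoticForJacobianOfVeolocityField} applies with a constant uniform on $E$. By continuity of $\nabla T$ and $V$ on $\overline E$, there is $c_E>0$ with $|\nabla\ln T(x_0)|\ge c_E$ and $|V(x_0)|\ge c_E$ for $x_0\in E$. The speed-ratio bound in \eqref{20250329-yb-GlobalUpperBoundsOfOrbitPeriodAndRatioOfVelocity} then gives $|V(x(t;x_0))|\ge c_E'$ for all $t$, and $|V|\le C$ trivially.

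\emph{Decomposition and singular values.} Set $a:=\nabla\ln T(x_0)$ and $v_t:=V(x(t;x_0))$. By \eqref{20241021-yb-AsymptoticExpansionForJacobianOfFlow},
\begin{align*}
J_{\varPhi(t)}(x_0)=-t\,v_t\otimes a+R_t,\qquad \|R_t\|\le C .
\end{align*}
Hence
\begin{align*}
\big|\,\|J_{\varPhi(t)}(x_0)\|-|t|\,|a|\,|v_t|\,\big|\le C .
\end{align*}
Since $\det J_{\varPhi(t)}=1$, the two singular values are $\|J_{\varPhi(t)}\|=:\lambda_{\mathrm{sing}}^{-1}\ge1$ and $\lambda_{\mathrm{sing}}\le1$. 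Therefore
\begin{align*}
\lambda_{\mathrm{sing}}\,t|a||v_t| = \frac{t|a||v_t|}{t|a||v_t|+O(1)}\longrightarrow 1 .
\end{align*}
The convergence is uniform on $E$ because $t|a||v_t|\ge c_Ec_E' t$. This proves \eqref{20250325-yb-LocallyUniformEstimateOnSmallerSingularValues}.

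\emph{Singular vector.} The right singular vector for $\lambda_{\mathrm{sing}}$ is the eigenvector of $G_t:=J^\top J$ for its smallest eigenvalue. This vector is orthogonal to the top eigenvector. Expanding,
\begin{align*}
G_t=t^2|v_t|^2\,a\otimes a+E_t,\qquad \|E_t\|\le C(1+|t|),
\end{align*}
where $\|E_t\|$ is controlled by $\|R_t\|\,|t||a||v_t|+\|R_t\|^2$. The leading rank-one part has spectral gap $t^2|v_t|^2|a|^2$. A Davis--Kahan (or direct $2\times2$) argument therefore shows that the angle between the top eigenvector and $a$ is $O\big(\|E_t\|/(t^2|a|^2)\big)=O\big((|t||a|)^{-1}\big)$, using $|v_t|\ge c_E'$. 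I expect getting this rate, rather than a cruder one, to be the main obstacle: one must keep track of the fact that $\|E_t\|$ is linear rather than quadratic in $t$.

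The bottom eigenvector then makes angle $O((|t||a|)^{-1})$ with $a^\perp$. Since $V\cdot\nabla T=0$, the vector $a^\perp$ is parallel to $V(x_0)$. Choose $V_{\mathrm{sing}}$ with the sign and the length $|V(x_0)|$ so that it is closest to $V(x_0)$. Then
\begin{align*}
|V_{\mathrm{sing}}-V(x_0)|\le C|V(x_0)|\,(|t||a|)^{-1},
\end{align*}
which gives \eqref{20250325-SingularVectorAlmostEqualsToV} for $|t|\ge t_0$. For $|t|\le t_0$, the left-hand side is at most $2\sup|V|$, while the right-hand side is bounded below since $|a|\le C$ on $E$. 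Enlarging $C$ thus covers all $t\in\mathbb R$.
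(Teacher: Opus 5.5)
Your proof is correct, but it is organized differently from the paper's.

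The paper returns to the exact moving-frame factorization of Theorem~\ref{20241015-yb-Erlangen-theorem-FlowOfTangentVectorsForTimeInvariantVelocityField}, $J_{\varPhi(t)}(x_0)=U(x(t))\begin{pmatrix}\rho&b\\0&\rho^{-1}\end{pmatrix}U(x_0)^{-1}$. Here $\rho=|V(x(t))|/|V(x_0)|$; the paper calls this $a$, and I rename it to avoid a clash with your $a=\nabla\ln T(x_0)$. The paper then writes $J^\top J$ as an explicit symmetric $2\times2$ matrix in the frame at $x_0$. It solves $\lambda^4-(\rho^2+\rho^{-2}+b^2)\lambda^2+1=0$ in closed form and inserts $|b|=|t|\,|\nabla\ln T(x_0)|\,|V(x(t))|+O(1)$ from \eqref{20241021-yb-AverageOfDeflectedTangentVector}. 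The singular vector is then computed exactly as $V(x_0)+\alpha V^\perp(x_0)$ with $\alpha=(\lambda_{\mathrm{sing}}^2-\rho^2)/(\rho b)$.

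You instead use Theorem~\ref{20241023-yb-proposition-AsymptoticForJacobianOfVeolocityField} as a black box: a rank-one term of size $|t|$ plus a bounded remainder. You read off the large singular value as $\|J_{\varPhi(t)}(x_0)\|$ via the triangle inequality, get the small one from $\det J_{\varPhi(t)}=1$, and locate the singular direction by perturbation. Your route avoids all explicit formulas and would work for any unimodular family admitting such an expansion. The paper's exact formulas give slightly more, e.g.\ the two-sided size $|\alpha|\asymp(|t|\,|\nabla\ln T(x_0)|\,|V(x_0)|)^{-1}$ of the deviation rather than only an upper bound.

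Two small remarks. First, the step you flagged as the main obstacle does not need Davis--Kahan. For a unit vector $u$ with $|J_{\varPhi(t)}(x_0)u|=\lambda_{\mathrm{sing}}\le1$, your expansion gives $|t|\,|v_t|\,|a\cdot u|\le 1+\|R_t\|$, hence $|a\cdot u|/|a|\le C(|t|\,|a|)^{-1}$ at once, using $|v_t|\ge c_E'$. Second, $|a|$ is bounded above and below on $E$, so the factor $|\nabla\ln T(x_0)|$ in \eqref{20250325-SingularVectorAlmostEqualsToV} only affects the constant. Your cruder bound $\|E_t\|\le C(1+|t|)$ is therefore harmless.
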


\begin{remark}\label{remark-SingularValues}
    Variation of the orbit period causes the anisotropic growth of singular values of the flow-map Jacobian $J_{\varPhi(t)}$ (see \eqref{20250325-yb-LocallyUniformEstimateOnSmallerSingularValues}): the larger singular value grows linearly, and its associated singular vector asymptotically aligns with $V^{\perp}$; conversely, the smaller singular value decays at the reciprocal rate, with its singular vector asymptotically aligning with $V$. This is responsible for the following: (i) the upper and lower bounds for $J_{\varPhi(t)}$  in \eqref{20260823-LowerBoundOfJacobianOfFlow}, (ii) the stretching-compression effect (i.e., stretch along the ``bigger" singular vector and compress along the ``smaller" singular vector),  and (iii) the growth of curves and the deformation for open sets under the flow.
\end{remark}

\begin{proof}
Let $t \in \mathbb R$ and $x_0\in E$. For simplicity, write $x(\cdot)$ for the solution $x(\cdot;x_0)$. 
Let $\vec e$ and $U$ be defined in \eqref{20241015-yb-OrthogonalFrame}. 
By \eqref{20241018-yb-SpecialExpressionOfJacobianOfFlow}, we have
\begin{align*}
J_{\varPhi(t)}(x_0)
=
U(x(t))
\begin{pmatrix}
a & b\\
0 & a^{-1}
\end{pmatrix}
U(x_0)^{-1},
\end{align*}
where $a=a(t, x_0)$ and $b=b(t, x_0)$ are given by
\begin{align}\label{20250325-yb-TwoNumbers-a-b}
a:=\frac{|V(x(t))|}{|V(x_0)|},
~\text{ and }~
b:=|V(x(t))|\,|V(x_0)|
\int_0^t
\Big(
\mathrm{curl}\,\frac{V(y)}{|V(y)|^2}
\Big)\Big|_{y=x(\tau)}d\tau .
\end{align}
Consequently,
\begin{align}\label{20250325-yb-SquareOfJacobianMatrix}
J_{\varPhi(t)}^{\top}(x_0)J_{\varPhi(t)}(x_0)
=
U(x_0)
\begin{pmatrix}
a^2 & ab\\
ab & a^{-2}+b^2
\end{pmatrix}
U(x_0)^{-1}.
\end{align}

Since the determinant of this symmetric matrix equals  $1$, the matrix $J_{\varPhi(t)}(x_0)$ has two positive singular values, denoted by $\lambda_{\mathrm{sing}}^{-1}(t,x_0)\in[1,+\infty)$ and $\lambda_{\mathrm{sing}}(t,x_0)\in(0,1]$. 
The smaller singular value satisfies
\[
\lambda^4-(a^2+a^{-2}+b^2)\lambda^2 +1=0,
\]
which yields
\begin{align}\label{20250325-yb-ExpressionOfSmallerSingularValue}
\lambda_{\mathrm{sing}}^2
=
\frac{2}{
(a^2+a^{-2}+b^2)
+
\sqrt{(a^2+a^{-2}+b^2)^2-4}
}.
\end{align}
By \eqref{20250325-yb-TwoNumbers-a-b}, \eqref{20250329-yb-GlobalUpperBoundsOfOrbitPeriodAndRatioOfVelocity}, and \eqref{20241021-yb-AverageOfDeflectedTangentVector}, the quantities $a$ and $a^{-1}$ are uniformly bounded for $x_0\in E$ and $t \in \mathbb R$, while
\begin{align}\label{20250325-yb-AsymptoticOfIntegral}
b^2\sim
\big(
|t| \, |\nabla\ln T(x_0)|\,|V(x(t;x_0))|
\big)^2
~
( |t| \to +\infty)
\end{align}
uniformly over $E$. 
Substituting this into \eqref{20250325-yb-ExpressionOfSmallerSingularValue} gives \eqref{20250325-yb-LocallyUniformEstimateOnSmallerSingularValues}.

Next we determine the corresponding singular vector. 
Assume
\begin{align}\label{20250325-yb-DefineSingularVector}
V_{\mathrm{sing}}(t,x_0)
=
V(x_0)+\alpha V^{\perp}(x_0),
\end{align}
where $\alpha=\alpha(t,x_0)\in\mathbb R$ is to be determined. 
Substituting this into \eqref{20250325-yb-SquareOfJacobianMatrix} yields
\[
\alpha=\frac{\lambda_{\mathrm{sing}}^2 - a^2}{ab}.
\]
Using \eqref{20250325-yb-LocallyUniformEstimateOnSmallerSingularValues}, 
\eqref{20250325-yb-AsymptoticOfIntegral}, and \eqref{20250325-yb-TwoNumbers-a-b}, we obtain
\[
|\alpha|
\asymp 
|ab^{-1}|
\asymp
\big(
|t| \, |\nabla\ln T(x_0)|\,|V(x_0)|
\big)^{-1}
~ (|t| \to +\infty),
\]
uniformly for $x_0\in E$. 
Together with \eqref{20250325-yb-DefineSingularVector}, this proves \eqref{20250325-SingularVectorAlmostEqualsToV} for large $t$. For the bounded time $t$, \eqref{20250325-SingularVectorAlmostEqualsToV} can be directly checked. The proof is completed. 
\end{proof}

\section{Proof of Theorem \ref{20241021-yb-theorem-OptimalGrowthForCurves}}
\label{20241018-yb-TimeInvariantVelocityField}

This section is devoted to the proof of Theorem \ref{20241021-yb-theorem-OptimalGrowthForCurves}. 
The argument requires computing the length $|\gamma_t|$ and analyzing the matrices $J_{\varPhi(t)}(\cdot)$ along the initial curve $\gamma_0$. 
With the help of Theorem \ref{20241023-yb-proposition-AsymptoticForJacobianOfVeolocityField}, we now present the proof of Theorem \ref{20241021-yb-theorem-OptimalGrowthForCurves}.

\begin{proof}[Proof of Theorem \ref{20241021-yb-theorem-OptimalGrowthForCurves}]

First, by \eqref{20250402-yb-FintiePeriodAssumptionOnCurve}, it is clear that
\begin{align*}
\gamma_0 \subset \Omega_{\ge D},
\end{align*}
where $\Omega_{\ge D}$ is defined in \eqref{20250323-yb-LargePeirodLayer} (with $s$ replaced by $D$).
Next, we prove \eqref{20241023-yb-AuxsiliaryEstimateForLengthOfCurves}. 
By Proposition \ref{20250322-yb-proposition-UsefulPropertiesOfOrbitPeriod}, there exists a constant $C_1=C_1(\Omega,V,D_*)>0$ (with $D_*=\min\{1,D\}$) such that for each $x_0\in \Omega_{\ge D}\setminus\{V=0\}$,
\begin{align}
T^{-1}(x_0)
+|V(x_0)|\,|\nabla T(x_0)|
+\sup_{t\in\mathbb R}\frac{|V(x(t;x_0))|}{|V(x_0)|}
\le C_1 .
\end{align}
This implies that when $x_0\in\Omega_{\ge D}\setminus\{V=0\}$,
\begin{align*}
|\nabla\ln T(x_0)|\,|V(x(t;x_0))|
&=T^{-1}(x_0)
\big(|\nabla T(x_0)|\,|V(x_0)|\big)
\frac{|V(x(t;x_0))|}{|V(x_0)|}
\le C_1^3,
\end{align*}
which  yields \eqref{20241023-yb-AuxsiliaryEstimateForLengthOfCurves}.

\vspace{4pt}

We now prove \eqref{20241023-yb-AsymptoticForLengthOfCurves}. 
Define the measurable set
\begin{align}\label{20250402-yb-NodalSetOfInitialCurve}
E_0
:=
\big\{\alpha\in[0,1] ~:~ T(\gamma_0(\alpha))=0\big\}
=
\big\{\alpha\in[0,1] ~:~ V(\gamma_0(\alpha))=0\big\},
\end{align}
where the equality of the two sets follows from (i) of Lemma \ref{20250322-yb-propsotion-PropertiesOfOrbitPeriod}.
We claim that
\begin{align}\label{20250402-yb-ZeroDerivativeOfInitialCurve}
\gamma_0'(\alpha)=0
~\text{ for a.e. }\alpha\in E_0 .
\end{align}
To see this, take a Lebesgue point $\alpha_0\in E_0$ of $E_0$  such that $\gamma_0$ is differentiable at $\alpha_0$. Since the set of equilibria of $V$ is finite (see assumption (A2)), these equilibria are isolated. 
Then, by the continuity of $\gamma_0$, it follows from \eqref{20250402-yb-NodalSetOfInitialCurve} that there exists $\delta_0>0$ such that
\begin{align*}
    \gamma_0(\alpha)
    \equiv\text{constant over}\ 
    (\alpha_0-\delta_0,\alpha_0+\delta_0)\cap E_0.
\end{align*}
Since $\gamma_0$ is differentiable at $\alpha_0$, the above implies that $\gamma_0'(\alpha_0)=0$. So almost every point $\alpha_0$ in $E_0$ has this property. Thus, \eqref{20250402-yb-ZeroDerivativeOfInitialCurve} holds.


\vspace{4pt}

Next, for $\alpha\in[0,1]$ we compute
\begin{align}\label{20250402-yb-DerivativesOfEvolvingCurves}
\gamma_t'(\alpha)
=
\frac{d}{d\alpha}\varPhi(t)(\gamma_0(\alpha))
=
J_{\varPhi(t)}(\gamma_0(\alpha))\,\gamma_0'(\alpha).
\end{align}
For a.e.\ $\alpha\in[0,1]\setminus E_0$, applying 
\eqref{20241021-yb-AsymptoticExpansionForJacobianOfFlow}
with
\[
(x_0,x(t;x_0),w_0)
\mapsto
(\gamma_0(\alpha),\gamma_t(\alpha),\gamma_0'(\alpha))
\]
gives a constant $C_2=C_2(\Omega,V,D_*)>0$ such that
\begin{align}\label{20250402-yb-GrowthRateOnNonEquilibrium}
\sup_{t\ge0}
\left|
\gamma_t'(\alpha)
+
t\big(\gamma_0'(\alpha)\cdot\nabla\ln T(\gamma_0(\alpha))\big)
V(\gamma_t(\alpha))
\right|
\le
C_2 |\gamma_0'(\alpha)|.
\end{align}

Finally, using \eqref{20250402-yb-DerivativesOfEvolvingCurves} together with
\eqref{20250402-yb-ZeroDerivativeOfInitialCurve} and
\eqref{20250402-yb-NodalSetOfInitialCurve}, we obtain
\begin{align*}
|\gamma_t|
&=
\int_0^1|\gamma_t'(\alpha)|\,d\alpha
=
\int_0^1
\chi_{\{T\neq0\}}(\gamma_0(\alpha))
|\gamma_t'(\alpha)|\,d\alpha .
\end{align*}
Combining this with \eqref{20250402-yb-GrowthRateOnNonEquilibrium} yields
\eqref{20241023-yb-AsymptoticForLengthOfCurves}. The proof is completed.
\end{proof}

\section{Deformation of the orbit-relative complement}
\label{20250313-yb-subsection-MixingScale-DeformationOfOpenSet}

In this section, we study the deformation of the transported orbit-relative complement $Orbit(A)\setminus A$ of a subdomain $A$ satisfying a uniform-orbit cone condition. As time evolves, this complement becomes increasingly elongated, like a piece of dough being gradually
stretched into a long, thin noodle (see Figure \ref{fig:r2-onecircle-evolution} in Subsection \ref{subsection-SecondTheorem}).



The main result of this section is the following.

\begin{proposition}    \label{rev:prop52}
Let $A\subset\Omega$ be a nonempty open subset whose closure is not $V$-invariant. 
Assume that $A$ satisfies the following uniform-orbit cone condition: there are numbers $r_0 >0 $ and $\beta_0 \in (0,\pi/2)$ such that for each $x\in A$, there is a point $p \in A\cap Orbit(x)$ and a direction $\nu_p \in \mathbb S^1$ such that
\begin{align}\label{20260908-UniformInteriorConeCondition}
    \Bigl\{
    p+s\xi  ~:~  0<s \leq r_0,  ~\xi\in\mathbb S^1,    ~\xi\cdot\nu_p>\cos\beta_0
    \Bigr\}
    \subset A. 
\end{align}
Suppose also that
        \begin{align}\label{20260901-AssumptionInDeformationTheorem}
            d:=d\big(A,\{T=0, +\infty\}\big)>0
            ~\text{ and }~ 
            \kappa := \inf_{x \in A}  | \nabla T(x)| > 0.
        \end{align}
Write $E:= Orbit(A) \setminus A$ and $E(t):=\varPhi(t)(E)$ ($t \geq 0$). 
Then there is a constant $C=C(\Omega,V,d,\kappa,r_0,\beta_0)>0$ such that
        \begin{align}\label{lip:main}
            C^{-1} (1+t)^{-1}   \sup_{x_0\in E} d(x_0,E^\complement)      \leq
            \rho\bigl(E(t)\bigr)
            \leq
            \rho_A\bigl(E(t)\bigr)
            \leq  C (1+t)^{-1},
            ~~ t\geq0.
        \end{align}
        Here, the functions $\rho(\cdot)$ and $\rho_{A}(\cdot)$ are defined as follows:
        \begin{align*}
            \rho(F):=\sup_{x\in F}d\bigl(x,F^\complement\bigr)
            ~\text{ and }~
            \rho_A(F)
            :=
            \sup_{x\in F}d\bigl(x, F^\complement \cap Orbit(A)  \bigr),
            ~\emptyset \neq F  \subset  Orbit(A). 
        \end{align*}
    
\end{proposition}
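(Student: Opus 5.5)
The middle inequality $\rho(E(t))\le\rho_A(E(t))$ is immediate, since $E(t)\subset Orbit(A)$ and $F^\complement\cap Orbit(A)\subset F^\complement$. So the work splits into the lower bound for $\rho$ and the upper bound for $\rho_A$.

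Both bounds will be done in the action--angle coordinates of Lemma~\ref{rev:uniform-action-angle} on a neighbourhood $\mathcal U$ of $\overline{Orbit(A)}$. By \eqref{20260901-AssumptionInDeformationTheorem} and the invariance of $T$, this set stays at distance $\gtrsim d$ from $\{T=0,+\infty\}$, and $|\nabla T|\ge\kappa$ there. In these coordinates a point is $(h,\phi)$, with $h$ the value of $H$ and $\phi\in\mathbb R/\mathbb Z$ the normalized time along the orbit. The flow acts as
\[
\varPhi(t):(h,\phi)\mapsto\bigl(h,\ \phi+t/T(h)\bigr),
\]
and the coordinate map is bi-Lipschitz with constants depending on $(\Omega,V,d)$; this uses Proposition~\ref{20250322-yb-proposition-UsefulPropertiesOfOrbitPeriod}. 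The frequency $\omega(h)=1/T(h)$ satisfies $|\omega'(h)|\asymp|\nabla T|/(T^2|\nabla H|)\ge c(\kappa,d)>0$, so $\omega$ is strictly monotone in $h$ on each connected band of orbits.

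\emph{Lower bound.} Take $x_0\in E$ and set $r:=d(x_0,E^\complement)$, so $B_r(x_0)\subset E$. Its image under the flow contains $\varPhi(t)(B_r(x_0))$. By the Jacobian bound \eqref{20260823-LowerBoundOfJacobianOfFlow} of Theorem~\ref{20241023-yb-proposition-AsymptoticForJacobianOfVeolocityField}, applied along segments, $\varPhi(t)^{-1}=\varPhi(-t)$ is Lipschitz with constant $C(1+t)$ on the relevant region. Hence $\varPhi(t)(B_r(x_0))\supset B_{r/(C(1+t))}(\varPhi(t)x_0)$, which gives $\rho(E(t))\ge C^{-1}(1+t)^{-1}r$. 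Taking the supremum over $x_0\in E$ gives the claim. This step is routine once the Lipschitz bound is justified on a neighbourhood that stays inside $\Omega_{\ge\varepsilon}$.

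\emph{Upper bound for $\rho_A$.} Fix $y=\varPhi(t)x\in E(t)$. We must produce a point of $\varPhi(t)(A)$ within $C/(1+t)$ of $y$; since $\varPhi(t)(A)\subset Orbit(A)\setminus E(t)$, such a point lies in $E(t)^\complement\cap Orbit(A)$. Because $x\in Orbit(A)$, the uniform-orbit cone condition \eqref{20260908-UniformInteriorConeCondition} gives a point $p\in A$ on the orbit of $x$ and a cone $K_p\subset A$ of size $r_0$ and aperture $\beta_0$ at $p$. The cone is not tangent to the orbit direction in a degenerate way: any cone of aperture $\beta_0$ contains a sub-cone whose directions make angle $\ge c(\beta_0)$ with $V(p)$. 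In $(h,\phi)$ coordinates, $K_p$ therefore contains a curvilinear sector, i.e.\ a family of short segments transverse to the orbits. More concretely, it contains a region
\[
\{(h,\phi):\ h\in I,\ |\phi-\phi_p-\lambda(h-h_p)|\le c\,|h-h_p|\},
\]
with $I$ a one-sided $h$-interval of length $\gtrsim r_0$ and $|\lambda|\le C$.

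Under $\varPhi(t)$ this region becomes a sector whose centre line is $\phi=\phi_p+t\omega(h_p)+(\lambda+t\omega'(h_p)+O(t|h-h_p|))(h-h_p)$. Its angular width at height $h$ stays $c|h-h_p|$, while the slope in $\phi$ is of order $t$, since $|\omega'|\ge c$. As $h$ ranges over an $h$-interval of length $\sim C/t$ (with the window moved within $I$), the centre line wraps through all angles $\phi\in\mathbb R/\mathbb Z$. The set therefore meets every orbit-segment of $h$-width $C/(1+t)$ near any phase, including the phase of $y$; the level $h_y=h_p$ is reachable at distance $\lesssim 1/t$ in $h$. Back in Euclidean coordinates, via the bi-Lipschitz map, this gives a point of $\varPhi(t)(A)$ within $C(1+t)^{-1}$ of $y$. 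For bounded $t$ the estimate is trivial because $Orbit(A)$ is bounded.

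I expect the main obstacle to be making this wrapping argument uniform. The rotation count must be shown to be linear in $t$, with the $O(t|h-h_p|^2)$ curvature of the twisted sector controlled; one needs $h$-windows where $t|h-h_p|^2\ll 1$ while $t|h-h_p|\gtrsim 1$, which holds for $t$ large. One must also handle the case where the orbit through $y$ lies on the boundary of the band covered by the cone. There I would use the cone on the appropriate side, where $I$ is one-sided toward the interior of $Orbit(A)$, together with the bi-Lipschitz control near $\partial Orbit(A)$.
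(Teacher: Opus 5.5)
Your proposal is correct and follows essentially the paper's route. The lower bound is the two-sided Jacobian bound \eqref{20260823-LowerBoundOfJacobianOfFlow} applied along a segment; the paper makes your ``relevant region'' caveat rigorous by stopping the segment at its first exit from $E(t)$, so that it stays in $Orbit(A)$. The upper bound is the phase-wrapping argument in the chart of Lemma~\ref{rev:uniform-action-angle}, run along a transverse segment from the cone at a point $p\in A$ on the orbit of $x$.

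The obstacles you anticipate do not arise. The paper uses only that single segment (no sector) and the monotonicity of the phase difference $\Theta(s;t)$, whose derivative is at least $ct-\|\alpha'\|_{L^\infty}$. The intermediate value theorem then gives a phase match within $s\lesssim 1/t$ without any Taylor remainder of $\omega$, and no boundary case occurs because the segment starts inside $A$ on the level of $y$.
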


\begin{proof}
\csname @show@reffalse\endcsname 
First of all, $E$ is not empty; otherwise, $Oribt(A) = A$ and $A$ is $V$-variant, which leads to a contradiction. 
The proof is organized into the following three steps.

    \medskip
    \noindent\textit{Step 1. To show that for some $C_1=C_1(\Omega,V,d,\kappa) > 0 $, 
    \begin{align}\label{20260901-weight}
        \inf_{x\in Orbit(A)}  
            ~  | \nabla T^{-1}(x) | 
        >  C_1
    \end{align}
    }
    
%
    Observe that
    \begin{align*}
        | \nabla T^{-1} | =  | V | \cdot  |\nabla \ln T |   \cdot ( T |V|)^{-1},
    \end{align*}
    and that $T|V|$ is bounded above by \eqref{20250329-yb-GlobalUpperBoundsOfOrbitPeriodAndRatioOfVelocity}. These yield  \eqref{20260901-weight}. 

    \medskip
    \noindent\textit{Step 2. To  prove the first inequality in \eqref{lip:main}, i.e., for some $C_2=C_2(\Omega,V,d,\kappa)>0$,
    \begin{align}\label{20250326-yb-LowerBoundOfMaximalWidthOfEvolvingSubdomain}
         C_2(1+t)^{-1}
        \sup_{x_0\in E} d(x_0,E^\complement)
        \le
        \sup_{x\in E(t)} d\big(x,E(t)^\complement\big),
        ~ t\ge0
    \end{align}
}
    We estimate the distance from an arbitrary point of $E(t)$ to the boundary of $E(t)$. Fix $\hat x_0\in E(t)$, and let $\hat x_1$ be any boundary point of $E(t)^\complement$. Along the segment from $\hat x_0$ to $\hat x_1$, define $x_*$ to be the first point at which one exits $E(t)$, i.e.,
    \begin{align*}
        x_*:= ( 1 - \alpha_* ) \hat x_0+ \alpha_*\hat x_1
        ~\text{with }~
        \alpha_*:=\inf\Big\{
        \alpha\in[0,1]  ~:~  (1-\alpha) \hat x_0+\alpha \hat x_1\in E(t)^\complement
        \Big\}.
    \end{align*}
    Since $\varPhi(t)$ is a homeomorphism, there exists a Lipschitz curve $\gamma_0:[0,\alpha_*)\to E$ connecting $\varPhi(t)^{-1}(\hat x_0)$ to $\varPhi(t)^{-1}(x_*)$ (a boundary point of $E$) such that
    \begin{align*}
        \varPhi(t)(\gamma_0(\alpha))
        =
        (1-\alpha)\hat x_0+\alpha\hat x_1,
        ~
        \alpha\in[0,\alpha_*).
    \end{align*}
    Hence
    \begin{align}\label{20250326-yb-LowerBoundOfAlternativeDistance}
        d(\hat x_0,\hat x_1)
        \ge
        d(\hat x_0,x_*)
        =
        |\varPhi(t)(\gamma_0)|
        =
        \int_0^{\alpha_*}
        |J_{\varPhi(t)}(\gamma_0(\alpha))\gamma_0'(\alpha)|\,d\alpha.
    \end{align}
    By \eqref{20260823-LowerBoundOfJacobianOfFlow}, there is a constant $C_{11} = C_{11}(\Omega, V, d)  >0$ such that
    \begin{align*}
        |J_{\varPhi(t)}(\gamma_0(\alpha))\gamma_0'(\alpha)|
        \ge
        C_{11} ( 1+ t )^{-1} |\gamma_0'(\alpha)|,
        ~
        \forall\,\alpha\in[0,\alpha_*).
    \end{align*}
    From this and  \eqref{20250326-yb-LowerBoundOfAlternativeDistance}, we have 
    \begin{align*}
        d(\hat x_0,\hat x_1)
        \geq
        C_{11} |\gamma_0| ( 1+ t )^{-1}
        \geq
        C_{11}
        d\Big(\varPhi(t)^{-1}(\hat x_0),E^\complement\Big)
        ( 1+ t )^{-1} .
    \end{align*}
    This implies
    \begin{align*}
        &\sup_{x\in E(t)} d\big(x,E(t)^\complement\big)
        =
        \sup_{\hat x_0\in E(t)}\inf_{\hat x_1\in E(t)^\complement} d(\hat x_0,\hat x_1)
        \\
        \ge& C_{11}   ( 1+ t )^{-1}  \sup_{\hat x_0\in E(t)}
        d\Big(\varPhi(t)^{-1}(\hat x_0),E^\complement\Big)
        = 
        C_{11}(1+t)^{-1}
        \sup_{x_0\in E} d(x_0,E^\complement),
    \end{align*}
    which proves \eqref{20250326-yb-LowerBoundOfMaximalWidthOfEvolvingSubdomain}.

    \vskip 5pt
    \noindent \medskip
    \noindent\textit{Step 3. To prove the last inequality in \eqref{lip:main}, i.e., for some $C_3=C_3(\Omega,V,d,\kappa,r_0,\beta_0)>0$,
        \begin{align}\label{20250326-yb-UpperBoundOfMaximalWidthOfEvolvingSubdomain}
            \sup_{x\in E(t)} d\Big(x,E(t)^\complement\cap Orbit(A)\Big)
            \le
            C_3(1+t)^{-1},
            ~ t\ge0
        \end{align}
    }
    
Fix an arbitrary $x_0 \in E$. Since $E = Orbit(A) \setminus A$, we deduce from \eqref{20260908-UniformInteriorConeCondition} that there is a $y_0 \in Orbit(x_0) \cap A$ and a direction $\nu \in \mathbb S^1$, making an angle of at least $\beta_0  /3 $ with $V(y_0)$, such that 
     \begin{align}\label{20260901-ExteriorLine}
        \gamma_{y_0} := \big\{  y_0 + s \nu ~:~  s \in [0, r_0]  \big\}
        \subset  A =  E^\complement  \cap  Orbit(A). 
     \end{align}
     Without loss of generality, we assume that the angle between $\nu$ and $V$ along the curve $\gamma_{ y_0 }$ is at least $\beta_0/6$, i.e., 
     \begin{align} \label{20260901-UniformAngleAlongExteriorSegment}
          \big| \nu \cdot V( \gamma_{y_0}(s) )  \big|  \leq  
          \cos \frac{\beta_0}{6}  ~  | V( \gamma_{y_0}(s) ) |
          ~\text{ for all }~  s \in [0,r_0].
     \end{align}
     This can be done by shrinking $r_0$. 
     Indeed, by the first inequality in \eqref{20260901-AssumptionInDeformationTheorem} as well as (i) of Lemma \ref{20250322-yb-propsotion-PropertiesOfOrbitPeriod},  we know that $V$ is nonzero on the set $\big\{ x\in \Omega ~:~ d(x, \{T=0,+\infty\}) \geq d \big\}$, and that there is a $C_0 = C_0(\Omega, V, d)>0$ such that
     \begin{align*}
        \Big| \frac{d}{ds}  V( \gamma_{y_0}(s) )  \Big| =  | J_V \gamma_{y_0}'(s) |
        \leq  \| J_V( \gamma_{y_0}(s) )  \|_{\mathbb R^{2 \times 2}}
        \leq C_0 \inf_{ \tau \in [0,s]}  | V( \gamma_{y_0}(\tau) )|,
     \end{align*}
     which implies \eqref{20260901-UniformAngleAlongExteriorSegment} by shrinking $r_0$ to $C_0^{-1}( \cos \frac{\beta_0}{6}  - \cos \frac{\beta_0}{3} )$. 
     
     We now apply the coordinate system in Lemma \ref{rev:uniform-action-angle} to  $Orbit(A)$ (including $\gamma_{ y_0 }$). Since $A$ is connected, so is  $Orbit(A)$ because the flow consists of homeomorphisms $\{ \varPhi(t)\}_{t\in \mathbb R}$. At the same time, by Lemma \ref{20260903-EquivalentDistanceHypothesis} (in the appendix) and \eqref{20260901-AssumptionInDeformationTheorem}, we have
     \begin{align*}
         d\big(Orbit(A),\{T=0, +\infty\}\big)>0
         ~\text{ and }~ 
          \inf_{x \in Orbit(A)}  | \nabla T(x)| > 0.
     \end{align*}
     Thus, the orbit $Orbit(A)$ is contained in a connected subdomain of the set $\mathcal R$ defined in \eqref{20260901-NonTrivalPeriodRegion}. 
     Now, by Lemma \ref{rev:uniform-action-angle} (in the appendix), we can find a $C^1$ coordinate map $\psi$ from   $Orbit(A)$ to $(a,b) \times \mathbb T$ (where $\mathbb T := \mathbb R / (2\pi\mathbb Z)$) for some $a,b \in \mathbb R$. 
     
    In this coordinate chart $(\psi, Orbit(A))$, we write
    \begin{align}\label{Coordinate-20260914}
        \psi(x_0) = (h(0), \alpha_0)   ~\text{ and }~
        \psi(y_0 + s \nu)=(h(s), \alpha(s)), ~ s\in [0,r_0]
    \end{align}
    (here $h(\cdot)$ and $\alpha(\cdot)$ are two $C^1$ functions). For each $t\geq0$, by \eqref{lip:aa} in the appendix, we consider the difference in the second coordinate between $x_0$ and points on $\gamma_{y_0}$ under the action of $\varPhi(t) $: 
    \begin{align}\label{20260901-DefinitionOfPhaseDifference}
        \Theta(s; t) :=& \big( \alpha(s) + \omega(h(s)) t \big) - 
        \big( \alpha_0 + \omega(h(0)) t \big)
        \nonumber\\
        =& \big( \alpha(s)  - \alpha_0 \big)  + 
        \big[ \omega(h(s)) - \omega(h(0))\big] t, ~~ s\in [0,r_0], 
    \end{align}
    where $\omega(\cdot)$ is given by \eqref{lip:aa}. 
    
    Let $C_1$ be given by \eqref{20260901-weight}. Since $\omega(h)= 2\pi/T(x)$ in \eqref{lip:aa}, we have
    \begin{align}\label{20260902-LowerBoundOfFrequency}
        \inf_{ x \in Orbit(A) }   | \omega'( H(x)) |  \cdot  | V(x)| 
        =  2\pi \inf_{ x \in Orbit(A) }   | \nabla T^{-1}(x) | 
         \geq 2 \pi C_1.
    \end{align}
    Fix a sufficiently large time $t$ such that
    \begin{align}\label{20260902-DefineLargeTime}
        t \geq T_{large} :=  \big( 2 \pi  + r_0  \| \alpha' \|_{ L^\infty( (0,r_0) )}  \big) / 
       \big( 2 \pi C_1 r_0 \sin( \beta_0 / 6 )  \big).
    \end{align}
    It follows from  the expression for $J_{ \psi }$ in \eqref{lip:aab}, \eqref{20260901-UniformAngleAlongExteriorSegment} and \eqref{20260902-LowerBoundOfFrequency} that when $s\in[0, r_0]$, 
    \begin{align*}
        | \Theta'(s ; t) | \geq& t | \omega'( h(s)) h'(s)|  - |\alpha'(s)|
        = t | \omega'( h(s)) |  \cdot  | \nu \cdot V^{\perp}(\gamma_{ y_0 }(s) ) |
        - |\alpha'(s)|
        \nonumber\\
        \geq&  t | \omega'( h(s) ) | \cdot |V(\gamma_{ y_0 }(s) ) |
         \sin \frac{\beta_0}{6} 
         -  |\alpha'(s)|
         \nonumber\\
         \geq& \Big( t \sin \frac{\beta_0}{6}  \Big)   
         \inf_{ x \in Orbit(A) }  \Big( | \omega'( H(x) ) | \cdot |V(x) |\Big)
         -  \| \alpha' \|_{ L^\infty( (0, r_0) )} 
         \nonumber\\
         \geq& 2 \pi C_1  \Big( t \sin \frac{\beta_0}{6}  \Big) - \| \alpha' \|_{ L^\infty( (0, r_0) )} 
         \geq  \frac{ 2\pi t }{ r_0 T_{large} }  . 
    \end{align*}
Since $\Theta(\cdot; t)$ is of class $C^1$, the above implies that it is monotone on $[0, r_0]$ and that its range contains an interval of length $\geq 2 \pi$. Take the smallest $s^* \in (0,  r_0 T_{large} t^{-1}]$ such that
    \begin{align}\label{20260901-2PI}
        \Theta( s^* ; t)  \in 2\pi \mathbb Z,
    \end{align}
    which says that the points $x_0$ and $\gamma_{y_0}(s^*)$ have the same angle $\alpha_0 + \omega(h(0)) t $ on the torus $\mathbb T$ under the action of $\varPhi(t)$.

    Next, by \eqref{20260901-2PI},  the coordinates of the points $\varPhi(t)(x_0)$ and $\varPhi(t)( \gamma_{y_0}(s^*) )$ are connected by the following segment along the $h$-axis: 
    \begin{align*}
        \gamma_1(s) := \big( h(s), \alpha_0 + \omega(h(0)) t  \big),  ~  s\in [0, s^*]. 
    \end{align*}
    Write $y(s):= \psi^{-1}( \gamma_1(s) )$ for $ s \in [0, s^*]$. We obtain from the estimate \eqref{20260902-Upper-Lower-Bound-OfCoordinateMap} in the appendix and \eqref{Coordinate-20260914} that for some $C=C(\Omega,V,d, \kappa)$, 
    \begin{align*}
        | \varPhi(t)(x_0) -  \varPhi(t)( \gamma_{y_0}(s^*) ) | \leq& | \psi^{-1}( \gamma_1 ) |
        = \int_0^{s^*}  | J_{ \psi^{-1} } ( \gamma_1(s) )  \gamma_1'(s)|  ds
        \nonumber\\
        \leq C & \int_0^{s^*}   |h'(s)|  ds
        = C |  h(s^*) - h(0)  |
        = C | H(y_0 + s^* \nu) - H(y_0) |.
    \end{align*}
    Combined with  \eqref{20260901-2PI}, the above yields 
    \begin{align*}
        | \varPhi(t)(x_0) -  \varPhi(t)( \gamma_{y_0}(s^*) ) |  \leq  
        C \| \nabla H \|_{L^\infty (\Omega)} s^*
        \leq  C  \| V \|_{L^\infty (\Omega)}  r_0 T_{large} t^{-1}. 
    \end{align*}
    Since $\gamma_{ y_0 } \subset E^\complement \cap Orbit(A)$ (see \eqref{20260901-ExteriorLine}), we  obtain 
    \begin{align*}
        d\big( \varPhi(t)(x_0), E(t)^\complement \cap Orbit(A) \big) \leq | \varPhi(t)(x_0) - \varPhi(t)( \gamma_{y_0}(s^*) )  | 
        \leq   C  \| V \|_{L^\infty (\Omega)}  r_0 T_{large} t^{-1} .
    \end{align*}
    Since $x_0 \in E$ was arbitrary, the above leads to \eqref{20250326-yb-UpperBoundOfMaximalWidthOfEvolvingSubdomain} when $t\geq T_{large}$. 
    
    The conclusion \eqref{20250326-yb-UpperBoundOfMaximalWidthOfEvolvingSubdomain} for small times $t< T_{large}$ follows from the continuity of $\varPhi(t)$. This completes the proof. 
\end{proof}


\begin{remark}\label{20250404-remark-MechanismForDeformationOfSet}
The basic mechanism for the deformation of a transported orbit-relative complement  is as follows: variation of the orbit period supplies the transverse shear between neighboring
periodic trajectories (see the second inequality in assumption  \eqref{20260901-AssumptionInDeformationTheorem}), causing  the larger and smaller singular values of the flow Jacobian to be comparable to $1+t$ and $(1+t)^{-1}$, respectively (see  Remark \ref{remark-SingularValues}).  Roughly speaking, a transported set is stretched along the singular directions associated with the larger singular values and compressed along those associated with the smaller singular values. As a result, the set becomes progressively longer and thinner. See Subsection \ref{subsection-SecondTheorem} for numerical illustrations. 
\end{remark}

\section{Proof of Theorem \ref{20250103-yb-theorem-MixingScale}}
\label{20241018-yb-section-MixingScale}

This section is devoted to the proof of Theorem \ref{20250103-yb-theorem-MixingScale}. We start with the following lemma. 
\begin{lemma}
    \label{rev:core-transition} 
    Let $A\subset\Omega$ be a nonempty open set whose closure is not $V$-invariant. Then the orbit-relative complement $E_A := Orbit(A)\setminus A$ has a nonempty interior and satisfies 
    \begin{align}\label{rev:core-transition-sandwich}
        MixingScale(t,A) =&      
      \sup_{x\in Orbit(A)  }  d\bigl(x, \varPhi(t)(A)\bigr)
      \nonumber\\
    =&        \sup_{x\in \varPhi(t)(E_A)}d\bigl(x,  Orbit(A) \setminus \varPhi(t)(E_A)  \bigr),
        ~~ t \geq 0. 
    \end{align}
   Here, $MixingScale(t,A)$ is defined in  \eqref{20250103-yb-ExactDefinitionOfMixingScale}. 
\end{lemma}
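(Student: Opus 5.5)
The plan is to prove the three claims in order: $E_A$ has nonempty interior, the first equality, and the second equality. Throughout, write $A_t:=\varPhi(t)(A)$ and $O:=Orbit(A)$. Since each $\varPhi(t)$ is a measure-preserving homeomorphism of $\overline\Omega$ onto itself, $O$ is open and invariant. Hence $\varPhi(t)(E_A)=O\setminus A_t$, and $O\setminus\varPhi(t)(E_A)=A_t$.

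For the nonempty interior, I argue by contradiction. Suppose $\mathrm{Int}\,E_A=\emptyset$. Then $A$ is dense in $O$, so $O\subset\overline A$, and therefore $\overline O=\overline A$. But $\overline O$ is $V$-invariant: it is the closure of an invariant set under a flow of homeomorphisms. This contradicts the hypothesis that $\overline A$ is not invariant. It follows that $E_A\supset O\setminus\overline A$, which is open and nonempty.

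For the first equality, set $\delta(t):=\sup_{x\in O}d(x,A_t)$. This is finite, and it is positive: $\varPhi(t)(\mathrm{Int}\,E_A)$ is a nonempty open subset of $O$ disjoint from $A_t$. I prove the two inequalities separately.

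\emph{Lower bound: $MixingScale\ge\delta$.} Suppose $A_t$ is mixed to scale $\varepsilon$ over $O$. Then every ball $B_\varepsilon(x)$ with $x\in O$ meets $A_t$ in positive measure, so $d(x,A_t)<\varepsilon$. Hence $\varepsilon\ge\delta$.

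\emph{Upper bound: $MixingScale\le\delta$.} Take $\varepsilon>\delta$. For each $x\in O$ there is $y\in A_t$ with $|x-y|<\delta+\eta$, where $\eta:=(\varepsilon-\delta)/2$. I need a uniform lower bound on $|B_\varepsilon(x)\cap A_t|$, which I expect to be the main obstacle. A general open set can contain points with arbitrarily thin neighborhoods, so openness alone does not give uniformity. My plan is to use compactness. Write $A_t=\bigcup_n K_n$ with $K_n$ increasing compact sets whose interiors exhaust $A_t$. For $n$ large, every point of $O$ lies within $\delta+\eta$ of $\mathrm{Int}\,K_n$; I get this by covering the bounded set $\overline O$ by finitely many balls and using that $\overline O$ is compact. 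Then the function
\[
x\mapsto |B_\varepsilon(x)\cap \mathrm{Int}\,K_n|
\]
is continuous and strictly positive on the compact set $\overline O$, so it has a positive minimum. This gives a valid density constant $\kappa$. Therefore $MixingScale\le\varepsilon$, and letting $\varepsilon\downarrow\delta$ gives $MixingScale=\delta$.

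For the second equality, first take $x\in O$. If $x\in A_t$, then $d(x,A_t)=0$. Otherwise $x\in\varPhi(t)(E_A)$, and $d(x,A_t)=d\bigl(x,O\setminus\varPhi(t)(E_A)\bigr)$ by the identity $O\setminus\varPhi(t)(E_A)=A_t$ noted at the start. Taking the supremum over $x\in O$ therefore equals the supremum over $x\in\varPhi(t)(E_A)$. This supremum is over a nonempty set, so it is positive and the zero contributions from $A_t$ do not affect it.
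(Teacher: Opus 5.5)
Your proof is correct and follows essentially the same route as the paper. The nonempty interior comes from $\overline{Orbit(A)}$ being invariant (the paper phrases this via Lemma~\ref{lemma-20260122-CharacterizeInvariantSet}), the second equality from the disjoint decomposition $Orbit(A)=\varPhi(t)(A)\cup\varPhi(t)(E_A)$, and the first from the same two inequalities. The only variation is in how you get the uniform density constant: you use a compact exhaustion of $\varPhi(t)(A)$ and continuity of $x\mapsto|B_\varepsilon(x)\cap\mathrm{Int}\,K_n|$ on $\overline{O}$. The paper instead uses nested balls $B_\varepsilon(y_x)\subset B_r(x)$ together with the finite-cover Lemma~\ref{20250322-yb-lemma-ZeroMixingScaleForOpenSets}, which is the same compactness idea.
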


\begin{proof}
    First of all, we have $ Orbit(A)\setminus\overline A  \neq \emptyset$. This follows from Lemma \ref{lemma-20260122-CharacterizeInvariantSet} (in the appendix) and the assumption that the closure of $A$ is not $V$-invariant. 
    
Next, the set $E_A$ has a nonempty interior. In fact, since every $\varPhi(t)$ $(t\geq0)$ is a homeomorphism and $A$ is open, the orbit $Orbit(A) = \cup_{t \in \mathbb R} \varPhi(t)(A)$ is also open. The set $ Orbit(A)\setminus\overline A$ is also open and, as shown above, nonempty. It is clear that $ Orbit(A)\setminus\overline A$ is contained in the interior of $E_A= Orbit(A) \setminus A$. 
    
    It remains to prove \eqref{rev:core-transition-sandwich}. Fix $t\geq 0$. 
    We first verify the second equality in \eqref{rev:core-transition-sandwich}. Since $E_A= Orbit(A) \setminus A$, it is clear that
    \begin{align*}
        Orbit(A) = A  \cup  E_A
        ~\text{ and }~
        A \cap E_A =\emptyset.
    \end{align*}
    Because $Orbit(A)$ is $V$-invariant, the above implies
    \begin{align*}
        Orbit(A) = \varPhi(t)(A)  \cup  \varPhi(t)(E_A)
        ~\text{ and }~
        \varPhi(t)(A) \cap \varPhi(t)(E_A) = \emptyset.
    \end{align*}
    This leads to the second equality in \eqref{rev:core-transition-sandwich}. 
    
    Finally, we prove the first equality in \eqref{rev:core-transition-sandwich}. By \eqref{20250103-yb-ExactDefinitionOfMixingScale} in Definition~\ref{20250103-yb-DefinitionOfMixingScale}, there are positive numbers $\{ \varepsilon_n \}_{n=1}^\infty$ such that for each $n$, the set $\varPhi(t)(A)$ is mixed to scale $\varepsilon_n$ over $Orbit(A)$ and that 
    \begin{align}\label{ApproximateMixingScale-20260909}
       MixingScale(t,A) = \lim_{ n \rightarrow +\infty}  \varepsilon_n.
    \end{align}
    Then, by (ii) of Definition~\ref{20250103-yb-DefinitionOfMixingScale}, we find that for each $n$, 
    \begin{align*}
        | B_{\varepsilon_n}(x) \cap \varPhi(t)(A) | > 0,
        ~~\forall\, x \in Orbit(A),
    \end{align*}
    which implies 
    \begin{align*}
        d(x, \varPhi(t)(A))  \leq  \varepsilon_n,
        ~~\forall\, x \in Orbit(A). 
    \end{align*}
    This, together with \eqref{ApproximateMixingScale-20260909}, gives
    \begin{align}\label{LowerBoundOfMixingScale-20260909}
        \sup_{x\in Orbit(A)  }  d\bigl(x, \varPhi(t)(A)\bigr)
        \leq MixingScale(t,A). 
    \end{align}
    
    Take an arbitrary number $r$ such that
    \begin{align}\label{UpperBoundOfMixingScale-20260909}
       3 \varepsilon := r  - \sup_{x\in Orbit(A)  }  d\bigl(x, \varPhi(t)(A)\bigr)
       > 0. 
    \end{align}
    Then, for each $x \in Orbit(A)$, the ball $B_{r-2\varepsilon}(x)$ must contain at least one point $y_x$ in the set $\varPhi(t)(A)$. This implies
    \begin{align*}
         B_r(x) \cap \varPhi(t)(A)   \supset   B_{\varepsilon}(y_x) \cap \varPhi(t)(A) , 
         ~~\forall\, x \in Orbit(A). 
    \end{align*}
    Then, we apply Lemma \ref{20250322-yb-lemma-ZeroMixingScaleForOpenSets} (with $\mathcal O$ replaced by $\varPhi(t)(A)$) to obtain that for each $x \in Orbit(A)$, 
    \begin{align*}
        | B_r(x) \cap \varPhi(t)(A) |   \geq  | B_{\varepsilon}(y_x) \cap \varPhi(t)(A) |
        \geq \inf_{ y \in \varPhi(t)(A) }   | B_{\varepsilon}(y) \cap \varPhi(t)(A) |
        > 0.
    \end{align*}
    From this and (ii) of Definition~\ref{20250103-yb-DefinitionOfMixingScale}, we know that $\varPhi(t)(A)$ is mixed to scale $r$ over $Orbit(A)$. Then, by \eqref{20250103-yb-ExactDefinitionOfMixingScale} in Definition~\ref{20250103-yb-DefinitionOfMixingScale}, we obtain
    \begin{align*}
        MixingScale(t,A) \leq r.
    \end{align*}
    Because of the arbitrariness of $r$ in \eqref{UpperBoundOfMixingScale-20260909}, the above implies
    \begin{align*}
        MixingScale(t,A)  \leq
        \sup_{x\in Orbit(A)  }  d\bigl(x, \varPhi(t)(A)\bigr). 
    \end{align*}
    This, together with \eqref{LowerBoundOfMixingScale-20260909}, leads to the first equality in \eqref{rev:core-transition-sandwich}. The proof is completed.     
\end{proof}

We are now in a position to prove
Theorem~\ref{20250103-yb-theorem-MixingScale}, using
Lemma~\ref{rev:core-transition}  and 
Proposition \ref{rev:prop52}. 

\begin{proof}[Proof of Theorem~\ref{20250103-yb-theorem-MixingScale}]
Note that the set $A$ is assumed to be a Lipschitz domain. The uniform-orbit cone condition \eqref{20260908-UniformInteriorConeCondition} holds for such a domain $A$. Furthermore, \eqref{20260901-AssumptionInDeformationTheorem} is exactly the assumption \eqref{rev:regular}. Thus, the conclusion \eqref{lip:main} in Proposition \ref{rev:prop52} holds for the set $A$. Then, \eqref{20250112-yb-OrderOfMixingScale} follows from \eqref{rev:core-transition-sandwich} (in Lemma~\ref{rev:core-transition}) and \eqref{lip:main} (in Proposition \ref{rev:prop52}). This completes the proof. 
\end{proof}


\section{Proof of Theorem \ref{third:optimal-negative-norm}}
\label{section-ThirdMainTheorem}

This section is devoted to the proof of Theorem \ref{third:optimal-negative-norm}. We begin with Lemma \ref{20260902-RegularityOfProjection}, which establishes the regularity of the orbit average function defined in \eqref{third:orbit-average}. 

\begin{lemma}\label{20260902-RegularityOfProjection}
    The map $f \mapsto \bar f$ (given by \eqref{third:orbit-average}) is an orthogonal projection on $L^2(\Omega)$. Furthermore, if  a set $A\subset \Omega$ satisfies
    $
        d\big( A, \{T=0, +\infty\} \big) > 0
    $
    and     $f\in H^1(\Omega)$ with supp\,$f \subset A$, then $\bar f \in H^1(\Omega)$. 
\end{lemma}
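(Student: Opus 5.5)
We treat the two assertions separately. For the projection property, we first note that the orbit average is well defined almost everywhere. By Lemma \ref{20250322-yb-propsotion-PropertiesOfOrbitPeriod}, the set $\{T=0\}$ is finite and $\{T=+\infty\}$ lies on finitely many curves, so $\{0<T<+\infty\}$ has full measure in $\Omega$. On the exceptional null set we set $\bar f:=f$, or any convention; this does not matter in $L^2$.

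\emph{Linearity and idempotence.} The map $P f:=\bar f$ is clearly linear. Since $T$ is constant along orbits and $\bar f$ is $\varPhi(s)$-invariant, we get $\bar{\bar f}=\bar f$, i.e.\ $P^2=P$.

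\emph{Self-adjointness.} It suffices to show $\int_\Omega \bar f\, g=\int_\Omega f\,\bar g$. The flow preserves Lebesgue measure, so for each fixed $s$ we have $\int f(\varPhi(s)x)g(x)\,dx=\int f(x)g(\varPhi(-s)x)\,dx$. The difficulty is that the averaging window $T(x)$ depends on $x$. To handle this we rewrite the average with the $T$-periodicity: $\bar f(x)=\lim_{N\to\infty}\frac1{N}\int_0^{N} f(\varPhi(s)x)\,ds$ on $\{0<T<\infty\}$. Alternatively, we cut $\Omega$ into the invariant sets $\{T\in I_j\}$ and use Fubini together with measure preservation on each piece. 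For fixed $N$, the time-averaging operator $P_N$ has adjoint $\frac1N\int_0^N g\circ\varPhi(-s)\,ds$. This adjoint converges pointwise to $\bar g$, because averaging over a backward period gives the same orbit average. Since $|P_N f|\le$ orbit sup control is not available in $L^2$, we apply dominated convergence first for bounded $f,g$ and then pass to general $f,g$ by density. Boundedness $\|\bar f\|_{L^2}\le\|f\|_{L^2}$ follows from Jensen along each orbit combined with the invariance, or from the von Neumann ergodic theorem, which also identifies $P$ as the orthogonal projection onto invariant functions.

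\emph{$H^1$ regularity.} Assume $\operatorname{supp} f\subset A$ with $d(A,\{T=0,+\infty\})>0$. By Lemma \ref{20260903-EquivalentDistanceHypothesis}, $Orbit(A)$ stays a positive distance from $\{T=0,+\infty\}$, and $\bar f$ vanishes outside $Orbit(A)$. We then work in the coordinates of Lemma \ref{rev:uniform-action-angle} on a neighbourhood of $\overline{Orbit(A)}$. Here the wrinkle is that the lemma as stated may require $\nabla T\neq0$. If that is unavailable, we use the more elementary representation $\bar f(x)=\int_0^1 f(\varPhi(\tau T(x))x)\,d\tau$. Here $T\in C^1$ with $|\nabla T|$ bounded on the compact region $\Omega_{\ge\varepsilon}\cap\{d(\cdot,\{T=0\})\ge\varepsilon\}$ by Proposition \ref{20250322-yb-proposition-UsefulPropertiesOfOrbitPeriod}. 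The map $G_\tau(x):=\varPhi(\tau T(x))(x)$ is $C^1$ with Jacobian $J_{\varPhi(\tau T)}+\tau V\otimes\nabla T$, which is bounded uniformly in $\tau\in[0,1]$; by \eqref{20241021-yb-DerivativeOfPeriodField} it is nondegenerate at least at $\tau=1$. For smooth $f$ the chain rule then gives
\[
\|\nabla\bar f\|_{L^2}\le \int_0^1\|(\nabla f\circ G_\tau)\,J_{G_\tau}\|_{L^2}\,d\tau .
\]

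\emph{Main obstacle.} The hard step is to bound $\|h\circ G_\tau\|_{L^2}\lesssim\|h\|_{L^2}$, i.e.\ to control the change of variables through $G_\tau$, which need not be measure preserving or injective. We would handle this in the coordinates $(H,\phi)$, with $\phi$ the time-normalized phase $\phi\in\mathbb R/\mathbb Z$. In these coordinates $G_\tau$ is just the rigid shift $(H,\phi)\mapsto(H,\phi+\tau)$, hence a diffeomorphism. Its Jacobian is bounded above and below by the bounds on $|V|$, $T$ and $|\nabla T|$ on the region, so the change of variables costs only a constant. Density of smooth functions in $H^1$ restricted to such supports then concludes.
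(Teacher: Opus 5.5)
Your proposal is correct, but it is organized differently from the paper's proof.

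\textbf{Projection property.} The paper proves only contractivity. It uses the pointwise Cauchy--Schwarz bound $(\bar f)^2\le\overline{f^2}$ together with $\int_\Omega\overline{f^2}=\int_\Omega f^2$, and combines this with $\bar{\bar f}=\bar f$; a contractive idempotent on a Hilbert space is automatically orthogonal. Your Ces\`aro-average argument instead proves self-adjointness directly. Taking $g\equiv1$ in it also justifies the identity $\int_\Omega\bar h=\int_\Omega h$, which the paper uses without comment.

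\textbf{$H^1$ regularity.} Both proofs differentiate $\bar f(x)=\int_0^1 f(G_\tau(x))\,d\tau$ with the same Jacobian. The difference is what happens next. The paper keeps the $\tau$-integral inside the modulus,
\[
|\nabla\bar f(x)|\le C\int_0^1|\nabla f|(G_\tau(x))\,d\tau=C\,\overline{|\nabla f|}(x),
\]
so the $L^2$-contractivity from the first part finishes the argument, with no change of variables and no chart.

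Your use of Minkowski's inequality in $\tau$ is what creates your ``main obstacle'' of bounding each map $h\mapsto h\circ G_\tau$. That obstacle is milder than you suggest. Set $s=\tau T(x)$ and use $J_{\varPhi(s)}(x)V(x)=V(\varPhi(s)(x))$. Then $J_{G_\tau}(x)=J_{\varPhi(s)}(x)\bigl(I_2+\tau V(x)\otimes\nabla T(x)\bigr)$, hence $\det J_{G_\tau}=1+\tau\,\nabla T\cdot V=1$. Moreover, $G_\tau$ is a bijection of $\{0<T<+\infty\}$ with inverse $y\mapsto\varPhi(-\tau T(y))(y)$. So $G_\tau$ is exactly measure preserving and $\|h\circ G_\tau\|_{L^2}=\|h\|_{L^2}$. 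Equivalently, in your $(H,\phi)$ chart the Lebesgue density is $T$, a function of $H$ alone, which the phase shift leaves unchanged.

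Your worry about the chart is also unnecessary. The chart of Lemma~\ref{rev:uniform-action-angle} exists on any subdomain of $\mathcal R$ without assuming $\nabla T\neq0$. Only the quantitative bound \eqref{20260902-Upper-Lower-Bound-OfCoordinateMap} is stated under $\kappa>0$.
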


\begin{proof}
Fix an $f\in C^{\infty}(\overline{\Omega})$. By \eqref{third:orbit-average}, direct computation shows that when $0 < T(x) < +\infty$, 
\begin{align*}
    \big( \bar f(x) \big)^2 =&  \left( 
    \int_{ Orbit(x)  }   f(y)  \frac{d \sigma(y) }{ |T(x)| |V(y)| }
    \right)^2
    \nonumber\\
    \leq&  \int_{ Orbit(x)  }   f(y)^2  \frac{d \sigma(y) }{ |T(x)| |V(y)| }
    \int_{ Orbit(x) }    \frac{d \sigma(y) }{ |T(x)| |V(y)| }
    = \overline{f^2}(x).
\end{align*}
Observe that the set $\big\{ 0 < T(x) < +\infty \big\}$ coincides with almost all of the domain $\Omega$ except for finitely many infinite-period orbits and equilibria (see Lemma \ref{20250322-yb-propsotion-PropertiesOfOrbitPeriod}). 
Then, the last inequality yields 
\begin{align}\label{20260903-SquareNormOfProjection}
    \int_{\Omega} \bar f(x)^2 dx \leq  \int_{\Omega}   \overline{f^2}(x) dx
    = \int_{\Omega}   f^2(x) dx. 
\end{align}
By the density of $C^{\infty}(\overline{\Omega})$ in $L^2(\Omega)$, the map $f \mapsto \bar f$ is linear and bounded. 
At the same time, it follows from \eqref{third:orbit-average} that $\bar{ \bar f}= \bar f$. Thus, the map $f \mapsto \bar f$ is an orthogonal projection on $L^2(\Omega)$. 

Next, we see from \eqref{third:orbit-average} that when $0 < T(x) < +\infty$, 
\begin{align}\label{20260903-EstimateOfH1}
   \nabla \bar f (x) =& \nabla \int_0^1 f\big( \varPhi(s T(x)) (x)\big) ds
   \nonumber\\
   =&  \int_0^1 
   \Big(
        s \nabla T(x) \otimes V\big( \varPhi(s T(x)) (x) \big)    + J_{ \varPhi(s T(x)) }^{\top}(x)
   \Big)
   \nabla f\big( \varPhi(s T(x)) (x)\big) 
    ds.
\end{align}
At the same time, since  $d := d\big( A, \{T=0, +\infty\} \big) > 0$, we see from Lemma \ref{20260903-EquivalentDistanceHypothesis} (in the appendix) that $Orbit(A)$ is also separated from the set $\big\{ T=0, +\infty \}$. This, together with \eqref{20250329-yb-GlobalUpperBoundsOfOrbitPeriodAndRatioOfVelocity} and \eqref{20260823-LowerBoundOfJacobianOfFlow}, implies that there is a constant $C=C(\Omega, V, d)$ such that
\begin{align*}
    \sup_{ y \in Orbit(A) }  \Big[
    \| \nabla T(y) \otimes V(y)  \|_{ \mathbb R^{2\times2} }   +  
    \sup_{0 \leq t \leq T(y)} \|J_{ \varPhi(t) } (y) \|_{ \mathbb R^{2\times2} } 
    \Big] 
    \leq C.
\end{align*}
Since supp\,$f \subset A$, the above, together with \eqref{20260903-EstimateOfH1}, yields
\begin{align*}
    |  \nabla \bar f (x)  |  \leq   C \int_0^1  \big| \nabla f\big( \varPhi(s T(x)) (x)\big)  \big| 
    ds
    =  \displaystyle\frac{C}{T(x)}  \int_0^{T(x)}
    |\nabla f| \bigl(\varPhi(t)(x)\bigr)\,dt.
\end{align*}
Then, by an argument similar to that in \eqref{20260903-SquareNormOfProjection}, we obtain
\begin{align*}
    \int_{\Omega}  |  \nabla \bar f (x)  |^2  dx
    \leq C^2  \int_{\Omega}  |  \nabla  f (x)  |^2  dx. 
\end{align*}
This, along with \eqref{20260903-SquareNormOfProjection}, yields
\begin{align*}
    \| \bar f \|_{ H^1(\Omega) }    \leq  (1+C)  \|  f \|_{ H^1(\Omega) }. 
\end{align*}
Therefore, by the density of the space $C^{\infty}(\overline{\Omega})$ in $H^1(\Omega)$, $\bar f \in H^1(\Omega)$ holds for general $f \in H^1(\Omega)$ with supp\,$f\subset A$. This completes the proof.     
\end{proof}

We are now in a position to prove Theorem \ref{third:optimal-negative-norm}. 
\begin{proof}[Proof of Theorem \ref{third:optimal-negative-norm}]
 First, we enlarge $A$ to a $V$-invariant open and connected subset of the closure $\overline{\Omega}$ containing $Orbit(A)$ as a relatively compact subset. 
 Indeed,  since $A$ is connected, the assumption \eqref{20260902-domain} also holds for the connected sets $\overline{A}$ and  $Orbit(\overline{A})$ (by Lemma \ref{20260903-EquivalentDistanceHypothesis} in the appendix).  Take a subdomain  $A_1 \subset \overline{\Omega}$ to be the $\varepsilon$-neighborhood of $Orbit(\overline{A})$ for small $\varepsilon>0$. Then, by  Lemma \ref{20260903-EquivalentDistanceHypothesis}, \eqref{20260902-domain} also holds with $A$ replaced by the connected orbit $\mathcal O := Orbit(A_1)$, i.e., 
 \begin{align}\label{20260903-EstimateOfO}
    \tilde{d} := d\big(\mathcal O,\{T=0, +\infty\}\big)>0
     ~\text{ and }~
     \tilde{\kappa} := \inf_{ x \in \mathcal O}  | \nabla T(x) | > 0.
 \end{align}
 The rest of the proof is organized into the following two steps.

\vskip 5pt
\noindent\textit{Step 1. To show the lower bound in \eqref{third:sharp-rate}}

Take an arbitrary function $\varphi \in C_0^{\infty}( \mathcal O)$. Define
\begin{align*}
    \varphi_t := \varphi \circ \varPhi(-t)  \in  H_0^1( \mathcal O). 
\end{align*}
By \eqref{20260823-LowerBoundOfJacobianOfFlow} as well as \eqref{20260903-EstimateOfO}, there is a constant $C_1 = C_1(\Omega, V, d) >0 $ such that
\begin{align}\label{20260902-EstimateOfSobolevFunction}
    \int_{\Omega}   | \nabla \varphi_t |^2 dx 
    \leq  \sup_{x \in \mathcal O} \| J_{\varPhi(-t)} (x) \|^2  
    \int_{\Omega}   | \nabla \varphi |^2 dx
    \leq  C_1^2 (1+t)^2 \int_{\Omega}   | \nabla \varphi |^2 dx.
\end{align}
Since the flow $\{ \varPhi(t)\}_{ t\in \mathbb R}$ preserves measure, we obtain 
\begin{align*}
    \int_{\Omega}  \big( \theta_0(x) - \bar\theta_0(x) \big) \varphi(x) dx
    =& \int_{\Omega}  \big[ \theta_0\big( \varPhi(-t)(x) \big)  - \bar\theta_0(x) \big] \varphi\big(  \varPhi(-t)(x) \big) dx
    \nonumber\\
    =& \int_{\Omega}  \big[ \theta(t,  x)  - \bar\theta_0(x) \big] \varphi_t(x) dx
    \leq  \| \theta(t)  - \bar\theta_0 \|_{ H^{-1}(\Omega)}   
    \| \varphi_t  \|_{ H_0^1(\Omega) } .
\end{align*}
Then, it follows from \eqref{20260902-EstimateOfSobolevFunction} that
 for each $\varphi \in C_0^{\infty}( \mathcal O)$, 
\begin{align*}
    \int_{\Omega}  \big( \theta_0(x) - \bar\theta_0(x) \big) \varphi(x) dx
    \leq  C_1 (1+t) \| \theta(t)  - \bar\theta_0 \|_{ H^{-1}(\Omega)}   
    \| \varphi  \|_{ H_0^1(\Omega) }.
\end{align*}
Because supp\,$(\theta_0 - \bar\theta_0)   \Subset \mathcal O$, the above implies
 \begin{align*}
     \| \theta_0  - \bar\theta_0 \|_{ H^{-1}(\Omega)}  
     \leq  C_1 (1+t)  \| \theta(t)  - \bar\theta_0 \|_{ H^{-1}(\Omega)}.
 \end{align*}
 This proves the lower bound in \eqref{third:sharp-rate}. 

\vskip 5pt
\noindent\textit{Step 2. To show the upper bound in \eqref{third:sharp-rate}}

We apply the coordinate system introduced in Lemma \ref{rev:uniform-action-angle} (in the appendix) to $E=\mathcal O$. Let the coordinate chart $(\psi, \mathcal O)$ from $\mathcal O$ onto $(a,b) \times \mathbb T$ be given by Lemma \ref{rev:uniform-action-angle}. 

Since $V \in C^2( \overline{ \Omega} )$, by an argument similar to that in (iii) of Lemma \ref{20250322-yb-propsotion-PropertiesOfOrbitPeriod}, one can show that the orbit-period function $T(\cdot) $ is of class $C^2$ at $x \in \overline{\Omega}$ with $T(x) \in (0, +\infty)$. Then, the function $\omega$ in \eqref{lip:aa} is of class $C^2$.  Furthermore, we derive from \eqref{20260902-Upper-Lower-Bound-OfCoordinateMap} (in the appendix) and \eqref{20260903-EstimateOfO} that
\begin{align}\label{20260903-UpperLowerBounds}
    \| J_{\psi^{-1}} \|_{ C( (a,b) \times \mathbb T)} +
    \| \omega \|_{ C^2((a,b))}   < +\infty
    ~\text{ and }~
    \inf_{ h \in (a,b)} \min\big\{ |\omega(h)|  , |\omega'(h)| \big\} > 0.
\end{align}

Denote by $\hat f$ the representation of $f$ in the coordinate chart $(\psi, \mathcal O)$. 
 The following formula holds for each $f \in L^2(\mathcal O)$: when $x=\psi^{-1}(h,\theta) $,
\begin{align}\label{20260902-AnEqualityInAngle}
    \int_{\mathbb T} \hat f(h, \theta_1) d\theta_1 =
    |\omega(h)|
    \int_{ H^{-1}(h) \cap \mathcal O }
    \frac{ f(y) }{ |V(y)| }
    d\sigma(y) 
    =|\omega(h)| \int_0^{ T(x)}  f\big( \varPhi(t)(x) \big) dt.
\end{align}
In fact, we obtain from \eqref{lip:aa}--\eqref{lip:aab} (in the appendix) that for each $g \in C_0^{\infty}(a,b)$, 
\begin{align*}
    \int_a^b\Big( \int_{\mathbb T} \hat f(h, \theta_1) d\theta_1 \Big)
    g(h) dh =& \int_a^b \int_{ \mathbb T}  \hat f (h,\theta_1)  g(h)  d \theta_1 dh
    = \int_{ \mathcal O} f(x)  g(H(x))
    \cdot |\det J_\psi(x)|   dx
    \nonumber\\
    =& \int_a^b |\omega(h)| g(h) 
    \int_{ H^{-1}(h) \cap \mathcal O }
    f(x)
    \frac{ d\sigma(x)  }{ |\nabla H(x)| }
    dh
    \nonumber\\
    =& \int_a^b |\omega(h)| g(h) 
    \int_{ H^{-1}(h) \cap \mathcal O }
    f(x)
    \frac{ d\sigma(x)  }{ |V(x)| }
    dh.
\end{align*}
This gives \eqref{20260902-AnEqualityInAngle}. 

Now, we estimate $\theta -\bar\theta_0$ in the coordinate chart $(\psi, \mathcal O)$. For this purpose, let $t \geq 1$ and fix an arbitrary $ \varphi \in C_0^{\infty}( \mathcal O)$. 
We see from \eqref{lip:aa}--\eqref{lip:aab} (in the appendix) that 
\begin{align}\label{20260902-ExpressionInActionAngleForm}
    \int_{\Omega}  (\theta - \bar\theta_0)(t,x)  \varphi(x) dx 
    =& \int_a^b  \int_{ \mathbb T}   
    \widehat{(\theta - \bar\theta_0)}(t, h, \theta)  
    \widehat\varphi 
    (h,\theta) |\det J_{\psi^{-1}} | d \theta dh
    \nonumber\\
    =& \int_a^b  \int_{ \mathbb T}   \widehat{(\theta_0 - \bar\theta_0)}
    \big( h, \theta - t \omega(h)  \big)  
        \widehat{\varphi}(h,\theta)  
    |\omega^{-1}(h)|
    d \theta dh.
\end{align}
We apply the Fourier transform to the variable $\theta$. Write
\begin{align} \label{20260902-FourierSeriesOfTheta}
 \widehat{(\theta_0 - \bar\theta_0)}(h,\theta) =  \sum_{ k \in \mathbb Z \setminus \{0\}}  a_k(h) e^{ i k \theta} 
 ~\text{ and }~
 \widehat{\varphi} (h,\theta)
 = \sum_{ k \in \mathbb Z  }  b_k(h) e^{ i k \theta},
 ~~(h,\theta)  \in (a,b) \times  \mathbb T,  
\end{align}
where $a_k(\cdot), b_k(\cdot)$ are the Fourier coefficients depending on the variable $h$. Here, the index $ k \in \mathbb Z \setminus \{0\}$ is taken due to the following fact: 
\begin{align*}
    a_0(h) = \frac{1}{2 \pi}\int_{\mathbb T} \widehat{(\theta_0 - \bar\theta_0)}(h,\theta) d\theta
    = \frac{ |\omega(h)| }{ 2\pi } \int_0^{ T(x)}  (\theta_0 - \bar\theta_0)\big( \varPhi(t)(x) \big) dt
    =0
\end{align*}
(here we used \eqref{20260902-AnEqualityInAngle} in the second equality and \eqref{third:orbit-average} in the last equality).

Now, it follows from \eqref{20260902-ExpressionInActionAngleForm} and \eqref{20260902-FourierSeriesOfTheta} that
\begin{align*}
    \int_{\Omega}  ( \theta(t) - \bar\theta_0) \varphi dx  
    =&  2\pi \sum_{ k \in \mathbb Z \setminus \{0\} }
      \int_a^b
    a_k(h)  b_{-k}(h)  |\omega^{-1}(h)|  e^{ - i k \omega(h) t }
    dh
    \nonumber\\
    =& 2\pi \sum_{ k \in \mathbb Z \setminus \{0\} }
    \int_a^b
    a_k(h)  b_{-k}(h)    \frac{i}{kt |\omega(h)| \omega'(h)} \partial_h e^{ - i k \omega(h) t }
    dh
    \nonumber\\
    =& 2\pi\sum_{ k \in \mathbb Z \setminus \{0\} }   
          \frac{i a_k b_{-k} e^{ - i k \omega  t }}{kt \omega'  |\omega|}
    \Big|_{h=a}^b
     + 2\pi \sum_{ k \in \mathbb Z \setminus \{0\} }
    \int_a^b
    \partial_h \Big( a_k  b_{-k}    \frac{1}{ikt  \omega'  |\omega|} 
    \Big)
      e^{ - i k \omega t }
    dh.
\end{align*}
At the same time, since $\varphi \in C_0^{\infty}(\mathcal O)$ vanishes on $\{ H(x)=a,b \}$, it follows from \eqref{20260902-AnEqualityInAngle} that when $h=a,b$, 
\begin{align*}
    b_{-k}(h) = \frac{1}{ 2\pi } \int_{ \mathbb T}  \widehat\varphi(h,\theta) e^{ik \theta} d\theta
   = \frac{ |\omega(h)| }{ 2\pi }
   \int_{ H^{-1}(h) \cap \mathcal O }
   \frac{ \varphi(x) e^{ik \theta(x)} }{ |V(x)| }
   d\sigma(x) 
   =0, ~~ k \in \mathbb Z.
\end{align*}
Therefore, the last two equalities give 
\begin{align}\label{20260902-EqualityOfFrequency}
    \int_{\Omega}  ( \theta(t) - \bar\theta_0) \varphi dx 
    =   2\pi\sum_{ k \in \mathbb Z \setminus \{0\} }
    \frac{1}{ikt  } \int_a^b
    \partial_h \Big( a_k  b_{-k}    \frac{1}{  |\omega| \omega' } 
    \Big)
    e^{ - i k \omega t }
    dh.
\end{align}

We compute that for each $k \in \mathbb Z \setminus \{0\}$, 
\begin{align*}
     \Big|  \partial_h \Big( a_k b_{-k}   \frac{1}{ |\omega| \omega'}   \Big)
     \Big| 
    \leq  \Big( |a_k'|  |b_{-k}|  + |a_k| | b_{-k}' | \Big) 
   \frac{2}{ |(\omega^2)' | } 
   + |a_k| |b_k|   \frac{2 | (\omega^2)''| }{ |(\omega^2)'  |^2 }. 
\end{align*}
Thus, we obtain from \eqref{20260902-EqualityOfFrequency} that for each $k \in \mathbb Z \setminus \{0\}$, 
\begin{align*}
    \int_{\Omega}  ( \theta(t) - \bar\theta_0) \varphi dx 
    \leq&  \frac{4 \pi}{t  } \sup_{h \in (a, b) }
    \frac{ |\omega| \big( |(\omega^2)' | + | (\omega^2)''| \big)  }{ |(\omega^2)'  |^2 } 
    \nonumber\\
    & \times 
    \left(
       \int_a^b  \Big(\| (a_k(h))_{k\in \mathbb Z}\|^2   +
       \| (a_k'(h))_{k\in \mathbb Z}\|^2
       \Big)  \frac{dh}{ |\omega(h)| }
    \right)^{1/2}
    \nonumber\\
    &  \times 
    \left(
    \int_a^b  \Big(\| (b_k(h))_{k\in \mathbb Z}\|^2   +
    \| (b_k'(h))_{k\in \mathbb Z}\|^2
    \Big) \frac{dh}{ |\omega(h)| }
    \right)^{1/2}.
\end{align*}
This, along with \eqref{20260903-UpperLowerBounds} and \eqref{20260902-FourierSeriesOfTheta}, yields that for some $C=C(\Omega, V, d, \kappa)$,
\begin{align*}
    \int_{\Omega}  ( \theta(t) - \bar\theta_0) \varphi dx 
    \leq&  \frac{C}{t  }   \| \theta_0 - \bar\theta_0 \|_{ H^1(\Omega) }
    \| \varphi \|_{ H^1(\Omega) }.
\end{align*}
Since $\varphi \in C_0^{\infty}( \mathcal O)$ and $Orbit(A) \Subset \mathcal O$, the above leads to the upper bound in \eqref{third:sharp-rate} when $t\geq 1$. The case $t \in [0,1)$ is clear. This completes the proof. 
\end{proof}

\section{Examples illustrating the roles of the assumptions}
\label{section-ExamplesOnAssumptions}

In this section we give  counterexamples concerning several assumptions in
the three main theorems.    The conclusions in
the corresponding theorems fail when the indicated hypothesis is
removed, while the other hypotheses listed there are kept.  These counterexamples are not
intended to assert that every assumption is logically necessary for
every individual velocity field. 

This section is organized as follows: Subsections \ref{subsec:assumption-FirstMainTheorem}, \ref{subsec:assumption-SecondMainTheorem}, and \ref{subsec:assumption-ThirdMainTheorem}  justify the assumptions in Theorems~\ref{third:optimal-negative-norm}, \ref{20250103-yb-theorem-MixingScale}, and \ref{20241021-yb-theorem-OptimalGrowthForCurves}, respectively.

\subsection{Assumptions in Theorem~\ref{third:optimal-negative-norm}}
\label{subsec:assumption-FirstMainTheorem}

We will present some counterexamples for specific velocity fields to illustrate the failure of Theorem~\ref{third:optimal-negative-norm} when one of its assumptions is removed. 

\begin{example}[Vanishing lower bound for the gradient of orbit periods]
    \label{example:VanishingPeriodGradient-MixingNorm-20260910}
    We provide a counterexample to \eqref{third:sharp-rate} when the second inequality in assumption \eqref{20260902-domain} fails. 
    Let $\Omega := B_1(0) $.  Let $r_* \in (0,1)$ and let $g \in C^{\infty}([0,+\infty))$ be positive except at the origin: 
    \begin{align}\label{SpecialPeriod-20260910}
        g'\big( (r_*+r)^2 \big) \asymp r^{m_*+1}    ~\text{ and }~
        g(r^2) \asymp r^{m}   \text{ as } r \rightarrow 0^+
        ~\text{ for some }
        m_*,m \in \mathbb N. 
    \end{align} 
    Consider the following Hamiltonian flow:
    \begin{align*}
        H(x) := \frac{1}{2} \int_{|x|^2}^1 g( r ) dr, ~ x \in \mathbb R^2  ~\text{ and }~
        V := - \nabla^{\perp} H |_{ \overline{\Omega} }. 
    \end{align*} 
    By direct computation, $V$ has only one (stable) equilibrium $0$ of order $m+1$, and 
    \begin{align}\label{OrbitPeriod:VanishingPeirodGradient}
        T(x) = \frac{ 2\pi }{ g(|x|^2) }
        ~\text{ and }~
        | \nabla T(x) | = \frac{ 4\pi |x| \cdot  | g'(|x|^2) |  }{ g^2(|x|^2) } , ~ x\in \Omega\setminus\{0\}. 
    \end{align}

    We construct an initial datum in the following way: fix a sufficiently small number $\varepsilon>0$ and let  $A:= \{ x \in \Omega ~:~ r_* - \varepsilon < |x| < r_* + \varepsilon\}$. By \eqref{OrbitPeriod:VanishingPeirodGradient} and  \eqref{SpecialPeriod-20260910}, it is clear that 
    \begin{align*}
        \inf_{x \in A} | \nabla T(x) | =0.
    \end{align*}
     Take a function $a \in C_0^{\infty} \big( (r_* - \varepsilon, r_* + \varepsilon) \big)$ with $a(r_*) \neq 0$. Consider the following initial data supported in $A$:
    \begin{align*}
        \theta_0(x) := a( |x| )  \cos (\arg x)
        ~\text{ and }~
        \theta_1(x) := a( |x| )  \sin (\arg x),  ~ x\in \Omega. 
    \end{align*}
    By \eqref{third:orbit-average}, the orbit average of both $\theta_0,\theta_1$ are zero. 
    Write $\omega(x) := 2 \pi / T(x) = g(|x|^2)$ (by \eqref{OrbitPeriod:VanishingPeirodGradient}). 
    By direct computation, when $ t \geq 0$, 
    \begin{align*}
        \theta(t; \theta_0 ) = a( |x| )  \cos \big( \arg x - t \omega(x) \big)
        ~\text{ and }~
        \theta(t; \theta_1 ) = a( |x| )  \sin \big(\arg x - t \omega(x) \big),  ~ x\in \Omega,
    \end{align*}
    which implies
    \begin{align*}
        \int_{\Omega}  \big[ \theta(t; \theta_0 ) + i \theta(t; \theta_1 ) \big] 
        \cdot  \big[ \theta_0  - i  \theta_1 \big]
        dx
        =& \int_{\Omega}   a(|x|) e^{ i (\arg x - t \omega(x) )}  
        a(|x|) e^{ - i \arg x} dx.
    \end{align*}
    This yields that as $t \rightarrow +\infty$, 
    \begin{align*}
         \| \theta(t; \theta_0 ) + i \theta(t; \theta_1 ) \|_{ H^{-1}(\Omega; \mathbb C)}
         \gtrsim  
         \bigg| \int_{ r_* - \varepsilon}^{r_* + \varepsilon}  a^2(r)  e^{ - i  t g(r^2) }  rdr
         \bigg|
         \asymp t^{ - \frac{1}{m_*+2} }.
    \end{align*}
    Here, we used \eqref{SpecialPeriod-20260910}.  Therefore, the upper bound in \eqref{third:sharp-rate} cannot hold in the current situation.     
\end{example}

\begin{example}[Approach to a stable equilibrium]
    \label{Counterexample-StableEquilibrium:MixingNorm}
We provide a counterexample to \eqref{third:sharp-rate} when the support of an initial datum approaches a stable equilibrium. 
Let $\Omega := B_1(0) $. Take a number $m > 3$ and consider the following Hamiltonian flow:
\begin{align*}
    H(x) := |x|^{m+1} -1, ~ x \in \mathbb R^2  ~\text{ and }~
    V := - \nabla^{\perp} H. 
\end{align*} 
The velocity field $V|_{\overline{\Omega} }$ has only one (stable) equilibrium (the origin) of order $m$. The assumptions (A1)-(A2) hold in the current case. Furthermore,
\begin{align}\label{ExpressionOfOrbitPeriod-StableEquilibrium}
    T(x) = \frac{ 2\pi }{ (m+1) |x|^{m-1} }
    \text{ and }
    |\nabla T(x)| =  \frac{(m-1)2\pi}{ (m+1)|x|^m },
    ~x\in \Omega \setminus\{0\}.
\end{align}
The following set 
\begin{align*}
    A := \big\{
    x \in \Omega ~:~  |\arg x|  <  \pi/ 2
    \big\}
\end{align*}
satisfies $\inf_{ x \in A}  | \nabla T(x)| > 0$. It has the stable equilibrium $0$ as a boundary point. 

Let $\varepsilon\in(0,1)$. We choose an initial datum in the following way: take two nonzero functions $f \in C_0^{\infty}((1/2, 1))$ and $\alpha \in C_0^{\infty}((-\pi/2, \pi/2))$ with $\int_{\mathbb R} \alpha(x) dx =0$. Choose the following initial datum supported in the set $A$: 
\begin{align*}
    \theta_{0, \varepsilon} (x) :=  f( |x| / \varepsilon ) \, \alpha( \arg x ),  ~ x \in \mathbb R^2. 
\end{align*}
Here, $\arg x$ denotes the angle of the vector $x$ from the first coordinate axis. 
By \eqref{third:orbit-average}, one can check that 
$ \bar\theta_{0, \varepsilon} = 0$. 

In what follows, we extend equation \eqref{20240925-yubiao-MainEquation} to $\Omega = \mathbb R^2$.  
Note that $V$ is homogeneous of order $m$. Then, the function  $(t,x) \mapsto \theta( \varepsilon^{-(m-1)} t, \varepsilon x; \theta_{0, \varepsilon})$ satisfies  equation \eqref{20240925-yubiao-MainEquation} with the initial datum $\theta_{0,1}$ and $\Omega = \mathbb R^2$. 
In particular,
\begin{align*}
    \theta( \varepsilon^{-(m-1)} , \varepsilon x; \theta_{0, \varepsilon})
    = \theta(1, x; \theta_{0,1}),  ~ x\in \mathbb R^2. 
\end{align*}
Write $t_{\varepsilon} := \varepsilon^{-(m-1)} $. By direct computation, we have
\begin{align*}
    \| \theta( t_{\varepsilon}, \cdot; \theta_{0, \varepsilon}) \|_{ H^{-1}( B_1(0))} = \varepsilon^2 \| \theta(1, \cdot; \theta_{0,1})  \|_{ H^{-1}( B_{ \frac{1}{\varepsilon} }(0) ) }
    ~\text{ and }~
    \| \theta_{0, \varepsilon} \|_{ H_0^{1}( B_1(0) )} = \|  \theta_{0,1}  \|_{ H_0^{1}( B_{ \frac{1}{\varepsilon} }(0) ) }.
\end{align*}
Since $\Omega = B_1(0)$, we then obtain
\begin{align*}
    \| \theta( t_{\varepsilon}, \cdot; \theta_{0, \varepsilon}) \|_{ H^{-1}(\Omega)} / \| \theta_{0, \varepsilon} \|_{ H^{1}(\Omega)}
    \asymp  \varepsilon^2
    ~\text{ as }
    \varepsilon \rightarrow 0^+. 
\end{align*}
Because $t_{\varepsilon}^{-1} = \varepsilon^{m-1}$ and $m > 3$, the above implies that the upper bound in \eqref{third:sharp-rate} does not hold when the support of an initial datum approaches the stable equilibrium. 
\end{example}

\begin{example}[Approach to infinite-period orbits]
    \label{Counterexample-InfinitePeriod:MixingNorm}
    Let
    $\Omega := (0,\pi)^2$. Let $m \in \{1\} \cup (2,+\infty)$ and consider the cellular-type flow
    \[
    H(x)
    :=|(\sin x_1, \sin x_2)|^{m-1}\sin x_1\sin x_2,
    ~x=(x_1,x_2) \in \mathbb R^2
    ~\text{ and }~
    V:=- \nabla^{\perp} H|_{ \overline{\Omega}}.
    \]
    The only critical points of $H$ in
    $\overline\Omega$ are the center $(\pi/2,\pi/2)$ and the four corners.  
    The assumptions (A1)--(A2) hold with $m_k=1$ at the
    center (stable equilibrium) and $m_k=m$ at the corners (unstable equilibria).  The boundary of $\Omega$, except for its corners, consists of the infinite-period orbits $\{ T=+\infty \}$. 
    
    We compute the period of the orbit $\{ H(x) = h \}$  with sufficiently small $h \in (0,1)$. Take two numbers $\alpha_h \asymp h$ and $\beta_h  \asymp h^{ \frac{1}{m+1} }$ in $ (0, \frac{\pi}{2} )$ satisfying
    \begin{align*}
        H(\frac{\pi}{2}, \alpha_h) = h   ~\text{ and }~
        H(\beta_h, \beta_h) = h.
    \end{align*}
    By the symmetry of $H$, the period period of the level set $\{ H(x) = h \}$ equals $8$ times the time required to travel on the segment from point $(\beta_h, \beta_h)$ to point $(\frac{\pi}{2}, \alpha_h)$: 
    \begin{align*}
        T(x) =& 8 \int_{ \beta_h }^{ \frac{\pi}{2} } \frac{ \sqrt{1+|\frac{ dx_2 }{ dx_1 }|^2} dx_1 }{ |V(x_1,x_2)|}
        = 8 \int_{ \beta_h }^{ \frac{\pi}{2} } \frac{  dx_1 }{ | \partial_2 H(x_1,x_2) | }
        \nonumber\\
        =&  8 h^{-1}  \int_{ \beta_h }^{ \frac{\pi}{2} }     
        \frac{ \sin^2 x_1 + \sin^2 x_2 }{ \sin^2 x_1 + m \sin^2 x_2 }
        \tan x_2 
        d x_1
        \asymp   h^{-1}  \int_{ \beta_h }^{ \frac{\pi}{2} }     
        \tan x_2 
        d x_1.
    \end{align*}
    On this segment, $\tan x_2 \sim  \sin x_2  \asymp h/\sin^m x_1$ for small $h$. 
    By direct computation, we obtain that as $h=H(x) \rightarrow 0^+$, 
    \begin{align}\label{OrbitPeriod:InfiniteOrbit-FirstTheroem-20260911}
        T(x) \asymp \begin{cases}
           h^{ - \frac{m-1}{m+1} }, &~ m >1,
            \vspace{0.5em}\\
            \ln \frac{1}{h}, &~m=1
        \end{cases}
        ~\text{ and }~
        \Big| \frac{ d T(x) }{ dH(x)} \Big|  \asymp h^{ - \frac{2m}{m+1} }. 
    \end{align}

    The following set with small $\delta$
    \begin{align}\label{SetA:InfinitePeriod-FirstTheorem-20260912}
        A:= \big( \frac{\pi}{3}, \frac{2\pi}{3} \big) \times (0, \delta)
    \end{align}
    includes a part of the infinite-period orbits (i.e., $(0,\pi) \times \{0\}$)  as its boundary. 
    The construction of a sequence of initial data is as follows. Set  
    \begin{align*}
        x_h := \big( \frac{\pi}{2}, \alpha_h \big), ~ t_h:=T(x_h),
        \text{ and }
        v_h := \frac{ V(x_h) }{ |V(x_h)| }.
    \end{align*}
    Since $ V \cdot \nabla T =0$, it follows from   \eqref{20241021-yb-DerivativeOfPeriodField} that 
    \begin{align*}
        v_h + |V(x_h)| \nabla T(x_h) = \big[ J_{\varPhi(t_h)}(x_h)^{\top} \big]^{-1}  v_h.
    \end{align*}
    After small perturbations, there is a small neighborhood $U_h\Subset A$ of $x_h$ such that
    \begin{align}\label{PerburbationProperties-20260911}
        \big| \big[ J_{\varPhi(t_h)}(x)^\top \big]^{-1}  v_h \big|   >  \frac{ |V(x_h)| \cdot | \nabla T(x_h) |  }{2}
        ~\text{and}~
        | V(x) \cdot v_h| > \frac{ |V(x_h)| }{2} ,
        ~ x \in U_h.
    \end{align}
    Take $a \in C_0^{\infty}(U_h) \setminus \{0\}$ and define a sequence of real initial data $\{(\theta_{0,k}, \theta_{1,k})\}_{k=1}^{\infty}$ supported in $A$ as follows:
    \begin{align}\label{SpecialInitialData:InifiteOrbit-FirstTheorem-20260911}
        (\theta_{0,k} + i \theta_{1,k})(x) := V(x)\cdot \nabla \big( a(x) e^{i k v_h\cdot x } \big),
        ~ x \in \Omega. 
    \end{align}
   By \eqref{third:orbit-average}, one can directly check that their orbit averages are all $0$ (i.e., $\bar\theta_{0,k} = \bar\theta_{1,k} =0$). 
    
 The following equality holds: for each $f,\varphi\in C_0^{\infty}(\Omega) $ with $\nabla \varphi \neq 0$ on supp\,$f$, 
 \begin{align*}
     \lim_{ k \rightarrow +\infty } \|  \big( k e^{i k \varphi } \big) f \|_{H^{-1}(\Omega; \mathbb C)}^2  =   
     \int_{\Omega}  f^2 | \nabla \varphi |^{-2} dx.
 \end{align*}
 The key observation behind this equality is that the difference $ - \Delta \big( k^{-1} e^{ik\varphi} f / |\nabla\varphi|^2 \big) - \big( k e^{i k \varphi } \big) f$ is   $o(1)$ in $H^{-1}(\Omega; \mathbb C)$ (as $k \rightarrow +\infty$), which implies
  \begin{align*}
     \lim_{ k \rightarrow +\infty } \big\|  \big( k e^{i k \varphi } \big) f \big\|_{H^{-1}(\Omega; \mathbb C)}^2  =   
     \lim_{ k \rightarrow +\infty }  \big\| k^{-1} e^{ik\varphi} f / |\nabla\varphi|^2 \big\|_{H_0^1(\Omega; \mathbb C)}
     = \big\| f / |\nabla\varphi| \big\|_{L^2(\Omega)},
 \end{align*}
 here the details are omitted. Now we apply this equality to \eqref{SpecialInitialData:InifiteOrbit-FirstTheorem-20260911} to obtain
 \begin{align*}
     \lim_{ k \rightarrow +\infty }  \| \theta_{0,k} + i \theta_{1,k} \|_{ H^{-1}(\Omega; \mathbb C)}^2
     =& \int_{\Omega}  | a(x) V(x) \cdot v_h |^2 dx,
     \nonumber\\
     \lim_{ k \rightarrow +\infty }  \| \theta(t_h;\theta_{0,k}) + i \theta(t_h; \theta_{1,k}) \|_{ H^{-1}(\Omega; \mathbb C)}^2
     =& \lim_{ k \rightarrow +\infty }  \| ( \theta_{0,k} + i \theta_{1,k}) \circ \varPhi(t_h)^{-1} \|_{ H^{-1}(\Omega; \mathbb C)}^2
     \nonumber\\
     =& \int_{\Omega}  \frac{   | a(x) V(x) \cdot v_h |^2 }{
        \big| \big[ J_{\varPhi(t_h)}(x)^\top \big]^{-1}  v_h \big|^2
     } dx.
 \end{align*}
 This, together with \eqref{PerburbationProperties-20260911} and \eqref{OrbitPeriod:InfiniteOrbit-FirstTheroem-20260911}, yields that as $h = H(x_h) \rightarrow 0^+$, 
 \begin{align*}
     \lim_{ k \rightarrow +\infty }  \frac{
        \| \theta(t_h;\theta_{0,k}) + i \theta(t_h; \theta_{1,k}) \|_{ H^{-1}(\Omega; \mathbb C)}
     }{
        \| \theta_{0,k} + i \theta_{1,k} \|_{ H^{-1}(\Omega; \mathbb C)}
     }
     \leq   \frac{2}{ |V(x_h)| \cdot | \nabla T(x_h) | } 
     \asymp  h^{ \frac{2m}{m+1} } ,
 \end{align*}
 which is of smaller order than $t_h^{-1}=\frac{1}{ T(x_h) }$. Therefore, the lower bound in \eqref{third:sharp-rate} cannot hold in the current situation.         
\end{example}

\subsection{Assumptions in Theorem~\ref{20250103-yb-theorem-MixingScale}}
\label{subsec:assumption-SecondMainTheorem}

We will present some counterexamples for specific velocity fields to illustrate the failure of Theorem~\ref{20250103-yb-theorem-MixingScale} when one of its assumptions is removed. 

\begin{example}[Vanishing lower bound for the gradient of orbit periods]
    \label{example:VanishingPeriodGradient-MixingScale-20260910}
    We provide a counterexample to \eqref{20250112-yb-OrderOfMixingScale} when the second inequality in assumption \eqref{rev:regular} fails. Consider the same velocity field $V$ (with the same notations) as in Example \ref{example:VanishingPeriodGradient-MixingNorm-20260910}. Set
    \begin{align*}
        A_1 := \Big\{  x \in \Omega  ~:~   
        r_* - \varepsilon < |x| <r_* + \varepsilon,
        ~ |\arg x| <  \frac{ \pi }{ 2 }  
        \Big\}. 
    \end{align*}
    Let $ t \geq 1$.     According to \eqref{OrbitPeriod:VanishingPeirodGradient} and  \eqref{SpecialPeriod-20260910}, we can take $\delta_t \asymp t^{- \frac{1}{2} }  \in (0, \frac{r_*}{2})$ such that the angular velocity in the annulus $\mathfrak R_{\delta_t} := \big\{  r_* - \delta_t < |x| <r_* + \delta_t \big\}$ minus $\omega_* := g(r_*^2)$ is less than $\frac{\pi}{3t}$, i.e.,
    \begin{align*}
        \Big| \frac{2\pi}{ T(x) }  - \omega_*  \Big| = \big| g( |x|^2 )  -g(r_*^2) \big| <  \frac{\pi}{3t}
        ~\text{ when }   x \in \mathfrak R_{\delta_t}.
    \end{align*}
    Thus, the set $\varPhi(t)(A_1)$ does not enter the region $\mathfrak R_{\delta_t} \cap \big\{  \frac{5\pi}{6} < \arg x - t \omega_* < \frac{7\pi}{6} \big\}$. So
    \begin{align*}
        B_{ \delta_t }  \big( (r_* \cos( \pi + t \omega_* ), r_* \sin( \pi + t \omega_* ) \big)  \cap  \varPhi(t)(A_1) = \emptyset. 
    \end{align*}
    Then, by (iii) of Definition \ref{20250103-yb-DefinitionOfMixingScale}, we know
    \begin{align*}
        MixingScale(t,A_1) \geq  \delta_t \asymp t^{ - \frac{1}{2} }
        ~\text{ as }~  t\rightarrow +\infty.
    \end{align*}
    This implies that the upper bound in \eqref{20250112-yb-OrderOfMixingScale} (with $A$ replaced by $A_1$) does not hold in the current situation. 
\end{example}

\begin{example}[Approach to a stable equilibrium]
    We provide a counterexample to \eqref{20250112-yb-OrderOfMixingScale} when the set $A$ approaches a stable equilibrium. 
    Consider the same velocity field $V$ and the set $A$ as in Example \ref{Counterexample-StableEquilibrium:MixingNorm}.  Let $ t \geq 1$ and define
    \begin{align*}
        r_t := \frac{1}{2} \Big( \frac{ \pi }{ 3(m+1)t }  \Big)^{ \frac{1}{m-1} }.
    \end{align*}
    By \eqref{ExpressionOfOrbitPeriod-StableEquilibrium}, the angular velocity in the ball $B_{2 r_t}(0)$ is less than $\frac{\pi}{3t}$, i.e.,
    \begin{align*}
        \frac{2\pi}{ T(x) } = (m+1) |x|^{m-1}  <  \frac{\pi}{3t}
        ~\text{ when }  |x| < 2 r_t.
    \end{align*}
    Thus, the set $\varPhi(t)(A)$ does not enter the region $B_{2r_t}(0) \cap \big\{  \frac{5\pi}{6} < \arg x < \frac{7\pi}{6} \big\}$. So
    \begin{align*}
        B_{ \frac{r_t}{2} }  \big( (-r_t,0) \big)  \cap  \varPhi(t)(A) = \emptyset. 
    \end{align*}
    Then, by (iii) of Definition \ref{20250103-yb-DefinitionOfMixingScale}, we know
    \begin{align*}
        MixingScale(t,A) \geq  \frac{r_t}{2} \asymp t^{ - \frac{1}{m-1} }
        ~\text{ as }~  t\rightarrow +\infty.
    \end{align*}
    Since $m>3$, the above implies that the upper bound in \eqref{20250112-yb-OrderOfMixingScale} does not hold when the set $A$ approaches the stable equilibrium $0$. 
\end{example}

\begin{example}[Approach to infinite-period orbits]
    We provide a counterexample to \eqref{20250112-yb-OrderOfMixingScale} when the set $A$ approaches an infinite-period orbit. 
    Consider the same velocity field $V$ and the set $A$ (see \eqref{SetA:InfinitePeriod-FirstTheorem-20260912}) as in Example \ref{Counterexample-InfinitePeriod:MixingNorm} and let $m>2$.  The corner $0$ of $\Omega$ is an unstable equilibrium of order $m>2$. By Lemma \ref{20241001-yb-lemma-Hamiltonian}, we have that $|V(x)| \asymp |x|^m$ as $x \rightarrow 0$. 
    There are numbers $r>0$ and $C>0$ such that
    \begin{align}\label{OrderOfVelocity-20260912}
        | V(x) |  \leq  C |x|^m, ~ x \in B_{r}(0) \cap \Omega.
    \end{align}
    Let $t\geq 1$. Choose a number $r_t$ in the following way:
    \begin{align*}
        r_t := \min\Big\{
            \frac{1}{2}  \big( C(m-1)t \big)^{ -\frac{1}{m-1} }, \,
            \frac{\pi}{6}, \,  \frac{r}{2}
        \Big\}. 
    \end{align*}
    We estimate the evolution of points in $B_{r_t}(0)$. 
    Write $R(s;x) := | \varPhi(-s)(x)|$, $s\geq 0$, $x \in B_{r_t}(0) \cap \Omega$. Then, it follows from \eqref{OrderOfVelocity-20260912} that $R'(s;x) \leq C R(s;x)^m$ in $B_r(0)$ and that
    \begin{align*}
        R(s;x) \leq \frac{ |x| }{
            \big[ 1 - C(m-1) |s| |x|^{m-1}  \big]^{ \frac{1}{m-1} }
        }
        < 2 r_t
        \leq \frac{\pi}{3},
        ~ s\in(0, t),~x\in B_{r_t}(0) \cap \Omega. 
    \end{align*}
    Then, we see from the definition of $A$ in \eqref{SetA:InfinitePeriod-FirstTheorem-20260912} that
    \begin{align*}
        \varPhi(-t) B_{r_t}(0) \subset B_{ \frac{\pi}{3} }(0) 
        \subset A^\complement,
        ~\text{ i.e., }
         B_{r_t}(0)  \cap \varPhi(t)(A) = \emptyset. 
    \end{align*}
    Meanwhile, since the boundary of  $Orbit(A)$ includes $\partial\Omega$, one has $B_{ \frac{r_t}{3} }\big( ( \frac{r_t}{3}, \frac{r_t}{3}) \big) \subset Orbit(A)$ for large $t$. 
    Now, by (iii) of Definition \ref{20250103-yb-DefinitionOfMixingScale}, we know
    \begin{align*}
        MixingScale(t,A) \geq \frac{ r_t }{3} \asymp t^{ - \frac{1}{m-1} }
        ~\text{ as }~  t\rightarrow +\infty.
    \end{align*}
    Since $m>2$, the above implies that the upper bound in \eqref{20250112-yb-OrderOfMixingScale} does not hold in the current situation. 
\end{example}

\subsection{Assumption in Theorem~\ref{20241021-yb-theorem-OptimalGrowthForCurves}}
\label{subsec:assumption-ThirdMainTheorem}

We will present some counterexample for specific velocity fields to illustrate the failure of Theorem~\ref{20241021-yb-theorem-OptimalGrowthForCurves} when the curve $\gamma_0$ lies on the infinite-period orbits. 

\begin{example}
    \label{Counterexample-InfinitePeriod:Length}
    Consider the same velocity field $V$  as in Example \ref{Counterexample-InfinitePeriod:MixingNorm}.  Let $m=1$. We are in the cellular flow case: 
    \begin{align*}
        V(x_1, x_2) = (\sin x_1 \cos x_2,  -\cos x_1 \sin x_2), ~(x_1,x_2) \in \overline{ \Omega }.
    \end{align*}
    In particular, $V(x_1, 0) = (\sin x_1, 0)$ on the $x_1$-axis. Each point on this part of the axis evolves according to the ODE: $x_1'(t) = \sin x_1(t)$, whose solution satisfies $x(t) = f(t, x(0))$, where 
    \begin{align*}
        f(t,x) := 2 \arctan( e^t \tan(x/2) ), ~ t\geq 0, ~ x \in (0, \pi).
    \end{align*}
    Take the Lipschitz curve 
    \begin{align*}
        \gamma_0(s) := \big( s^2\big( 1 + \sin (\pi s^{-1})\big), 0 \big)
        ~s\in (0,1];
        ~~\gamma_0(0) =0. 
    \end{align*}
    Furthermore, with $s_n:=( 3/2+ 2n)^{-1}$ and $p_n:=(1/2 + 2n)^{-1}$ ($n \in \mathbb N$), we have
    \begin{align*}
        \gamma_0(s_n)=0 ~\text{and}~ \gamma_0(p_n) = (2 p_n^2,0),
        ~ n \in \mathbb N.
    \end{align*}
    Let $t\geq 2$. Over each $[s_{n}, s_{n-1}]$, the curve $\gamma_0$ goes from $0$ to a point at distance greater than $2p_n^2$ from the origin and then returns to $0$.     When $1 \leq n \leq \frac{1}{4} e^{t/2}$, 
    \begin{align*}
        |\varPhi(t)( \gamma_0 |_{ [s_{n}, s_{n-1}] } ) |
        \geq 2 f(t, 2 p_n^2)  \geq 4 \arctan(e^t p_n^2) \geq \pi. 
    \end{align*}
    In particular, at the fixed time $t$, the ratio of the length of a transported segment compared to its original one can be very large: 
    \begin{align*}
         \lim_{n\rightarrow+\infty}  \frac{  |\varPhi(t)( \gamma_0 |_{ [s_{n}, p_n] } ) | }{
         |\gamma_0 |_{ [s_{n}, p_n] }  |	
         } 
         = \lim_{n\rightarrow+\infty} \frac{  f(t, 2 p_n^2) }{
              2 p_n^2	
        } = +\infty. 
    \end{align*}
    Now, we deduce that as $t\rightarrow +\infty$,
    \begin{align*}
         |\varPhi(t)( \gamma_0) |  \geq \sum_{ 1 \leq n \leq \frac{1}{4} e^{t/2} }  |\varPhi(t)( \gamma_0 |_{ [s_{n}, s_{n-1}] } ) |
          \gtrsim e^{t/2}.
    \end{align*}
    The length of $\gamma_0$ under evolution increases exponentially. Therefore, \eqref{20241023-yb-AsymptoticForLengthOfCurves} does not hold in the current situation.     
\end{example}

\section{Further studies on mixing time and set separation}
\label{20250313-yb-subsection-MixingScale-ProofsOfCorollaries}

In this section, we present two important corollaries of Theorem \ref{20250103-yb-theorem-MixingScale}. The first is an analog of Bressan's conjecture (see \cite[p.\,101]{Bressan-2006}) for stationary divergence-free vector fields (see Corollary \ref{20250110-yb-corollary-AnalogOfBressanConjecture}). Note that Bressan's original conjecture concerns time-dependent flows. The second concerns the extent to which two disjoint evolving sets, sharing the same orbit, approach each other as time becomes large (see Corollary \ref{20250110-yb-corollary-DistanceBetweenTwoSets}).

\begin{corollary}\label{20250110-yb-corollary-AnalogOfBressanConjecture}
Assume the same hypotheses as in Theorem \ref{20250103-yb-theorem-MixingScale}. Set
\begin{align}\label{20250301-yb-OptimalMixingTime}
    t^*_{\varepsilon} := \inf
    \big\{
        t > 0 ~:~ V \text{ mixes } A \text{ to scale } \varepsilon
        \text{ at time } t
    \big\},
    ~ \forall\, \varepsilon > 0
\end{align}
(here the infimum is understood to be $+\infty$ if the corresponding set is empty). Then, it holds that
\begin{align*}
t^*_{\varepsilon}   \asymp \varepsilon^{-1}  ~ \text{ as } \varepsilon \to 0^+ .
\end{align*}
\end{corollary}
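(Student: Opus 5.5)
The plan is to read off $t^*_\varepsilon$ directly from the two-sided bound \eqref{20250112-yb-OrderOfMixingScale} in Theorem~\ref{20250103-yb-theorem-MixingScale}. The one structural fact needed is a monotonicity property in the scale at a fixed time. If $V$ mixes $A$ to scale $\varepsilon$ at time $t$, then it mixes $A$ to every scale $\varepsilon'>\varepsilon$ at time $t$. Conversely, if $\varepsilon>MixingScale(t,A)$, then $V$ mixes $A$ to scale $\varepsilon$ at time $t$. Both follow from Lemma~\ref{rev:core-transition}. Indeed, its proof shows that $V$ mixes $A$ to scale $r$ at time $t$ whenever $r>\sup_{x\in Orbit(A)}d(x,\varPhi(t)(A))=MixingScale(t,A)$. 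It also shows that mixing to scale $r$ forces $d(x,\varPhi(t)(A))\le r$ for all $x\in Orbit(A)$, and hence $MixingScale(t,A)\le r$. Consequently,
\begin{align*}
\{t>0:\ MixingScale(t,A)<\varepsilon\}\subset\{t>0:\ V\text{ mixes }A\text{ to scale }\varepsilon\text{ at time }t\}\subset\{t>0:\ MixingScale(t,A)\le\varepsilon\}.
\end{align*}

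\emph{Upper bound.} Let $C_2$ be the constant in \eqref{20250112-yb-OrderOfMixingScale}. If $t>2C_2\varepsilon^{-1}$, then $MixingScale(t,A)\le C_2(1+t)^{-1}<\varepsilon/2<\varepsilon$, so $t$ lies in the set defining $t^*_\varepsilon$. In particular that set is nonempty, and $t^*_\varepsilon\le 2C_2\varepsilon^{-1}$.

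\emph{Lower bound.} Let $C_1$ be the other constant in \eqref{20250112-yb-OrderOfMixingScale}. If $V$ mixes $A$ to scale $\varepsilon$ at time $t$, then $C_1(1+t)^{-1}\le MixingScale(t,A)\le\varepsilon$, so $t\ge C_1\varepsilon^{-1}-1$. Taking the infimum gives $t^*_\varepsilon\ge C_1\varepsilon^{-1}-1\ge \tfrac{C_1}{2}\varepsilon^{-1}$ once $\varepsilon\le C_1/2$.

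Together these give $t^*_\varepsilon\asymp\varepsilon^{-1}$ as $\varepsilon\to0^+$. The only step needing care is the monotonicity/threshold statement; everything else is arithmetic. That step is essentially the content of the first equality in \eqref{rev:core-transition-sandwich}, so I would invoke Lemma~\ref{rev:core-transition} and its proof for it. The subtlety is only that the density constant $\kappa$ in Definition~\ref{20250103-yb-DefinitionOfMixingScale} is allowed to depend on $\varepsilon$ and $t$, which is exactly what makes the characterization through $\sup_x d(x,\varPhi(t)(A))$ valid.
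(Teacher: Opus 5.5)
Your proposal is correct and follows essentially the same route as the paper: both read $t^*_\varepsilon$ off the two-sided bound \eqref{20250112-yb-OrderOfMixingScale}. The lower bound uses that mixing to scale $\varepsilon$ forces $MixingScale(t,A)\le\varepsilon$, and the upper bound uses that $MixingScale(t,A)<\varepsilon$ implies mixing to scale $\varepsilon$. The only difference is that the paper justifies this threshold step directly from the density condition \eqref{20250310-yb-DensityForMixing} (enlarging the radius only changes $\kappa$), whereas you obtain it from Lemma~\ref{rev:core-transition} and its proof; this is equally valid.
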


This corollary indicates that if the mixing scale decays like 
$1/t$, then the time needed to reach scale $\varepsilon$ 
 is of order $\varepsilon^{-1}$. 

\begin{proof}[Proof of Corollary \ref{20250110-yb-corollary-AnalogOfBressanConjecture}]
    By Theorem \ref{20250103-yb-theorem-MixingScale}, we have
    \begin{align}\label{20250307-yb-proof-TwoSidedEstimateOfMixingScale}
        C_1 (1 + t)^{-1} \leq MixingScale(t,A) \leq C_2 (1 + t)^{-1},
        ~ t \geq 0,
    \end{align}
    where $C_1$ and $C_2$ are the constants in \eqref{20250112-yb-OrderOfMixingScale}.
    Let 
    \begin{align}\label{20250307-yb-RangeOfEpsilon}
        0 < \varepsilon < C_1 / 2.
    \end{align}
    Let $t_1 \in (0, C_1 /(2\varepsilon))$ be arbitrary. By \eqref{20250307-yb-proof-TwoSidedEstimateOfMixingScale} and \eqref{20250307-yb-RangeOfEpsilon},
    \begin{align*}
        MixingScale(t_1, A) \geq C_1 (1 + t_1)^{-1} > \varepsilon.
    \end{align*}
    By \eqref{20250103-yb-ExactDefinitionOfMixingScale}, this implies that $V$ cannot mix $A$ to scale $\varepsilon$ at time $t_1$. Thus, by \eqref{20250301-yb-OptimalMixingTime},
    $
    t^*_{\varepsilon} \geq t_1.
    $
    Since $t_1$ was arbitrary in $(0, C_1 /(2\varepsilon))$, we conclude that
    \begin{align}\label{2025030y-yb-LowerBoundOfOptimalMixingTime}
        t^*_{\varepsilon} \geq C_1 /(2\varepsilon).
    \end{align}
    
    Next, set $t_2 := C_2 / \varepsilon$. Again by \eqref{20250307-yb-proof-TwoSidedEstimateOfMixingScale} and \eqref{20250307-yb-RangeOfEpsilon},
    \begin{align*}
        MixingScale(t_2, A) \leq C_2 (1 + t_2)^{-1} < \varepsilon.
    \end{align*}
    Then, by \eqref{20250103-yb-ExactDefinitionOfMixingScale} as well as \eqref{20250310-yb-DensityForMixing}, $V$ can mix $A$ to scale $\varepsilon$ at time $t_2$. Therefore, by \eqref{20250301-yb-OptimalMixingTime},
    \[
    t^*_{\varepsilon} \leq t_2 = C_2 / \varepsilon.
    \]
    Since both $C_1$ and $C_2$ are independent of $\varepsilon$, the above inequality together with \eqref{2025030y-yb-LowerBoundOfOptimalMixingTime} yields the desired conclusion.
\end{proof}

\begin{corollary}\label{20250110-yb-corollary-DistanceBetweenTwoSets}
Let $T$ be given by \eqref{20241021-yb-PeriodOfOrbits}. 	
Let $A,B\subset  \Omega$ be two disjoint Lipschitz subdomains (open and connected) sharing the same orbit, i.e.,
\[
Orbit(A)=Orbit(B).
\]
Assume the same hypotheses for $A$ as in Theorem \ref{20250103-yb-theorem-MixingScale}.
Then
\begin{align}\label{20250308-yb-DistanceBetweenTwoEvolvingSubdomains}
     d_H\big( \varPhi(t)(A), \varPhi(t)(B) \big) \asymp t^{-1}
    ~ \text{as } t \to +\infty.
\end{align}
Here, $d_H$ denotes the Hausdorff distance between sets, i.e.,
\begin{align*}
	d_H(E,F) := \max\Big\{ \sup_{x \in E} d(x,F),  \sup_{y \in F} d(y,E) \Big\}.
\end{align*}
\end{corollary}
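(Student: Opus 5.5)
The plan is to reduce both inequalities in \eqref{20250308-yb-DistanceBetweenTwoEvolvingSubdomains} to results already proved: Lemma~\ref{rev:core-transition} together with Theorem~\ref{20250103-yb-theorem-MixingScale} for the upper bound, and the Jacobian lower bound \eqref{20260823-LowerBoundOfJacobianOfFlow}, used as in Step~2 of Proposition~\ref{rev:prop52}, for the lower bound.

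\emph{Verifying the hypotheses for $B$.} The first step is to check that $B$ satisfies the hypotheses of Theorem~\ref{20250103-yb-theorem-MixingScale}. Since $A$ is open and disjoint from $B$, it is also disjoint from $\overline B$. Hence the nonempty set $A$ lies in $Orbit(B)\setminus\overline B$. If $\overline B$ were $V$-invariant, then $Orbit(B)\subset Orbit(\overline B)=\overline B$, which is a contradiction; so $\overline B$ is not $V$-invariant. For \eqref{rev:regular}, Lemma~\ref{20260903-EquivalentDistanceHypothesis} transfers the separation from $\{T=0,+\infty\}$ and the lower bound on $|\nabla T|$ from $A$ to $Orbit(A)$. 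The latter set equals $Orbit(B)\supset B$, so both conditions hold for $B$ with constants depending only on those of $A$. I expect this transfer to be the only delicate point, namely that $\inf|\nabla T|>0$ really propagates to the whole orbit saturation. Should the lemma only give this for $\nabla T^{-1}$, I would instead use Step~1 of Proposition~\ref{rev:prop52} together with the bound \eqref{20250329-yb-GlobalUpperBoundsOfOrbitPeriodAndRatioOfVelocity} on $T$.

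\emph{Upper bound.} Since $\varPhi(t)(B)\subset Orbit(A)$, Lemma~\ref{rev:core-transition} and Theorem~\ref{20250103-yb-theorem-MixingScale} give
\[
\sup_{y\in\varPhi(t)(B)} d\big(y,\varPhi(t)(A)\big)\le \sup_{x\in Orbit(A)} d\big(x,\varPhi(t)(A)\big)=MixingScale(t,A)\le C_2(1+t)^{-1}.
\]
Applying the same argument with the roles of $A$ and $B$ exchanged, which the first step allows, bounds the other half of $d_H$. Hence $d_H\le C(1+t)^{-1}$.

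\emph{Lower bound.} Fix $y_0\in B$ and $r>0$ with $B_r(y_0)\subset B$. Then $\varPhi(t)(B_r(y_0))$ is disjoint from $\varPhi(t)(A)$. It therefore suffices to show that $d\big(\varPhi(t)(y_0),\varPhi(t)(B_r(y_0))^\complement\big)\ge c\,r(1+t)^{-1}$. Take any point $\hat x_1$ outside $\varPhi(t)(B_r(y_0))$ and let $x_*$ be the first exit point of the segment from $\varPhi(t)(y_0)$ to $\hat x_1$. Pull this segment back by $\varPhi(t)^{-1}$ to a Lipschitz curve $\gamma_0$ in $B_r(y_0)$ that joins $y_0$ to $\partial B_r(y_0)$, so that $|\gamma_0|\ge r$. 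All points of $B_r(y_0)$ lie in $Orbit(A)$, which is a subset of $\Omega_{\ge s}$ for some $s>0$ and avoids equilibria. So \eqref{20260823-LowerBoundOfJacobianOfFlow} yields
\[
|\hat x_1-\varPhi(t)(y_0)|\ge \int |J_{\varPhi(t)}(\gamma_0)\gamma_0'|\ge c(1+t)^{-1}|\gamma_0|\ge c\,r(1+t)^{-1}.
\]
It follows that $d_H\ge d\big(\varPhi(t)(y_0),\varPhi(t)(A)\big)\ge c\,r(1+t)^{-1}$. Combining the two bounds gives $d_H\asymp t^{-1}$ as $t\to+\infty$.
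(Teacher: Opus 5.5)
Your proof is correct. The upper bound and the transfer of hypotheses to $B$ follow the paper, but the lower bound takes a more direct route. You also check explicitly that $\overline B$ is not $V$-invariant, which the paper leaves implicit. Your fallback concerning $\nabla T^{-1}$ is unnecessary: Lemma~\ref{20260903-EquivalentDistanceHypothesis} transfers $\inf|\nabla T|>0$ from $A$ to $Orbit(A)=Orbit(B)$ directly.

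For the lower bound, the paper applies Proposition~\ref{rev:prop52} to the auxiliary set $Orbit(B)\setminus\overline B$. This requires three things:
\begin{enumerate}
\item verifying the uniform-orbit cone condition for that set, using the interior cones of $A$;
\item identifying its orbit-relative complement as $\overline B\cap Orbit(B)$;
\item comparing distances to the complements of $\varPhi(t)(\overline B\cap Orbit(B))$ and of $\varPhi(t)(B)$.
\end{enumerate}
The third step tacitly uses that boundary points of the Lipschitz domain $B$ are limits of points outside $\overline B$.

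You instead run the first-exit and pull-back argument from Step~2 of that proposition directly on a ball $B_r(y_0)\subset B$, needing only \eqref{20260823-LowerBoundOfJacobianOfFlow} on $Orbit(A)$. This is shorter and uses less. Your lower bound needs neither the Lipschitz regularity of $B$, nor any cone condition, nor $\inf|\nabla T|>0$. It needs only an interior ball and the separation of $Orbit(A)$ from $\{T=0,+\infty\}$. It also yields a slightly sharper pointwise fact: the tracked point $\varPhi(t)(y_0)$ stays at distance at least $c\,r(1+t)^{-1}$ from $\varPhi(t)(A)$.

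The paper's route gains modularity, since it deduces the corollary from Proposition~\ref{rev:prop52} as a black box. Both arguments rest on the same Jacobian lower bound.
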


%

\begin{proof}
To prove \eqref{20250308-yb-DistanceBetweenTwoEvolvingSubdomains}, it suffices to show that there exist constants $\widehat C_1, \widehat C_2>0$ such that
\begin{align}\label{20250308-yb-UpperBoundOfDistance}
	\widehat C_1 (1 + t)^{-1} \leq
	\sup_{x \in \varPhi(t)(B)} d(x, \varPhi(t)(A))
	\leq \widehat C_2 (1 + t)^{-1},
	~ t \geq 0.
\end{align}
Once this is established, we can obtain a similar two-sided inequality by exchanging $A$ and $B$ in \eqref{20250308-yb-UpperBoundOfDistance}, and \eqref{20250308-yb-DistanceBetweenTwoEvolvingSubdomains} then follows from these two inequalities. Here, we used that $B$ also satisfies the same hypotheses  as in Theorem \ref{20250103-yb-theorem-MixingScale}.  This holds due to the following:  because $A$ satisfies these, by Lemma \ref{20260903-EquivalentDistanceHypothesis}, so does $Orbit(B)=Orbit(A)$, which implies the desired conclusion. 

We begin with the upper bound in \eqref{20250308-yb-UpperBoundOfDistance}. By Theorem \ref{20250103-yb-theorem-MixingScale},
\begin{align}\label{20250307-yb-proof-CopyOfTwoSidedEstimateOfMixingScale}
	MixingScale(t,A) \leq C_2 (1 + t)^{-1},
	~ t \geq 0,
\end{align}
where $C_2$ is the constant in \eqref{20250112-yb-OrderOfMixingScale}.
Fix $t \geq 0$. By \eqref{20250307-yb-proof-CopyOfTwoSidedEstimateOfMixingScale} and \eqref{20250103-yb-ExactDefinitionOfMixingScale}, the vector field $V$ mixes $A$ to scale $2 C_2 / (1+t)$ at time $t$. Hence, by \eqref{20250310-yb-DensityForMixing},
\begin{align*}
	\inf_{x \in Orbit(A)} | B_{2 C_2 / (1+t)}(x) \cap \varPhi(t)(A) | > 0.
\end{align*}
Since $Orbit(B)=Orbit(A)$ and $\varPhi(t)(B)\subset Orbit(B)$, it follows that
\begin{align*}
	\inf_{x \in \varPhi(t)(B)} | B_{2 C_2 / (1+t)}(x) \cap \varPhi(t)(A) |
	&\geq
	\inf_{x \in Orbit(B)} | B_{2 C_2 / (1+t)}(x) \cap \varPhi(t)(A) |
	\\
	&=
	\inf_{x \in Orbit(A)} | B_{2 C_2 / (1+t)}(x) \cap \varPhi(t)(A) |
	> 0.
\end{align*}
Since $A$ is open and $\varPhi(t)$ is a homeomorphism, the set $\varPhi(t)(A)$ is open. Therefore, the last inequality implies that each point $x \in \varPhi(t)(B)$ lies at distance at most $2 C_2 (1+t)^{-1}$ from $\varPhi(t)(A)$, i.e.,
\begin{align*}
	\sup_{x \in \varPhi(t)(B)} d(x, \varPhi(t)(A)) \leq 2 C_2 (1 + t)^{-1}.
\end{align*}
Since $t \geq 0$ was arbitrary, this proves the upper bound in \eqref{20250308-yb-UpperBoundOfDistance} with $\widehat C_2 = 2C_2$.

Next, we turn to the lower bound in \eqref{20250308-yb-UpperBoundOfDistance}. For this purpose, we want to apply Proposition \ref{rev:prop52} (with $A$ replaced by $B_V^{\complement} := Orbit(B) \setminus  \overline{B}$) to estimate the inradius of $\varPhi(t)(B)$.  The following observation will be important:
\begin{align}\label{Sandwich-20260915}
   E_B := Orbit( B_V^{\complement} )  \setminus B_V^{\complement} 
   = \overline{B} \cap Orbit(B).
\end{align}
Indeed,  since $A,B$ are open and disjoint and share the same orbit, it is clear that 
\begin{align}\label{AOutsideB-20260915}
   A \subset B_V^{\complement}.
\end{align}
This implies 
\begin{align*}
    Orbit(A) \subset Orbit( B_V^{\complement} ) \subset Orbit(B) = Orbit(A).
\end{align*}
This establishes \eqref{Sandwich-20260915}. 

We need to verify the assumptions in Proposition \ref{rev:prop52} (with $A$ replaced by $B_V^{\complement} $). 
First, we verify the uniform-orbit cone condition \eqref{20260908-UniformInteriorConeCondition} with $A$ replaced by $B_V^{\complement}$. Because $A$ is a Lipschitz domain, it satisfies the uniform interior cone condition. Then, by \eqref{AOutsideB-20260915}, we obtain the uniform-orbit cone condition \eqref{20260908-UniformInteriorConeCondition} for the set $B_V^{\complement}$. 

Second, we check \eqref{20260901-AssumptionInDeformationTheorem} with $A$ replaced by $B_V^{\complement}$. Note that  $A$ satisfies assumption \eqref{rev:regular} in Theorem \ref{20250103-yb-theorem-MixingScale}. By Lemma \ref{20260903-EquivalentDistanceHypothesis}, the set $Orbit(B) = Orbit(A)$ also satisfies \eqref{rev:regular}. Since $B_V^{\complement} \subset Orbit(B)$, we then know that \eqref{20260901-AssumptionInDeformationTheorem} holds with $A$ replaced by $B_V^{\complement}$. 

Now, apply Proposition \ref{rev:prop52} (with $(A, E)$ replaced by $(B_V^{\complement}, E_B)$) to obtain that for some $C>0$,
\begin{align*}
    C(1+t)^{-1}  \leq 
    \sup_{ x \in \varPhi(t)(E_B) }  d\big(x,  \big[ \varPhi(t)( E_B) \big]^\complement \big)
    \leq \sup_{ x \in \varPhi(t)( \overline{B} ) }  d\big( x, \big[ \varPhi(t)( B) \big]^\complement \big)
\end{align*}
(here \eqref{Sandwich-20260915} is used in the second inequality). 
Meanwhile, since $A\cap B = \emptyset$,  we have $\varPhi(t)(A) \cap \varPhi(t)(B) =\emptyset$. This, along with the last inequality, yields
\begin{align*}
    C(1+t)^{-1}   \leq \sup_{ x \in \varPhi(t)( \overline{B} ) }  d(x, \varPhi(t)(A) )
    = \sup_{ x \in \varPhi(t)( B ) }  d(x, \varPhi(t)(A) ).
\end{align*}
This gives the lower bound in \eqref{20250308-yb-UpperBoundOfDistance}. The proof is completed. 
\end{proof}

\section{Numerical simulations}
\label{section-NumericalSimulations}

In this section, we present numerical simulations illustrating Theorems~\ref{third:optimal-negative-norm}, \ref{20250103-yb-theorem-MixingScale}, and \ref{20241021-yb-theorem-OptimalGrowthForCurves}, respectively. All ODE systems are integrated with the second-order implicit midpoint scheme, which is symplectic for Hamiltonian systems.

\subsection{Example illustrating Theorem~\ref{third:optimal-negative-norm}}
\label{subsection-MixNorm}

The purpose of this example is to illustrate the inverse-time decay in
Theorem~\ref{third:optimal-negative-norm}. Let
$\Omega=(0,1)\times(0,1)$ and consider the Hamiltonian function
\begin{align}\label{eq:r2-cellular-flow}
H(x_1,x_2) :=\sin(\pi x_1)\sin(\pi x_2), ~(x_1, x_2) \in \mathbb R^2
~\text{ and }~
V :=-\nabla^\perp H|_{ \overline{\Omega}}.
\end{align}


We take the initial distribution
\begin{align*}
\theta_0(x_1,x_2)
:=\chi\bigl(H(x_1,x_2)\bigr)
\tanh\left(\frac{x_2-0.5}{0.01}\right),
~(x_1,x_2) \in \Omega,
\end{align*}
where $\chi\in C_c^\infty((0,1))$ is a smooth cutoff satisfying
$\chi(h)=1$ for $h\in[0.08,0.90]$. Thus $\theta_0$ is supported in a
compact regular energy band. Moreover, the reflection
\begin{align*}
R(x_1,x_2):=(x_1,1-x_2)
\end{align*}
preserves each orbit, while
$\theta_0\circ R=-\theta_0$. Hence, the orbit average satisfies $\bar\theta_0=0$, and
Theorem~\ref{third:optimal-negative-norm} predicts inverse-time decay of
$\|\theta(t)\|_{H^{-1}(\Omega)}$.

We compute the solution by a tensor sine Galerkin method. Writing
\begin{align*}
\theta_N(t,x_1,x_2)
&=\sum_{m,n=1}^N a_{mn}(t)\,
\Big( 2 \sin(m\pi x_1)\sin(n\pi x_2)  \Big),
~t \geq 0, 
~(x_1, x_2) \in \Omega, 
\end{align*}
we obtain the mix-norm
\begin{align*}
\|\theta_N(t)\|_{H^{-1}(\Omega)}^2
&=\sum_{m,n=1}^N
\frac{|a_{mn}(t)|^2}{\pi^2(m^2+n^2)},
~ t \geq 0.
\end{align*}
We use $N=768$ modes in each coordinate and $\Delta t=1/400$. The
Galerkin matrix is skew-symmetric, and no spectral filter is applied.

Figure~\ref{fig:r2-mixnorm-evolution} shows the evolution of the scalar at six different time instants. As time evolves, the initial transition is stretched into increasingly fine alternating filaments along the closed streamlines.

Figure~\ref{fig:r2-mixnorm-decay} shows the mix-norm and the fitted power law. A log--log least-squares fit of $Ct^{-p}$ over the interval
$3.00\leq t\leq10.00$ gives $C=0.07$ and $p=0.99$.
Thus the numerical mix-norm exhibits polynomial decay with an exponent close
to $-1$. The nearly linear tail in the double-logarithmic plot and the fitted exponent $-0.99$ are consistent with the inverse-time order in Theorem~\ref{third:optimal-negative-norm}.

\begin{figure}[H]
  \centering
  \begin{subfigure}[b]{0.16\textwidth}
    \includegraphics[width=\textwidth]{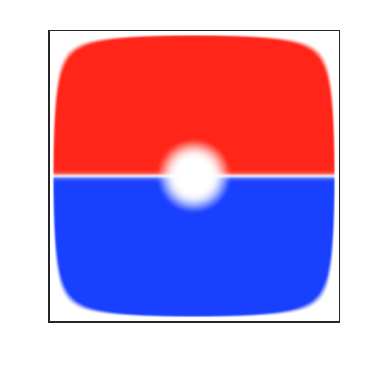}
    \caption{$t=0$}
  \end{subfigure}
  \begin{subfigure}[b]{0.16\textwidth}
    \includegraphics[width=\textwidth]{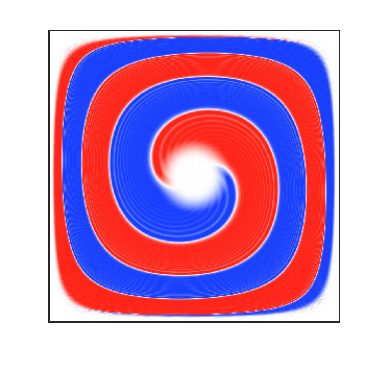}
    \caption{$t=2$}
  \end{subfigure}
  \begin{subfigure}[b]{0.16\textwidth}
    \includegraphics[width=\textwidth]{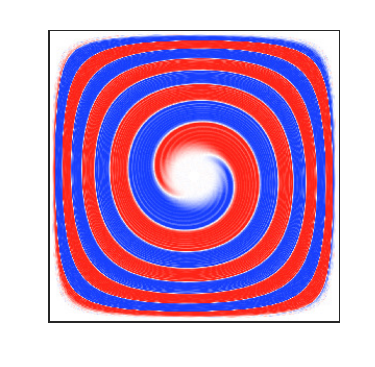}
    \caption{$t=4$}
  \end{subfigure}
  \begin{subfigure}[b]{0.16\textwidth}
    \includegraphics[width=\textwidth]{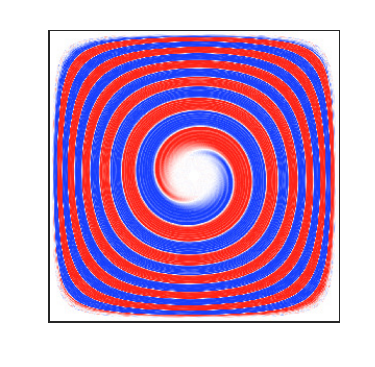}
    \caption{$t=6$}
  \end{subfigure}
  \begin{subfigure}[b]{0.16\textwidth}
    \includegraphics[width=\textwidth]{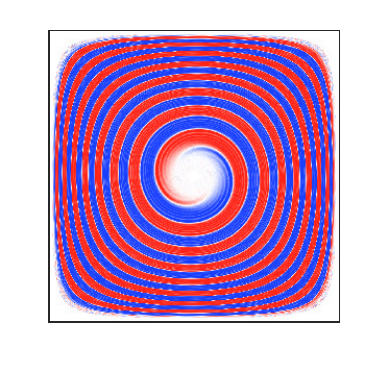}
    \caption{$t=8$}
  \end{subfigure}
  \begin{subfigure}[b]{0.16\textwidth}
    \includegraphics[width=\textwidth]{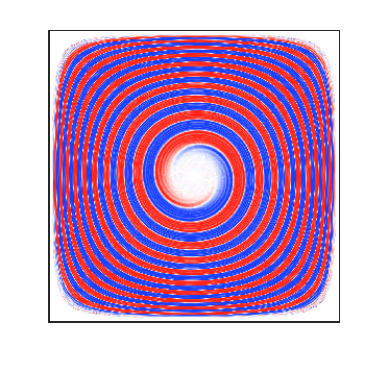}
    \caption{$t=10$}
  \end{subfigure}
  \caption{Evolution of the scalar field $\theta$ at different time instants}
  \label{fig:r2-mixnorm-evolution}
\end{figure}

\begin{figure}[htp]
  \centering
  \includegraphics[width=0.5\textwidth]{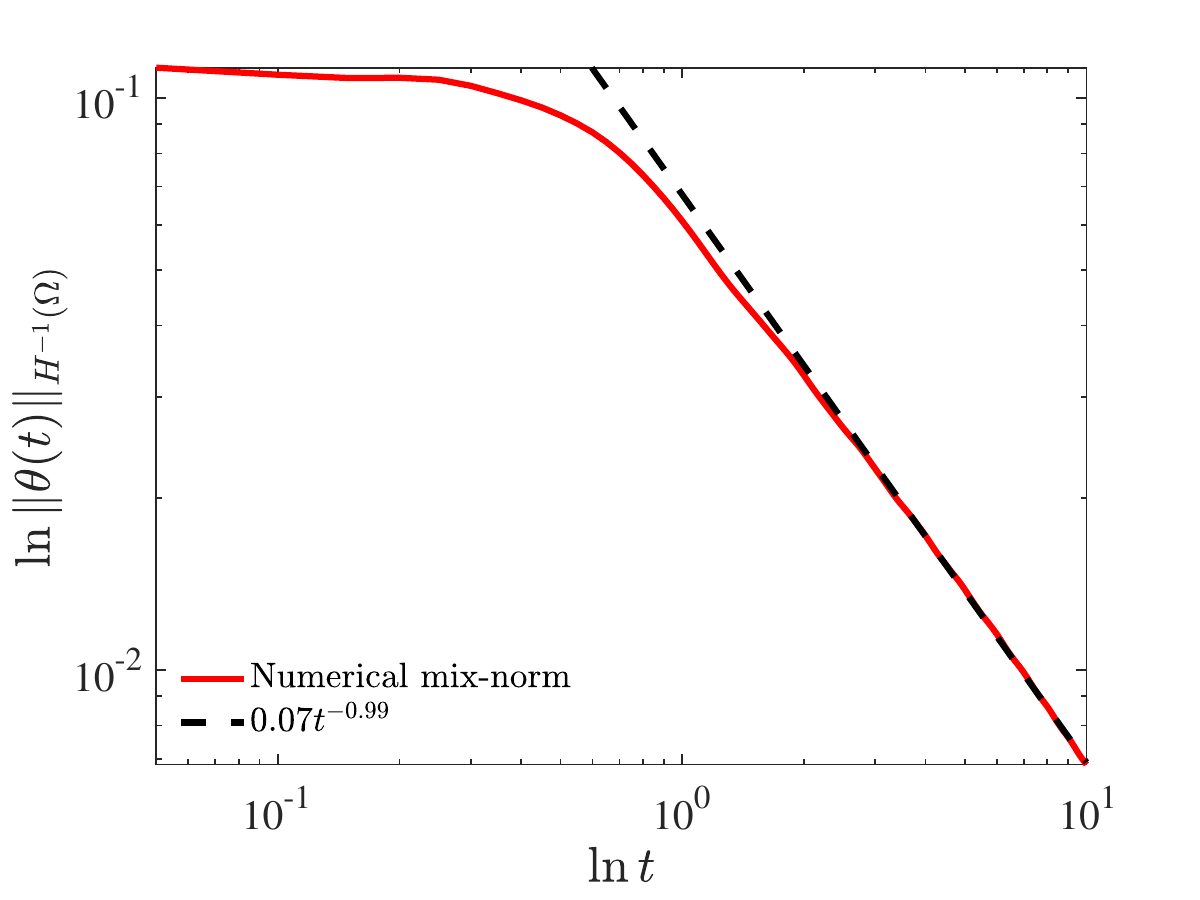}
  \caption{Polynomial decay of $\|\theta(t)\|_{H^{-1}(\Omega)}$.}
  \label{fig:r2-mixnorm-decay}
\end{figure}

\subsection{Example illustrating
Theorem~\ref{20250103-yb-theorem-MixingScale}}
\label{subsection-SecondTheorem}

The purpose of this example is to illustrate the inverse-time decay of the
mixing scale. We reuse the cellular flow in
\eqref{eq:r2-cellular-flow} and let $A$ be the open ball centered at
$(0.75,0.75)$ with radius $0.20$. 
 One can directly check that this example satisfies the hypotheses of
Theorem~\ref{20250103-yb-theorem-MixingScale}.

The mixing scale is the
one-sided distance from $Orbit(A)$ to
$\varPhi(t)(A)$. Since $\varPhi(t)(A)$ is a Jordan domain, its distance
from an exterior point is attained on the transported boundary. We therefore
compute the distance to
the resulting polygonal curve. In these coordinates the angular equation has
constant velocity, so the implicit midpoint scheme is exact. The computation
uses $1{,}048{,}576$ material points on $A$.

Figure~\ref{fig:r2-onecircle-evolution} shows the short-time evolution at
$t=0,1,\ldots,5$ only to illustrate the deformation of the
transported disk. The red region is the transported disk $\varPhi(t)(A)$, while the two black dashed curves are the boundary components of $\operatorname{Orbit}(A)$.
Figure~\ref{fig:r2-onecircle} reports the mixing scale on
$[0, 1000]$. A log--log least-squares fit of
$C(1+t)^{-p}$ over $300\leq t\leq1000$ gives $C=0.78$ and $p=0.99$. The black dashed line in Figure~\ref{fig:r2-onecircle-lnmixscale} extends this fitted law across the displayed interval; only the observations with $300\leq t\leq1000$ determine $C$ and $p$. 

\begin{figure}[htp]
  \centering
  \begin{subfigure}[b]{0.16\textwidth}
    \includegraphics[width=\textwidth]{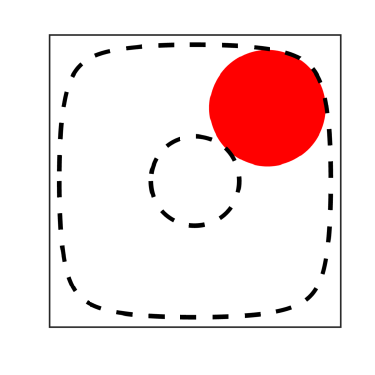}
    \caption{$t=0$}
  \end{subfigure}
  \begin{subfigure}[b]{0.16\textwidth}
    \includegraphics[width=\textwidth]{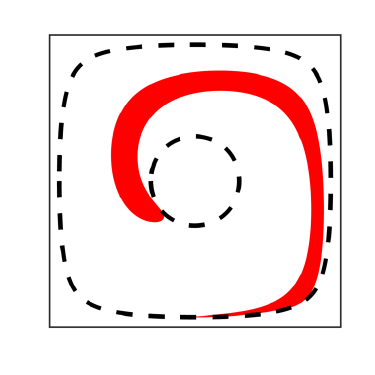}
    \caption{$t=1$}
  \end{subfigure}
  \begin{subfigure}[b]{0.16\textwidth}
    \includegraphics[width=\textwidth]{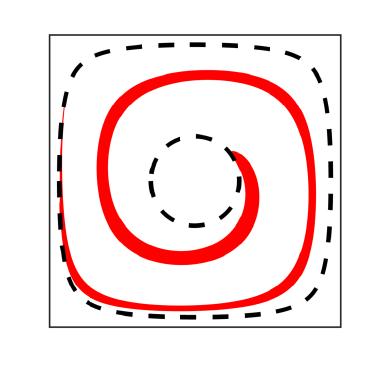}
    \caption{$t=2$}
  \end{subfigure}
  \begin{subfigure}[b]{0.16\textwidth}
    \includegraphics[width=\textwidth]{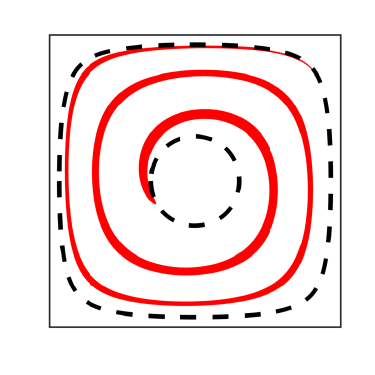}
    \caption{$t=3$}
  \end{subfigure}
  \begin{subfigure}[b]{0.16\textwidth}
    \includegraphics[width=\textwidth]{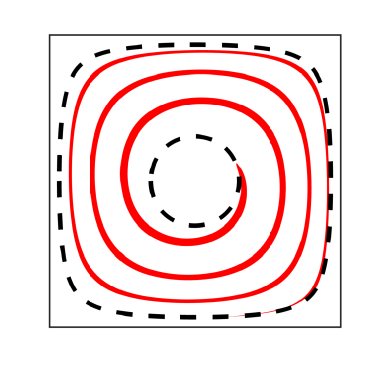}
    \caption{$t=4$}
  \end{subfigure}
  \begin{subfigure}[b]{0.16\textwidth}
    \includegraphics[width=\textwidth]{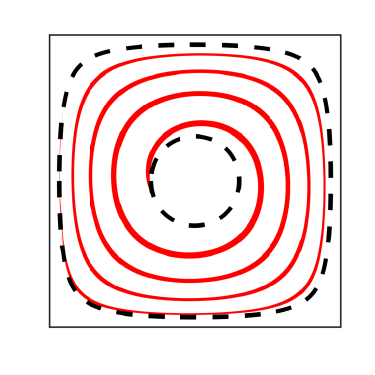}
    \caption{$t=5$}
  \end{subfigure}
  \caption{Short-time evolution of the set $A$ (in red), shown for
  illustration.
  The black dashed curves are the two boundary components of
  $\operatorname{Orbit}(A)$.}
  \label{fig:r2-onecircle-evolution}
\end{figure}

\begin{figure}[htp]
  \centering
  \begin{subfigure}[b]{0.49\textwidth}
    \includegraphics[width=\textwidth]{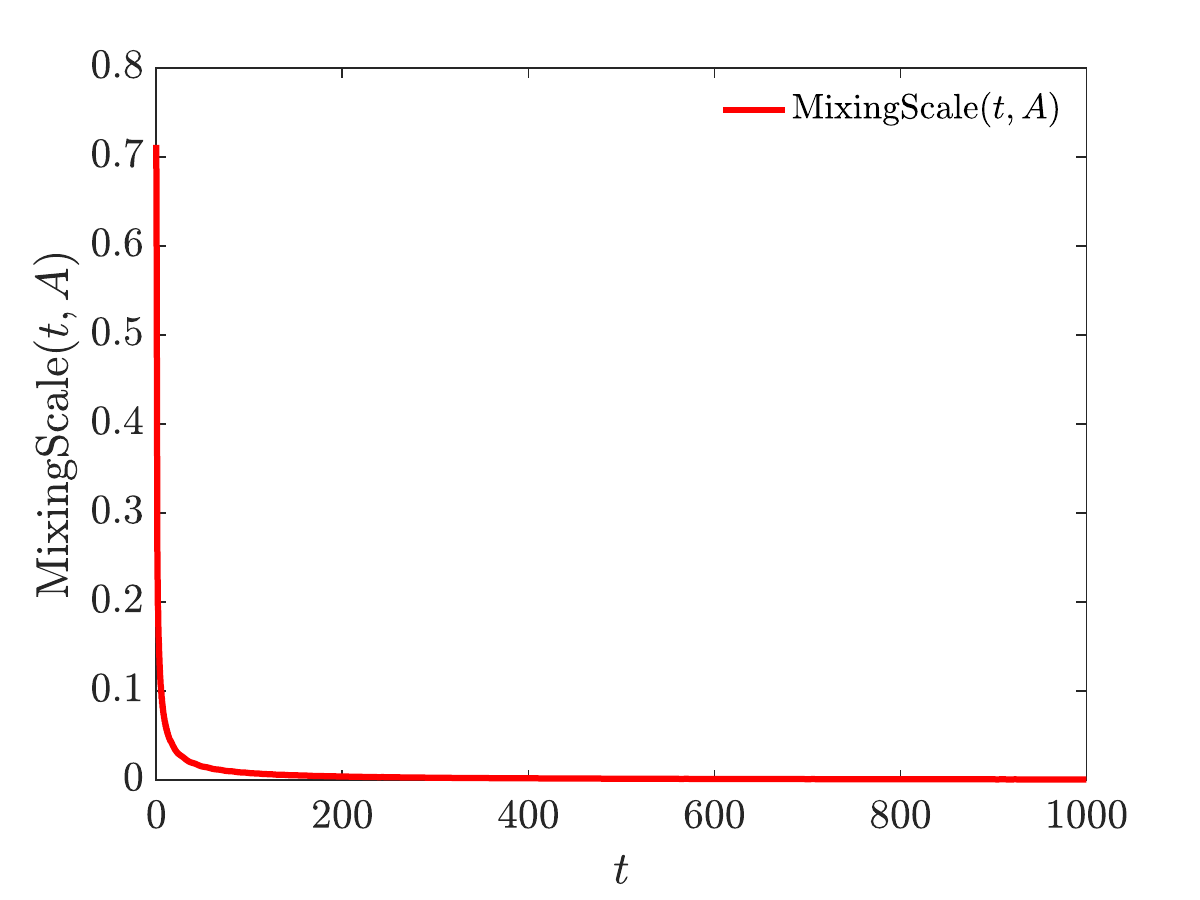}
    \caption{$t\mapsto\operatorname{MixingScale}(t,A)$}
    \label{fig:r2-onecircle-mixscale}
  \end{subfigure}
  \begin{subfigure}[b]{0.49\textwidth}
    \includegraphics[width=\textwidth]{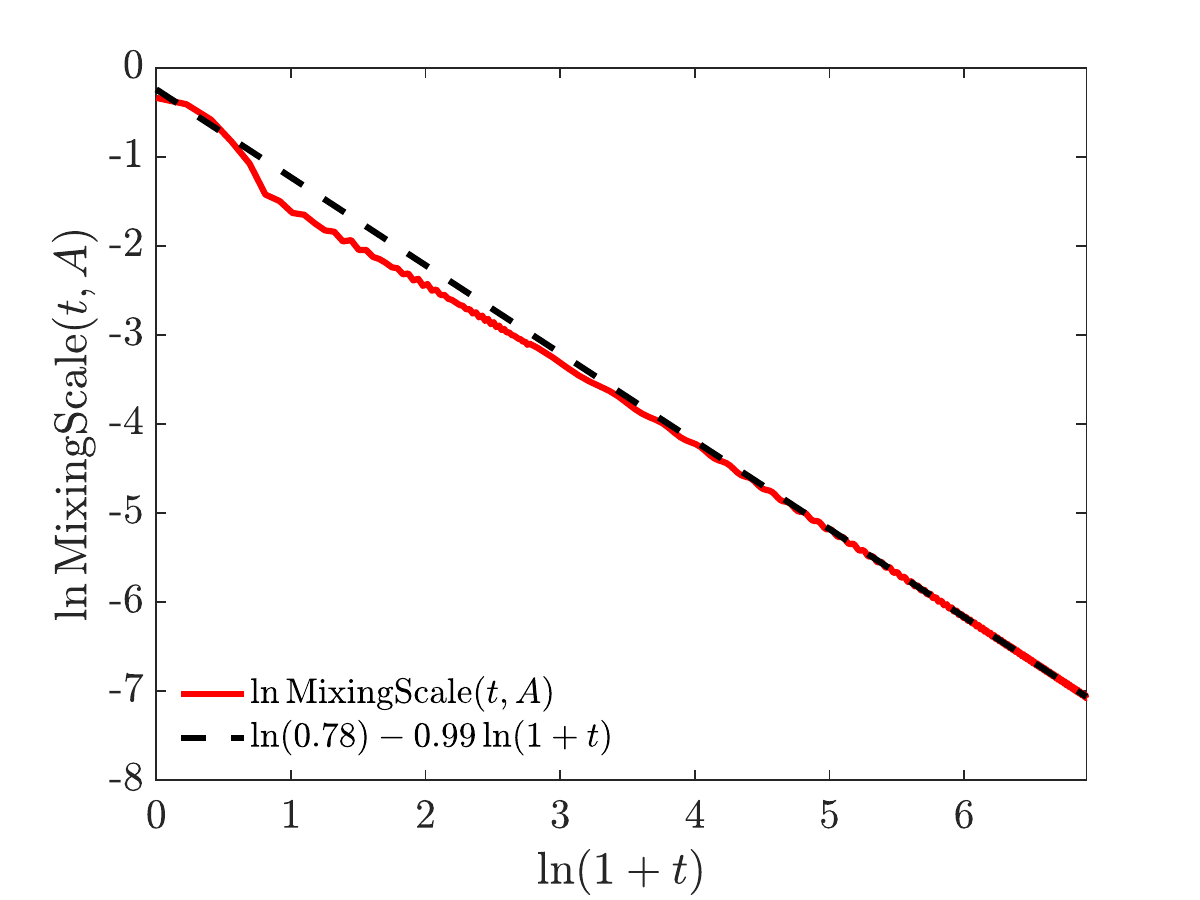}
    \caption{$\ln(1+t)\mapsto\ln\operatorname{MixingScale}(t,A)$}
    \label{fig:r2-onecircle-lnmixscale}
  \end{subfigure}
  \caption{Decay of $\operatorname{MixingScale}(t,A)$. In panel (b), the
  black dashed line is the power-law fit on $300\leq t\leq1000$.}
  \label{fig:r2-onecircle}
\end{figure}

Figure~\ref{fig:r2-onecircle-evolution} shows that $\varPhi(t)(A)$ becomes
longer and thinner while spreading through its orbit saturation. The scale
shows an overall decreasing trend in
Figure~\ref{fig:r2-onecircle-mixscale}. The transformed graph in
Figure~\ref{fig:r2-onecircle-lnmixscale} has an approximately linear tail,
and the fitted exponent $-0.99$ is consistent with the inverse-time order in
\eqref{20250112-yb-OrderOfMixingScale}.

\subsection{Example illustrating
Theorem~\ref{20241021-yb-theorem-OptimalGrowthForCurves}}
\label{subsection-FirstTheorem}

The purpose of this example is to compare the asymptotic coefficient in
Theorem~\ref{20241021-yb-theorem-OptimalGrowthForCurves} with the
computed growth of a transported curve. Let $\Omega := B_1(0)$ and consider
\begin{align*}
H(x):=|x|^4-1, ~  x \in \mathbb R^2
~\text{ and }~
V(x):=-\nabla^{\perp} H|_{ \overline{\Omega}}.
\end{align*}
For $0<|x|\leq1$, the orbit period and its logarithmic gradient are
\begin{align*}
T(x)=\frac{\pi}{2|x|^2}
~\text{ and }~
\nabla\ln T(x)=-2|x|^{-2}x,
~ x \in \overline{\Omega} \setminus \{0\}.
\end{align*}
Fix $\delta := 0.01$ and  take
\begin{align*}
\gamma_0(\alpha)
:=\bigl(\delta+(1-2\delta)\alpha,0\bigr),
~
0\leq\alpha\leq1. 
\end{align*}
Thus only a segment of length $0.01$ is removed from each end of the unit
radial segment. 
By \eqref{20241023-yb-AsymptoticForLengthOfCurves}, we should have
\begin{align}
\lim_{t\to+\infty}\frac{|\gamma_t|}{t}
=\frac{8}{3}\bigl((1-\delta)^3-\delta^3\bigr)
\approx2.59.
\label{eq:r2-exact-curve-length}
\end{align}

We transport uniformly distributed points on $\gamma_0$ and approximate
$|\gamma_t|$ by the length of the resulting polygonal curve. We use
$N_\gamma=160{,}001$ points and $\Delta t=1/800$ on $0\leq t\leq10$. Figure~\ref{fig:r2-length-evolution} displays the progressive winding of
the transported interface. Figure~\ref{fig:r2-length-length} shows that
the length becomes approximately linear in time over the computed
interval, while Figure~\ref{fig:r2-length-slope} shows $|\gamma_t|/t$
approaching the value $2.59$ in
\eqref{eq:r2-exact-curve-length}, indicated by the black dashed line across the
displayed interval. The computation is consistent with the
asymptotic formula in
Theorem~\ref{20241021-yb-theorem-OptimalGrowthForCurves}.

\begin{figure}[H]
  \centering
  \begin{subfigure}[b]{0.16\textwidth}
    \includegraphics[width=\textwidth]{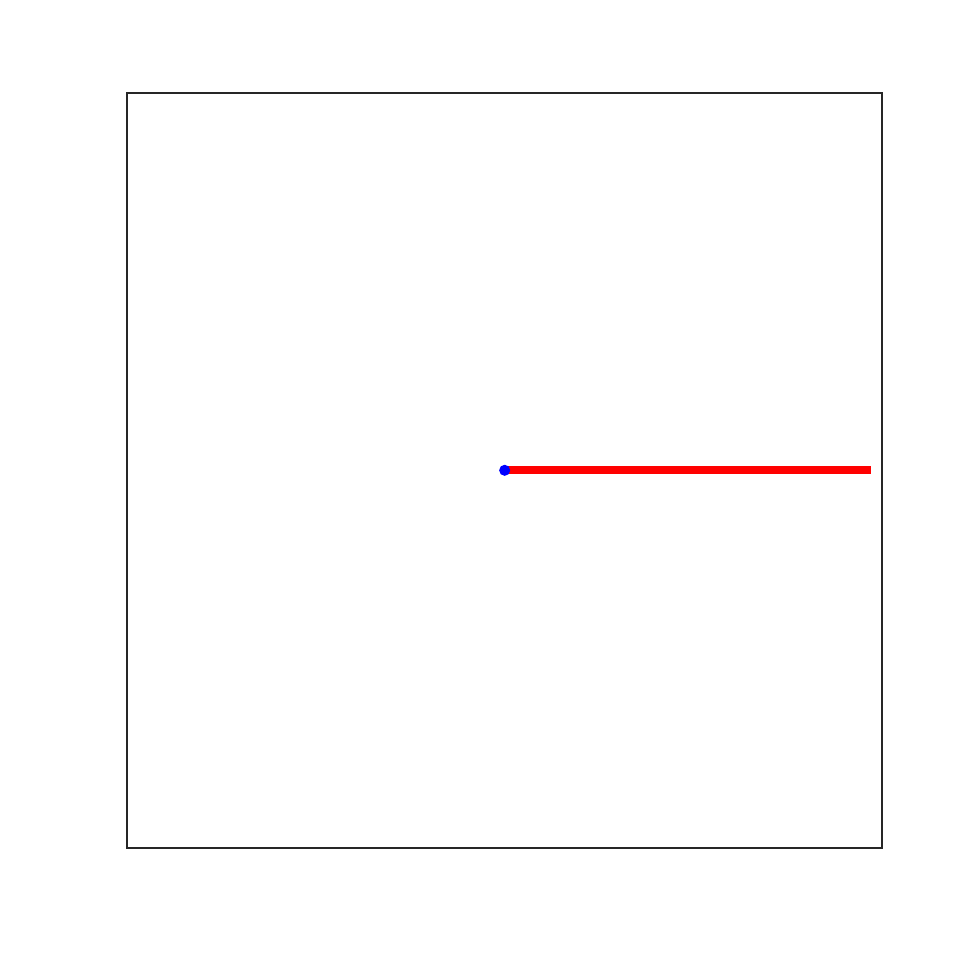}
    \caption{$t=0$}
  \end{subfigure}
  \begin{subfigure}[b]{0.16\textwidth}
    \includegraphics[width=\textwidth]{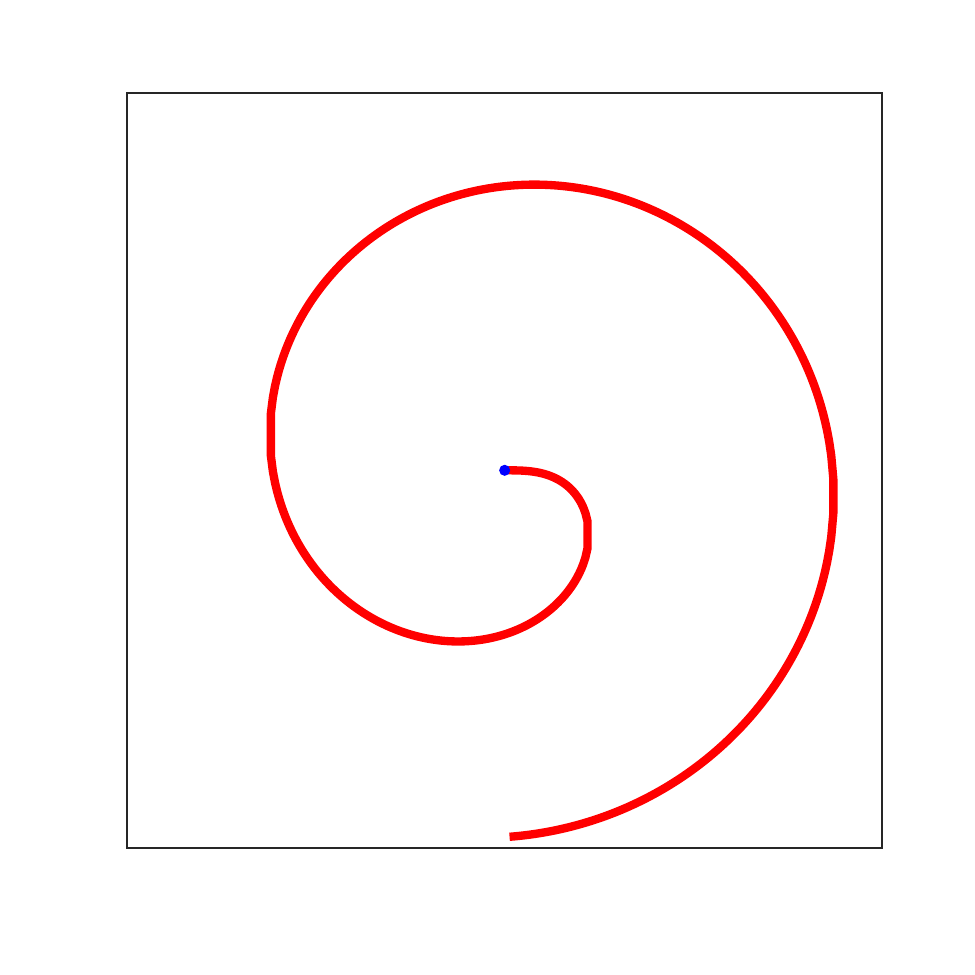}
    \caption{$t=2$}
  \end{subfigure}
  \begin{subfigure}[b]{0.16\textwidth}
    \includegraphics[width=\textwidth]{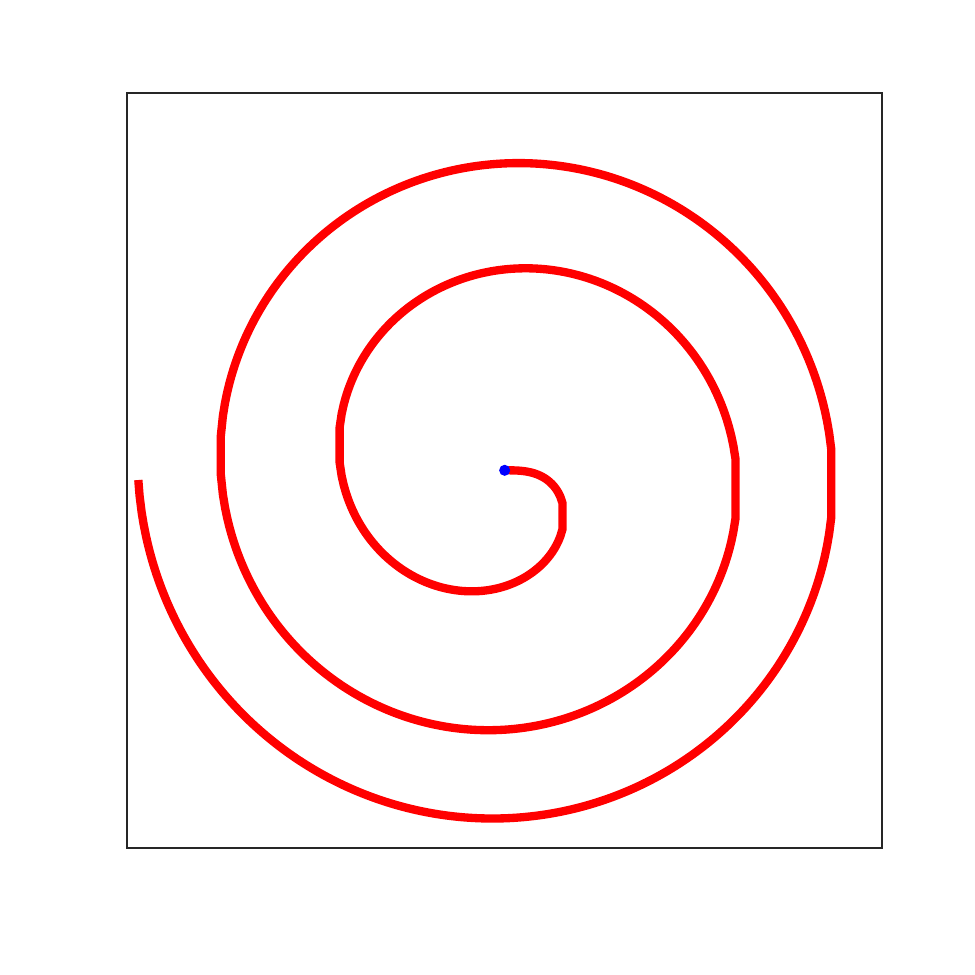}
    \caption{$t=4$}
  \end{subfigure}
  \begin{subfigure}[b]{0.16\textwidth}
    \includegraphics[width=\textwidth]{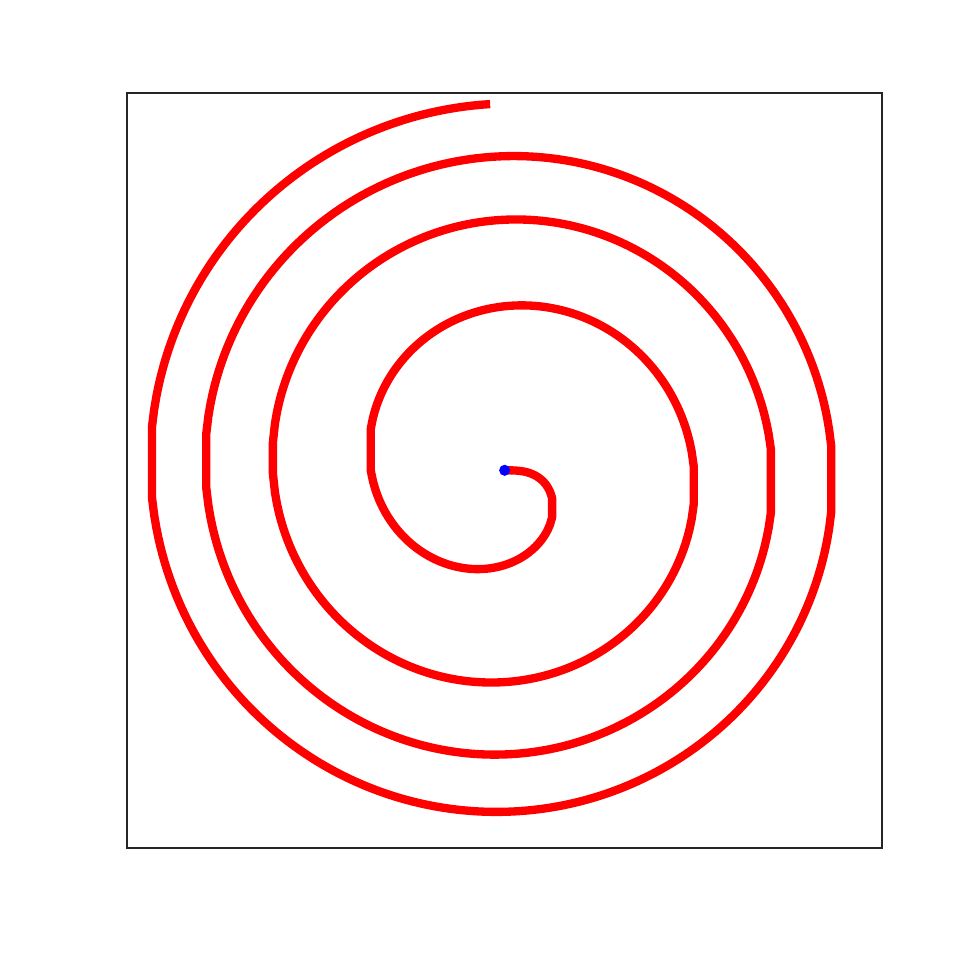}
    \caption{$t=6$}
  \end{subfigure}
  \begin{subfigure}[b]{0.16\textwidth}
    \includegraphics[width=\textwidth]{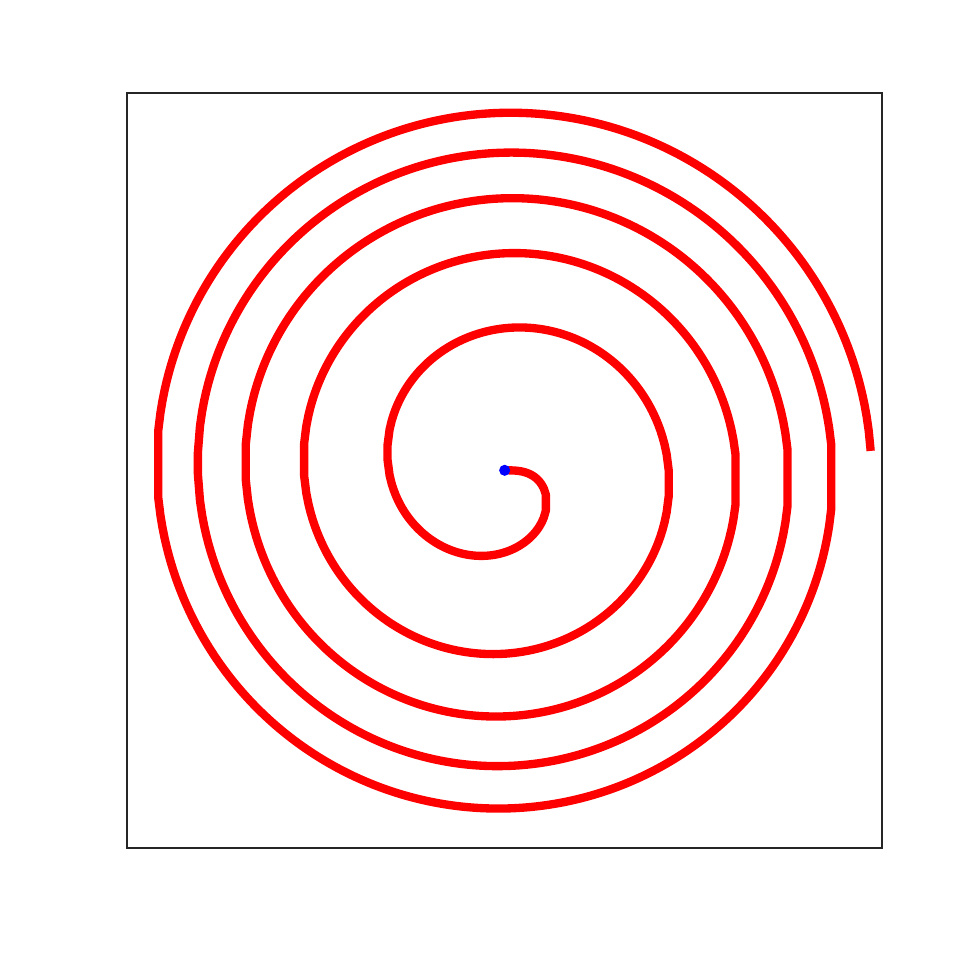}
    \caption{$t=8$}
  \end{subfigure}
  \begin{subfigure}[b]{0.16\textwidth}
    \includegraphics[width=\textwidth]{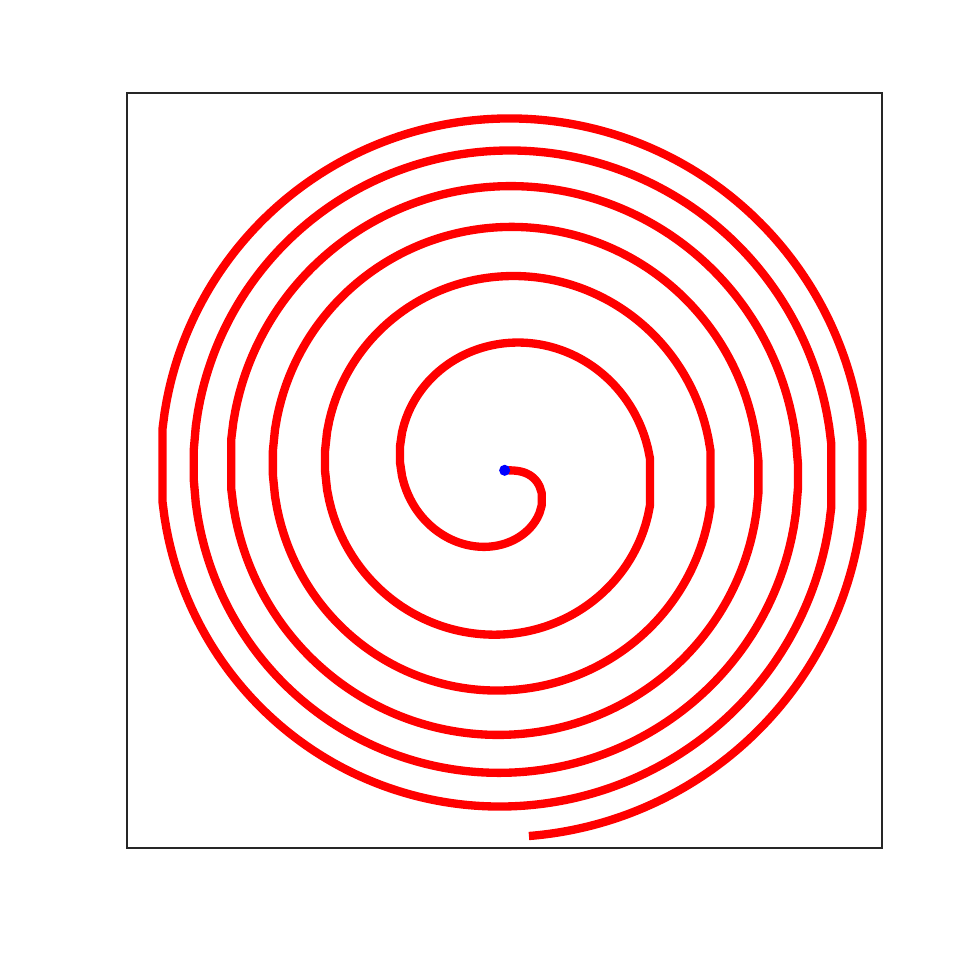}
    \caption{$t=10$}
  \end{subfigure}
  \caption{Evolving curves at different time instants}
  \label{fig:r2-length-evolution}
\end{figure}

\begin{figure}[H]
  \centering
  \begin{subfigure}[b]{0.49\textwidth}
    \includegraphics[width=\textwidth]{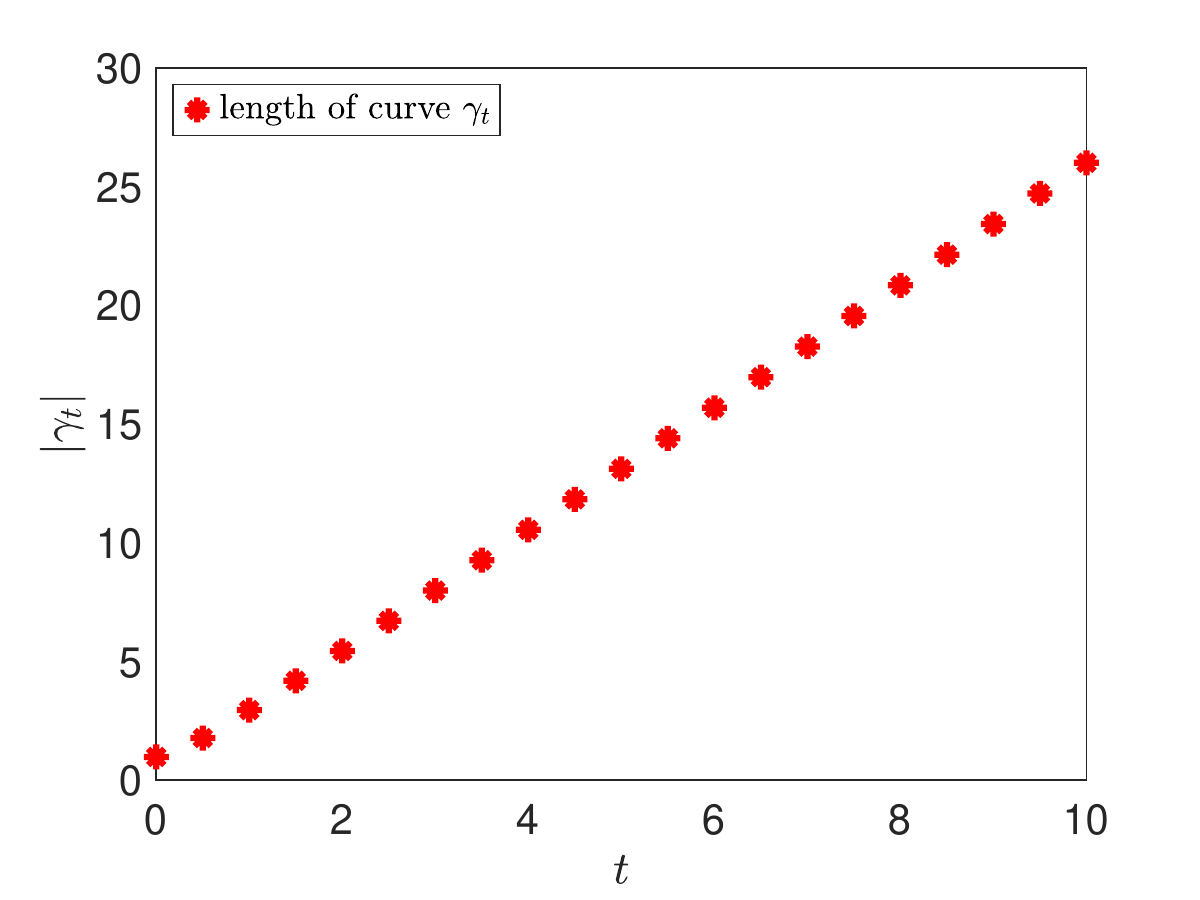}
    \caption{Graph of $t\mapsto|\gamma_t|$}
    \label{fig:r2-length-length}
  \end{subfigure}
  \begin{subfigure}[b]{0.49\textwidth}
    \includegraphics[width=\textwidth]{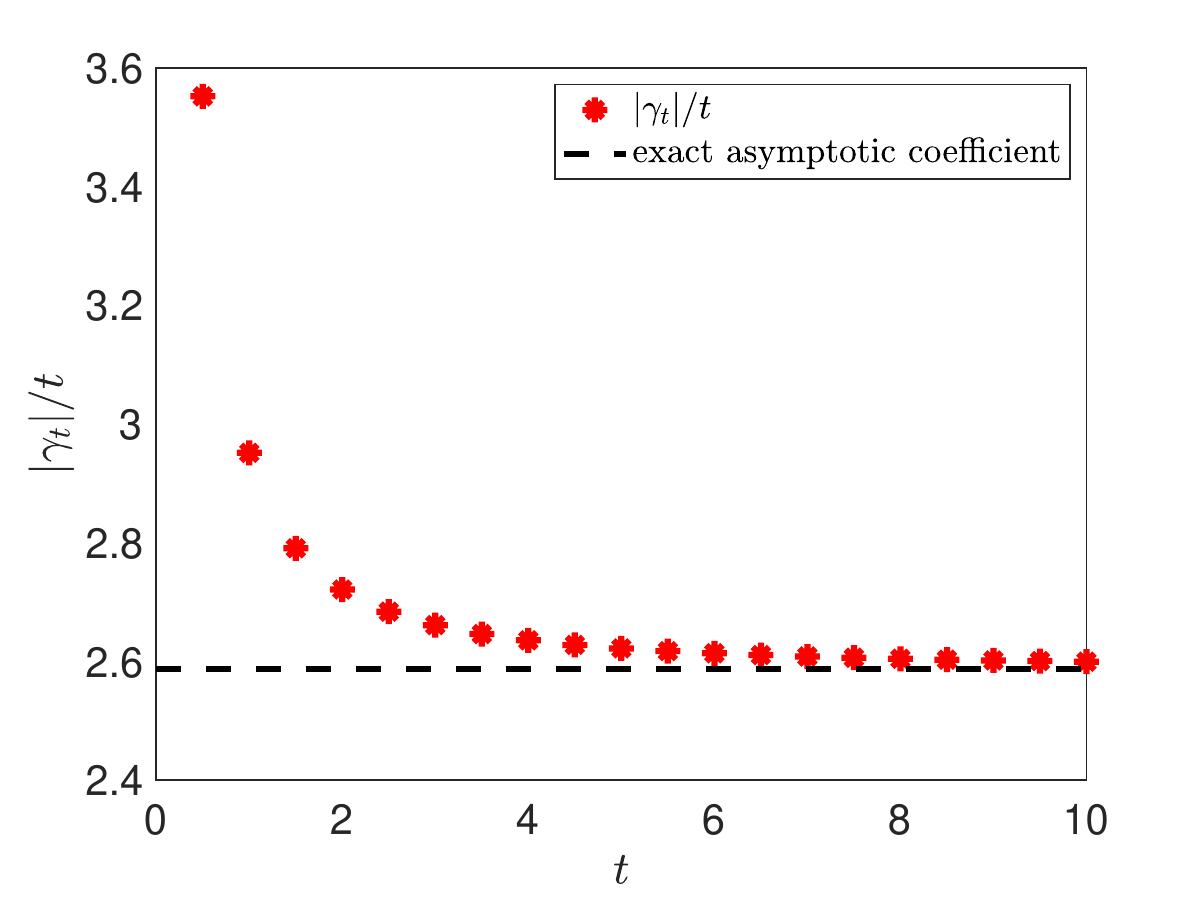}
    \caption{Graph of $t\mapsto|\gamma_t|/t$}
    \label{fig:r2-length-slope}
  \end{subfigure}
  \caption{Linear growth of $|\gamma_t|$ for large times $t$. The black
  dashed line in panel (b) is the exact asymptotic coefficient in
  \eqref{eq:r2-exact-curve-length}.}
  \label{fig:r2-length}
\end{figure}

\section{Conclusions and perspectives}
\label{section-conclusion}

\subsection{Conclusions}

\noindent\textit{The common mechanism.}
On every annulus contained in the regular twist region
\[
\mathcal R_{tw} :=\big\{x\in\Omega  ~:~  0<T(x)<+\infty,\ \nabla T(x)\ne0 \big\},
\]
the flow under the coordinates in
Lemma~\ref{rev:uniform-action-angle} reads as:
\[
\varPhi(t)(\psi^{-1}(h,\vartheta))
=\psi^{-1}(h,\vartheta+t\omega(h))
~\text{ with }
\omega(h)=\frac{2\pi}{T(\psi^{-1}(h,\vartheta))}.
\]
The differential of the cylinder map is
\[
\begin{pmatrix}
    1&0\\
    t\omega'(h)&1
\end{pmatrix}.
\]
Thus nonvanishing transverse variation of the orbit period produces
shear of order $t$.  In physical coordinates,
Theorem~\ref{20241023-yb-proposition-AsymptoticForJacobianOfVeolocityField}
gives a uniform first-order expansion of the flow Jacobian (away from the infinite-period orbits):
\[
J_{\varPhi(t)}(x)
=-t \, V\big( \varPhi(t)(x) \big) \otimes  \nabla\ln T(x) 
+  O(1)
~\text{ as }  t \rightarrow \infty.
\]
This Jacobian expansion, rather than the orbit-period function alone,
provides the structural link between the results of the paper.

Applied to the tangent vectors of an initial curve, the expansion yields the
first-order length formula in
Theorem~\ref{20241021-yb-theorem-OptimalGrowthForCurves}.  For every compact
set $E\Subset\mathcal R_{tw}$, uniformly for $x\in E$ as $t\to+\infty$, the
larger and smaller singular values of $J_{\varPhi(t)}(x)$ are comparable to
$1+t$ and $(1+t)^{-1}$, respectively.  The same shear underlies the scale
$t^{-1}$ in the two mixing theorems.  For
Theorem~\ref{20250103-yb-theorem-MixingScale}, a transverse displacement of
that order produces a phase change of order one, controlling the deformation of the orbit-relative complement (see Proposition \ref{rev:prop52}). 
For Theorem~\ref{third:optimal-negative-norm}, the nonzero angular
Fourier modes acquire the oscillatory factor
$e^{-ikt\omega(h)}$, and integration by parts in $h$ yields cancellation of
order $(1+t)^{-1}$ (see \eqref{20260902-EqualityOfFrequency}).  The two inverse-time mixing laws and the leading
curve-length term are therefore manifestations of the same shear.

\medskip
\noindent\textit{Distinctions between the results.}
The common mechanism does not make the three main theorems equivalent.
The negative-Sobolev result in Theorem \ref{third:optimal-negative-norm} is a linear Eulerian estimate for a scalar after
subtracting its invariant orbit average. Its
upper bound relies on $H^1$ regularity of the initial datum, $C^2$ regularity of
the velocity field, and oscillatory cancellation.  The mixing scale result in Theorem \ref{20250103-yb-theorem-MixingScale} is a
nonlinear geometric statement for a Lipschitz subdomain. It relies on
noninvariance, phase matching, and the interior geometry of the domain,
rather than on scalar cancellation. Both theorems impose
\[
d\big(A,\{T=0, +\infty\}\big)>0
~\text{ and }~
\inf_{ x \in A}  | \nabla T(x) | > 0
\]
on the subdomain $A$ appearing in their statements.

The curve growth result in Theorem \ref{20241021-yb-theorem-OptimalGrowthForCurves} concerns a one-dimensional Lagrangian object and applies
in a broader dynamical setting. It allows stable equilibria and
points where $\nabla T=0$, provided that the initial curve remains separated
from the infinite-period orbits.  At every parameter $\alpha$ for which
$0<T(\gamma_0(\alpha))<+\infty$, the leading shear is nonzero precisely
when
\[
\gamma_0'(\alpha)\cdot\nabla \ln T(\gamma_0(\alpha))\ne0.
\]
At a point where $\nabla T\ne0$, the vanishing of this directional
derivative is equivalent to $\gamma_0'$ being parallel to $V$.  If
$\nabla T=0$, the leading shear vanishes in every tangent direction. Hence the length
remains uniformly bounded precisely when, for almost every $\alpha$, either
$T(\gamma_0(\alpha))=0$, or
$0<T(\gamma_0(\alpha))<+\infty$ and
\[
\gamma_0'(\alpha)\cdot\nabla \ln  T(\gamma_0(\alpha))=0.
\]
The theorem does not in general
assert convergence of $|\gamma_t|/t$, because its leading coefficient
contains the time-dependent factor $|V(\gamma_t(\alpha))|$.
Proposition~\ref{rev:prop52} gives a complementary deformation theorem under
a uniform-orbit cone hypothesis and illustrates the same Jacobian mechanism.

\subsection{Perspectives}

The following problems require estimates beyond the analysis developed here. 

\begin{enumerate}[label=(\roman*)]
    \item \textit{Equilibria.}
    Near a stable equilibrium $p$ of order $m$,
    Lemma~\ref{20250329-yb-lemma-OrbitPeriodAroundEquilibra} and
    \eqref{ham:velocity-bounds} give
    \[
    |V(x)|\asymp|x-p|^m,
    ~\text{ and }~
    T(x)\asymp|x-p|^{1-m}.
    \]
    These estimates are used to establish the uniform Jacobian bounds in
    the curve-length expansion, but do not provide a uniformly
    bi-Lipschitz coordinate chart (in Lemma \ref{rev:uniform-action-angle}) up to $p$.  Indeed, the periodic fibres
    collapse at $p$, and when $m>1$,
    \[
    T(x)\longrightarrow+\infty
    \quad\text{as }x\to p, 
    \]
    although $T(p)=0$ by definition.  Extending the functional and geometric
    theorems to orbits whose closures meet a stable equilibrium
    requires substitutes for the uniform bi-Lipschitz coordinate bounds
    near $p$. A weighted coordinate theory is one possible approach. One must
    also distinguish centers with genuine shear from isochronous regions, where the
    leading variation of the period vanishes. The standard cellular flow  has been treated in \cite{brue2024enhanced}.
    
    \item \textit{Infinite-period trajectories.}
    Lemma~\ref{20250322-yb-propsotion-PropertiesOfOrbitPeriod} shows that every
    nonconstant trajectory with $T=+\infty$ converges to equilibria in forward
    and backward time.  The present theorems exclude these trajectories either
    through compact inclusion in $\mathcal R_{tw}$ or through
    \eqref{20250402-yb-FintiePeriodAssumptionOnCurve}.  Analysis across these trajectories requires quantitative  estimates near its limiting
    equilibria, together with a replacement for the periodic coordinate chart. The
    arguments of this paper do not determine whether the resulting laws retain
    the same algebraic orders or acquire logarithmic or different algebraic
    corrections.
    
    \item \textit{Time-dependent velocity fields.}
    For $V=V(t,x)$, there is generally no fixed orbit period or invariant orbit
    average. The first task is therefore to
    identify the appropriate limiting profile and dynamically accessible set.
    Time-periodic fields provide a natural first model: one
    may seek conditions on the Poincar\'{e} map under which accumulated shear
    yields quantitative estimates for mix-norms, mixing scales, and curve growth.
    
    \item \textit{Three-dimensional stationary flows.}
    In three dimensions, stationarity does not imply an integrable foliation
    by streamlines. The steady ABC Euler flows provide classical examples with
    chaotic particle trajectories \cite{dombre1986chaotic}.  A
    three-dimensional theory must accommodate both regular invariant regions, which
    may be organized by invariant tori, and chaotic components, which may be
    studied through local Poincar\'e maps and Lyapunov exponents. Positive stretching alone does
    not imply decay of a prescribed scalar mix-norm: recurrence, folding, and
    cancellation must also be controlled.  A central problem is to determine
    when estimates for the flow Jacobian yield two-sided functional and
    geometric mixing laws on the relevant dynamical components.
    
    \item \textit{Enhanced dissipation in two dimensions.}
    Consider
    \[
    \begin{cases}
        \partial_t\theta^\nu+V\cdot\nabla\theta^\nu
        =\nu\Delta\theta^\nu,
         &(t,x)\in(0,+\infty) \times\Omega,\\
        \partial_{\vec n}\theta^\nu=0,
        &(t,x)\in(0,+\infty) \times\partial\Omega,\\
        \theta^\nu|_{t=0}=\theta_0,
        & x\in\Omega.
    \end{cases}
    \]
    Enhanced
    dissipation for zero streamline-average data in two-dimensional Hamiltonian
    flows has already been studied; see
    \cite[Definition~1.1 and Theorems~1--3]{brue2024enhanced}. A question in the present
    setting is to obtain matching upper and lower
    viscosity-dependent decay estimates for the finite-order class
    (A1)--(A2) and to determine precisely how these
    estimates depend on the inviscid $(1+t)^{-1}$ mixing law.  
\end{enumerate}

\section{Appendix}
\label{section-appendix}

\subsection{Auxiliary lemmas}
\label{appendix-Others}

\begin{lemma}\label{lemma-20260122-CharacterizeInvariantSet}
Let $E \subset \Omega$. Then $\overline{E}$ is $V$-invariant if and only if
$
Orbit(E) \subset \overline{E}.
$
\end{lemma}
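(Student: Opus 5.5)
Since $\varPhi(t)$ is a homeomorphism of $\overline{\Omega}$ for every $t\in\mathbb R$, the identity $\varPhi(t)(\overline E)=\overline{\varPhi(t)(E)}$ holds. The statement then reduces to unwinding the definitions of $Orbit(\cdot)$ and of $V$-invariance, namely $Orbit(F)=F$. I will prove the two implications separately.

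\emph{Necessity.} Suppose $\overline E$ is $V$-invariant, that is, $Orbit(\overline E)=\overline E$. Because $E\subset\overline E$, monotonicity of the orbit operation gives
\[
Orbit(E)=\bigcup_{t\in\mathbb R}\varPhi(t)(E)\subset\bigcup_{t\in\mathbb R}\varPhi(t)(\overline E)=Orbit(\overline E)=\overline E .
\]

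\emph{Sufficiency.} Suppose $Orbit(E)\subset\overline E$. The inclusion $\overline E\subset Orbit(\overline E)$ is trivial by taking $t=0$. For the reverse inclusion, fix $t\in\mathbb R$. The hypothesis gives $\varPhi(t)(E)\subset\overline E$. Since $\varPhi(t)$ is continuous,
\[
\varPhi(t)(\overline E)\subset\overline{\varPhi(t)(E)}\subset\overline E .
\]
Taking the union over $t$ gives $Orbit(\overline E)\subset\overline E$, and hence $Orbit(\overline E)=\overline E$.

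The only point needing care is the continuity of $\varPhi(t)$ on $\overline\Omega$, and that the closure is taken in $\mathbb R^2$ with $\overline E\subset\overline\Omega$, so that $\varPhi(t)$ is defined on $\overline E$. Continuity holds because $V\in C^1(\overline\Omega)$ is tangent to $\partial\Omega$. The flow therefore preserves $\overline\Omega$ and is a $C^1$ flow there, as already stated after \eqref{20240926-yubiao-DefinitionOfFlow}. I expect no genuine obstacle: the argument uses only continuity, not the full homeomorphism property.
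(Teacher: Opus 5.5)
Your proof is correct and is essentially the paper's argument. Necessity follows from monotonicity of $Orbit(\cdot)$ together with $E\subset\overline{E}$. Sufficiency passes from $\varPhi(t)(E)\subset\overline{E}$ to $\varPhi(t)(\overline{E})\subset\overline{E}$ and then takes the union over $t$; you make the continuity of $\varPhi(t)$ explicit there, whereas the paper uses it tacitly.
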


\begin{proof}
First we show the necessity. Suppose that $\overline{E}$ is $V$-invariant. Then
$
Orbit(\overline{E}) = \overline{E}.
$
Since $E \subset \overline{E}$, it follows that
$
Orbit(E) \subset \overline{E}.
$

Next we prove the sufficiency. Assume that $Orbit(E) \subset \overline{E}$. Then for each $t \in \mathbb{R}$,
$
\varPhi(t)(E) \subset \overline{E}.
$
This implies 
$
\varPhi(t)(\overline{E}) \subset \overline{E}$
for all  $t\in\mathbb{R}$.
Therefore,
\[
\overline{E}
= \varPhi(0)(\overline{E})
\subset Orbit(\overline{E})
= \bigcup_{t\in\mathbb{R}} \varPhi(t)(\overline{E})
\subset \overline{E},
\]
and hence $\overline{E}$ is $V$-invariant. The proof is completed.
\end{proof}

\begin{lemma}\label{20250322-yb-lemma-ZeroMixingScaleForOpenSets}
Let $\mathcal O \subset \mathbb R^n$ (with $n\in\mathbb N^+$) be a bounded open set. Then, for any $r>0$,
\[
\inf_{x\in\overline{\mathcal O}} |B_r(x)\cap \mathcal O| >0.
\]
\end{lemma}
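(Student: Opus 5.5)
The plan is a compactness argument based on the interior of $\mathcal O$ together with a uniform positive lower bound near the boundary. Fix $r>0$. For each $x\in\overline{\mathcal O}$ I will show that $|B_r(x)\cap\mathcal O|>0$. Since $x\in\overline{\mathcal O}$, there is a point $y\in\mathcal O$ with $|x-y|<r/2$. Because $\mathcal O$ is open, some ball $B_\rho(y)\subset\mathcal O$ exists, and we may take $\rho<r/2$. Then $B_\rho(y)\subset B_r(x)\cap\mathcal O$, so the measure is positive.

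To turn this into a positive \emph{infimum}, I will use lower semicontinuity together with compactness. The function $g(x):=|B_r(x)\cap\mathcal O|=\int_{\mathcal O}\chi_{B_r(x)}(z)\,dz$ is continuous in $x$. Indeed, $|g(x)-g(x')|\le|B_r(x)\,\triangle\,B_r(x')|\to0$ as $x'\to x$, which follows from dominated convergence or from the explicit measure of the symmetric difference of two balls. Since $\mathcal O$ is bounded, $\overline{\mathcal O}$ is compact. A continuous, strictly positive function on a compact set attains a positive minimum, and this gives the claim.

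An alternative route avoids the continuity argument. Cover $\overline{\mathcal O}$ by finitely many balls $B_{r/2}(x_j)$ with $x_j\in\overline{\mathcal O}$. For each $j$, fix the ball $B_{\rho_j}(y_j)\subset B_{r/2}(x_j)\cap\mathcal O$ produced above. Any $x\in\overline{\mathcal O}$ lies in some $B_{r/2}(x_j)$, so $B_r(x)\supset B_{r/2}(x_j)\supset B_{\rho_j}(y_j)$. Hence the infimum is at least $\min_j|B_{\rho_j}|>0$.

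The only slightly delicate point is making sure the infimum is taken over the closure rather than over $\mathcal O$ itself. This is exactly why boundedness, and hence compactness, is used. I expect no real obstacle.
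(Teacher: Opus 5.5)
Your proof is correct, but your primary route differs from the paper's. The paper never uses continuity of $g(x)=|B_r(x)\cap\mathcal O|$. Instead it runs essentially your alternative covering argument, with one simplification: it covers $\overline{\mathcal O}$ by balls $B_{r/2}(x)$ whose centers lie in $\mathcal O$ itself, which is possible because $\mathcal O$ is dense in $\overline{\mathcal O}$. It then extracts a finite subcover $\{B_{r/2}(x_k)\}_{k=1}^N$ and bounds $|B_r(x)\cap\mathcal O|\ge\min_k|B_{r/2}(x_k)\cap\mathcal O|$. Each term is positive because $B_{r/2}(x_k)\cap\mathcal O$ is a nonempty open set containing $x_k$.

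Placing the centers in $\mathcal O$ removes the need for your auxiliary inner balls $B_{\rho_j}(y_j)$. Note also that those balls should come from your pointwise argument run with $r/2$ in place of $r$, i.e.\ $|x_j-y_j|<r/4$ and $\rho_j<r/4$, since the ball produced ``above'' only sits inside $B_r(x_j)$.

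The two approaches trade off differently. The covering proof is purely topological and gives an explicit lower bound from finitely many sets. Your continuity route needs a small measure-theoretic step, the estimate $|g(x)-g(x')|\le|B_r(x)\triangle B_r(x')|$, which in fact shows $g$ is Lipschitz. In return it gives slightly more: $g$ is continuous, and the infimum is attained as a minimum on the compact set $\overline{\mathcal O}$. Both arguments are complete.
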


\begin{proof}
Fix $r>0$. Since every point of $\overline{\mathcal O}$ can be approximated by points in $\mathcal O$, the collection
$
\{B_{r/2}(x)\}_{x \in \mathcal O}
$
forms an open covering of the compact set $\overline{\mathcal O}$. Hence there exists a finite subcover
$
\{B_{r/2}(x_k)\}_{k=1}^N
$ 
 of $\overline{\Omega}$. 
Let $x\in\overline{\mathcal O}$ be arbitrary. Then $x\in B_{r/2}(x_{k_0})$ for some $k_0\in\{1,\dots,N\}$. Consequently,
$
B_r(x) \supset B_{r/2}(x_{k_0}).
$
Thus
\[
|B_r(x)\cap\mathcal O|
\ge
|B_{r/2}(x_{k_0})\cap\mathcal O|.
\]
Since $x$ was arbitrary, we obtain
\[
\inf_{x\in\overline{\mathcal O}} |B_r(x)\cap\mathcal O|
\ge
\min_{1\le k\le N} |B_{r/2}(x_k)\cap\mathcal O|
>0,
\]
which completes the proof.
%
\end{proof}

\begin{lemma}\label{lemma-20260122-FineMixingForInvariantSet}
Let $A\subset\Omega$ be a nonempty open set. For any $t\ge0$, the set $\overline{A}$ is $V$-invariant if and only if
\begin{align}\label{20260122-FineMixingForInvariantSet}
MixingScale(t,A)=0 .
\end{align}
\end{lemma}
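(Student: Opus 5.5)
The plan is to reduce both directions to the inclusion $Orbit(A)\subset\overline{\varPhi(t)(A)}$. I then connect that inclusion to $V$-invariance of $\overline A$ through Lemma~\ref{lemma-20260122-CharacterizeInvariantSet}. The only other tools are Lemma~\ref{20250322-yb-lemma-ZeroMixingScaleForOpenSets} and the fact that each $\varPhi(s)$ is a measure-preserving homeomorphism of $\overline\Omega$ leaving $Orbit(A)$ invariant. Throughout, fix $t\ge0$ and set $\mathcal O:=\varPhi(t)(A)$; this is a bounded open set because $A$ is open and $\varPhi(t)$ is a homeomorphism.

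\emph{Sufficiency of invariance.} Suppose $\overline A$ is $V$-invariant.
\begin{itemize}
\item By Lemma~\ref{lemma-20260122-CharacterizeInvariantSet}, $Orbit(A)\subset\overline A$.
\item Invariance gives $\varPhi(t)(\overline A)\subset\overline A$ and $\varPhi(-t)(\overline A)\subset\overline A$, so $\varPhi(t)(\overline A)=\overline A$.
\item Since $\varPhi(t)$ is a homeomorphism, $\overline{\mathcal O}=\varPhi(t)(\overline A)=\overline A\supset Orbit(A)$.
\item Fix any $\varepsilon>0$. Lemma~\ref{20250322-yb-lemma-ZeroMixingScaleForOpenSets}, applied to $\mathcal O$ with radius $\varepsilon$, gives $\kappa_\varepsilon:=\inf_{x\in\overline{\mathcal O}}|B_\varepsilon(x)\cap\mathcal O|>0$.
\item The function $x\mapsto |B_\varepsilon(x)\cap\mathcal O|/|B_\varepsilon(x)|$ is then bounded below on $Orbit(A)\subset\overline{\mathcal O}$ by $\kappa_\varepsilon/(\pi\varepsilon^2)>0$.
\item Hence \eqref{20250310-yb-DensityForMixing} holds with $E=\mathcal O$, $F=Orbit(A)$, and $V$ mixes $A$ to scale $\varepsilon$ at time $t$.
\end{itemize}
As $\varepsilon>0$ is arbitrary, \eqref{20250103-yb-ExactDefinitionOfMixingScale} gives $MixingScale(t,A)=0$.

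\emph{Necessity.} Suppose $MixingScale(t,A)=0$. By the definition of the infimum there are scales $\varepsilon_n\to0^+$ such that $V$ mixes $A$ to scale $\varepsilon_n$ at time $t$.
\begin{itemize}
\item For every $x\in Orbit(A)$ and every $n$, \eqref{20250310-yb-DensityForMixing} gives $|B_{\varepsilon_n}(x)\cap\mathcal O|\ge\kappa_n|B_{\varepsilon_n}(x)|>0$.
\item In particular $B_{\varepsilon_n}(x)$ meets $\mathcal O$, so $d(x,\mathcal O)<\varepsilon_n$; this is the same step as in the proof of Lemma~\ref{rev:core-transition}.
\item Letting $n\to\infty$ yields $Orbit(A)\subset\overline{\mathcal O}=\varPhi(t)(\overline A)$.
\item Apply the homeomorphism $\varPhi(-t)$ and use $\varPhi(-t)(Orbit(A))=Orbit(A)$ to get $Orbit(A)\subset\overline A$.
\end{itemize}
Lemma~\ref{lemma-20260122-CharacterizeInvariantSet} then shows that $\overline A$ is $V$-invariant.

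I expect no real obstacle. The one point needing care is that the density constant $\kappa$ in Definition~\ref{20250103-yb-DefinitionOfMixingScale}(ii) must be uniform over the possibly non-closed set $Orbit(A)$. This is exactly why Lemma~\ref{20250322-yb-lemma-ZeroMixingScaleForOpenSets} is stated with the infimum over the closure $\overline{\mathcal O}$.
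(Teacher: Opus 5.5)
Your proposal is correct and follows essentially the same route as the paper. In both, each direction passes through the inclusion $Orbit(A)\subset\overline{\varPhi(t)(A)}$, with Lemma~\ref{20250322-yb-lemma-ZeroMixingScaleForOpenSets} supplying the density bound and Lemma~\ref{lemma-20260122-CharacterizeInvariantSet} characterizing invariance. The differences are cosmetic: you pull back by $\varPhi(-t)$ before invoking Lemma~\ref{lemma-20260122-CharacterizeInvariantSet}, whereas the paper applies it to $\varPhi(t)(A)$ and then transfers invariance to $\overline A$; you also state explicitly the identity $\varPhi(t)(\overline A)=\overline A$, which the paper's forward direction uses only tacitly.
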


\begin{proof}
Let $ t\geq 0$. 
First, we prove the necessity. Suppose that $\overline{A}$ is $V$-invariant. Then
\begin{align}\label{20260122-yb-SubsetRelation}
	\varPhi(t)(A) \subset Orbit(A) \subset Orbit(\overline{A}) = \overline{A}.
\end{align}
Fix an arbitrary $r>0$. By Lemma \ref{20250322-yb-lemma-ZeroMixingScaleForOpenSets}, applied with $\mathcal O = \varPhi(t)(A)$, there exists a constant $c>0$ such that
\begin{align*}
	| B_r(x) \cap \varPhi(t)(A) | > c 
	~ \text{for every } x \in \overline{\varPhi(t)(A)} .
\end{align*}
By (iii) of Definition \ref{20250103-yb-DefinitionOfMixingScale}, the above inequality together with \eqref{20260122-yb-SubsetRelation} implies that
\[
MixingScale(t,A) \le r.
\]
Since $r>0$ was chosen arbitrarily, the above implies \eqref{20260122-FineMixingForInvariantSet}. 

Next, we prove the sufficiency. Suppose that \eqref{20260122-FineMixingForInvariantSet} holds. 
Then, by (iii) of Definition \ref{20250103-yb-DefinitionOfMixingScale}, the set $Orbit(A)$ is approximated by $\varPhi(t)(A)$. This implies
\begin{align*}
	Orbit\big(\varPhi(t)(A)\big)
	\subset Orbit(A)
	\subset \overline{\varPhi(t)(A)}
	= \varPhi(t)(\overline{A}).
\end{align*}
Then, by Lemma \ref{lemma-20260122-CharacterizeInvariantSet}, 
 $\varPhi(t)(\overline{A})$ is $V$-invariant. Hence,  $\overline{A}$ is also $V$-invariant. The proof is completed.
\end{proof}


\begin{lemma}\label{20260903-EquivalentDistanceHypothesis}
For each nonempty set $E \subset \Omega$, Property-$E$ is defined as follows:
\begin{align*}
    d\big(E,\{T=0, +\infty\}\big)>0
    ~\text{ and }~
    \inf_{ x \in E}  | \nabla T(x) | > 0.
\end{align*}
Let $A \subset \Omega$ be a nonempty set.  
 Then, Property-$A$ is equivalent to Property-$Orbit(A)$. 
\end{lemma}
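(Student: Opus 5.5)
Since $A\subset Orbit(A)$, Property-$Orbit(A)$ trivially implies Property-$A$: a distance to a fixed set can only increase, and an infimum can only increase, when we pass to the subset $A$. The real content is the converse. We must show that if $A$ is uniformly separated from $\{T=0,+\infty\}$ and $|\nabla T|\ge\kappa>0$ on $A$, then the same holds, with possibly worse constants, on every trajectory through $A$. The plan is to reduce everything to estimates along a single orbit $Orbit(x_0)$ with $x_0\in A$, uniformly in $x_0$.

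\emph{Step 1: separation.} Set $d:=d(A,\{T=0,+\infty\})>0$. Then $A\subset\Omega_{\ge d}$, and $A$ contains no equilibria, since equilibria lie in $\{T=0\}$ by Lemma~\ref{20250322-yb-propsotion-PropertiesOfOrbitPeriod}(i). I argue by contradiction. Suppose there are points $x_n\in A$ and times $t_n$ with $d(\varPhi(t_n)(x_n),\{T=0,+\infty\})\to0$. By compactness of $\overline\Omega$, pass to subsequences with $x_n\to\bar x\in\overline A$ and $\varPhi(t_n)(x_n)\to y\in\{T=0,+\infty\}$; this limit set is closed, being finitely many equilibria together with finitely many heteroclinic curves and their closures. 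Since $d(\bar x,\{T=0,+\infty\})\ge d$, we have $0<T(\bar x)<\infty$. By continuity of $T$ near $\bar x$ (Lemma~\ref{20250322-yb-propsotion-PropertiesOfOrbitPeriod}(iii)), the periods $T(x_n)$ stay bounded, so we may take $t_n\in[0,T(x_n)]$ bounded. Passing to the limit $t_n\to\bar t$ and using continuity of the flow gives $y=\varPhi(\bar t)(\bar x)$. But $T$ is constant along orbits, so $T(y)=T(\bar x)\in(0,\infty)$, contradicting $y\in\{T=0,+\infty\}$. This yields $d(Orbit(A),\{T=0,+\infty\})>0$.

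\emph{Step 2: the gradient bound.} I transport $\nabla T$ along the flow. Since $T\circ\varPhi(t)=T$ on $\{0<T<\infty\}$, differentiating gives
\begin{align*}
\nabla T(x_0)=J_{\varPhi(t)}(x_0)^{\top}\,\nabla T(\varPhi(t)(x_0)).
\end{align*}
It follows that $|\nabla T(x_0)|\le\|J_{\varPhi(t)}(x_0)\|\,|\nabla T(x(t))|$. Because of periodicity, it suffices to take $t\in[0,T(x_0)]$. By \eqref{20250329-yb-GlobalUpperBoundsOfOrbitPeriodAndRatioOfVelocity} applied on $\Omega_{\ge d}$, $T(x_0)\le C$ there, since $|V|$ is bounded below away from equilibria on this compact set. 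Then \eqref{20260823-LowerBoundOfJacobianOfFlow} gives $\|J_{\varPhi(t)}(x_0)\|\le C(1+T(x_0))\le C'$. Hence $|\nabla T(x(t))|\ge\kappa/C'$ for all $t$, which gives $\inf_{Orbit(A)}|\nabla T|>0$.

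\emph{Main obstacle.} The delicate point is Step 1: it needs the closedness of $\{T=0,+\infty\}$ and the continuity of $T$ near orbits of finite positive period. I expect these to follow from Lemma~\ref{20250322-yb-propsotion-PropertiesOfOrbitPeriod} together with the finiteness of equilibria and the structure of heteroclinic orbits in (ii). A cleaner alternative is to note that level sets of $H$ in a neighborhood of $\overline A$ consist of closed periodic curves. The finite-period orbit through $\bar x$ is compact, and nearby orbits lie in a thin tubular annulus around it. That annulus stays a positive distance from equilibria and from the separatrices.
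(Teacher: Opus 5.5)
Your proof is correct, but it takes a different route from the paper's in both steps.

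\textbf{Separation.} The paper argues quantitatively. It notes that $T\le c_2$ on $\overline A$, hence on $Orbit(A)$. It then invokes ``ODE theory'' (implicitly: invariance of $\{T=0,+\infty\}$ under the flow, plus Lipschitz dependence on initial data over times $\le c_2$). The conclusion is that if one point of an orbit is $\varepsilon$-close to $\{T=0,+\infty\}$, the whole orbit is $C\varepsilon$-close; since the orbit meets $A$, this contradicts $d>0$. Your sequential-compactness argument replaces this Gronwall-type estimate by continuity of $T$ at $\bar x$, reduction of $t_n$ modulo the period, and continuity of the flow. It is at least as complete as the paper's sketch.

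The closedness you single out as the main obstacle is immediate. By Lemma \ref{20250322-yb-propsotion-PropertiesOfOrbitPeriod}(iii), $T$ is $C^1$, hence finite and positive, near every point where $0<T<+\infty$. So $\{0<T<+\infty\}$ is relatively open in $\overline\Omega$, and its complement is closed. No structure theory for heteroclinic curves is needed.

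\textbf{Gradient bound.} The paper uses that $|\nabla T|/|V|$ is constant along each orbit, together with the speed-ratio bound of Proposition \ref{20250322-yb-proposition-UsefulPropertiesOfOrbitPeriod}. The constancy is asserted without proof; it follows from $\nabla T\parallel V^\perp$ and $J_{\varPhi(t)}(x_0)^\top V^\perp(x(t))=V^\perp(x_0)$, which can be read off from \eqref{20241018-yb-SpecialExpressionOfJacobianOfFlow}. You instead transport $\nabla T$ by the chain rule and bound $\|J_{\varPhi(t)}(x_0)\|$ on $[0,T(x_0)]$. This sidesteps the unproved identity at the price of a cruder constant. Theorem \ref{20241023-yb-proposition-AsymptoticForJacobianOfVeolocityField} is more than you need here: Gronwall applied to the linearized equation over a window of length $\le\sup_{\overline A}T$ already suffices.

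Neither route is circular. Proposition \ref{20250322-yb-proposition-UsefulPropertiesOfOrbitPeriod}, and hence the Jacobian theorem, rely only on the separation claim, which both you and the paper establish first.

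One wording point: state $T(x_0)\le C$ for $x_0\in A$, not on all of $\Omega_{\ge d}$, because $T$ blows up near stable equilibria of order $m_k>1$. The simplest justification is that $T$ is continuous on the compact set $\overline A\subset\{0<T<+\infty\}$.
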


\begin{proof}
First of all, we claim
\begin{align}\label{BothAwayFromExtremeOrbitPeriods-20260926}
    d\big(A,\{T=0, +\infty\}\big)>0   ~\Longleftrightarrow~
    d\big(Orbit(A),\{T=0, +\infty\}\big)>0.
\end{align}
Clearly, the latter implies the former. To show the converse, assume that the former is true. Then,  since $T(\cdot)$ is of class $C^1$ outside $\{T=0, +\infty\}$, there are $c_1,c_2>0$ such that  
\begin{align*}
    c_1  \leq   T(x)  \leq  c_2, ~~  x \in \overline{A}. 
\end{align*}
It is clear that
\begin{align*}
    c_1  \leq   T(x)  \leq  c_2, ~~  x \in Orbit(A). 
\end{align*}
We want to prove that $Orbit(A)$ remains at a positive distance from $\{T=0, +\infty\}$. Suppose, for a contradiction, that this is false. Fix an arbitrary $\varepsilon >0$. Then, there is an $x_{\varepsilon} \in Orbit(A)$ such that
\begin{align*}
    d( x_{\varepsilon}, \{T=0, +\infty\}) < \varepsilon.
\end{align*}
At the same time, note that the orbit period $T(x_{\varepsilon})$ has an upper bound $c_2$. Thus, by ODE theory, we find that for some $C=C(\Omega, V, c_2)>0$ (independent of $\varepsilon$),
\begin{align*}
    \sup_{y \in Orbit(x_{\varepsilon}) }  d\big( y, \{T=0, +\infty\} \big) < C \varepsilon.
\end{align*}
In particular, since $A$ intersects $Orbit(x_{\varepsilon})$, we obtain that the distance between $A$ and $\{T=0, +\infty\}$  is bounded by $C \varepsilon$. Now, since $\varepsilon>0$ was arbitrary, we are led to a contradiction. Therefore, the claim \eqref{BothAwayFromExtremeOrbitPeriods-20260926} is true.

Next, to show the equivalence between Property-$Orbit(A)$ and Property-$A$, it is clear that 
Property-$Orbit(A)$ implies Property-$A$, since $A \subset Orbit(A)$. To prove the converse, assume that Property-$A$ holds. By \eqref{BothAwayFromExtremeOrbitPeriods-20260926} and the first inequality in Property-$A$, we have the first inequality in Property-$Orbit(A)$.

To show the second inequality in Property-$Orbit(A)$, fix an  $x_0 \in Orbit(A)$. Then, 
\begin{align*}
    x_1 := x(t_0; x_0) \in A
    ~\text{ for some }~
    t_0 \in \mathbb R. 
\end{align*}
Then, by \eqref{20250329-yb-GlobalUpperBoundsOfOrbitPeriodAndRatioOfVelocity} (with $C>0$) and the first inequality in Property-$A$, we obtain
\begin{align*}
    |\nabla T(x_0)| = |\nabla T(x_1)|   \frac{ |V(x_0)| }{ |V(x_1)| }
    > C^{-1}   \inf_{ x \in A}  | \nabla T(x) |.   
\end{align*}
This, together with the second inequality in Property-$A$, gives the second inequality in Property-$Orbit(A)$. 
In conclusion, Property-$Orbit(A)$ is true. This completes the proof. 
\end{proof}

\subsection{Coordinate system}
\label{appendix-Coordinate}

The coordinate system in Lemma \ref{rev:uniform-action-angle} is classical for the Hamiltonian system \eqref{20240925-yubiao-Flow} and has been used in the literature, for instance in \cite[Subsection 2.2]{brue2024enhanced} for 2D compact manifolds. For the sake of completeness of this paper, we present it and related import properties as follows. 

\begin{lemma}
    \csname @show@reffalse\endcsname 
    \label{rev:uniform-action-angle}
    Set $\mathbb T:=\mathbb R/(2\pi\mathbb Z)$. Let $E$ be a subdomain (open and connected) of the following open set:
    \begin{align}\label{20260901-NonTrivalPeriodRegion}
        \mathcal R
        :=\bigl\{
        x\in\Omega ~:~ 0<T(x)<+\infty
        \bigr\}.
    \end{align}
    Then, the set $H(E)$ is an
    open interval $(a,b)$, and there is a $C^1$ diffeomorphism
    $\psi:Orbit(E)\to(a,b)\times\mathbb T$ such that for all $t\in\mathbb R$ and $(h,\theta)\in(a,b)\times\mathbb T$, 
    \begin{align}\label{lip:aa}
        \varPhi(t)\bigl(\psi^{-1}(h,\theta)\bigr)
        &=\psi^{-1}\bigl(h,\theta+t\omega(h)\bigr),
        \\
        H\bigl(\psi^{-1}(h,\theta)\bigr)&=h,
        \nonumber
    \end{align}
    where $H$ is the Hamiltonian function given by \eqref{ham:representation}, and  $\omega(h):=2\pi/T(\psi^{-1}(h,\theta)) \in C^1((a,b))$ is the angular velocity  independent of
    $\theta$. Moreover, with 
    $x:=\psi^{-1}(h,\theta)$, 
    \begin{align}\label{lip:aab}
        \begin{cases}
            J_{\psi}(x) W
            & = \begin{pmatrix}
                1  & 0 \\
                q_{\psi}(x)  &  \omega(h)|V(x)|^{-2}
            \end{pmatrix}        
            \begin{pmatrix}
                W \cdot V^{\perp}(x) \\
                W \cdot V(x)
            \end{pmatrix},
            ~W \in \mathbb R^2,
            \vspace{0.5em}\\
            det J_{\psi}(x) &= -\omega(h)
        \end{cases}
    \end{align}
    for some continuous function $q_{\psi}$. 
    Furthermore, if further assume that 
    \begin{align*}
         d:=d\big(E, \{T=0,+\infty\} \big)>0 
         ~\text{ and }~
         \kappa:= \inf_{x \in E} |\nabla T(x)| > 0,
    \end{align*}
then there is a constant $C=C(\Omega, V, d, \kappa)$ such that when $x = \psi^{-1}(h, \theta) \in Orbit(E)$, 
    \begin{align}\label{20260902-Upper-Lower-Bound-OfCoordinateMap}
        \| J_{\psi} (x) \| + \| J_{ \psi^{-1}}(h, \theta) \|
        + \| (\omega, 1/\omega) \|_{ C^1( (a,b); \mathbb R^2)}
        + \| 1 / \omega' \|_{ C( (a,b)) }
        \leq C.
    \end{align}
\end{lemma}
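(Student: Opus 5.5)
We construct $\psi$ by combining the Hamiltonian $H$ as the first coordinate with a normalized time along orbits as the angle. First, $H(E)$ is an interval. Since $E$ is connected and $H$ is continuous, $H(E)$ is connected. It is open because $\nabla H=V^{\perp}\neq0$ on $\mathcal R$ (by (i) of Lemma \ref{20250322-yb-propsotion-PropertiesOfOrbitPeriod}), so $H$ is an open map there. Since $H$ is constant on orbits, $H(Orbit(E))=H(E)=(a,b)$.

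The main point is to show that each level set $H^{-1}(h)\cap Orbit(E)$ is exactly one periodic orbit. Since $Orbit(E)\subset\mathcal R$ is a union of closed orbits on which $H$ is a submersion, the map $H$ realizes $Orbit(E)$ as a fibre bundle over $(a,b)$ with circle fibres (Ehresmann-type argument). Connectedness of $Orbit(E)$ then forces the fibres to be connected. I would check this via a transversal: pick $x_*\in E$ and the integral curve $\sigma$ of $\nabla H/|\nabla H|^2$ through $x_*$, so that $H(\sigma(h))=h$. By the implicit function theorem and continuity of $T$ on $\mathcal R$, the set of orbits hit by $\sigma$ is open and closed in $Orbit(E)$, hence all of it. I expect this global step, namely maximal extension of $\sigma$ over all of $(a,b)$ inside $Orbit(E)$ and the uniqueness of the orbit in each level, to be the main obstacle. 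The way around it is the uniform bounds on $T$ and $|V|$ on compact subsets, which keep $\sigma$ from escaping.

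Given the transversal, set
\[
\psi^{-1}(h,\theta):=\varPhi\bigl(\theta/\omega(h)\bigr)(\sigma(h)), \qquad \omega(h)=2\pi/T(\sigma(h)).
\]
This is well defined on $\mathbb T$ by periodicity, $C^1$ since $T\in C^1$ on $\mathcal R$ (Proposition \ref{20250322-yb-proposition-UsefulPropertiesOfOrbitPeriod}), and bijective by the one-orbit-per-level fact. The identity \eqref{lip:aa} is immediate from the group property, and $H\circ\psi^{-1}=h$. For \eqref{lip:aab}, differentiate $h=H(x)$ and $\theta=\omega(H(x))\tau(x)$ (modulo $2\pi$), where $\tau$ is the hitting time. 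This gives $\nabla h=\nabla H$, whose action on $W$ is $W\cdot V^{\perp}$ up to the sign convention. Flowing along $V$ advances $\theta$ at rate $\omega$, so the $V$-component of $\nabla\theta$ is $\omega|V|^{-2}$, and the transverse part defines $q_\psi$. The determinant is the product of the diagonal entries times $\det(V^{\perp},V)$, which equals $\mp|V|^2$, yielding $-\omega$ after fixing orientation.

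For \eqref{20260902-Upper-Lower-Bound-OfCoordinateMap}, use Lemma \ref{20260903-EquivalentDistanceHypothesis} to transfer $d,\kappa$ to $Orbit(E)$. Proposition \ref{20250322-yb-proposition-UsefulPropertiesOfOrbitPeriod} then bounds $T$, $1/T$, $|V|$ and $1/|V|$ (since $T=0$ points are excluded at distance $d$), hence $\omega$ and $1/\omega$. Next, $\omega'(h)=-2\pi T^{-2}\,dT/dH$ and $|\nabla T|=|V|\,|dT/dH|$, so $|\omega'|$ is bounded above and below using $\kappa$ and $|\nabla T|\,|V|\le C$. For $q_\psi$, I would bound it by controlling $J_{\varPhi(s)}$ for $|s|\le T$ through \eqref{20260823-LowerBoundOfJacobianOfFlow}, together with $\nabla\tau$, which is expressible through $\nabla T$ and the transversal $\sigma$. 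Choosing $\sigma$ with $|\sigma'|=|V|^{-1}$ keeps these quantities uniform. Finally, $J_{\psi^{-1}}$ is bounded by inverting the triangular matrix, using $\det J_\psi=-\omega$ bounded away from zero.
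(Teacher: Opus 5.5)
Your construction matches the paper's. You use the same transversal (the integral curve of $\nabla H/|\nabla H|^2$, so $H(\sigma(h))=h$), the same chart $\psi^{-1}(h,\theta)=\varPhi(\theta/\omega(h))(\sigma(h))$, and the same differentiation leading to \eqref{lip:aab}. You depart from the paper in two supporting steps.

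\textbf{One orbit per level.} The paper argues that two orbits on the same level would force an extremum of $H$, hence a critical point, when moving transversally between them. You instead show that the union of orbits met by the maximal transversal is open and closed in the connected set $Orbit(E)$. Your version is the sounder one, since an extremum of $H$ along a path only gives tangency to a level curve, not a critical point. Your hint about bounds on compact sets is the right mechanism once the compact set is identified. Around any limit orbit in the open invariant set $Orbit(E)$, the nearby orbits form a flow-box annulus with one orbit per $H$-value. So $\sigma$ stays in a compact sub-annulus and cannot terminate at an interior value of $H$, which contradicts maximality. No global bounds on $T$ or $|V|$ are needed, and none are available in the part of the lemma without $d,\kappa$. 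Your Ehresmann remark presupposes properness of $H$ on $Orbit(E)$, which is essentially the claim itself, so it is the transversal argument that carries the weight.

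\textbf{Uniform bounds.} For \eqref{20260902-Upper-Lower-Bound-OfCoordinateMap}, the paper enlarges $\Omega$ so that $Orbit(E)$ is compactly contained in a component of the enlarged regular set. It then reads the bounds off the continuity of $\psi,\psi^{-1},\omega$. You derive them explicitly from Lemma~\ref{20260903-EquivalentDistanceHypothesis}, \eqref{20250329-yb-GlobalUpperBoundsOfOrbitPeriodAndRatioOfVelocity}, the relation $|dT/dH|=|\nabla T|/|V|$, and \eqref{20260823-LowerBoundOfJacobianOfFlow} for times up to $T$. For $q_\psi$, one convenient identity is $\nabla\tau\cdot V^\perp=-V\cdot J_{\varPhi(\tau)}(\sigma)\,\sigma'$. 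Your route is longer, but it makes the dependence $C=C(\Omega,V,d,\kappa)$ explicit. It is also independent of $E$ and of the base point of the transversal, which the compactness argument leaves implicit, and it needs no enlargement of $\Omega$.

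\textbf{Minor points.} With the paper's conventions, $\nabla H=V^\perp$ exactly, and the matrix with rows $V^\perp,V$ has determinant exactly $-|V|^2$. So no sign or orientation choice is involved in \eqref{lip:aab}.
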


\begin{proof}
    First of all, $\nabla H \in C^1(\Omega)$ is not zero over the set $\mathcal R$. Indeed, it follows from \eqref{ham:representation} that $|\nabla H| = |V|$, and from (i) of Lemma \ref{20250322-yb-propsotion-PropertiesOfOrbitPeriod} that  both $V$ and $T$ are nonzero on $\mathcal R$. Thus, $\nabla H$ is also not zero over  $\mathcal R$.

    The function $H$ cannot attain its local maxima or minima in $E$. Otherwise, it would have at least one critical point where $\nabla H$ is zero. Then, because $E$ is connected and open as a domain, there are numbers $a,b \in \mathbb R$ such that
    \begin{align*}
        H( E ) = (a,b).
    \end{align*}
    Furthermore, the orbit $Orbit(E)$ is also open and connected, and for each $h \in (a,b)$, $H^{-1}(h) \cap Orbit(E)$ is the orbit $Orbit(x_h)$ with some $x_h \in E$. Indeed, there are no two different orbits $Orbit(x_{h_1}) \subset E$ and $Orbit(x_{h_2}) \subset E$ included in  the same level set $H^{-1}(h)$. Otherwise, when moving transversely from one orbit to the other in the same component $\mathcal R$, the value of $H$ must return to the same value $h$. Then, during this motion, there must be an extremum of $H$, which means that $H$ has a critical point in $Orbit(E)$. This leads to a contradiction. 
    
    Fix an $x_0 \in Orbit(E)$. Consider the following gradient flow
    \begin{align*}
        x'(t) = \frac{ \nabla H( x(t)) }{  | \nabla H( x(t)) |^2  }, ~ t \in (a,b);~~ x|_{ t= H(x_0)} =x_0.
    \end{align*}
    Solve this equation to obtain a $C^1$ curve $\gamma(\cdot)$, which will be identified as the baseline $\theta =0$, pulled back by the coordinate map. One can directly check that $H(\gamma(t))$ is strictly increasing as $t$ increases and that
    \begin{align}\label{20260902-TimeIsEnergy}
        H( \gamma(t) ) = t  ~\text{ when }~  t \in (a,b). 
    \end{align}
    
    We now define the inverse coordinate map: when $(h, \theta) \in (a,b) \times \mathbb T$, 
    \begin{align}\label{20260902-EnergyAngleCoordinate}
        \psi^{-1}(h, \theta) := \varPhi\big( \theta / \omega(h) \big) ( \gamma(h) )
        ~\text{ with }~
        \omega(h) := 2 \pi /T( \gamma(h) ). 
    \end{align}
    The map $\psi^{-1}$ is well defined due to the periodicity of each orbit, i.e., for each $k \in \mathbb Z$, 
    \begin{align*}
        \psi^{-1}(h, \theta + 2k \pi)
        =& \varPhi\big( \theta / \omega(h) + k T( \gamma(h) )\big) ( \gamma(h) )
        \nonumber\\
        =& \varPhi\big( \theta / \omega(h) \big) ( \gamma(h) )
        = \psi^{-1}(h, \theta).
    \end{align*}
    Furthermore, for each $h$, the variable $\theta$ labels the orbit $Orbit( \gamma(h) )$; the functions $\varPhi(\cdot)$ (given in \eqref{20240926-yubiao-DefinitionOfFlow}), $T(\cdot)$ and $\gamma(\cdot)$ are of class $C^1$. It is clear that $\psi^{-1} $ is of class $C^1$. One can also directly check that it is one-to-one from $(a,b) \times \mathbb T$ to the orbit $Orbit(E)$. 
    
    Now, we verify \eqref{lip:aa}. For all $t\in\mathbb R$ and $(h,\theta)\in(a,b)\times\mathbb T$, we see from \eqref{20260902-EnergyAngleCoordinate} that 
    \begin{align*}
        \varPhi(t)\bigl(\psi^{-1}(h,\theta)\bigr)
        =\varPhi\big( t + \theta / \omega(h) \big) ( \gamma(h) )
        = \psi^{-1}(h,\theta + t \omega(h) ), 
        \nonumber
    \end{align*}
    and from \eqref{20260902-EnergyAngleCoordinate} and \eqref{20260902-TimeIsEnergy} that
    \begin{align*}
        H\bigl(\psi^{-1}(h,\theta)\bigr) = H ( \gamma(h) ) = h.
    \end{align*}
    These lead to \eqref{lip:aa}.

    Now, we  prove \eqref{lip:aab}. The second equality in \eqref{lip:aab} follows from the first one. To show the first one, we fix $x = \psi^{-1}(h, \theta)$ and differentiate with respect to the time variable at $t=0$ in \eqref{lip:aa} to obtain 
    \begin{align}\label{20260902-FormulaOfTheta}
        \begin{pmatrix}
            0 \\
            \omega(h)
        \end{pmatrix}
        = 
        \frac{d}{dt}  \begin{pmatrix}
            h \\
            \theta + t \omega(h)
        \end{pmatrix}
        = \frac{d}{dt}  \psi \circ \varPhi(t) \circ \psi^{-1} ( h, \theta)
        =  J_{\psi}(x) V(x) . 
    \end{align}
    At the same time, we have that for each $W\in \mathbb R^2$,
    \begin{align*}
        H(x+ \varepsilon W) = H(x) +  \varepsilon \nabla H \cdot W + 
        o(\varepsilon)
        ~\text{ as }~  
        \varepsilon \rightarrow 0. 
    \end{align*}
    With the projection $\pi_1$ onto the variable $h$, the above, together with \eqref{ham:representation}, implies 
    \begin{align*}
        \pi_1 J_{\psi}(x)W = \frac{d}{d\varepsilon} H(x+\varepsilon W) 
        \big|_{\varepsilon=0}
        = \nabla H \cdot W
        = V^{\perp} \cdot W,
        ~~ W\in \mathbb R^2.
    \end{align*}
    By direct computation, the above, along with \eqref{20260902-FormulaOfTheta}, gives the first equality in \eqref{lip:aab}.

    Finally, we verify \eqref{20260902-Upper-Lower-Bound-OfCoordinateMap}. By assumptions (A1)-(A2), we can slightly enlarge $\Omega$ to a new Lipschitz domain $\widetilde{\Omega}$ such that these assumptions are still verified on the closure of $\widetilde{\Omega}$. Define an analog of $\mathcal R$  according to \eqref{20260901-NonTrivalPeriodRegion} with $\Omega$ replaced by $\widetilde{\Omega}$. Write $\widetilde{\mathcal R}$ for this analog. 
    
    Since $E$ is connected, the set $E$ is included in a connected component $\mathcal R_1$ of the set $\widetilde{\mathcal R}$. The orbit $Orbit(\mathcal R_1)$ is also connected and included in $\widetilde{\mathcal R}$, so we have $Orbit(\mathcal R_1) = \mathcal R_1$. Then, by similar arguments, the aforementioned coordinate system $(\psi, Orbit(E))$ can be extended to $(\psi, \mathcal R_1)$. 
    
    Because $E$ is assumed to be bounded with parameters $d, \kappa$, the orbit $Orbit(E)$ is also bounded with new parameters $\hat d, \hat \kappa$ (by Lemma \ref{20260903-EquivalentDistanceHypothesis} in the appendix):
    \begin{align*}
        \hat d := d\big(Orbit(E), \{T=0,+\infty\} \big)>0
        ~\text{ and }~
        \hat\kappa := \inf_{x \in Orbit(E)} |\nabla T(x)| > 0.
    \end{align*}
    Thus, $Orbit(E)$ is compactly contained in $\mathcal R_1$. 
    Note that the maps $\psi, \psi^{-1}$ are of class $C^1$ over the set $\mathcal R_1$, and that the functions $\omega, \omega^{-1}$ are of class $C^1$ over $\mathcal R_1$. Then, \eqref{20260902-Upper-Lower-Bound-OfCoordinateMap} follows at once. 
    This completes the proof. 
\end{proof}

\subsection{Proof of Lemma \ref{20241001-yb-lemma-Hamiltonian}}
\label{appendix-HamiltonianFunction}

%

First of all, one can directly check that $P_k$ is $C^2$, and the identities \eqref{20250329-yb-AsymptoticsOfHamiltonianAndVelocity} now follow directly from
\eqref{ham:local-model}--\eqref{ham:remainder}. In particular, 
$V=\mathcal J\nabla H$ and $J_V=\mathcal J D^2H$.

Next, we show \eqref{ham:velocity-bounds}. Compute $\nabla P_k$ at $y \neq 0$: 
\[
\nabla P_k(y)=|y|^{m_k-1} S_k y
+\frac{m_k-1}{2} |y|^{m_k-3} (y^\top S_ky)y.
\]
By the nonsingularity of $S_k$ and the fact that $m_k \geq 1$,   $\nabla P_k$ is not zero on the unit circle. Otherwise, for some $e \in \mathbb S^1$, 
\begin{align*}
    0 = \nabla P_k(e) = \bigg( I_2 + \frac{m_k-1}{2} e \otimes e  \bigg) S_k e ,   
\end{align*}
which implies $S_k e=0$, a contradiction. 
Now, by the homogeneity of $P_k$, we obtain
\[
c|y|^{m_k}\le|\nabla P_k(y)|\le C|y|^{m_k}
~\text{ and }~
\|D^2 P_k(y)\|\le C|y|^{m_k-1},
~ y \in \mathbb R^2.
\]
This, together with the second and third equalities in  \eqref{20250329-yb-AsymptoticsOfHamiltonianAndVelocity},  implies \eqref{ham:velocity-bounds}. 

Finally, we verify that every stable equilibrium lies in $\Omega$. Assume that $x_k^*$ is a stable equilibrium in $\overline{\Omega}$. In this case,  $S_k$ is definite and so  $x_k^*$ is a strict local extremum of $H$ (by \eqref{ham:local-model} and \eqref{ham:remainder}).
If $x_k^*$ were on $\partial\Omega$, the local Lipschitz boundary would contain points arbitrarily close to $x_k^*$ with the same Hamiltonian value (see \eqref{ham:representation}). This contradicts the strict extremum at $x_k^*$. Hence, every stable equilibrium lies in $\Omega$.

\subsection{Proof of Lemma \ref{20250322-yb-propsotion-PropertiesOfOrbitPeriod}}
\label{appendix-PropertiesOfOrbitPeriod}

The statements (i)–(iii) are proved below one by one.

\vskip 5pt
\noindent\textit{(i)} 
The sufficiency is obvious, since every equilibrium is a constant solution of equation \eqref{20240925-yubiao-Flow}. 
To show the necessity, assume that $T(x_0)=0$. We argue by contradiction and suppose that $V(x_0)\neq 0$. Then, by \eqref{20240925-yubiao-Flow}, there exists $\delta_0>0$ such that
\begin{align*}
    x(t; x_0) \neq x_0 
    \quad \text{for all } t\in (0,\delta_0).
\end{align*}
From this and the definition of $T(x_0)$ in \eqref{20241021-yb-PeriodOfOrbits}, we deduce that $T(x_0)\ge \delta_0$, which contradicts the assumption $T(x_0)=0$. Hence $V(x_0)=0$, and the necessity follows.

\vskip 5pt
\noindent\textit{(ii)} 
We first prove the sufficiency. Assume that the solution $x(\cdot;x_0)$ is not constant and that \eqref{20250329-yb-SourcesOfSolutionsWithInfinitePeriod} holds. To show that $T(x_0)=+\infty$, suppose by contradiction that $T(x_0)<+\infty$. Then
\begin{align*}
    x(kT(x_0);x_0)=x_0 
    \quad \text{for every integer } k .
\end{align*}
Together with \eqref{20250329-yb-SourcesOfSolutionsWithInfinitePeriod}, this implies
\begin{align*}
    x_0=x_k^*=x_l^*
\end{align*}
for some equilibria $x_k^*$ and $x_l^*$ given in \eqref{20250329-yb-SourcesOfSolutionsWithInfinitePeriod}. Consequently, $x(\cdot;x_0)\equiv x_0$, which contradicts the assumption that the solution is not constant. Hence $T(x_0)=+\infty$, proving the sufficiency.

Next we prove the necessity. Assume that $T(x_0)=+\infty$. Then the solution $x(\cdot;x_0)$ cannot be constant; otherwise equation \eqref{20240925-yubiao-Flow} would imply that the velocity is identically zero along the trajectory, and hence $x_0$ would be an equilibrium, contradicting statement (i).

It remains to prove \eqref{20250329-yb-SourcesOfSolutionsWithInfinitePeriod}. We only prove the second equality, as the first can be shown similarly. The goal is to show that for any sequence of times tending to $+\infty$, there exists a subsequence whose corresponding trajectory points converge to an equilibrium. Once this is established, the desired equality follows from the discreteness of equilibria under assumption (A2).

Let $\{t_k\}_{k\ge1}$ be a sequence with $t_k\to+\infty$. Since the trajectory $x(\cdot;x_0)$ remains in the bounded set $\overline{\Omega}$, there exists a subsequence (still denoted by $\{t_k\}$) and a point $y_\infty\in\overline{\Omega}$ such that
\begin{align}\label{20250403-yb-ConvergenceOfTimeSequence}
    \lim_{k\to+\infty} x(t_k;x_0)=y_\infty .
\end{align}
We claim that $y_\infty$ is an equilibrium. By contradiction, suppose that $V(y_\infty)\neq0$. Then there exist $\delta_1>0$ and a small neighborhood $U_1$ of $y_\infty$ such that $U_1$ can be written as the disjoint union of solution segments of equation \eqref{20240925-yubiao-Flow}:
\begin{align*}
    U_1=\bigcup_{j\in J}\{x(t;z_j)\}_{t\in(-\delta_1,\delta_1)},
\end{align*}
where $J$ is an index set and the solution curves are mutually distinct.

Let $H$ be the Hamiltonian given by assumption (A1). By \eqref{ham:representation},
\begin{align*}
    |\nabla H(y_\infty)|=|V(y_\infty)|\neq0 .
\end{align*}
Since $H$ is constant along each solution of equation \eqref{20240925-yubiao-Flow}, we can assume that after possibly shrinking $U_1$,
\begin{itemize}
    \item[] ($\mathcal P$) the function $H$ takes distinct values on different solution curves in $U_1$.
\end{itemize}
Meanwhile, from \eqref{20250403-yb-ConvergenceOfTimeSequence}, there exists $N_1$ such that for all $k\ge N_1$,
\begin{align*}
    x(t_k;x_0)\in U_1, 
    ~ 
    y_\infty\in U_1,
    ~ \text{ and }~
    H(x(t_k;x_0))=H(y_\infty).
\end{align*}
This, together with Property ($\mathcal P$), implies that all points $x(t_k;x_0)$ with $k\ge N_1$ and $y_\infty$ lie on the same solution curve. Hence there exists $j_0\in J$ such that
\begin{align*}
    \{x(t_k;x_0)\}_{k\ge N_1}
    \subset
    \{x(t;z_{j_0})\}_{t\in(-\delta_1,\delta_1)},
    ~ \text{ and }~
    y_\infty\in
    \{x(t;z_{j_0})\}_{t\in(-\delta_1,\delta_1)} .
\end{align*}
Therefore, for each $k\ge N_1$ there exists $\hat t_k\in(-\delta_1,\delta_1)$ such that
\begin{align*}
    x(\hat t_k; x(t_k;x_0))=y_\infty .
\end{align*}
In particular, there exist $k_1,k_2\ge N_1$ such that
\begin{align*}
    t_{k_2}+\hat t_{k_2}>t_{k_1}+\hat t_{k_1}
    \quad \text{and}\quad
    x(t_{k_j}+\hat t_{k_j};x_0)=y_\infty,\quad j=1,2 .
\end{align*}
Thus $y_\infty$ is a periodic point of the trajectory $x(\cdot;x_0)$, and hence $x_0$ is also periodic. Therefore there exists $\hat t>0$ such that $x(\hat t;x_0)=x_0$, which implies $T(x_0)<+\infty$ by \eqref{20241021-yb-PeriodOfOrbits}. This contradicts the assumption $T(x_0)=+\infty$. Hence $y_\infty$ must be an equilibrium.
This completes the proof of statement (ii).

\vskip 5pt
\noindent\textit{(iii)} 
Suppose that $0<T(x_0)<+\infty$. By statement (i) and \eqref{ham:representation}, we have
\begin{align}\label{20250329-yb-NonEquilibrium}
    V(x_0)\neq0 
    \text{ and }
    \nabla H(x_0) \neq 0.
\end{align}
Write $\nu(p) := \frac{ \nabla H(p) }{ | \nabla H(p) | }$ for $p$ near $x_0$. Denote by $\Sigma$ the surface $\{ H(x) = H(x_0)\}$ around $x_0$. Without loss of generality, by \eqref{20250329-yb-NonEquilibrium}, assume that $\nabla H \neq 0$ over $\Sigma$. 
 There is a small $s_0 >0$ such that for each $p\in \Sigma$,  the  function $H$ is strictly increasing over the following $C^1$ curve:
\begin{align*}
    q(s; p) := p + s \nu(p),   ~  s \in (-s_0, s_0), 
\end{align*}
and the map $(s,p) \mapsto q(s; p)$ parameterizes a neighborhood of $x_0$. 

Arbitrarily fix an $p\in \Sigma$. We study the returning time of points on $q(\cdot;p)$. For this purpose, define a $C^1$ function $f:\mathbb R\times (-s_0, s_0)$ by
\begin{align*}
    f(t, s):= \big(  x(t; q(s;p)) - p  \big)   \cdot  V(p),
    ~ t\in \mathbb R, ~ s\in   (-s_0, s_0).
\end{align*}
Since $x(\cdot;p)$ solves equation \eqref{20240925-yubiao-Flow}, 
it follows from \eqref{20250329-yb-NonEquilibrium} and \eqref{20241021-yb-PeriodOfOrbits} that
\begin{align*}
    \partial_t f(T(p), 0)
    =
    x'(T(p); p)  \cdot  V(p)
    = 
    |V(p)|^2  \neq  0 .
\end{align*}
Therefore, the implicit function theorem applied to $f$ yields that there is a function $\tau(\cdot;p)$ (with $s\in (-s_1, s_1)$ for some $s_1 \in (0,s_0)$), of class $C^1$ in both $s$ and $p$, such that
\begin{align*}
     f( \tau(s;p) , s ) = \Big(  x\big( \tau(s;p); q(s;p) \big) - p  \Big)   \cdot  V(p)
     =0,  ~  s \in (-s_1, s_1) .
\end{align*}
Since $H\big(  x(\tau(s;p); q(s;p) ) \big) = H(q(s;p))$ for all $s$ and $H$ is strictly increasing over $(-s_1, s_1)$, the above yields 
\begin{align}\label{ReturnTime-20260916}
    x\big( \tau(s;p); q(s;p) \big)  = q(s;p),  ~  s \in (-s_1, s_1). 
\end{align}

Next, we see from \eqref{ReturnTime-20260916} that  for each $s \in (-s_1, s_1)$, $\tau(s;p)$ is a multiple of $T( q(s;p))$.  
We claim that for small $s$,
\begin{align}\label{ReturnTime-Period-20260916}
    \tau(s;p) = T( q(s;p)). 
\end{align}
Otherwise, there is a sequence $\{ s_k \}_{k \geq 1}$ converging to $0$ such that $\tau(s_k;p) \geq 2 T( q(s_k;p))$ for each $k$. Then,  $T(q(s_k;p)) \leq 
\frac{2}{3} \tau(0;p) = \frac{2}{3} T(p)$ for large $k$. At the same time, the lower bound of $T(\cdot)$ around $p$ should be positive. 
Hence, there is a subsequence of $\{ s_k \}_{k \geq 1}$, still denoted by the same manner, and a  $t^* \in (0, \frac{2}{3} T(p) ]$ such that
\begin{align*}
    \lim_{ k\rightarrow +\infty }   T(q(s_k;p))
    = t^*
    ~\text{ and }~
    x\big( T(q(s_k;p));  q(s_k;p) \big) = q(s_k;p),  ~k \in \mathbb N^+.
\end{align*}
It is clear that $x(t^*; p ) = p$, which implies that 
the solution $x(t^*; p)$ returns to $p$ within time $\leq \frac{2}{3} T(p)$, which leads to a contradiction. Therefore, the claim \eqref{ReturnTime-Period-20260916} is true. 

Finally, because the map $(s,p) \mapsto q(s; p)$ parameterizes a neighborhood of $x_0$, 
the desired conclusion follows from \eqref{ReturnTime-Period-20260916}. This completes the proof.

\subsection{Proof of Lemma \ref{20250329-yb-lemma-EquilibriaInFinitePeriodRegion}}
\label{appendix-EquilibriaInFinitePeriodRegion}

First, by the definition of $\Omega_{\geq \varepsilon}$ in \eqref{20250323-yb-LargePeirodLayer}, the set $\Omega_{\geq \varepsilon}$ is closed.
Let $x_k^*$ be an unstable equilibrium. We claim that
\begin{align}\label{20250330-yb-InfinitePeriodAroundUnstableEquilibrium}
    d\big(x_k^*, \{T=+\infty\}\big) = 0.
\end{align}
Once this is established, it follows from \eqref{20250323-yb-LargePeirodLayer} that the set $\Omega_{\geq \varepsilon}$ cannot contain any unstable equilibrium.

It remains to prove \eqref{20250330-yb-InfinitePeriodAroundUnstableEquilibrium}.  
Since $x_k^*$ is unstable, by Definition \ref{20250329-yb-ClassificationOfEquilibria}, we have that the symmetric matrix $S_k $ is indefinite. By assumption (A2), there exists an orthogonal coordinate system $\{\vec v_1,\vec v_2\}$ centered at $x_k^*$, two positive constants $a,b$, and a remainder term $R \in C^2(\mathbb R^2)$, satisfying
\begin{align}\label{20250330-yb-GoodRemainder}
    \partial_{x_1}^{\alpha}\partial_{x_2}^{\beta}R(x_1,x_2)
    = o\!\left(|(x_1,x_2)|^{2-\alpha-\beta}\right),
    ~\text{ as }  (x_1,x_2) \rightarrow 0, ~
    \alpha+\beta \le 2,
\end{align}
such that
\begin{align*}
    H(x_k^* + x_1\vec v_1 + x_2\vec v_2)  - H(x_k^*)
    =
    \pm  |(x_1,x_2)|^{m_k-1}
    \big(
    a^2 x_1^2 - b^2 x_2^2 + R(x_1,x_2)
    \big).
\end{align*}
For simplicity, denote by $H(x_1,x_2)$ the function on the left-hand side of the above identity. Without loss of generality, we assume that
\begin{align*}
    H(x_1,x_2)
    &=
    H_p(x_1,x_2)
    +
    |(x_1,x_2)|^{m_k-1} R(x_1,x_2)
    \nonumber\\
    &:=
    |(x_1,x_2)|^{m_k-1}
    \big(
    a^2 x_1^2 - b^2 x_2^2 + R(x_1,x_2)
    \big).
\end{align*}
By direct computation, there is a small $r_0>0$ such that for each $r \in (0, r_0) $, the level set $\{ H_p(x) =0 \}$ on $\partial B_r(0)$ has four points that can be parameterized by four $C^1$ functions $\{ y_k(r)\}_{k=1}^4$ (with $r$ as the independent variable). Furthermore,
\begin{align*}
     |\nabla H_p(r)| \asymp  r^m  \text{ as }  r \rightarrow 0^+. 
\end{align*}
By \eqref{20250330-yb-GoodRemainder}, we find that $H$ also satisfies similar properties: there is a small $r_1 \in (0, r_0)$ such that for each $r \in (0, r_1) $, the level set $\{ H(x) =0 \}$ on $\partial B_r(0)$ has four points that can be parameterized by four $C^1$ functions $\{ \hat y_k(r)\}_{k=1}^4$ (with $r$ as the independent variable). Furthermore,
\begin{align*}
    |\nabla H(r)| \asymp  r^m  \text{ as }  r \rightarrow 0^+. 
\end{align*}
Therefore, for each curve $\hat y_k( \cdot )$, the orbit period is computed as follows:
\begin{align*}
   T( y_k(\cdot) ) \geq  \int_0^{r_1}  \frac{ |dy_k(r)| }{ |V( y_k(r) )| }
   = \int_0^{r_1}  \frac{ |dy_k(r)| }{ |\nabla H( y_k(r) )| }
   \gtrsim \int_0^{r_1}    |y_k(r) |^{-m}    |dy_k(r)|
   = +\infty. 
\end{align*}
This means that each curve $\hat y_k( \cdot )$ has the infinite period. When $x_k^* \in \Omega$, all these curves are in $\Omega$ after reducing $r_1$. When $x_k^* \in \partial\Omega$, the boundary $\partial\Omega$ near $x_k^*$ is the level set of $H$ (see \eqref{ham:representation}), which implies that at least one curve of $\{ \hat y_k(r)\}_{k=1}^4$  lies in $\overline{ \Omega }$. 
Therefore,  the conclusion  \eqref{20250330-yb-InfinitePeriodAroundUnstableEquilibrium} is true. This completes the proof.

\section*{Acknowledgments}
The authors thank Prof. Enrique Zuazua for his helpful comments.
The first author is partially supported by NSF DMS-2111486, DMS-2205117 and AFOSR FA9550-23-1-0675. This work was initiated during the author's visit to the Chair for Dynamics, Control, Machine Learning and Numerics, Friedrich-Alexander-Universit\"at Erlangen-N\"urnberg, Germany, with support from the Humboldt Research Fellowship for Experienced Researchers program of the Alexander von Humboldt Foundation.
The third author is supported by the National Natural Science Foundation of China under grant 12671535 and the Humboldt Research Fellowship for Experienced Researchers program from the Alexander von Humboldt Foundation.

\bibliographystyle{abbrv}
\bibliography{references.bib}

\begin{thebibliography}{10}

\bibitem{alberti2016exponential}
G.~Alberti, G.~Crippa, and A.~L. Mazzucato.
\newblock Exponential self-similar mixing by incompressible flows.
\newblock {\em J. Amer. Math. Soc.}, 32(2):445--490, 2019.

\bibitem{manu2016how}
M.~Aminian, F.~Bernardi, R.~Camassa, D.~Harris, and R.~McLaughlin.
\newblock How boundaries shape chemical delivery in microfluidics.
\newblock {\em Science}, 354(6317):1252--1256, 2016.

\bibitem{beebe2002biology}
D.~J. Beebe, G.~A. Mensing, and G.~Walker.
\newblock Physics and applications of microfluidics in biology.
\newblock {\em Annu. Rev. Biomed. Eng.}, 4:261--286, 2002.

\bibitem{bonicatto2021regularity}
P.~Bonicatto and E.~Marconi.
\newblock Regularity estimates for the flow of {BV} autonomous divergence-free vector fields in {$\mathbb{R}^2$}.
\newblock {\em Comm. Partial Differential Equations}, 46(12):2235--2267, 2021.

\bibitem{Bressan-2006}
A.~Bressan.
\newblock A lemma and a conjecture on the cost of rearrangements.
\newblock {\em Rend. Sem. Mat. Univ. Padova}, 110:97--102, 2003.

\bibitem{brue2024enhanced}
E.~Bru\`e, M.~Coti~Zelati, and E.~Marconi.
\newblock Enhanced dissipation for two-dimensional {H}amiltonian flows.
\newblock {\em Arch. Ration. Mech. Anal.}, 248(5):Paper No. 84, 37, 2024.

\bibitem{chakravarthy1996mixing}
V.~S. Chakravarthy and J.~M. Ottino.
\newblock Mixing of two viscous fluids in a rectangular cavity.
\newblock {\em Chem. Eng. Sci.}, 51(14):3613--3622, 1996.

\bibitem{zelati2024mixing}
M.~Coti~Zelati, G.~Crippa, G.~Iyer, and A.~L. Mazzucato.
\newblock Mixing in incompressible flows: transport, dissipation, and their interplay.
\newblock {\em Notices Amer. Math. Soc.}, 71(5):593--604, 2024.

\bibitem{crippa2008estimates}
G.~Crippa and C.~De~Lellis.
\newblock Estimates and regularity results for the {D}i{P}erna-{L}ions flow.
\newblock {\em J. Reine Angew. Math.}, 616:15--46, 2008.

\bibitem{crippa2019polynomial}
G.~Crippa, R.~Luc\`a, and C.~Schulze.
\newblock Polynomial mixing under a certain stationary {E}uler flow.
\newblock {\em Phys. D}, 394:44--55, 2019.

\bibitem{d1999control}
D.~D'Alessandro, M.~Dahleh, and I.~Mezi\'c.
\newblock Control of mixing in fluid flow: a maximum entropy approach.
\newblock {\em IEEE Trans. Automat. Control}, 44(10):1852--1863, 1999.

\bibitem{dombre1986chaotic}
T.~Dombre, U.~Frisch, J.~M. Greene, M.~H\'{e}non, A.~Mehr, and A.~M. Soward.
\newblock Chaotic streamlines in the {ABC} flows.
\newblock {\em J. Fluid Mech.}, 167:353--391, 1986.

\bibitem{doswell1984kinematic}
C.~A. Doswell~III.
\newblock A kinematic analysis of frontogenesis associated with a nondivergent vortex.
\newblock {\em J. Atmos. Sci.}, 41(7):1242--1248, 1984.

\bibitem{elgindi2019universal}
T.~M. Elgindi and A.~Zlato{\v s}.
\newblock Universal mixers in all dimensions.
\newblock {\em Adv. Math.}, 356:106807, 33, 2019.

\bibitem{foures2014optimal}
D.~P.~G. Foures, C.~P. Caulfield, and P.~J. Schmid.
\newblock Optimal mixing in two-dimensional plane {P}oiseuille flow at finite {P}\'eclet number.
\newblock {\em J. Fluid Mech.}, 748:241--277, 2014.

\bibitem{hu2023feedback}
W.~Hu, C.~N. Rautenberg, and X.~Zheng.
\newblock Feedback control for fluid mixing via advection.
\newblock {\em J. Differential Equations}, 374:126--153, 2023.

\bibitem{hu2018boundaryNS}
W.~Hu and J.~Wu.
\newblock Boundary control for optimal mixing via {N}avier-{S}tokes flows.
\newblock {\em SIAM J. Control Optim.}, 56(4):2768--2801, 2018.

\bibitem{iyer2014lower}
G.~Iyer, A.~Kiselev, and X.~Xu.
\newblock Lower bounds on the mix norm of passive scalars advected by incompressible enstrophy-constrained flows.
\newblock {\em Nonlinearity}, 27(5):973--985, 2014.

\bibitem{li2026hamiltonian}
Z.~Li and E.~Zuazua.
\newblock Hamiltonian interface dynamics for reduced-order optimization of incompressible mixing.
\newblock {\em arXiv:2605.04688}, 2026.

\bibitem{lin2011optimal}
Z.~Lin, J.-L. Thiffeault, and C.~R. Doering.
\newblock Optimal stirring strategies for passive scalar mixing.
\newblock {\em J. Fluid Mech.}, 675:465--476, 2011.

\bibitem{liu2008mixing}
W.~Liu.
\newblock Mixing enhancement by optimal flow advection.
\newblock {\em SIAM J. Control Optim.}, 47(2):624--638, 2008.

\bibitem{mathew2007optimal}
G.~Mathew, I.~Mezi{\' c}, S.~Grivopoulos, U.~Vaidya, and L.~Petzold.
\newblock Optimal control of mixing in {S}tokes fluid flows.
\newblock {\em J. Fluid Mech.}, 580:261--281, 2007.

\bibitem{mathew2005multiscale}
G.~Mathew, I.~Mezi\'c, and L.~Petzold.
\newblock A multiscale measure for mixing.
\newblock {\em Phys. D}, 211(1-2):23--46, 2005.

\bibitem{seis2013maximal}
C.~Seis.
\newblock Maximal mixing by incompressible fluid flows.
\newblock {\em Nonlinearity}, 26(12):3279--3289, 2013.

\bibitem{abraham2002chaotic}
A.~D. Stroock, S.~K.~W. Dertinger, A.~Ajdari, I.~Mezić, H.~A. Stone, and G.~M. Whitesides.
\newblock Chaotic mixer for microchannels.
\newblock {\em Science}, 295(5555):647--651, 2002.

\bibitem{thiffeault2012multiscale}
J.-L. Thiffeault.
\newblock Using multiscale norms to quantify mixing and transport.
\newblock {\em Nonlinearity}, 25(2):R1--R44, 2012.

\bibitem{vikhansky2002enhancement}
A.~Vikhansky.
\newblock Enhancement of laminar mixing by optimal control methods.
\newblock {\em Chem. Eng. Sci.}, 57(14):2719--2725, 2002.

\bibitem{yao2017mixing}
Y.~Yao and A.~Zlato{\v s}.
\newblock Mixing and un-mixing by incompressible flows.
\newblock {\em J. Eur. Math. Soc. (JEMS)}, 19(7):1911--1948, 2017.

\bibitem{zheng2023numerical}
X.~Zheng, W.~Hu, and J.~Wu.
\newblock Numerical algorithms and simulations of boundary dynamic control for optimal mixing in unsteady {S}tokes flows.
\newblock {\em Comput. Methods Appl. Mech. Engrg.}, 417:Paper No. 116455, 24, 2023.

\bibitem{zillinger2019scales}
C.~Zillinger.
\newblock On geometric and analytic mixing scales: comparability and convergence rates for transport problems.
\newblock {\em Pure Appl. Anal.}, 1(4):543--570, 2019.

\end{thebibliography}

\end{document}